\newtheorem{theorem}{Theorem}[section]
\newtheorem{lemma}[theorem]{Lemma}
\newtheorem{corollary}[theorem]{Corollary}
\newtheorem{prop}[theorem]{Proposition}
\theoremstyle{definition}
\newtheorem{definition}[theorem]{Definition}
\theoremstyle{remark}
\newtheorem{remark}[theorem]{Remark}
\newcommand{\abs}[1]{\left\lvert #1 \right\rvert}
\newcommand{\norm}[1]{\left\lVert #1 \right\rVert}
\newcommand{\laplacian}{\Delta}
\newcommand{\grad}{\nabla}
\newcommand{\cross}{\times}
\newcommand{\real}{\mathds{R}}
\newcommand{\Qb}{\widetilde{Q}_b}
\newcommand{\epsRe}{{\epsilon_\text{re}}}
\newcommand{\epsIm}{{\epsilon_\text{im}}}
\newcommand{\zb}{\tilde{\zeta}}
\newcommand{\dy}{\,dy}
\newcommand{\epsNorm}{\int{\abs{\grad_y\epsilon}^2\mu\dy} + \int_{\abs{y}\leq\frac{10}{b}}{\abs{\epsilon}^2e^{-\abs{y}}\dy}}
\newcommand{\epsTildeNorm}{\int{\abs{\grad_y\widetilde{\epsilon}}^2\mu(y)\dy} + \int_{\abs{y}\leq\frac{10}{b}}{\abs{\widetilde{\epsilon}}^2e^{-\abs{y}}\dy}}
\title{Standing Ring Blowup Solutions for Cubic NLS}
\author{Ian Zwiers\footnote{Department of Mathematics, University of Toronto. 40 St. George Street, Toronto, Ontario, Canada M5S 2E4} }
\date{}
\begin{document}
\maketitle

\begin{abstract}
We prove there exist solutions to the focusing cubic nonlinear Schr\"odinger equation on $\real^3$ that blowup on a circle, in the sense of $L^2$ concentration on a ring, bounded $H^1$ norm outside any surrounding toroid, and growth of the global $H^1$ norm with the log-log rate. 

Analogous behaviour occurs in any dimension $N\geq 3$. That is, there exists data on $\real^N$ for which the corresponding evolution by cubic NLS explodes on a set of co-dimension two.
To simplify the exposition, the proof is presented in dimension three, with remarks to indicate the adaptations in higher dimension. 
\end{abstract}

\section{Introduction}

Consider the cubic focusing nonlinear Schr\"odinger equation in dimension three,
\begin{equation}\label{Eqn-NLS}\left\{\begin{aligned}
&iu_t+\laplacian u + u\abs{u}^2 = 0\\
&u(0,x) = u_0 : \real^{3} \to {\mathds C}.
\end{aligned}\right.\end{equation}
This is a canonical model equation arising in physics and engineering, \cite{SulemSulem}. This equation, and other closely related equations, have been the subject of many recent mathematical studies.

Equation (\ref{Eqn-NLS}) is locally wellposed for data $u_0\in H^s(\real^3)$, for any $s \geq \frac{1}{2}$, \cite{Cazenave03}.  Higher regularity persists under local-in-time dynamics, and the maximal time $T_{max}>0$ for which $u\in C\left([0,T_{max}),H^{s}\right)$ is the same for all $s>\frac{1}{2}$, 
 and we have the classic blowup alternative: either $T_{max} = +\infty$ or $\norm{u(t)}_{H^{s}}\to\infty$ as $t\to T_{max}$. 
Evolution by equation (\ref{Eqn-NLS}) preserves:
\begin{align}
\label{ConserveMass}
& 
\int_{\real^{3}}{\abs{u(t,x)}^2\,dx}
=\int{\abs{u_0}^2\,dx} = M[u_0],
&& \text{(mass)}
\\
\label{ConserveEnergy}
&
\int{\abs{\grad_xu(t,x)}^2\,dx} - \frac{1}{2}\int{\abs{u(t,x)}^4\,dx}
= E[u(t,x)] = E[u_0],
&& \text{(energy)}
\\
\label{ConserveMoment}
&
Im\left(\int{\overline{u}(t,x)\grad u(t,x)\,dx}\right) 
= Im\left(\int{\overline{u_0}\grad u_0\,dx}\right).
&& \text{(momentum)}
\end{align}
There are corresponding symmetries. If $u(t,x)$ satisfies (\ref{Eqn-NLS}), then so do the following:
\[\begin{aligned}
u(t,x+x_0) 
	&&& \forall\, x_0 \in \real^{3}
	&& \text{(spatial translation invariance)}\\
u(t+t_0,x) 
	&&& \forall\, t_0\in \real
	&& \text{(time translation invariance)}\\
u(t,x)e^{i\gamma_0} 
	&&& \forall\, \gamma_0\in\real 
	&& \text{(phase invariance)}\\
u(t,x-\beta_0 t)e^{i\frac{\beta_0}{2}\cdot\left(x-\frac{\beta_0}{2}t\right)} 
	&&& \forall\, \beta_0 \in \real^{3} 
	&& \text{(Galilean invariance)}\\
\lambda_0 u(\lambda_0^2t,\lambda_0 x)
	&&& \forall\, \lambda_0 > 0
	&& \text{(scaling invariance)}
\end{aligned}\]
Scaling invariance leaves the $\dot{H}^\frac{1}{2}(\real^3)$ norm of data unchanged and for this reason equation (\ref{Eqn-NLS}) is deemed {\it $H^{\frac{1}{2}}$-critical}. 
Equation (\ref{Eqn-NLS}) has standing wave solutions. The ansatz, $u(t,x) = e^{i t}W(x)$, leads to the elliptic PDE,
\begin{equation}\label{DefnEqn-W}\left\{\begin{aligned}
&\laplacian W - W + W\abs{W}^2 = 0,\\
&\begin{aligned}W(\abs{x}) > 0 && \text{for } x \in \real^3\end{aligned}.
\end{aligned}\right.
\end{equation}
The unique positive radial solution\footnotemark to equation (\ref{DefnEqn-W}) is the {\it ground-state} solution of equation (\ref{Eqn-NLS}).  We reserve the notation $Q$ for the ground-state solution of the {\bf two-dimensional} problem,
\begin{equation}\label{DefnEqn-Q}
\left\{\begin{aligned}
	&\laplacian_{\real^2} Q -Q + Q\abs{Q}^2 = 0,\\
	&\begin{aligned}Q(\abs{y}) > 0 && \text{for } y \in \real^2\end{aligned}.
\end{aligned}\right.
\end{equation}
\footnotetext{
The classic proof is in \cite{Coffman-Uniqueness3DCubicGroundState-72}. For a more general proof, including other dimensions, see \cite{Weinstein-NLSSharpInterpolation-82}, \cite{BerestyckiLions-83} and \cite{Kwong-Uniqueness-89}. 
For a concise overview of these results, see \cite[Appendix B]{Tao06}. 
}
Recently it was shown, \cite{DHR-Scattering3DCubic-08}, that solutions to equation (\ref{Eqn-NLS}) exist for all time, and scatter, if,
\begin{equation}\label{Eqn-DHR-Result}\begin{aligned}
M[u_0]\,E[u_0] < M[W]\,E[W] \text{ and } \norm{u_0}_{L^2}\norm{\grad u_0}_{L^2} < \norm{W}_{L^2}\norm{\grad W}_{L^2}.
\end{aligned}\end{equation}
Negative energy data $u_0\in H^1$ lead to blow up in finite time if either radially symmetric or with finite variance, $u_0 \in \Sigma = H^1\cap\{f : \abs{x}f(x)\in L^2\}$, \cite{OgawaTsutsumi-NegEnerBlowupForEnergySubcrit-91}. 
By adjusting the quadratic phase of negative-energy data, one can produce examples of blowup solutions with arbitrary energy, \cite[Remark 6.5.9]{Cazenave03}. Further sufficient conditions for blowup based on the virial identity are known, \cite{HolmerPlatteRoudenko-09}.
As a companion to equation (\ref{Eqn-DHR-Result}), \cite{HolmerRoudenko-SharpCondition3DCubicNLS-08} and \cite{HolmerRoudenko-DivInfVar3DCubic-10} show that if,
\[
\begin{aligned}
M[u_0]\,E[u_0] < M[W]\,E[W] \text{ and } \norm{u_0}_{L^2}\norm{\grad u_0}_{L^2} > \norm{W}_{L^2}\norm{\grad W}_{L^2},
\end{aligned}
\]
then the solution either breaks down in finite time or is unbounded in $H^1$ as $t\to\infty$.
More generally, since equation (\ref{Eqn-NLS}) is $H^1$-subcritical, local wellposedness and the scaling symmetry prove that all solutions in $H^1$ that blow up in finite time must obey the {\it scaling lower bound}, 
\[
\norm{u(t)}_{H^1} \geq \frac{C}{\left(T_{max}-t\right)^\frac{1}{4}}.
\]
Alternatively, the scaling lower bound can be established through energy conservation, \cite{CW-CauchyProblemHs-90}.
Numerics suggest self-similar solutions that blowup at this rate may exist \cite{SulemSulem}. Asymptotics also suggest there may be radially symmetric solutions that focus on a sphere, as that sphere collapses into a point \cite{FGW-SingularRingNumerics-2007, HolmerRoudenko-OnBlowup3DCubic-07}. The growth of $H^1$ in that case appears to be $\left(T_{max}-t\right)^{-\frac{1}{3}}$.
Recently it was shown, \cite{MR-critNormRadialL2Super-07}, that radially symmetric solutions in $H^1$ that blow up in finite time must also blowup in the critical norm, according to,
\[
\norm{u(t)}_{\dot{H}^\frac{1}{2}} \geq \abs{\log(T_{max}-t)}^C.
\]
Since equation (\ref{Eqn-NLS}) does not satisfy the pseudo-conformal symmetry, there is no explicit closed-form blowup solution based on $W$. Indeed, the present work constructs solutions of equation (\ref{Eqn-NLS}) with precise blowup rate.
Lastly, for data of finite variance $u_0\in\Sigma$, there is an integral upper bound, \cite{Merle-LimitSolutionsAtBlowup-89}, on the blowup rate, 
\[\begin{aligned}
\int_0^{T_{max}}{\norm{u(t)}_{\dot{H}^1}^\mu\,dt} < +\infty
&& \text{ for }
&& 0 \leq \mu < 1.
\end{aligned}\]

\begin{remark}[Higher Dimensions]\label{Remark-HigherDimEqn}
We will refer to the cubic nonlinear Schr\"odinger equation in dimension $N$,
\begin{equation}\label{Eqn-NLS-Ndim}
\left\{\begin{aligned}
&iu_t+\laplacian u + u\abs{u}^2 = 0\\
&u(0,x) = u_0 : \real^{N} \to {\mathds C}.
\end{aligned}\right.\end{equation}
This equation is $\dot{H}^{\frac{N}{2}-1}$-critical, and is locally wellposed for data $u_0\in H^s$, for any $s > \frac{N}{2}-1$, with the classic blowup alternative, \cite{Cazenave03}. For data $u_0 \in H^{s'}$, for some $s' > \frac{N}{2}$, higher regularity persists and, as with equation (\ref{Eqn-NLS}), the maximal time $T_{max} > 0$ for which $u\in C\left([0,T_{max}),H^s\right)$ is the same for all $s > \frac{N}{2}-1$.
\end{remark}

\begin{remark}[Notation]\label{Remark-Notation}
We use
$f\lesssim g$, $f \gtrsim g$ and $f\approx g$ to denote that there exist constants $C_1, C_2>0$ such that $f \leq C_1 g$, $f \geq C_2 g$ and $C_2 g \leq f \leq C_1 g$, respectively. 
Notation $f \sim g$ is used in more casual discussion to symbolize $f$ and $g$ are of the same order. We will use $\delta(\alpha)$ to denote any function of $\alpha$ with the property $\delta(\alpha) \rightarrow 0$ as $\alpha \rightarrow 0$. The exact form of $\delta$ will depend on the context. Frequently, we use the operator,
\[\begin{aligned}
\Lambda = 1 + y\cdot\grad_y, && \text{ where } y \text{ is a two-dimensional variable.}
\end{aligned}\]
Note that for $f, g\in L^2(\real^2)$ we have, $\left(\Lambda f, g\right) = -\left(f,\Lambda g\right)$.
\end{remark}

\subsection{Statement of Result}

For all $N\geq 3$ we introduce cylindrical coordinates $x = (r,z,\theta) \in [0,\infty)\times\real\times S^{N-2}$ for $x\in\real^N$. We refer to functions that are symmetric with respect to $\theta$ as cylindrically symmetric, and we let $H^s_{cyl}(\real^N)$ denote the cylindrically symmetric subset of $H^s$.

\begin{theorem}[Main Result]\label{Thm-MainResult}
For all $N\geq 3$, there exists a  set of cylindrically symmetric data $u_0 \in {\mathcal P}$,  open in $H^{N}_{cyl}(\real^{N})$ for which the corresponding solution $u(t)$ of (\ref{Eqn-NLS}) has maximum (forward) lifetime $0 < T_{max} < +\infty$ and exhibits the following properties:
\begin{itemize}
\item \underline{\rm Concentration}:

There exist parameters $\lambda(t)>0$, $r(t) > 0$, $z(t) \in \real^{N-2}$ and $\gamma(t) \in \real$, with convergence,
\begin{equation}\label{Thm-MainResult-RZ}\begin{aligned}
\left(r(t),z(t)\right) \longrightarrow \left(r_{max},z_{max}\right)
&& \text{ as } t \rightarrow T_{max}
&& \text{ with } r_{max} \sim 1,
\end{aligned}\end{equation}
such that there is the following strong convergence in $L^2(\real^{N})$,
\begin{equation}\label{Thm-MainResult-L2}\begin{aligned}
u(t,r,z,\theta) - \frac{1}{\lambda(t)}Q\left(\frac{(r,z)-(r(t),z(t))}{\lambda(t)}\right)e^{-i\gamma(t)}
\longrightarrow
u^*(r,z,\theta),
&& \text{ as } t \rightarrow T_{max}.
\end{aligned}\end{equation}

\item \underline{\rm Persistent regularity away from singular ring}:

For any $R > 0$,
\begin{equation}\label{Thm-MainResult-SingOnRing}
u^* \in H^\frac{N-1}{2}\left(\abs{(r,z)-(r_{max},z_{max})}>R\right).
\end{equation}

\item \underline{\rm Log-log blowup rate}:

The solution leaves $H^1$ at the log-log rate,
\begin{equation}\label{Thm-MainResult-LogLog}\begin{aligned}
\frac{
	\left(\frac{\log\abs{\log T_{max}-t}}{T_{max}-t}\right)^\frac{1}{2} 
	}{
\norm{u(t)}_{H^1(\real^{3})} 
}
\longrightarrow \frac{\sqrt{2\pi}}{\norm{Q}_{L^2(\real^2)}}
&& \text{ as } t \rightarrow T_{max}.
\end{aligned}\end{equation} 
Moreover, the higher-order norm behaves appropriately,
\begin{equation}\label{Thm-MainResult-LogLogHigher}\begin{aligned}
\frac{
	\norm{u(t)}_{H^{N}} 
	}{
\norm{u(t)}_{H^1}^{N}\,\log\norm{u(t)}_{H^1}
}
\longrightarrow 0
&& \text{ as } t \rightarrow T_{max}.
\end{aligned}\end{equation}

\end{itemize}
\end{theorem}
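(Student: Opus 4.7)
My plan is to adapt the Merle--Rapha\"el log-log framework for $L^2$-critical 2D NLS to the cubic equation on $\real^N$, exploiting the cylindrical symmetry so that near the singular ring the dynamics reduce to a perturbed planar problem in the $(r,z)$ variable. The perturbation is subcritical because the extra first-order term $\frac{N-2}{r}\partial_r$ in the cylindrical Laplacian has $r(t)$ staying bounded away from $0$ throughout the evolution (this is exactly the content of $r_{max}\sim 1$ in (\ref{Thm-MainResult-RZ})).

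First, I would construct the trapped set $\mathcal P$ of smooth cylindrically symmetric data that are close, in an appropriate weighted $H^1$ norm, to a renormalized two-dimensional bubble $\frac{1}{\lambda_0}\Qb\!\left(\frac{(r,z)-(r_0,z_0)}{\lambda_0}\right)e^{-i\gamma_0}$, with $r_0\sim 1$, $0<b_0\ll 1$, $\lambda_0\ll b_0$ and an initial $H^N$ bound consistent with (\ref{Thm-MainResult-LogLogHigher}). Here $\Qb$ is the modified Merle--Rapha\"el profile associated with $Q$ from (\ref{DefnEqn-Q}), localized on $|y|\lesssim 1/b$, so that the ansatz $\Qb$ solves the 2D critical elliptic problem modulo an exponentially small remainder. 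A geometric decomposition $u(t,x)=\frac{1}{\lambda(t)}\left(\Qb+\epsilon\right)\!\left(t,\frac{(r,z)-(r(t),z(t))}{\lambda(t)}\right)e^{-i\gamma(t)}$ is then imposed, using five modulation parameters $(\lambda,b,r,z,\gamma)$ pinned down by five standard orthogonality conditions on $\epsilon$ chosen to kill the null directions of the linearized operator at $Q$.

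Second, I would derive the modulation ODEs. The contribution of the cylindrical correction $\frac{N-2}{r}\partial_r$ to the $\epsilon$--equation is of size $\lambda/r\sim\lambda$ after rescaling, hence negligible compared to the leading two-dimensional terms. The usual Merle--Rapha\"el laws $-\lambda\lambda_t\sim b$ and $b_t\sim -e^{-\pi/b}$ therefore survive, and an appropriate Lyapunov/virial-Morawetz functional for $\epsilon$ (of the form used in the 2D log-log proof, perturbed by error terms of size $\lambda^2$) provides the monotonicity needed to bootstrap $\epsilon$ small, $b(t)\searrow 0$, $\lambda(t)\searrow 0$, and $(r(t),z(t))\to(r_{max},z_{max})$ for some $r_{max}\sim 1$. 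Integrating $b_t\sim -e^{-\pi/b}$ against $-\lambda\lambda_t\sim b$ yields the precise rate (\ref{Thm-MainResult-LogLog}); strong $L^2$ convergence in (\ref{Thm-MainResult-L2}) follows from mass conservation, the smallness of $\lambda$, and a standard propagation of compactness argument off the concentration core.

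Third, I would upgrade the regularity of the residual via a propagation-of-regularity bootstrap. Using that the $\dot H^{1/2}$ critical norm grows only polylogarithmically (as recalled from \cite{MR-critNormRadialL2Super-07}) and a Duhamel estimate in Strichartz norms for cylindrically symmetric data, one obtains $\|u(t)\|_{H^N}\lesssim \lambda(t)^{-N}(\log 1/\lambda(t))^C$, which yields (\ref{Thm-MainResult-LogLogHigher}). Combining this with a localized smoothing estimate for NLS and a smooth cutoff supported on $|(r,z)-(r_{max},z_{max})|\geq R$ gives a uniform $H^{(N-1)/2}$ bound away from the ring, hence the strong $L^2$ limit $u^*$ inherits the regularity (\ref{Thm-MainResult-SingOnRing}). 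Openness of $\mathcal P$ in $H^N_{cyl}$ is then automatic since all smallness conditions defining $\mathcal P$ are open.

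The main obstacle I anticipate is controlling the drift of the center: the $r$-coordinate is not a symmetry direction, so the modulation equation for $\dot r$ acquires a new forcing term from the cylindrical Laplacian that must be shown to be integrable in time and small compared to $\lambda$, so that $r(t)$ converges to a finite positive limit. This requires quantitative coercivity of the linearized operator restricted to the additional orthogonality direction and a careful accounting of how the $\frac{N-2}{r}\partial_r$ contribution enters the virial estimate; everything else is a perturbation of the established $L^2$-critical log-log theory.
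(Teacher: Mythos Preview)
Your modulation framework and the treatment of the cylindrical term $\frac{N-2}{r}\partial_r$ as a lower-order perturbation of size $\lambda$ are correct and match the paper. The drift of $r(t)$, which you flag as the main obstacle, is in fact routine: from the preliminary estimate $|r_s/\lambda|+|z_s/\lambda|\lesssim 1$ and the integrability of $\lambda$ one gets convergence immediately (see the paper's proof of {\bf I1.1}). The real difficulty lies elsewhere, and your third paragraph does not address it.

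There are two coupled gaps. First, the log-log virial and Lyapounov estimates both go through the conservation of energy, which produces a genuinely $N$-dimensional term $\int|\epsilon|^4\mu\,dy$. Controlling it by $\int|\nabla_y\epsilon|^2\mu\,dy$ requires the three-dimensional Sobolev embedding and hence a bound on $\|\chi u\|_{H^{(N-2)/2}}$ \emph{away} from the ring. This is not supplied by the two-dimensional log-log machinery; it must be fed in from the regularity argument. So the two halves of your proof are not independent---they must close simultaneously in a joint bootstrap.

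Second, and more seriously, your proposed regularity mechanism does not work. The result you cite from \cite{MR-critNormRadialL2Super-07} is for \emph{radially} symmetric solutions and does not apply here. More to the point, a Duhamel/Strichartz approach to the localized norms fails: the paper explains in its strategy section that the time-space numerology cancels exactly, so no gain is produced, and the nonlocal commutator from $D^\nu(\chi u|u|^2)$ cannot be estimated because $H^1(\real^2)\not\hookrightarrow L^\infty$. What the paper does instead is (i) split $u$ into the rescaled profile and the residual $\tilde u$ and exploit that $\|\tilde u\|_{H^1}$ is \emph{better} than $1/\lambda$ by a factor $e^{-c/b}$; (ii) prove a refined time-integration lemma $\int_0^t e^{-\sigma^*/b}\lambda^{-\mu}\,d\tau\lesssim e^{(\sigma_2-\sigma^*)/b}\lambda^{-(\mu-2)}$ that tracks the $e^{c/b}$ corrections precisely; and (iii) use a modified Brezis--Gallou\"et inequality (Lemma~\ref{Lemma-NdimEndpointSobolev}) to bound $\|\varphi u\|_{L^\infty}$ by $e^{\sigma_5/b}(\|\tilde u\|_{H^1}+\Gamma_b^{1/2}/\lambda)$ with $\sigma_5$ arbitrarily small, which is just enough to close a Gronwall on $\|\psi u\|_{H^{1/2}}$ and then iterate half-derivative smoothing through nested cutoffs down to $H^1$. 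Your proposal needs to replace the Strichartz step with this chain of estimates.
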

\begin{remark}[Proof for $N>3$]
To simplify the exposition, we only present proof in the case $N=3$.
The adaptations for higher dimensions are indicated by Remarks \ref{Remark-HigherDimEqn}, \ref{Remark-HigherDim-Hypo}, \ref{Remark-HigherDim-StrategyUseHCrit} and \ref{Remark-HigherDim-NthEnergy}.
\end{remark}

\begin{remark}[Nature of $u^*$]
\label{Remark-ProfileH1Away}
For the $L^2$-critical problem it is known that the residual profile $u^*$ is not in $H^1$, \cite{MR-ProfilesQuantization-05}.  Indeed, equation (\ref{Thm-MainResult-SingOnRing}) fails for $R=0$. See Remark \ref{Remark-ProfileNonH1} for further comment.
\end{remark}

\subsection{Brief Heuristic}
In cylindrical coordinates we write the Laplacian,
\begin{equation}\label{Eqn-Laplacian}
\laplacian_x = \partial_r^2 + \partial_z^2 + \frac{\partial_r}{r}.
\end{equation}
Suppose that a solution to equation (\ref{Eqn-NLS}) is cylindrically symmetric and concentrated near the ring $(r,z) \sim (r_0,z_0)$.  Then for an appropriately small $\lambda_0 > 0$ we may write,
\begin{equation}\label{Eqn-BriefHeuristic1}
u(t,x) = \frac{1}{\lambda_0}v\left(\frac{t}{\lambda_0^2},\frac{(r,z)-(r_0,z_0)}{\lambda_0}\right),
\end{equation}
where the function $v$ is supported on the half-plane $(r,z)\in \left\lbrack -\frac{r_0}{\lambda_0},\infty\right) \times \real$. 
Neglect that our parameters may vary in time. After changing coordinates, $v$ satisfies,
\begin{equation}\label{Eqn-BriefHeuristic2}
\begin{aligned}
i\partial_{s}v + \laplacian_y v
	+ \frac{\lambda_0}{r}\partial_{y_1} v
	+ v\abs{v}^2 =0
&&\text{ where }s=\frac{t}{\lambda_0^2}, 
&& y = \frac{(r,z)-(r_0,z_0)}{\lambda_0}.
\end{aligned}
\end{equation}
For a solution $u(t,x)$ tightly concentrated near $(r_0,z_0)$, we might choose $\lambda_0 \ll 1$ as the width of the window of concentration.  Then, $\frac{\lambda_0}{r}\partial_{y_1}v$ can be taken as a lower order correction, and the evolution of $v$ is essentially that of two-dimensional cubic NLS.  If $v(s,y)$ falls within the robust log-log blowup dynamic, we would expect the concentration near $(r_0,z_0)$ to increase, and for the lower order correction in equation (\ref{Eqn-BriefHeuristic2}) to become less relevant.

We may identify our main challenge: to ensure persistence of sufficient decay in the original variables near $r=0$ such that conditions there mimic those at infinity during a log-log blowup of two-dimensional cubic NLS.

\subsection{Similar Results}
\begin{theorem}[Standing Ring Blowups for Quintic NLS in 2D \cite{R06}]\label{Thm-Raph}
There exists a set of radially symmetric data $u_0$, open in $H^{1}_{rad}(\real^2)$, for which the corresponding solution to, 
\begin{equation}\label{Eqn-QuinticNLS2D}
\left\{\begin{aligned}
&iu_t+\laplacian u + u\abs{u}^4 = 0,\\
&u_0 \in H^1(\real^2),\end{aligned}\right.
\end{equation}
has maximum lifetime $T_{max}<+\infty$ and exhibits log-log blowup on a ring of fixed radius; that is, there holds appropriate equivalents of (\ref{Thm-MainResult-RZ}), (\ref{Thm-MainResult-L2}), (\ref{Thm-MainResult-SingOnRing}) and (\ref{Thm-MainResult-LogLog}).
\end{theorem}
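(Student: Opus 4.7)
The plan is to exploit the observation that radial two-dimensional quintic NLS, once concentrated in a thin window around a ring of radius $r_0 \sim 1$, reduces at leading order to the one-dimensional quintic NLS, which is exactly $L^2$-critical. The Merle-Raphael log-log theory then supplies a candidate blowup profile $Q_b$ built from the one-dimensional ground state $Q$, and the whole proof consists of propagating this one-dimensional $L^2$-critical dynamic in the presence of a curvature correction $\frac{1}{r}\partial_r$.

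First I would rewrite the equation in window coordinates: with parameters $\lambda(t) > 0$, $r(t) > 0$, $\gamma(t)\in\real$ to be determined, set $y = (r-r(t))/\lambda(t)$ and $s = \int_0^t \lambda^{-2}(\tau)\,d\tau$, and let $v(s,y) = \lambda^{1/2} u(t, r(t)+\lambda y) e^{i\gamma(t)}$. On the half-line $y > -r(t)/\lambda$ this gives
\begin{equation*}
iv_s + v_{yy} + \frac{\lambda}{r(t)+\lambda y}\,v_y + v\abs{v}^4 = \text{(modulation terms)},
\end{equation*}
and as $\lambda\to 0$ with $r(t)\sim 1$ the first-order coefficient becomes negligible, leaving one-dimensional $L^2$-critical quintic NLS on $\real$. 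I would prescribe initial data of the form $u_0(r) = \lambda_0^{-1/2} Q_{b_0}((r-r_0)/\lambda_0) e^{-i\gamma_0}$ suitably cut off away from $r_0 \sim 1$, with $0 < \lambda_0 \ll b_0 \ll 1$, and then run a modulation-plus-bootstrap scheme: decompose $u(t,r) = \lambda^{-1/2}[Q_b + \epsilon]((r-r(t))/\lambda)e^{-i\gamma}$, fix the four parameters $(\lambda, b, r, \gamma)$ by orthogonality against the standard directions $\Lambda Q$, $y\Lambda Q$, $\partial_y Q$, $Q$, and control $\epsilon$ in the Merle-Raphael weighted norm using their localized virial/Lyapunov functional, modified to absorb the perturbation $\frac{\lambda}{r+\lambda y}\partial_y$. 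Once $\epsilon$ is controlled, the log-log law $\lambda(t)\sim\sqrt{(T_{max}-t)/\log\abs{\log(T_{max}-t)}}$, strong $L^2$ convergence on the ring, and persistence of $H^{1/2}$ regularity of the residual profile $u^*$ outside any neighborhood of the limiting ring are all standard consequences of this machinery.

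The main obstacle is showing that the ring radius $r(t)$ stays bounded away from $0$ and $\infty$ throughout the blowup window; this is precisely what keeps the reduction to one-dimensional quintic NLS self-consistent. The modulation equation for $r(t)$ is driven by the projection of the curvature correction $\frac{\lambda}{r+\lambda y}\partial_y Q_b$ against an antisymmetric direction, and the key computation is that this projection is integrable in $s$ --- equivalently of total displacement $o(1)$ in $t$ --- once $\lambda$ decays at the log-log rate. With $r(0) = r_0 \sim 1$, this closes the bootstrap and yields $r(t)\to r_{max}\sim r_0$. Persistence of regularity of $u^*$ away from the limiting ring then follows from the standard profile-decomposition and local-smoothing arguments of Merle-Raphael applied to the $L^2$-residual, completing the proof.
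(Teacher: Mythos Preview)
This theorem is not proved in the present paper; it is quoted from \cite{R06} as prior work in the ``Similar Results'' subsection, so there is no proof here to compare against. Your sketch is, however, an accurate outline of the strategy of \cite{R06}: reduction of radial two-dimensional quintic NLS near a ring to the one-dimensional $L^2$-critical problem, modulation about the Merle--Rapha\"el profiles $\widetilde{Q}_b$, control of $\epsilon$ via the local virial identity and Lyapounov functional, and integrability of $\frac{r_s}{\lambda}$ to freeze the ring. This is precisely the template that the present paper adapts to cubic NLS in $\real^N$, $N\geq 3$.

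One point worth flagging: you write that persistence of regularity of $u^*$ away from the ring ``follows from the standard profile-decomposition and local-smoothing arguments.'' In \cite{R06} this step relies essentially on the Strauss radial embedding to control $\norm{u}_{L^\infty}$ away from the origin by $\norm{u}_{H^1}$, which is what makes the localized $H^\nu$ Gronwall argument close (see the discussion around equation (\ref{Strategy2-KatoStrich}) and the paper's remarks following Theorem~\ref{Thm-HolmerRoudenko}). That embedding is genuinely used and is not available in the cylindrical settings of Theorems~\ref{Thm-MainResult} and~\ref{Thm-HolmerRoudenko}; replacing it is the main technical contribution of the present paper. So your sketch is correct for Theorem~\ref{Thm-Raph}, but the phrase ``standard'' undersells a step that becomes the central obstruction once radial symmetry is dropped.
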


The following result and Theorem \ref{Thm-MainResult} were developed simultaneously.\footnotemark
\footnotetext{After Theorem \ref{Thm-Raph}, the idea to consider other $H^\frac{1}{2}$-critical problems was first suggested to the author's thesis advisor by Justin Holmer and Svetlana Roudenko in private conversation.  }
\begin{theorem}[Standing Ring Blowups for Cubic NLS in 3D \cite{HolmerRoudenko-3DCubicCircle-10}]\label{Thm-HolmerRoudenko}
There exists a set of cylindrically symmetric data $u_0$, open in $H^1_{cyl}(\real^3)$, for which the corresponding solution to (\ref{Eqn-NLS}) has maximum lifetime $T_{max} < +\infty$ and exhibits log-log blowup on a ring of fixed radius. In particular, (\ref{Thm-MainResult-RZ}) and (\ref{Thm-MainResult-L2}) hold, as does (\ref{Thm-MainResult-SingOnRing}) at the level of $H^\frac{1}{2}$ regularity.

\end{theorem}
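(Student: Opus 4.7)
The plan is to reduce to the two-dimensional $L^2$-critical cubic NLS via the cylindrical-coordinate heuristic outlined above, and then import the Merle-Raphaël log-log blowup machinery. Concretely, a solution concentrating on the circle $\{(r,z) = (r_{\max},z_{\max})\}$ with small window-scale $\lambda$ should, under the change of variables $y = ((r,z)-(r(t),z(t)))/\lambda(t)$, look like a modulated $Q_b$ approximate self-similar profile of 2D cubic NLS, perturbed by the lower-order drift $(\lambda/r)\partial_{y_1}$ from the Laplacian in cylindrical coordinates. The set $\mathcal{P} \subset H^1_{cyl}(\real^3)$ would consist of data of the form
\[
u_0(r,z,\theta) = \frac{1}{\lambda_0}\bigl(Q_{b_0} + \widetilde{\epsilon}_0\bigr)\!\left(\frac{(r,z)-(r_0,z_0)}{\lambda_0}\right) e^{-i\gamma_0},
\]
with $r_0\sim 1$, $\lambda_0$ small enough that $\lambda_0/r_0 \ll 1$, $b_0$ in the log-log window, and $\widetilde{\epsilon}_0$ sufficiently small in a suitable weighted norm; openness in $H^1_{cyl}$ then follows from the continuity of the decomposition.

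The next step is a modulation theory adapted from the 2D case. Introduce time-dependent parameters $\lambda(t)>0$, $b(t)$, $\gamma(t)$, $(r(t),z(t))$ and an error $\epsilon$, and decompose
\[
u(t,r,z,\theta) = \frac{1}{\lambda(t)}\bigl(Q_{b(t)} + \epsilon\bigr)\!\left(s,\frac{(r,z)-(r(t),z(t))}{\lambda(t)}\right) e^{-i\gamma(t)},
\qquad \frac{ds}{dt} = \frac{1}{\lambda(t)^2}.
\]
Impose the standard Merle-Raphaël orthogonality conditions on $\epsilon$ to uniquely determine the parameters. The point $(r(t),z(t))$ here plays the role of a free two-dimensional translation parameter, so there are four modulation equations (for $\lambda,b,\gamma$, and for the two components of $(r,z)$), plus the usual equation for $b$. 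The resulting system is the 2D critical NLS modulation ODE system perturbed by two sources: the cylindrical correction $(\lambda/r)\partial_{y_1}$, and the coupling induced by the $\theta$-symmetric Jacobian $r\,dr\,dz\,d\theta$ when computing inner products.

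The main analytic work is a bootstrap argument showing that data in $\mathcal{P}$ remain in the log-log regime until $T_{\max}$. The centerpiece is the localized virial / Lyapunov functional of Merle-Raphaël yielding monotonicity modulo error of a quantity like $\epsNorm$. Two new errors must be absorbed: the cylindrical drift, which is pointwise of size $\lambda/r$ times a Schwartz function of $y$ and hence integrable against the virial once one knows $\lambda(t)/r(t) \to 0$; and the $\theta$-Jacobian, handled by Taylor-expanding $r = r(t) + \lambda y_1$ and treating $\lambda y_1/r(t)$ as a perturbation on the support of $\epsilon$. From the resulting modulation bounds one extracts $\lambda(t)^2 \sim (T_{\max}-t)/\log|\log(T_{\max}-t)|$, the $L^2$-concentration statement, and the log-log rate for $\norm{u(t)}_{H^1(\real^3)}$, where the prefactor $\sqrt{2\pi}/\norm{Q}_{L^2(\real^2)}$ comes from the 3D-to-2D change of variables with Jacobian $r\,dr\,dz\,d\theta$ integrated at $r \sim r_{\max}$. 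The $H^{1/2}$ regularity of $u^*$ away from the ring is obtained by a standard local smoothing/energy argument on an annular region bounded away from the singular ring, using that the nonlinearity is $H^{1/2}$-critical and that the mass and $H^1$-mass concentrate on the ring.

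The main obstacle is controlling the motion of $(r(t),z(t))$ to prevent the ring from collapsing, i.e.\ to ensure $r(t) \to r_{\max} \sim 1$ rather than $r(t) \to 0$. The modulation equations give $|\dot{r}(t)|,|\dot{z}(t)| \lesssim \lambda(t) / r(t)$ up to contributions from $\epsilon$; with $\lambda(t)^2 \sim (T_{\max}-t)/\log|\log(T_{\max}-t)|$, the right-hand side is integrable in $t$, so $r(t)$ moves by only $O(\int \lambda\,dt) = o(1)$ from $r_0$. Choosing $r_0 \sim 1$ in $\mathcal{P}$ then keeps $r(t)$ bounded away from zero and closes the bootstrap, giving $r_{\max} \sim 1$ as claimed.
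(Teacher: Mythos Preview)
This theorem is not proved in the present paper; it is a cited result from \cite{HolmerRoudenko-3DCubicCircle-10}, stated here only as a companion to the paper's own main theorem. There is therefore no proof in the paper to compare against. What the paper does say about the Holmer--Roudenko argument is that its main technical point is an ``elegant microlocal estimate to smooth the nonlocal part'' arising when one computes $\frac{d}{dt}\norm{D^\nu(\chi u)}_{L^2}^2$ for a fractional $\nu$; this is precisely what allows them to close the bootstrap in $H^1_{cyl}$ and obtain $H^{1/2}$ regularity of $u^*$ away from the ring.

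Your outline is broadly correct in spirit (geometric decomposition, Merle--Rapha\"el virial/Lyapunov machinery, bootstrap, treating $(\lambda/r)\partial_{y_1}$ and the Jacobian $r\,dr\,dz$ as perturbations), and indeed this is the skeleton of both \cite{HolmerRoudenko-3DCubicCircle-10} and the present paper's proof of Theorem~\ref{Thm-MainResult}. However, you gloss over the genuinely hard step: the $H^{1/2}$ control of $u$ away from the ring is \emph{not} a ``standard local smoothing/energy argument.'' The obstruction, as the paper emphasizes after Theorem~\ref{Thm-HolmerRoudenko}, is the lack of Leibniz rule for $D^{1/2}$, which produces a nonlocal commutator term; handling it in $H^1$ without radial symmetry (so no Strauss embedding into $L^\infty$) is exactly the contribution of \cite{HolmerRoudenko-3DCubicCircle-10}. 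Your sketch does not indicate how you would close this. Separately, your modulation estimate on the ring position is stated incorrectly: the actual bound from the orthogonality conditions is $\abs{(r_s,z_s)/\lambda}\lesssim 1$, i.e.\ $\abs{(\dot r,\dot z)}\lesssim 1/\lambda$ in $t$-time, not $\lesssim \lambda/r$. The total displacement is then controlled by $\int_0^{T_{\max}} \lambda^{-1}\,dt$, which is finite only because of the log-log rate; your claim that it is $O(\int\lambda\,dt)$ is too optimistic and would not survive a careful reading.
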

The methods of \cite{R06} do not extend to prove either Theorem \ref{Thm-MainResult} or Theorem \ref{Thm-HolmerRoudenko}.  At issue is the initial localized gain of regularity, \cite[equation (4.137)]{R06}. Calculating $\frac{d}{dt}\norm{D^\nu\left(\chi u\right)}_{L^2}^2$ results in a nonlocal term, due to the lack of Leibniz rule - see (\ref{Strategy2-KatoStrich}). In this place \cite{R06} relies upon the Strauss radial embedding, and \cite{HolmerRoudenko-3DCubicCircle-10} use an elegant microlocal estimate to smooth the nonlocal part. We will avoid such problems through the use of higher regularity. 
Note that Theorem \ref{Thm-HolmerRoudenko} describes a larger class of data, with lower regularity, than does Theorem \ref{Thm-MainResult} in the case $N=3$.

\begin{theorem}[Standing Ring Blowups for Quintic NLS \cite{RaphaelSzeftel-StandingRingNDimQuintic-08}]\label{Thm-RaphSzef}
For all $N\geq 3$, there exists a set of radially-symmetric data $u_0$, open in $H^{N}_{rad}(\real^{N})$, for which the corresponding solution to:
\begin{equation}\label{Eqn-QuinticNLS}
\left\{\begin{aligned}
&iu_t + \laplacian u + u\abs{u}^4 = 0,\\
&u_0 \in H^{N}(\real^{N}),\end{aligned}\right.
\end{equation}
has maximum lifetime $T_{max}<+\infty$ and exhibits log-log blowup on a fixed ring $r\sim 1$; that is, there holds appropriate equivalents of (\ref{Thm-MainResult-L2}), (\ref{Thm-MainResult-RZ}), (\ref{Thm-MainResult-LogLog}) and (\ref{Thm-MainResult-SingOnRing}).
\end{theorem}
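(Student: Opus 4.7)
The heuristic parallels the one given for Theorem \ref{Thm-MainResult}: since the quintic nonlinearity is $L^2$-critical in dimension one, a radial solution of (\ref{Eqn-QuinticNLS}) tightly concentrated on a sphere $\{|x|=r(t)\}$ with $r(t)\sim 1$ should, after rescaling by its width $\lambda \ll 1$, obey to leading order the one-dimensional $L^2$-critical quintic NLS, for which the log-log blowup theory of Merle and Raphael is available. The plan is to transplant that theory into the geometry of (\ref{Eqn-QuinticNLS}) by absorbing the radial volume element and treating the curvature contribution $\frac{N-1}{r}\partial_r$ as a lower-order perturbation.

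The first step is the radial substitution $u(t,r) = r^{-(N-1)/2}w(t,r)$, which eliminates the first-order radial term at the cost of a smooth potential of order $r^{-2}$ (bounded on $r\gtrsim 1$) and a factor of $r^{-2(N-1)}$ in front of the nonlinearity that is harmless on $r\sim 1$. On the support of $u$ the equation for $w$ is then one-dimensional quintic NLS with smooth bounded perturbations. Next I would make the ansatz
\begin{equation*}
w(t,r) = \frac{1}{\lambda(t)^{1/2}}\bigl(\Qb + \epsilon\bigr)\!\left(s,\tfrac{r-r(t)}{\lambda(t)}\right)e^{-i\gamma(t)},
\end{equation*}
with $\Qb$ the Raphael refined profile built from the one-dimensional ground state $Q'' - Q + Q^{5} = 0$, and impose the standard orthogonality conditions on $\epsilon$ to extract modulation ODEs for $(\lambda,r,\gamma,b)$.

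The remainder of the argument follows the scheme of Theorem \ref{Thm-MainResult}: (i) select an open tube $\mathcal{P}\subset H^N_{\rm rad}(\real^{N})$ of initial data with $b_0>0$, $\lambda_0 \ll 1$, and localized negative energy; (ii) establish monotonicity of a virial-Lyapunov functional $\mathcal{F}(\epsilon)$, yielding $b$-smallness control of a weighted $H^1$ norm of $\epsilon$; (iii) propagate an almost-conservation law for a suitably modified energy, closing the bootstrap on $(\lambda,r,\gamma,b)$; (iv) integrate the $(\lambda,b)$ system to obtain $\lambda(t)\approx\bigl((T_{\max}-t)/\log|\log(T_{\max}-t)|\bigr)^{1/2}$, and use smallness of $|\dot r|$ relative to $\lambda$ to conclude $r(t)\to r_{\max}\sim 1$; (v) establish persistence of $H^{N-1/2}$-regularity outside any tubular neighborhood of the singular sphere by differentiating $\norm{D^{N}(\chi u)}_{L^2}^2$ against a cutoff $\chi$ vanishing near $r_{\max}$ and using radial Sobolev embeddings to tame the commutators.

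The main obstacle, signaled in the author's comparison with \cite{HolmerRoudenko-3DCubicCircle-10}, is step (v): for fractional derivatives the failure of the Leibniz rule produces nonlocal commutators that couple the exterior region to the concentration zone. The workaround, matching the strategy of the present paper, is to work at the integer level $H^{N}$, where the cutoff estimate reduces to products of pointwise radial $L^\infty$ bounds (via the Strauss embedding) with $L^2$ quantities controlled by the modulation bootstrap; the delicate point is to verify that the smallness gained from $\lambda\to 0$ uniformly dominates the curvature correction $\frac{N-1}{r}\partial_r$ throughout $[0,T_{\max})$, so that the exterior $H^{N}$-norm remains bounded while the interior drives the log-log blowup.
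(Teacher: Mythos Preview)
This theorem is not proved in the present paper: it is a cited result from \cite{RaphaelSzeftel-StandingRingNDimQuintic-08}, stated in the ``Similar Results'' subsection purely for comparison with Theorem~\ref{Thm-MainResult}. There is no proof in this paper to compare your proposal against.

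That said, your sketch is broadly faithful to the strategy of the cited work. The radial substitution $u = r^{-(N-1)/2}w$ reducing to a perturbed one-dimensional quintic problem, the modulation around the one-dimensional ground state and its refined profiles $\Qb$, the log-log machinery of Merle--Rapha\"el, and crucially the use of the Strauss radial embedding to control $L^\infty$ and close the exterior $H^N$ bootstrap --- these are indeed the ingredients of \cite{RaphaelSzeftel-StandingRingNDimQuintic-08}. The present paper's discussion after Theorem~\ref{Thm-RaphSzef} emphasizes precisely that the Strauss embedding is what makes the radial quintic case tractable, and that its unavailability in the cylindrical cubic setting is the main reason new ideas (the Brezis--Gallou\"et variant, the refined $\lambda$-integration, the singular/residual splitting) are required here. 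One minor correction: the initial data in $\mathcal{P}$ need not have negative energy; the relevant smallness conditions are on the modulation parameters and the normalized energy/momentum, as in Definition~\ref{Defn-DataP}.
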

It is essential to the proof of both Theorem \ref{Thm-MainResult} and Theorem \ref{Thm-RaphSzef} that the behaviour of a higher-order norm can be controlled in terms of the (understood) behaviour of the $H^1$ norm. Again, the fundamental obstruction is $H^1 \not\hookrightarrow L^\infty$, which manifests in two ways:
\begin{enumerate}
\item Inability to bootstrap a global $H^3$ control.  Without radial symmetry, one cannot make the estimate, $\frac{d}{dt}\norm{u}_{H^3}^2 \lesssim \norm{u}_{H^3}^{2-\delta}\norm{u}_{H^1}^{3\delta+2}$ - see the third term of \cite[equation (44)]{RaphaelSzeftel-StandingRingNDimQuintic-08}. 

\item Inability to achieve an initial localized gain of regularity.
As with the arguments of \cite{R06},  an analogue of \cite[equation (63)]{RaphaelSzeftel-StandingRingNDimQuintic-08} cannot be established. To bypass this issue, we require tighter control of higher regularity than is achieved in \cite{RaphaelSzeftel-StandingRingNDimQuintic-08}, and a Brezis-Gallou\"et type argument, Lemma \ref{Lemma-NdimEndpointSobolev}. 
\end{enumerate}
We apply three new strategies:
\begin{enumerate}
\item Consider the singular and residual portions of the solution separately during the most difficult calculations. 

This is recognition that the higher order norms of the central profile of the solution scale exactly with the global $H^1$ norm.  While the worst hypothesized $H^3$ behaviour may only be attributed to the residual portion of the solution, for the same portion we possess superior $H^1$ control. We arrange to only evaluate the $H^3$ norm of the residual when there is also a factor of the $H^1$ norm of the same.  
By making a delicate hypothesis of the global $H^3$ behaviour, only slightly worse than scaling, we can arrange for product of the $H^1$ and $H^3$ norms of the residual to be slightly better than scaling.

\item Integrate the modulation parameters with more accuracy to allow a more refined hypothesis of the global $H^3$ behaviour.  See Lemma \ref{Lemma-lambdaIntegral}.

\item Apply a modified Brezis-Gallou\"et argument to the residual portion of the solution to estimate $\norm{u}_{L^\infty}$ much closer to scaling than any two-dimensional Sobolev embedding. This allows us to complete a Gronwall argument and prove a localized gain of regularity outside the support of some bootstrap hypotheses.
\end{enumerate}

\begin{remark}[Theorem \ref{Thm-MainResult} for other nonlinearities]
Other $H^\frac{1}{2}$-critical equations, such as $\textrm{NLS}^{1+\frac{4}{3}}(\real^4)$ and $\textrm{NLS}^{2}(\real^5)$ suffer from non-smooth nonlinearities.  The author is unaware of local wellposedness in any space more regular than $H^1$ in these cases.  Non-smooth nonlinearities also prohibit iteration of interior regularity arguments, so that in these cases Lemma \ref{Lemma-UnprovenProperty} below is only true for $s=3$.   
\end{remark}

\subsection{Acknowledgements}
\label{Subsec-Ack}
The author would like to thank Pierre Rapha\"el and J\'er\'emie Szeftel for correspondence and for sharing the draft of their work, \cite{RaphaelSzeftel-StandingRingNDimQuintic-08}, and Justin Holmer and Svetlana Roudenko for conversations. The author is also indebted to his advisor, Jim Colliander, for his attention and guidance. This work is part of the author's Ph.D. thesis.

\section{Setting of the Bootstrap}
\label{Section-SetBootstrap}
In this chapter we identify data concentrated near the ring $(r,z)\sim (1,0)$, according to properties we will later show persist. Our subsequent arguments are based on the two-dimensional $L^2$-critical log-log blowup dynamic, which has been comprehensively investigated by Merle \& Rapha\"el, \cite{MR-BlowupDynamic-05, MR-SharpUpperL2Critical-03, MR-UniversalityBlowupL2Critical-04, R-StabilityOfLogLog-05, MR-ProfilesQuantization-05, MR-SharpLowerL2Critical-06}.  This work stems from those detailed studies.

\begin{definition}[Fundamental Properties of Almost Self-similar Profiles]\label{Defn-FundSelfSimProps}
For all $b> 0$ sufficiently small, there exists a solution $\widetilde{Q}_b \in H^1(\real^2)$ of,
\[
\laplacian \widetilde{Q}_b -\widetilde{Q}_b + ib\Lambda\widetilde{Q}_b + \widetilde{Q}_b\abs{\widetilde{Q}_b}^2 = -\Psi_b,
\]
that is supported on the ball of radius $\frac{2}{\abs{b}}$ and converges to $Q$ in $C^3(\real^2)$ as $b\rightarrow 0$. Profiles $\widetilde{Q}_b$ have mass the order of $b^2$ larger than $Q$, and energy of the order $e^{-\frac{C}{b}}$. The truncation error $\Psi_b$ acts as the source of the linear radiation,
\[
\laplacian \zeta_b - \zeta_b + ib\Lambda\zeta_b = \Psi_b.
\]
Radiation $\zeta_b$ is not in $L^2$, with the precise decay rate $\Gamma_b = \lim_{\abs{y} \to +\infty}\abs{y}\abs{\zeta_b}^2$. It is known $\Gamma_b \sim e^{-\frac{\pi}{b}}$, and it is this decay property linked to the central profile $\widetilde{Q}_b$ that is 
responsible for the log-log rate of the two-dimensional $L^2$-critical problem. For our analysis, we will truncate $\zeta_b$ near $\abs{y} \sim e^{+\frac{a}{b}}$ for a small fixed parameter $a$. See Section \ref{Subsec-SelfSim} for details.
\end{definition}
\begin{lemma}[Smoothness of $\widetilde{Q}_b$]
\label{Lemma-UnprovenProperty}
The almost self-similar profiles $\widetilde{Q}_b$ are smooth. For any $s\geq 3$,
\begin{equation}\label{Eqn-QbDerivProperty}\begin{aligned}
\limsup_{b\to 0} \norm{\widetilde{Q}_b}_{C^{s}(\real^2)} < +\infty
&& \text{ and }
&& \limsup_{b\to 0}\norm{\widetilde{Q}_b}_{H^{s}(\real^2)} < +\infty.
\end{aligned}\end{equation}
\end{lemma}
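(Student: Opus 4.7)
The strategy is an elliptic bootstrap on the defining equation of $\widetilde{Q}_b$, anchored by the $C^3$ convergence $\widetilde{Q}_b\to Q$ already built into Definition \ref{Defn-FundSelfSimProps}. The key structural point is that $\Delta - 1$ is an elliptic operator on $\real^2$ with exponentially decaying Green's function, hence an isomorphism $H^s(\real^2)\to H^{s+2}(\real^2)$ and a local regulariser $C^{k,\alpha}\to C^{k+2,\alpha}$.

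First I would rewrite the equation as
\begin{equation*}
(\Delta - 1)\widetilde{Q}_b = -ib\Lambda\widetilde{Q}_b - \widetilde{Q}_b|\widetilde{Q}_b|^2 - \Psi_b,
\end{equation*}
and recall from Section \ref{Subsec-SelfSim} that $\Psi_b$ is obtained by applying a smooth cut-off to a smooth exact profile and so is itself uniformly smooth. Then I would induct on $k\geq 3$: assuming $\|\widetilde{Q}_b\|_{C^k(\real^2)}$ is uniformly bounded in $b$, the cubic term lies in $C^k$ by Leibniz, and the term $b\Lambda\widetilde{Q}_b = b\widetilde{Q}_b + (by)\cdot\grad\widetilde{Q}_b$ is uniformly controlled in $C^{k-1}$ because on the support $|y|\leq 2/|b|$ the coefficient $|by|$ does not exceed $2$. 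Interior Schauder estimates for $\Delta - 1$ then yield a uniform $C^{k+1}$ bound, closing the induction and giving the first half of \eqref{Eqn-QbDerivProperty}.

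For the $H^s$ statement I would invoke the uniform exponential decay $|\widetilde{Q}_b(y)|\lesssim e^{-|y|/2}$, with analogous decay for derivatives produced by differentiating the equation. This decay is standard in the Merle–Rapha\"el construction and follows from $C^3$-closeness to $Q$ together with an Agmon-type argument applied to $(\Delta - 1)\widetilde{Q}_b = F_b$, where $F_b$ is effectively localised in a fixed ball because both the nonlinearity and $\Psi_b$ are localised where $\widetilde{Q}_b$ is large. Combined with the pointwise $C^s$ bound from the previous paragraph, a direct integration over $|y|\leq 2/|b|$ delivers the uniform $H^s$ bound.

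The principal technical point is maintaining uniformity in $b$ as $b\to 0$, since the support $|y|\leq 2/|b|$ swells to fill the whole plane. Two features conspire to suppress this growth: the small $b$ in front of $\Lambda$ pairs with the large $|y|$ to yield a bounded coefficient in the equation, and the uniform exponential decay of $\widetilde{Q}_b$ renders the enlarging support invisible in $L^2$-based norms. With both in hand the bootstrap closes for every $k\geq 3$, yielding both conclusions of \eqref{Eqn-QbDerivProperty}.
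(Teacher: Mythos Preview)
The paper does not actually prove this lemma; the internal label \texttt{Lemma-UnprovenProperty} is telling, and the statement is recorded without argument or citation beyond the surrounding construction of $\widetilde{Q}_b$ in Proposition~\ref{Prop-SelfSimilar}. Your elliptic bootstrap is the natural way to supply a proof, and the two structural observations you isolate are precisely what make the bounds uniform as $b\to 0$: the pairing of the small $b$ with the large $|y|$ in $b\Lambda\widetilde{Q}_b$ keeps that term a bounded-coefficient first-order operator on the support, and the weighted $C^3$ closeness to $Q$ in \eqref{Eqn-Qb_closeToQ} supplies uniform exponential decay that neutralises the expanding support when passing to $L^2$-based norms.

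Two points deserve tightening. First, your claim that $\Psi_b$ is uniformly smooth is correct but does not follow from the $\widetilde{Q}_b$-equation alone: $\Psi_b$ is built from the \emph{untruncated} profile $Q_b$ and the cutoff $\phi_b$ via \eqref{DefnEqn-Psib}, so one should invoke that $Q_b$ itself solves the exact radial equation \eqref{DefnEqn-Qb} (hence is $C^\infty$ on $(0,R_b)$ by ODE regularity) and is exponentially small on the annulus $R_b^-\leq|y|\leq R_b$ where $\Psi_b$ lives, absorbing the growth $|\nabla^k\phi_b|\sim b^k$. Second, the phrase ``direct integration over $|y|\leq 2/|b|$'' for the $H^s$ bound only works once exponential decay is known for derivatives up to order $s$, not merely at the $C^0$ level; you say this follows by differentiating the equation, but the induction (weighted $C^k$ control plus the exponentially decaying Green's function of $\Delta-1$ gives weighted $C^{k+1}$ control) should be made explicit, since the commutator $[\partial^\alpha,b\Lambda]$ costs one derivative at each step.
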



\subsection{Geometric Decomposition}
\label{Subsec-GeoDecomp}

In place of $(r,z,\theta) \in \real^{3}$ we change coordinates to the rescaled half-plane,
\begin{equation}\label{DefnEqn-y} 
y = \left(\frac{(r,z) - (r_0,z_0)}{\lambda_0}\right) \in \left\lbrack-\frac{r_0}{\lambda_0},+\infty\right)\times\real. 
\end{equation}
Fixed parameters $r_0$,$z_0$,$\lambda_0$ will later be replaced by $r(t)$, $z(t)$, and $\lambda(t)$. It will be clear from the context. Note the measure due to cylindrical symmetry, $dx = \lambda_0 \mu_{\lambda_0,r_0}(y)\,dy$ is given by,
\begin{equation}\label{DefnEqn-mu}
\mu_{\lambda_0,r_0}(y) = 2\pi\left(\lambda_0y_1+r_0\right){\mathds 1}_{y_1 \geq -\frac{r_0}{\lambda_0}}.
\end{equation}
We will shortly hypothesize parameters of the decomposition in such a way that the support of both $\widetilde{Q}_b$ and $\widetilde{\zeta}_b$ are well away from the boundary of domain (\ref{DefnEqn-y}). For convenience we will omit the constant factor $2\pi$ and approximate $\mu(y)\sim 1$ on this region - see (\ref{Hypo1-consequenceForMu}). Integrals in $y$ can then be seen as taken over all of $\real^2$, and regular integration by parts applies. 
Any integral that cannot be localized in this way will be treated separately, and very carefully.

To begin, we modulate suitable cylindrically symmetric data as if two-dimensional, \cite[Lemma 2]{R06}.
\begin{lemma}[Existence of Geometric Decomposition at a Fixed Time]\label{Lemma-GeoDecompFixed}
Suppose that $v\in H^1_{cyl}(\real^{3})$ may be written in the form, 
\begin{equation}\label{Lemma-GeoDecompFixed-Eqn}
v(r,z,\theta) = 
	\frac{1}{\lambda_v}\left(\widetilde{Q}_{b_v}+\epsilon_v\right)
	\left(\frac{(r,z)-(r_v,z_v)}{\lambda}\right)e^{-i\gamma_v}
\end{equation}
for some parameters $\lambda_v,b_v,r_v > 0$ and $\gamma_v,z_v \in \real$ such that,
\begin{equation}\label{Lemma-GeoDecompFixed-eps}
\int{\abs{\grad_y\epsilon_v}^2\mu_{\lambda_v,r_v}(y)\,dy}
	+\int_{\abs{y}\leq\frac{10}{b_v}}{\abs{\epsilon_v}^2e^{-\abs{y}}\,dy}
	< \Gamma_{b_v}^\frac{1}{2},
\end{equation}
\begin{equation}\label{Lemma-GeoDecompFixed-params}\begin{aligned}
\abs{(r_v,z_v) - (1,0)} < \frac{1}{3}
&& \text{ and }
&& 10\lambda_v < b_v < \alpha^*. 
\end{aligned}\end{equation}
Then there are nearby parameters: $\lambda_0,b_0,r_0 > 0$ and $\gamma_0,z_0 \in \real$ with,
\begin{equation}\label{Lemma-GeoDecompFixed-nearby}
\abs{b_0-b_v}
+\abs{\frac{\lambda_0}{\lambda_v} - 1} 
+\frac{\abs{(r_0,z_0)-(r_v,z_v)}}{\lambda_v} 
\leq \Gamma_{b_0}^\frac{1}{5},
\end{equation}
such that the corresponding $\epsilon_0$,
\begin{equation}\label{Lemma-GeoDecompFixed-defnEps}
\epsilon_0(y) = \lambda_0\,v\left(\lambda_0 y + (r_0,z_0)\right)\,e^{i\gamma_0} - \widetilde{Q}_{b_0},
\end{equation}
satisfies the two-dimensional orthogonality conditions\footnotemark:
\begin{equation}\label{Lemma-GeoDecompFixed-orthog}
Re\left(\epsilon_0,\abs{y}^2\widetilde{Q}_{b_0}\right) = 
Re\left(\epsilon_0,y\widetilde{Q}_{b_0}\right) = 
Im\left(\epsilon_0,\Lambda^2\widetilde{Q}_{b_0}\right) =
Im\left(\epsilon_0,\Lambda\widetilde{Q}_{b_0}\right) = 0.
\end{equation}
\end{lemma}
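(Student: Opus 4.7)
The plan is to apply a quantitative implicit function theorem in $\real^5$. For parameters $p=(\lambda,b,r,z,\gamma)$ near $p_v=(\lambda_v,b_v,r_v,z_v,\gamma_v)$, set
\[
\epsilon_p(y) := \lambda\,v(\lambda y + (r,z))\,e^{i\gamma} - \widetilde{Q}_b(y),
\]
and define $F:\real^5\to\real^5$ whose five real components are $Re(\epsilon_p,|y|^2\widetilde{Q}_b)$, the two components of $Re(\epsilon_p,y\widetilde{Q}_b)$, $Im(\epsilon_p,\Lambda^2\widetilde{Q}_b)$, and $Im(\epsilon_p,\Lambda\widetilde{Q}_b)$. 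The five real parameters ($\lambda,b,\gamma$, together with the $2$-vector $(r,z)$) balance the five real conditions.

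First I would bound $F$ at the base point. Hypothesis (\ref{Lemma-GeoDecompFixed-eps}) gives $\epsilon_v:=\epsilon_{p_v}$ a weighted $H^1$ norm at most $\Gamma_{b_v}^{1/4}$. The test functions $|y|^2\widetilde{Q}_b,\ y\widetilde{Q}_b,\ \Lambda\widetilde{Q}_b,\ \Lambda^2\widetilde{Q}_b$ are compactly supported and smoothly bounded uniformly in small $b$ by Lemma \ref{Lemma-UnprovenProperty}, so Cauchy--Schwarz yields $|F(p_v)|\lesssim \Gamma_{b_v}^{1/4}$.

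Next I would compute and invert the Jacobian $dF(p_v)$. Direct differentiation gives
\[
\partial_\lambda \epsilon_p = \tfrac{1}{\lambda}\Lambda(\epsilon_p+\widetilde{Q}_b),\quad \partial_{(r,z)} \epsilon_p = \tfrac{1}{\lambda}\grad_y(\epsilon_p+\widetilde{Q}_b),\quad \partial_\gamma \epsilon_p = i(\epsilon_p+\widetilde{Q}_b),\quad \partial_b \epsilon_p = -\partial_b \widetilde{Q}_b.
\]
Terms proportional to $\epsilon_v$ contribute only $O(\Gamma_{b_v}^{1/4})$ perturbations to matrix entries and can be absorbed. In the limit $b\to 0$ the $C^3$-convergence $\widetilde{Q}_b\to Q$ (Lemma \ref{Lemma-UnprovenProperty}) reduces $dF$ to a matrix of pairings involving only $Q$ and its derivatives. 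The real/imaginary structure decouples this matrix: the three real rows pair non-trivially with $\lambda,r,z$ (and, through the real part of $\partial_b\widetilde{Q}_b$, with $b$), while the two imaginary rows pair with $\gamma$ and with the (known non-vanishing) imaginary part of $\partial_b\widetilde{Q}_b$. Pohozaev-type identities, such as $(\Lambda Q,|y|^2 Q)=-\int|y|^2 Q^2$ and $(\partial_{y_j} Q,y_k Q)=-\tfrac{1}{2}\delta_{jk}\int Q^2$, together with the standard kernel structure of the linearization around $Q$ as exploited throughout \cite{MR-BlowupDynamic-05,R06}, give invertibility with a uniform bound on the inverse.

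The quantitative implicit function theorem then produces a unique nearby zero $p_0$ of $F$ with parameter shift of order $\Gamma_{b_v}^{1/4}$. Since the IFT forces $b_v\approx b_0$, we have $\Gamma_{b_v}^{1/4}\ll\Gamma_{b_0}^{1/5}$ for small $b$, yielding (\ref{Lemma-GeoDecompFixed-nearby}). The main obstacle is the explicit invertibility verification for $dF$: bookkeeping between real and imaginary parts, between scalar and vector parameters, and verifying that $Im\,\partial_b\widetilde{Q}_b$ does not vanish. Fortunately these are exactly the modulation calculations standard in the two-dimensional $L^2$-critical log-log theory. The cylindrical weight $\mu$ is essentially constant $\approx 2\pi r_0$ on the support of the test functions, since (\ref{Lemma-GeoDecompFixed-params}) gives $\lambda_v/b_v\ll r_v$, so $\mu$ contributes only small perturbations that do not affect invertibility.
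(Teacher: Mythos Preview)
Your proposal is correct and follows the standard implicit function theorem approach that the paper adopts by reference (the paper does not give its own proof but cites \cite[Lemma 2]{R06} and the Merle--Rapha\"el modulation lemmas, which proceed exactly as you describe). One small refinement worth noting: at $b=0$ the Jacobian is actually block diagonal between the real conditions $(F_1,F_2,F_3)$ and the imaginary ones $(F_4,F_5)$, and within the imaginary block one has $\partial_\gamma\,Im(\epsilon,\Lambda\widetilde{Q}_b)\to (Q,\Lambda Q)=0$, so invertibility of that $2\times 2$ block comes from its triangular structure (the $\gamma$-derivative of $Im(\epsilon,\Lambda^2\widetilde{Q}_b)$ is $-\norm{\Lambda Q}_{L^2}^2\neq 0$, and the $b$-derivative of $Im(\epsilon,\Lambda\widetilde{Q}_b)$ is $-\tfrac{1}{4}\int|y|^2Q^2\neq 0$ via (\ref{Eqn-Qb_by_b})) rather than from a diagonal argument.
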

\footnotetext{
The decomposition of \cite{MR-SharpUpperL2Critical-03} used slightly different orthogonality conditions.  Equation (\ref{Lemma-GeoDecompFixed-orthog}) is the decomposition introduced \cite[Lemma 6]{MR-UniversalityBlowupL2Critical-04}, which leads to a better estimate on the phase parameter than was achieved in \cite{MR-SharpUpperL2Critical-03}.
}

We envisage a singular ring contained within a toroid, the complement of which is contiguous and includes both the origin and infinity.  Denote two smooth cutoff functions,
\begin{equation}\label{DefnEqn-Chi}\begin{aligned}
\chi(r,z,\theta) = &\left\{\begin{aligned}
		1 && \text{ for } \abs{(r,z) - (1,0)} \geq \frac{2}{3}\\
		0 && \text{ for } \abs{(r,z) - (1,0)} \leq \frac{1}{3}
	\end{aligned}\right.
&& \text{and},\\
\chi_0(r,z,\theta) = &\left\{\begin{aligned}
		1 && \text{ for } \abs{(r,z) - (1,0)} \geq \frac{1}{7}\\
		0 && \text{ for } \abs{(r,z) - (1,0)} \leq \frac{1}{8}.
	\end{aligned}\right.
&&
\end{aligned}\end{equation}
In Chapter \ref{Section-BootAtInfty} we will define a further series of cutoff functions $\psi$ and $\varphi$; these further definitions will be supported on bounded regions where $\chi_0 \equiv 1$. 
We now describe the initial data for our bootstrap procedure. 
\begin{definition}[Description of Initial Data ${\mathcal P}$]\label{Defn-DataP}

For $\alpha^* > 0$, a constant to be determined, let the set ${\mathcal P}(\alpha^*)$ comprise cylindrically symmetric $u_0 \in H^{3}_{cyl}(\real^{3})$ that may be written of the form,
\begin{equation}\label{Defn-DataP1-GeoDecomp}\begin{aligned}
u_0(r,z) &= \frac{1}{\lambda_0}\left(\widetilde{Q}_{b_0}+\epsilon_0\right)
	\left(\frac{(r,z) - (r_0,z_0)}{\lambda_0}\right)e^{-i\gamma_0}\\
	&=\frac{1}{\lambda_0}\left(\widetilde{Q}_{b_0}\right)
	\left(\frac{(r,z) - (r_0,z_0)}{\lambda_0}\right)e^{-i\gamma_0} 
	+ \tilde{u}_0(r,z),
\end{aligned}\end{equation}
in a way that satisfies the following two sets of conditions:

	\begin{description}
	\item[{Singularity of a log-log nature:}]
	
	\item[C1.1] {\it 'Radial' profile focused near a singular ring,}
	\begin{equation}\label{DataP1-rz}
	\abs{(r_0,z_0) - (1,0)} < \alpha^*.
	\end{equation}

	\item[C1.2] {\it 'Radial' profile is close to $Q$ near the singular ring,}
	
	Profile $\widetilde{Q}_b$ have nearly the mass of $Q$, and account for nearly all mass globally,
	\begin{equation}\label{DataP1-mass}\begin{aligned}
	0 < b_0 + \norm{\tilde{u}_0}_{L^2(\real^{3})} < \alpha^*,
	\end{aligned}\end{equation} 
	and $\epsilon_0(y)$ both satisfies the orthogonality conditions, 
	\begin{equation}\label{DataP1-orthog}\begin{aligned}
	Re\left(\epsilon_0,\abs{y}^2\widetilde{Q}_{b_0}\right)
	= Re\left(\epsilon_0,y\widetilde{Q}_{b_0}\right)
	= Im\left(\epsilon_0,\Lambda^2\widetilde{Q}_{b_0}\right)
	= Im\left(\epsilon_0,\Lambda\widetilde{Q}_{b_0}\right) = 0,
	\end{aligned}\end{equation}
	and the smallness condition,
	\begin{equation}\label{DataP1-epsilon}
	\int{ \abs{\grad_y\epsilon_0(y)}^2 \mu_{\lambda_0,r_0}(y)\,dy}
		+\int_{\abs{y}\leq\frac{10}{b_0}}{\abs{\epsilon_0(y)}^2e^{-\abs{y}}\,dy}
		<\Gamma_{b_0}^\frac{6}{7}.
	\end{equation}
	
	\item[C1.3] {\it Conformal and scaling parameters are consistent with log-log blowup speed,}
	\begin{equation}\label{DataP1-loglog}
	e^{-e^\frac{2\pi}{b_0}} < \lambda_0 < e^{-e^{\frac{\pi}{2}\frac{1}{b_0}}}.
	\end{equation}
	
	\item[C1.4] {\it Energy and localized momentum are normalized,}
	\begin{equation}\label{DataP1-enerMoment}
	\lambda_0^2\abs{E_0} 
		+\lambda_0\abs{Im\left( \int{\grad_x\psi^{(x)}\cdot\grad_xu_0\overline{u}_0}\right)}
		< \Gamma_{b_0}^{10},
	\end{equation}
	where $\psi^{(x)}$ is a smooth cylindrically symmetric `cutoff' function with,
	\begin{equation}\label{DefnEqn-psi-x}
	\psi^{(x)}(r,z,\theta) = \left\{\begin{aligned}
		r + z && \text{ for } \abs{(r,z)-(1,0)}\leq\frac{1}{2},\\
		0 && \text{ for } \abs{(r,z)-(1,0)}\geq\frac{3}{4}.
		\end{aligned}\right.
	\end{equation}

	\item[{Regularity away from the singularity:}]
	\item[C2.1] {\it Scaling-consistent $\dot{H}^{3}$ norm,}
	\begin{equation}\label{DataP2-Hnk}
	\norm{u_0}_{H^{3}(\real^{3})} < \frac{C_{\widetilde{Q}}}{\lambda_0^{2+k}},
	\end{equation}
	where $C_{\widetilde{Q}}$ is a universal constant due to Lemma \ref{Lemma-UnprovenProperty},
	\item[C2.2] {\it Strong hierarchy of regularity away from the singular ring,}
	\begin{equation}\label{DataP2-Hlower}
	\norm{\chi_0u_0}_{H^{3-\kappa}(\real^{3})} < \frac{1}{\lambda_0^{3-2\kappa}},
	\end{equation}
	for each half integer $\frac{1}{2} \leq \kappa \leq \frac{3}{2}$, and,

	\item[C2.3] {\it Vanishing lower-order norms away from the singular ring,}
	\begin{equation}\label{DataP2-Hcrit}
	\norm{\chi_0u_0}_{H^1(\real^{3})} < \left(\alpha^*\right)^\frac{1}{2}.
	\end{equation}

	\end{description}
\end{definition}
Lemma \ref{Lemma-GeoDecompFixed} guarantees that ${\mathcal P}(\alpha^*)$ is open in $H^1_{cyl}\cap H^3_{cyl}$.  See Appendix \ref{Appendix}
for proof that ${\mathcal P}(\alpha^*)$ is nonempty.
For the remainder of this paper, fix an arbitrary $u_0 \in {\mathcal P}(\alpha^*)$. Let $u(t)$ denote the evolution by equation (\ref{Eqn-NLS}), with maximum (forward) lifetime, $0 < T_{max} \leq +\infty$.

Continuous evolution in $H^3(\real^3)$ implies the same in $H^1(\real^3)$, and so by Lemma \ref{Lemma-GeoDecompFixed} there is some $0<T_{geo} \leq T_{max}$, (which may be assumed maximal,) for which the geometric decomposition of Lemma \ref{Lemma-GeoDecompFixed} may be applied on $[0,T_{geo})$. 
There exist unique continuous functions $\lambda(t),b(t),r(t) : [0,T_{geo}) \to (0,\infty)$, and $\gamma(t),z(t) : [0,T_{geo}) \to \real$, with the expected initial values, where,
\begin{equation}\label{Eqn-GeoDecomp}\begin{aligned}
u(t,r,z,\theta) = 
	&\frac{1}{\lambda(t)}\left(\widetilde{Q}_{b(t)}+\epsilon(t)\right)
	\left(\frac{(r,z)-(r(t),z(t))}{\lambda}\right)e^{-i\gamma(t)}\\
	=&\frac{1}{\lambda(t)}\left(\widetilde{Q}_{b(t)}\right)
	\left(\frac{(r,z)-(r(t),z(t))}{\lambda}\right)e^{-i\gamma(t)}
	+\tilde{u}(t,r,z,\theta),
\end{aligned}\end{equation}
such that $\epsilon(t,y)$ satisfies the two-dimensional orthogonality conditions:
\begin{align}
&&Re\left(\epsilon(t),\abs{y}^2\widetilde{Q}_{b(t)}\right) 
	&= 0,\label{Eqn-GeoDecomp-OrthogReY2}\\
&&Re\left(\epsilon(t),y\widetilde{Q}_{b(t)}\right) 
	&= 0,\label{Eqn-GeoDecomp-OrthogReY}\\
&&Im\left(\epsilon(t),\Lambda^2\widetilde{Q}_{b(t)}\right) 
	&= 0,\label{Eqn-GeoDecomp-OrthogImL2}\\
&\text{and}&Im\left(\epsilon(t),\Lambda\widetilde{Q}_{b(t)}\right) 
	&= 0.\label{Eqn-GeoDecomp-OrthogImL}
\end{align}
We may now define the rescaled time,
\begin{equation}\label{DefnEqn-s}\begin{aligned}
s(t) = \int_0^t{\frac{1}{\lambda^2(\tau)}d\,\tau} + s_0
&& \text{ where }
&& s_0 = e^\frac{3\pi}{4b_0}.
\end{aligned}\end{equation}
Also denote $s_1 = s(T_{hyp})$, for $T_{hyp}$ due to the forthcoming Definition \ref{Defn-Hypo}. 
The choice of $s_0$ will prove convenient in Section \ref{Subsec-LocalVirial}.

\begin{remark}[Fixed Parameters] 
To aid the reader, we provide a brief summary of the various parameters that will be introduced, in the order one might ultimately determine them:
\begin{itemize}
\item $\eta$ and $a$: Parameters that determine the cutoff shape of $\widetilde{Q}_b$ and $\widetilde{\zeta}_b$, see equations (\ref{DefnEqn-Rb}) and (\ref{DefnEqn-A}) respectively. The value of $a>0$ is assumed sufficiently small for the proof of Lemma \ref{Lemma-Lowerbound}, relative to some universal constant. The earlier proof of Lemma \ref{Lemma-LyapounovFunc} is conditioned on the eventual choice of $\eta < \frac{a}{C_0}$, for another universal constant $C_0 > 0$, see equation (\ref{Proof-LyapounovFunc-eqn2}). These choices affect the class of initial data ${\mathcal P}$ both by setting the profiles $\widetilde{Q}_b$ and by forcing an upper bound on the value of $\alpha^*$.

\item $\sigma_1$, $\sigma_2$ and $\sigma_3$: 
Parameters in the statements of Lemma \ref{Lemma-lambdaIntegralCrude}, Lemma \ref{Lemma-lambdaIntegral}, and Corollary \ref{Corollary-lambdaIntegral2version2}. Their value is chosen (repeatedly) according to circumstance.

\item $\sigma_4$: Defined for Lemma \ref{Lemma-NSobolev}. Value is uniform over all $m>0$ sufficiently small.

\item $\delta_5$: A fixed arbitrary universal constant $0<\delta_5\ll 1$, used in the proof of Lemma \ref{Lemma-NSobolev}. 

\item $m'$: Existence of $m'<m$ with particular properties in a key assertion of Proposition \ref{Prop-Improv}, below.  Some particular value $m'\in(m-\frac{\sigma_4}{2},m)$ is chosen for the proof of Lemma \ref{Lemma-ControlledHnk}.

\item $\sigma_5$: Parameter in the statement of Lemma \ref{Lemma-NdimEndpointSobolev}. Value is fixed for the proof of Lemma \ref{Lemma-Annular12}.

\item $\sigma_6$: Defined for Lemma \ref{Lemma-AnnularN2}.  Value is uniform over all $m>0$ sufficiently small.

\item $m$:  Fixed constant $m>0$ features in the bootstrap hypotheses of Definition \ref{Defn-Hypo}, below. For the purpose of various proofs in Chapter \ref{Section-BootAtInfty}, $m$ will be assumed sufficiently small.
The exact value of $m$ may be determined apriori, and will affect the class of initial data ${\mathcal P}$ by forcing an upper bound on the value of $\alpha^*$.  

\item $\alpha^*$: Fixed constant $\alpha^*>0$ is determined last.  For the purpose of various proofs throughout this paper, $\alpha^*$ will be assumed sufficiently small.  
\end{itemize}
\end{remark}

The following bootstrap hypotheses are possible due to our choice of data in ${\mathcal P}$.
\begin{definition}[Time $T_{hyp} > 0$ \& Bootstrap Hypotheses]\label{Defn-Hypo}
Let $0 < T_{hyp} \leq T_{max}$ be the maximum time such that for all $t\in[0,T_{hyp})$ the following two sets of conditions hold:
\begin{description}
	\item[Singularity remains of a log-log nature:]
	
	\item[H1.1] {\it Profile remains focused near a singular ring,}
	\begin{equation}\label{Hypo1-rz}
	\abs{(r(t),z(t))-(1,0)} < \left(\alpha^*\right)^\frac{1}{2}.
	\end{equation}
	
	\item[H1.2] {\it Profile remains close to Q near the singular ring,}
	\begin{equation}\label{Hypo1-b}\begin{aligned}
	0 < b(t) + \norm{\tilde{u}(t)}_{L^2(\real^{3})} < \left(\alpha^*\right)^\frac{1}{10}.
	\end{aligned}\end{equation}
	
	\begin{equation}\label{Hypo1-epsilon}\begin{aligned}
	\int{\abs{\grad_y\epsilon(t)}^2\mu_{\lambda(t),r(t)}(y)\,dy} + \int_{\abs{y}\leq\frac{10}{b(t)}}{\abs{\epsilon(t)}^2e^{-\abs{y}}\,dy} \leq \Gamma_{b(t)}^\frac{3}{4}.
	\end{aligned}\end{equation}
	
	\item[H1.3] {\it Conformal and scaling parameters remain consistent with log-log blowup speed,}
	\begin{equation}\label{Hypo1-loglog}\begin{aligned}
	\frac{\pi}{10}\frac{1}{\log s} < b(s) < \frac{10\pi}{\log s}, &&
	e^{-e^\frac{10\pi}{b(s)}} < \lambda(s) < e^{-e^{\frac{\pi}{10}\frac{1}{b(s)}}}.
	\end{aligned}\end{equation}

	\item[H1.4] {\it Energy and localized momentum remain normalized,}
	\begin{equation}\label{Hypo1-enerMoment}
	\lambda^2(t)\abs{E_0} + \lambda(t)\abs{Im\left(\int{\grad\psi^{(x)}\cdot\grad u(t)\overline{u}(t)}\right)}
	< \Gamma_{b(t)}^2.
	\end{equation}

	\item[H1.5] {\it Norm growths are almost monotonic,}
	\begin{equation}\label{Hypo1-almostMonotony}\begin{aligned}
	\forall s_a \leq s_b \in[s_0,s_1], && \lambda(s_b) \leq 3\lambda(s_a).
	\end{aligned}\end{equation}

	\item[Regularity away from the singularity persists:]
	\item[H2.1] {\it Growth of $\dot{H}^3$ is near scaling,}
	\begin{equation}\label{Hypo2-Hnk}
	\norm{u(t)}_{H^{3}(\real^{3})} < \frac{e^{+\frac{m}{b(t)}}}{\lambda^{3}(t)},
	\end{equation}

	\item[H2.2] {\it Strong hierarchy of regularity away from the singular ring persists,}
	\begin{equation}\label{Hypo2-Hlower}
	\norm{\chi u(t)}_{H^{3-\kappa}} < \frac{e^{+(1+\kappa)\frac{m}{b(t)}}}{\lambda^{3-2\kappa}(t)},
	\end{equation}
	for each half integer $\frac{1}{2} \leq \kappa < \frac{3}{2}$,
	\begin{equation}\label{Hypo2-HlowerN2}
	\norm{\chi u(t)}_{H^\frac{3}{2}} < e^{+\frac{2m + \pi}{b(t)}},
	\end{equation}
	and,
	\item[H2.3] {\it Lower-order norms away from the singular ring remain bounded,}
	\begin{equation}\label{Hypo2-Hcrit}
	\norm{\chi u(t)}_{H^1} < \left(\alpha^*\right)^\frac{1}{10}.
	\end{equation}

	\end{description}
\end{definition}
\noindent An important consequence of {\bf H1.2}, {\bf H1.3},
and the forthcoming estimate on $\Gamma_b$, (\ref{Eqn-GammaBEstimate}), is that,
\begin{equation}\label{Hypo1-consequenceForLambdaGamma}
\lambda(t) < e^{-e^\frac{\pi}{10b(t)}} < \Gamma_{b(t)}^{10}.
\end{equation}
Therefore as a consequence of {\bf H1.1}
and the forthcoming definition of $A$, (\ref{DefnEqn-A}),
\begin{equation}\label{Hypo1-consequenceForMu}\begin{aligned}
\frac{2}{3} \leq \mu(y) \leq \frac{3}{2}
&&\forall \abs{y}\leq 5 A(t).
\end{aligned}\end{equation}
The region $\abs{y}\leq 5A(t)$ is exceptionally wide, encompassing the support of both the central profile $\widetilde{Q}_b$ and the associated radiation $\widetilde{\zeta}_b$. 

\begin{remark}[Geometric decomposition is well defined]
Hypotheses {\bf H1} easily satisfy the conditions of Lemma \ref{Lemma-GeoDecompFixed}, ensuring that $T_{hyp}\leq T_{geo}$ and the unique geometric decomposition (\ref{Eqn-GeoDecomp}) is available.
\end{remark}

\begin{remark}[Higher dimensions]\label{Remark-HigherDim-Hypo}
For the higher dimensional case, extend the hypotheses {\bf H2} to include:
\begin{itemize}
\item Controlled growth of $\norm{u(t)}_{H^{N}}$, in place of {\bf H2.1}
\item Equation (\ref{Hypo2-Hlower}) for each half integer $\frac{1}{2} \leq \kappa < \frac{N}{2}$, and phrase (\ref{Hypo2-HlowerN2}) with respect to $H^\frac{N}{2}$.
\item Bounded behaviour in $H^{\frac{N-1}{2}}$ away from the singular ring, in place of {\bf H2.3}.
\end{itemize}
The obvious changes to the class of initial data should also be made.
\end{remark}

\begin{prop}[Bootstrap Conclusion]\label{Prop-Improv}
For $\alpha^*>0$ sufficiently small, hypotheses (\ref{Hypo1-rz}) through (\ref{Hypo2-Hcrit}) are not sharp. There exists $m'<m$ such that for all $t\in[0,T_{hyp})$:
\begin{description}
\item[I1.1]
	\begin{equation}\label{Improv1-rz}
	\abs{(r(t),z(t))-(1,0)} < \left(\alpha^*\right)^\frac{2}{3},
	\end{equation}

	\item[I1.2]
	\begin{equation}\label{Improv1-b}\begin{aligned}
	0 < b(t) + \norm{\tilde{u}(t)}_{L^2(\real^{3})} < \left(\alpha^*\right)^\frac{1}{5},
	\end{aligned}\end{equation}

	\begin{equation}\label{Improv1-epsilon}\begin{aligned}
	\int{\abs{\grad_y\epsilon(t)}^2\mu_{\lambda(t),r(t)}(y)\,dy} + \int_{\abs{y}\leq\frac{10}{b(t)}}{\abs{\epsilon(t)}^2e^{-\abs{y}}\,dy} \leq \Gamma_{b(t)}^\frac{4}{5},
	\end{aligned}\end{equation}

	\item[I1.3]
	\begin{equation}\label{Improv1-loglog}\begin{aligned}
	\frac{\pi}{5}\frac{1}{\log s} < b(s) < \frac{5\pi}{\log s}, &&
	e^{-e^\frac{5\pi}{b(t)}} < \lambda(t) < e^{-e^{\frac{\pi}{5}\frac{1}{b(t)}}},
	\end{aligned}\end{equation}

	\item[I1.4]
	\begin{equation}\label{Improv1-enerMoment}
	\lambda^2(t)\abs{E_0} + \lambda(t)\abs{Im\left(\int{\grad\psi^{(x)}\cdot\grad u(t)\overline{u}(t)}\right)}
	< \Gamma_{b(t)}^4,
	\end{equation}

	\item[I1.5]
	\begin{equation}\label{Improv1-almostMonotony}\begin{aligned}
	\forall s_a \leq s_b \in[s_0,s_1], && \lambda(s_b) \leq 2\lambda(s_a).
	\end{aligned}\end{equation}


	\item[I2.1]
	\begin{equation}\label{Improv2-Hnk}
	\norm{u(t)}_{H^{3}(\real^{3})} < \frac{e^{+\frac{m'}{b(t)}}}{\lambda^{3}(t)},
	\end{equation}

	\item[I2.2]
	\begin{equation}\label{Improv2-Hlower}
	\norm{\chi u(t)}_{H^{3-\kappa}} < \frac{e^{+(1+\kappa)\frac{m'}{b(t)}}}{\lambda^{3-2\kappa}(t)},
	\end{equation}
	for each half integer $\frac{1}{2} \leq \kappa < \frac{3}{2}$, 
	\begin{equation}\label{Improv2-HlowerN2}
	\norm{\chi u(t)}_{H^\frac{3}{2}} < e^{+\frac{2m' + \pi}{b(t)}},
	\end{equation}
and,
	\item[I2.3]
	\begin{equation}\label{Improv2-Hcrit}
	\norm{\chi u(t)}_{H^1} < \left(\alpha^*\right)^\frac{1}{5}.
	\end{equation}

\end{description}
As a consequence, $T_{hyp} = T_{max}$.
\end{prop}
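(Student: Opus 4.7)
The proof is a bootstrap--continuity argument. Every quantity appearing in H1.1--H2.3 depends continuously on $t$---via the local $H^3$ wellposedness of (\ref{Eqn-NLS}) and the continuity of the geometric decomposition from Lemma~\ref{Lemma-GeoDecompFixed}---so if $T_{hyp} < T_{max}$ then at least one of the bounds H1.1--H2.3 would have to saturate at $t = T_{hyp}$. Establishing the strictly sharper conclusions I1.1--I2.3 on all of $[0,T_{hyp})$ precludes any such saturation and forces $T_{hyp} = T_{max}$.

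To upgrade the log-log cluster, I would follow the Merle--Rapha\"el framework adapted to the rescaled half-plane variable $y$. Inserting (\ref{Eqn-GeoDecomp}) into (\ref{Eqn-NLS}) and differentiating the orthogonality conditions (\ref{Eqn-GeoDecomp-OrthogReY2})--(\ref{Eqn-GeoDecomp-OrthogImL}) in $s$ produces modulation ODEs for the quantities $\lambda_s/\lambda + b$, $b_s$, $r_s$, $z_s$ and $\gamma_s - 1$; the cylindrical correction $\frac{\lambda}{r}\partial_{y_1}$ is genuinely lower order thanks to (\ref{Hypo1-consequenceForMu}) and H1.1, so the two-dimensional spectral theory of the linearized operator around $Q$ applies virtually verbatim. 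A Lyapunov functional $\mathcal{F}(\epsilon,b)$ built from the local virial identity, combined with conservation of energy and the localized momentum bound H1.4, controls the dispersion $\epsilon$ at the level $\Gamma_b^{4/5}$, from which I1.2--I1.4 follow by integrating the modulation ODEs; I1.1 follows from integrating the $r_s$, $z_s$ equations starting from (\ref{DataP1-rz}), and I1.5 is the standard almost-monotonicity of $\norm{\grad u}_{L^2}$ in the log-log regime.

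The technical heart of the argument is the regularity cluster I2. Write $u = S_b + \tilde u$ with $S_b(t,r,z) = \frac{1}{\lambda}\widetilde{Q}_b\bigl((r-r(t),z-z(t))/\lambda\bigr)e^{-i\gamma}$; by Lemma~\ref{Lemma-UnprovenProperty} the $H^s$ norm of $S_b$ scales as $\lambda^{-s}$ with a $b$-uniform constant, so the full burden falls on $\tilde u$. I would differentiate $\norm{\chi u}_{H^{3-\kappa}}^2$ in time and deploy the three strategies advertised in the introduction: split singular from residual so that every appearance of $\norm{\tilde u}_{H^3}$ is paired with the small factor $\norm{\tilde u}_{H^1}$; invoke the refined $\lambda$-integration Lemma~\ref{Lemma-lambdaIntegral} to convert cumulative time integrals into an $e^{\delta/b}$ gain; and apply a Brezis--Gallou\"et argument (Lemma~\ref{Lemma-NdimEndpointSobolev}) to the residual to control $\norm{\chi u}_{L^\infty}$ far more sharply than standard Sobolev embedding permits. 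The resulting Gronwall inequality closes with a strictly smaller exponent $m' < m$ because H2.3 provides the slack $\norm{\chi u}_{H^1} \ll 1$ needed to absorb the nonlocal commutators that arise from the failure of a Leibniz rule for fractional cutoff derivatives. The hardest step is the endpoint $\kappa = \tfrac{3}{2}$, where (\ref{Hypo2-HlowerN2}) carries no $\lambda$-gain and the Brezis--Gallou\"et logarithm must be played directly against the log-log rate of $\lambda$; this is precisely the place where the novel splitting into singular and residual parts, together with the delicate pairing of $H^1$ and $H^3$ norms of $\tilde u$, is essential to close the loop.
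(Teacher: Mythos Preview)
Your overall architecture for {\bf I1} matches the paper, and you have all the right ingredients for {\bf I2}, but your ordering of the regularity argument and your identification of the hard step are inverted relative to what the paper actually does.

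For {\bf I2.1} the paper differentiates the \emph{global} third-order energy $E_3(u)$ (Lemma~\ref{Lemma-HnkEnergy}), not a localized $\|\chi u\|_{H^{3-\kappa}}^2$; the singular/residual splitting $v = W + w$ is applied in the two-dimensional region near the ring (Lemma~\ref{Lemma-NSobolev}) and extracts a factor $\|w\|_{H^1}^{C}$ controlled by {\bf H1.2}---not {\bf H2.3}---producing the gain $e^{-\sigma_4/b}$ and hence $m' < m$. The Brezis--Gallou\"et estimate is not used at $\kappa = \tfrac{3}{2}$; it appears exactly once, in Lemma~\ref{Lemma-Annular12}, to win the first sub-scaling bound $\|\psi^{(1/2)}u\|_{H^{1/2}} \lesssim \lambda^{-C(\alpha^*)}$ on an annular shell covering $\mathrm{supp}(\nabla\chi)$, a region where neither {\bf H2.2} nor {\bf H2.3} is available. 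The paper labels this the ``crucial first step.'' From that seed the scheme iterates \emph{downward} from $\nu = 5/2$ through nested annular cutoffs $\varphi^{(\nu)}$ (Lemma~\ref{Lemma-AnnularHlower}), each step smoothing half a derivative via the Kato term $\|\psi^B u\|_{L^2_t H^{\nu+1/2}}$ and Lemma~\ref{Lemma-lambdaIntegral}. The endpoint $\kappa = \tfrac{3}{2}$ is routine once the machine is running: one simply substitutes Corollary~\ref{Corollary-lambdaIntegral2version2} for Lemma~\ref{Lemma-lambdaIntegral}. Only after the annular iteration is complete does one transfer the bounds to $\chi u$ and close {\bf I2.2}--{\bf I2.3} (Section~\ref{Subsection-Infty}). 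If you tried to run your version---differentiating $\|\chi u\|_{H^{3-\kappa}}^2$ directly and relying on {\bf H2.3} for slack---you would be unable to control the terms supported on $\mathrm{supp}(\nabla\chi)$, which is precisely why the annular detour is necessary.
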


\subsection{Strategy of Proof: the log-log argument}
\label{Subsec-StrategyPart1}

We will establish statements {\bf I1} in Chapter \ref{Section-BootLoglog} using the arguments of \cite{MR-SharpUpperL2Critical-03} and \cite{MR-SharpLowerL2Critical-06}. Here we identify the main challenge in maintaining the log-log dynamics.
As with all modulation arguments, we seek to reduce the question of blowup to a finite-dimensional ODE dynamic for the parameters. This is only possible due to the algebraic structure associated with $Q$.  
Recall the operator $\Lambda = 1 +y\cdot\grad_y$, which one might recognize from either the argument $E(Q) = 0$:
\begin{equation}\label{Strategy1-energy}
\left(0,\Lambda(Q)\right) = \left(\laplacian Q-Q+Q\abs{Q}^2,\Lambda(Q)\right) = -2E(Q), 
\end{equation}
or from the Pohozaev identity:
\begin{equation}\label{Strategy1-pohozaev}
\left(0,\Lambda(v)\right) 
	= Re\left(iv_s+\laplacian_y v + v\abs{v}^2,\Lambda(v)\right)
	= \frac{1}{2}\frac{d}{ds}Im\int{v\,y\cdot\grad\overline{v}\,dy} - 2E(v),
\end{equation}
which is also a consequence of formally calculating the virial identity, $\frac{d^2}{d^2s}\int{\abs{y}^2\abs{v}^2\,dy}$.
Substitution of (\ref{Eqn-GeoDecomp}) into (\ref{Eqn-NLS}) will produce an equation for $\epsilon$. Ignoring the distinction between $Q$ and $\Qb$, the terms linear in $\epsilon$ are, $i\partial_s\epsilon + L(\epsilon)$, where $L$ is the linearized propagator near $Q$. As a matrix on real and imaginary parts,
\begin{equation}\label{DefnEqn-L}\begin{aligned}
L(\epsilon) = 
	\left.
	\begin{bmatrix}0&L_-\\-L_+&0\end{bmatrix}
	\begin{bmatrix}\epsRe\\i\, \epsIm\end{bmatrix}
	\right.
&& \text{ with: }
&& \begin{aligned}
		L_+ &= -\laplacian + 1 - 3Q^2\\
		L_- &= -\laplacian + 1 - Q^2
	\end{aligned}
\end{aligned}
\end{equation}
Weinstein noted \cite{Weinstein85} the following:
\begin{equation}\label{Strategy1-Algebra}\begin{aligned}
L_-\left(\abs{y}^2Q\right) = -2\Lambda Q, &&
L_-\left(yQ\right) = -2\grad Q, && \text{ and } &&
L_+\left(\Lambda Q\right) = -2Q.
\end{aligned}\end{equation}
These algebraic properties are the inspiration of the orthogonality conditions so that, by taking appropriate inner products of the $\epsilon$-equation, linear terms cancel. For example, the imaginary part of the inner product with $\abs{y}^2Q$ has no linear terms due to conditions (\ref{Eqn-GeoDecomp-OrthogReY2}) and (\ref{Eqn-GeoDecomp-OrthogImL}). The imaginary part of the inner product with $yQ$ is controlled by momentum.

The most fruitful calculation is when we take the real part of an inner product of the $\epsilon$-equation with $\Lambda Q$. This is of course a localized version of equation (\ref{Strategy1-pohozaev}). We substitute conservation of energy to eliminate the linear term, $2\,Re\left(\epsilon,Q\right)$, which is due to the third identity of equation (\ref{Strategy1-Algebra}). The remaining terms quadratic in $\epsilon$ form the following, 
\begin{equation}\label{DefnEqn-H}
\begin{aligned}
H(\epsilon,\epsilon) = 
	\left.
	\begin{bmatrix}{\mathcal L}_\text{re}&0\\0&{\mathcal L}_\text{im}\end{bmatrix}
	\begin{bmatrix}\epsRe\\i\, \epsIm\end{bmatrix}
	\cdot
	\begin{bmatrix}\epsRe\\-i\, \epsIm\end{bmatrix}
	\right.
&& \text{ with: }
&& \begin{aligned}
		{\mathcal L}_\text{re} &= -\laplacian + 3Qy\cdot\grad Q\\
		{\mathcal L}_\text{im} &= -\laplacian + Qy\cdot\grad Q
	\end{aligned}
\end{aligned}
\end{equation}
Operator $H(\epsilon,\epsilon)$ is the derivative with respect to scaling of the conserved energy of the linear flow. It has coercivity properties that mirror the stability of $Q$:
\begin{prop}[Spectral Property]
\label{Prop-SpectralProperty}
There exists a universal constant ${\delta}_0 > 0$ such that $\forall v \in H^1$:
\begin{equation}\label{Eqn-2DSpectral}\begin{aligned}
H(v,v) \geq &{\delta}_0
	\left(\int_{y\in\real^2}{\abs{\grad_yv}^2} + 
		\int_{y\in\real^2}{\abs{v^2}e^{-\abs{y}}}\right)\\
	&-\frac{1}{{\delta}_0}\left(
		\begin{aligned}&Re\left(v,Q\right) + Re\left(v,\Lambda Q\right) +
		Re\left(v,yQ\right)\\ &+ Im\left(v,\Lambda Q\right) + 
		Im\left(v,\Lambda^2Q\right) + Im\left(v,\grad Q\right)\end{aligned}
	\right)^2.
\end{aligned}\end{equation}
\end{prop}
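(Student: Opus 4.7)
The plan is to reduce the coercivity of $H$ to a one-dimensional spectral question in each angular sector, then combine a robust high-angular-mode estimate with a finer analysis (of the type carried out numerically in \cite{FMR-ProofOfSpectralProperty-06}) in the low angular sectors. I would first rewrite $H$ in a slightly more familiar form: since $\laplacian Q - Q + Q^3 = 0$, one checks $Qy\cdot\grad Q = \frac{1}{2}y\cdot\grad(Q^2)$, so an integration by parts converts the zeroth order potential $Qy\cdot\grad Q$ into a combination of $Q^2$ and $y\cdot\grad(\cdot)$ terms. In particular, up to an overall sign, ${\mathcal L}_{\text{re}}$ and ${\mathcal L}_{\text{im}}$ can be related to the Weinstein operators $L_+$ and $L_-$ (which have explicit generalized null spaces generated by $\Lambda Q$ and $Q$, respectively, via the algebra (\ref{Strategy1-Algebra})). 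This identification makes the list of six stated directions natural: they are the kernel and generalized kernel of $L_\pm$ induced by the symmetries (scaling, phase, Galilean, translation).

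Next I would decompose $v\in H^1(\real^2)$ into Fourier modes in the angular variable, $v(\rho,\theta) = \sum_{k\in\mathbb{Z}} v_k(\rho)e^{ik\theta}$, and observe that both ${\mathcal L}_{\text{re}}$ and ${\mathcal L}_{\text{im}}$ commute with rotations, so $H(v,v) = \sum_k H_k(v_k,v_k)$, with each $H_k$ a quadratic form on half-line functions of $\rho$ carrying a centrifugal term $k^2/\rho^2$. For $|k|\geq 2$ I would prove coercivity directly: the centrifugal repulsion plus the strictly positive $-\laplacian$ contribution dominate the bounded potential $Qy\cdot\grad Q$, because $Q$ decays exponentially and a one-dimensional Hardy/Poincaré estimate provides a uniform gap. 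No orthogonality conditions are needed in these sectors.

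The remaining work is for $k=0,\pm 1$, where bound states actually exist. In each of these low sectors, $H_k$ is a one-dimensional Schrödinger-type quadratic form on the half line; using that $L_+ \Lambda Q = -2Q$, $L_- Q = 0$, $L_- (|y|^2 Q) = -4\Lambda Q$, $L_- (yQ) = -2\grad Q$, and $L_+(\partial Q) = 0$, one identifies the corresponding zero/negative eigenmodes in each sector as linear combinations of the six functions listed. A Sturm oscillation count (or, equivalently, the numerical verification of \cite{FMR-ProofOfSpectralProperty-06}) then shows $H_k$ has no further negative modes, so coercivity holds on the orthogonal complement of the identified directions, with a quantitative gap $\delta_0>0$.

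Assembling these pieces gives the stated inequality: the high-mode sum is controlled by $\int|\grad v|^2 + \int|v|^2 e^{-|y|}$ with no penalty term, while the low-mode contributions yield the same control once the six scalar products are subtracted with a sufficiently large constant $1/\delta_0$. The genuinely hard step is the sharp spectral count for $k=0$, since the radial sector is where the exponential weight $e^{-|y|}$ in the right-hand side must be matched against the full potential well of $3Qy\cdot\grad Q$; this is precisely the step the literature settles by a rigorous numerical computation, and I would simply cite it rather than reprove it.
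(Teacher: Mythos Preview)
The paper does not prove this proposition at all: immediately after stating it, the author writes ``The two-dimensional Spectral Property as stated here has a numerical proof \cite{FMR-ProofOfSpectralProperty-06}'' and moves on, treating it as a black-box input from the literature. Your outline --- angular Fourier decomposition, direct coercivity for $|k|\ge 2$ via the centrifugal term, and a Sturm/numerical count in the sectors $k=0,\pm 1$ --- is in fact a sketch of the strategy used in that cited reference, and you yourself concede that the decisive $k=0$ step would be handled by citing the same numerical computation. So there is no genuine alternative route here: you are reconstructing (accurately, in broad strokes) the approach of the source the paper defers to, while the paper itself simply invokes the result without argument.
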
 
The two-dimensional Spectral Property as stated here has a numerical proof \cite{FMR-ProofOfSpectralProperty-06}\footnotemark.
\footnotetext{
The numerical proof is given for the $L^2$-critical problem in dimensions $N=2,3,4$ and nonlinearity $u\abs{u}^\frac{4}{N}$.  Proof for dimension $N=1$ is explicit, \cite[Proposition 2]{MR-BlowupDynamic-05}.
}
Assuming we can ensure $H$ is coercive, the goal is to prove the {\it local virial identity},
\begin{equation}\label{Strategy1-localVirial}
b_s \geq \delta_1\left(\epsNorm\right) - \Gamma_b^{1-C\eta}.
\end{equation}
To prove (\ref{Strategy1-localVirial}) using the Spectral Property requires we control the contribution from all other terms of the conservation of energy.  In particular, we must establish the {\bf non-local} control,
\begin{equation}\label{Eqn-HRTemp1}
\int_{\real^{2}}{\abs{\epsilon(y)}^4\mu(y)} \ll \int_{\real^2}{\abs{\grad_y\epsilon}^2\mu(y)}.
\end{equation}
This is our main challenge.

The local virial identity (\ref{Strategy1-localVirial}) is a satisfactory control for $\epsilon$ at times where $b_s < 0$. However, our argument is based on approximating the central profile of the solution, therefore we cannot expect monotinicity in our modulation parameters. Including the radiation $\zb$ to better approximate the central profile, repeating the local virial calculation, and taking into account the mass flux leaving the support of the radiation, Merle \& Rapha\"el discovered a Lyapounov functional, \cite{MR-SharpLowerL2Critical-06}.  
It is remarkable that we may approximate the Lyapounov functional very precisely in terms of a positive multiple of a norm of $\epsilon$.  The functional is then used to bridge the control of $\epsilon$ between times where $b_s<0$. The approximation here is achieved through the conservation of energy, and involves equation (\ref{Eqn-HRTemp1}) a second time.

Regarding (\ref{Eqn-HRTemp1}), change variables,
\begin{equation}\label{Eqn-HRTemp2}
\int_{\real^{2}}{\abs{\epsilon(y)}^4\mu(y)} 
	= \lambda^2\int_{\real^{3}}{\abs{\widetilde{u}}^4}
	= \lambda^2\int_{\real^{3}}{\abs{\chi\widetilde{u}}^4} 
		+\lambda^2\int_{\real^{3}}{(1-\chi^4)\abs{\widetilde{u}}^4}.
\end{equation}
Since the support of $\chi$ includes the origin, we must apply three-dimensional Sobolev to that term,
\[
\norm{\chi \widetilde{u}}_{L^4(\real^3)}^4 \lesssim \norm{\chi u}_{H^\frac{1}{2}(\real^3)}^2\norm{\chi u}_{H^1(\real^3)}^2.
\]
Changing variables again, we observe that to achieve (\ref{Eqn-HRTemp1}) requires at least that, $\norm{\chi u}_{H^\frac{1}{2}(\real^3)} \ll 1$.
\begin{remark}[Higher dimensions]\label{Remark-HigherDim-StrategyUseHCrit}
In general the Sobolev embedding into $L^4$ involves the critical norm $H^{\frac{N-2}{2}}$,
\[
\norm{\chi u}_{L^4(\real^N)}^4 \lesssim \norm{\chi u}_{H^\frac{N-2}{2}(\real^N)}^2\norm{\chi u}_{H^1(\real^N)}^2.
\]
\end{remark}

\subsection{Strategy of Proof: persistence of regularity}
\label{Subsec-StrategyPart2}
Once we have established the log-log nature of our blowup, we expect powers of $\frac{1}{\lambda}$ to be as integrable in time as powers of $\sqrt{\frac{\log\abs{\log(T_{max}-t)}}{T_{max}-t}}$. Indeed, as noted in \cite{RaphaelSzeftel-StandingRingNDimQuintic-08},
\begin{equation}\label{Strategy2-lambdaIntegral}
\int_0^t{\frac{d\tau}{\lambda^{2\nu+1}(\tau)}} \leq C(\epsilon)\frac{1}{\lambda^{2\nu-1+\epsilon}(t)},
\end{equation}
for any $\delta > 0$ and $\nu \geq \frac{1}{2}$. As we explain below, our argument requires more care. We prove that,
\begin{equation}\begin{aligned}\label{Strategy2-lambdaIntegralImproved}
\int_0^t{\frac{\abs{\log\lambda}^{\sigma^*}}{\lambda^{2\nu+1}(\tau)}\,d\tau}
\leq C(\sigma^*,\sigma,\nu)  \frac{\abs{\log\lambda}^{\sigma}(t)}{\lambda^{\nu-1}(t)},
\end{aligned}\end{equation}
for any $\sigma^*<\sigma$, of either sign, and $\nu \geq \frac{1}{2}$.
The arguments of Chapter \ref{Section-BootAtInfty}, to establish statements {\bf I2}, proceed in three stages. 

\noindent {\it Control of $\norm{u}_{H^3}$}

We explicitly calculate $\frac{d}{dt}\norm{\grad^3u}_{L^2}^2$ and seek to estimate the resulting error terms separately in two regions of space.

First, away from the singularity, on the truly three-dimensional region that includes the origin. Here the estimates are simpler, due to hypotheses {\bf H2.2} and {\bf H2.3}.

Second, on a toroidal region that includes the singular ring. This region requires more delicacy, and we split the solution into the rescaled almost self-similar profile, and $\widetilde{u}$.
Since $\Qb$ is smooth, the higher-order norms scale exactly with $\frac{1}{\lambda}$. In particular,
\begin{equation}\label{Strategy2-scalingQb}
\norm{\frac{1}{\lambda}\Qb(y)}_{H^3(\real^3)} \leq \frac{C(\Qb)}{\lambda^3(t)},
\end{equation}
where the constant is uniform for all $b$ sufficiently small - see Lemma \ref{Lemma-UnprovenProperty}. Note that equation (\ref{Strategy2-scalingQb}) is better than {\bf H2.1}.
For terms in $\widetilde{u}$, note that the $H^1$ norm is better than $\frac{1}{\lambda}$ due to {\bf H1.3}.  By assuming $m>0$ is sufficiently small, we use this superior $H^1$ control to offset our use of {\bf H2.1}.  We prove that,
\[
\abs{\frac{d}{dt}\norm{u}_{H^3}^2} \lesssim \frac{1}{\lambda^8} + \frac{e^{-\frac{\sigma_4}{b}}}{\lambda^2}\norm{u}_{H^3}^2.
\]
To prove {\bf I2.1}, we use equation (\ref{Strategy2-lambdaIntegralImproved}) to integrate carefully.

\noindent {\it Initial regularity improvement}

Let $\psi^A$ be a smooth cutoff function that covers the support of $\grad \chi$ - this is a toroidal shell that acts as an interface between the singular dynamics and the truly three-dimensional dynamics. We hope for any control of $\norm{\psi^A u}_{H^\nu}$ that is better than an interpolation of {\bf H2.1}.  Calculate $\frac{d}{dt}\norm{\psi^Au}_{H^\nu}^2$ directly from equation (\ref{Eqn-NLS}) and integrate in time. The result is effectively Kato's smoothing effect and a Strichartz estimate,
\begin{equation}\label{Strategy2-KatoStrich}
\norm{\psi^A u}_{L^\infty_tH^\nu}^2 
	\lesssim \norm{\psi^B u}_{L^2_tH^{\nu+\frac{1}{2}}}^2 
	+ \int_0^t\abs{\int{D^\nu\left(\psi^Au\abs{u}^2\right)
		\, D^\nu\left(\psi^A\overline{u}\right)}}.
\end{equation}
where $\psi^B$ is some other cutoff function with slightly larger support.

Due to equation (\ref{Strategy2-lambdaIntegralImproved}), we see that the term in $H^{\nu+\frac{1}{2}}$ is infact of the order $\frac{1}{\lambda^{2\left(\nu-\frac{1}{2}\right)}}$. This is exactly the sort of control we want, however the nonlinear term of (\ref{Strategy2-KatoStrich}) is uncooperative.

Since, $H^1(\real^2) \not\hookrightarrow L^\infty(\real^2)$, we cannot apply an $L^\infty$ norm without breaking scaling.  
This is unlike the arguments of, \cite{R06, RaphaelSzeftel-StandingRingNDimQuintic-08}, where Strauss' radial interpolation inequality allows exactly such an embedding.
To estimate the nonlinear term of (\ref{Strategy2-KatoStrich}), we prove a modified Brezis-Gallou\"et estimate that does not break scaling `too-badly'.  It is here that the form of hypothesis {\bf H2.1} is used delicately.

\noindent{\it Iterated Smoothing}

The next stage is to prove {\bf I2.2} and {\bf I2.3} hold on the support of $\grad \chi$. We iterate the argument of equation (\ref{Strategy2-KatoStrich}), in half-integer steps, beginning with $\nu = 3-\frac{1}{2}$, and introducing a new cutoff with smaller support each time.  Due to the initial regularity improvement, it is possible to handle the nonlinear term of (\ref{Strategy2-KatoStrich}) systematically, and at the same order as the term in $H^{\nu+\frac{1}{2}}$.  Due to integration (\ref{Strategy2-lambdaIntegralImproved}), at each stage we may smooth (almost) a half-derivative farther, relative to scaling, than was proved in the previous stage.  After three iterates, we find that $\norm{\psi^C u}_{H^\frac{3}{2}}$ is (almost) order-zero in $\frac{1}{\lambda}$. The final iterate proves $\norm{\psi^D u}_{H^1}$ is constant.

To complete the proof of {\bf I2.2} and {\bf I2.3}, we repeat the iteration scheme for $\chi u$.  The combination of hypotheses {\bf H2.2} and {\bf H2.3} with the results of the first iteration make the second iteration substantially simpler. 

\section{Proof of Log-log Singular Behaviour}
\label{Section-BootLoglog}

In this chapter we will prove that properties {\bf I1.1} through {\bf I1.5} are a consequence of hypotheses {\bf H1.1} through {\bf H1.5} and the bound,
\begin{equation}\label{Hypo2-HcritReinterpreted}
\norm{\chi u(t)}_{H^\frac{1}{2}} < \left(\alpha^*\right)^\frac{1}{10},
\end{equation}
which is a particular consequence of {\bf H2.3}.

\subsection{Almost Self-similar Profiles}
\label{Subsec-SelfSim}

Forthcoming parameter $\eta > 0$ is universal, sufficiently small, and will be determined in Section \ref{Subsec-Lyapounov}. For $b \neq 0$ let,
\begin{equation}\label{DefnEqn-Rb}\begin{aligned}
R_b = \frac{2}{b}\sqrt{1-\eta} && \text{ and } && R_b^- = R_b\sqrt{1-\eta},
\end{aligned}\end{equation}
and let $\phi_b$ denote a radially symmetric cutoff function with,
\begin{equation}\label{DefnEqn-phib}\begin{aligned}
\phi_b(y) = \left\{\begin{aligned} 
	1 && \text{ for } &\abs{y} \leq R_b^-\\
	0 && \text{ for } &\abs{y} \geq R_b
	\end{aligned}\right.
&& \text{ and }
&& \abs{\grad\phi_b}_{L^\infty} + \abs{\laplacian\phi_b}_{L^\infty} \to 0 \text{ as } \abs{b} \to 0.
\end{aligned}\end{equation}
The following result was original shown in \cite[Prop 1]{MR-SharpUpperL2Critical-03}. The refined cutoff, with parameter $\eta$, is introduced in \cite[Prop 8 and 9]{MR-UniversalityBlowupL2Critical-04}.
\begin{prop}[Localized Self-Similar Profiles]\label{Prop-SelfSimilar}

For all $\eta > 0$ sufficiently small there exists positive $b^*(\eta)$ and $\delta(\eta)$ such that for all $\abs{b} < b^*(\eta)$ there exists a unique radial solution $Q_b$ to,
\begin{equation}\label{DefnEqn-Qb}
\left\{\begin{aligned}
&\laplacian Q_b - Q_b + ib\Lambda Q_b + Q_b\abs{Q_b}^2 = 0,\\
&\begin{aligned}
	P_b = Q_b e^{i\frac{b\abs{y}^2}{4}} > 0 
	&& \text{ for } y\in[0,R_b),
	\end{aligned}\\
&\begin{aligned}
	\abs{Q_b(0)-Q(0)} < \delta(\eta), 
	&& Q_b(R_b) = 0.
	\end{aligned}
\end{aligned}\right.
\end{equation}
The truncation to $\abs{y} < \frac{2}{b}$, $\widetilde{Q}_b(y) = Q_b(y)\phi_b(y)$, satisfies,
\begin{equation}\label{DefnEqn-TildeQb}
\laplacian\widetilde{Q}_b -\widetilde{Q}_b + ib\Lambda\widetilde{Q}_b + \widetilde{Q}_b\abs{\widetilde{Q}_b}^2 = - \Psi_b,
\end{equation}
with the explicit error term,
\begin{equation}\label{DefnEqn-Psib}
-\Psi_b = Q_b\laplacian\phi_b + 2\grad\phi_b\cdot\grad Q_b + ibQ_by\cdot\grad\phi_b+\left(\phi_b^3 - \phi_b\right)Q_b\abs{Q_b}^2.
\end{equation}
Moreover, $\widetilde{Q}_b$ satisfies the following properties:
\begin{itemize}

\item {\it Uniform closeness to the ground state}:
\begin{equation}\label{Eqn-Qb_closeToQ}\begin{aligned}
\norm{e^{C\abs{y}}\left(\widetilde{Q}_b-Q\right)}_{C^{3}} \rightarrow 0 
&& \text{ as } 
&& b \rightarrow 0.
\end{aligned}\end{equation}

\item {\it Derivative with respect to $b$}:
\begin{equation}\label{Eqn-Qb_by_b}\begin{aligned}
\norm{e^{C\abs{y}}
	\left(\frac{\partial}{\partial b}\widetilde{Q}_b + i\frac{\abs{y}^2}{4}Q\right)}_{C^2} \rightarrow 0 
&& \text{ as }
&& b \rightarrow 0.
\end{aligned}\end{equation}

\item {\it Supercritical mass}:
\begin{equation}\label{Eqn-Qb_by_bSquared}\begin{aligned}
\left. \frac{d}{d(b^2)}\left(\int{\abs{\widetilde{Q}_b}^2}\right)\right|_{b^2=0} = d_0
&& \text{ with }
&& 0 < d_0 < +\infty.
\end{aligned}\end{equation}
\end{itemize}

As a consequence of (\ref{Eqn-Qb_closeToQ}), for any polynomial $P(y)$ and $k=0,1$,
\begin{equation}\label{Eqn-PsibEstimate}
\abs{P(y)\grad^k\Psi_b}_{L^\infty} \leq e^{-\frac{C(P)}{\abs{b}}}.
\end{equation}

In particular, energy and momentum are degenerate,
\begin{equation}\label{Eqn-Qb_enerAndMoment}\begin{aligned}
\abs{E\left(\widetilde{Q}_b\right)} \leq e^{-(1-C\eta)\frac{\pi}{\abs{b}}}
&& \text{ and }
&& Im\left(\int{\grad_y\widetilde{Q}_b\,\overline{\widetilde{Q}_b}}\right) = 0.
\end{aligned}\end{equation}

\end{prop}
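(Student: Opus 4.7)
The plan is to eliminate the first-order drift term $ib\Lambda Q_b$ by the gauge transformation $P_b = Q_b e^{ib|y|^2/4}$. A direct computation (using $\grad \psi = -i\frac{b}{2}y\psi$ for $\psi = e^{-ib|y|^2/4}$ and $\laplacian \psi = (-ib - \frac{b^2|y|^2}{4})\psi$ in dimension two) shows that $P_b$ solves the real radial elliptic equation
\begin{equation*}
\laplacian P_b - P_b + \frac{b^2|y|^2}{4}P_b + P_b|P_b|^2 = 0,
\end{equation*}
which degenerates to the ground-state equation (\ref{DefnEqn-Q}) at $b=0$. The required positivity of $P_b$ on $[0,R_b)$ is now a genuine real-valued condition, and the problem reduces to a perturbation of $Q$ with the potential $\frac{b^2|y|^2}{4}$ treated on the bounded ball $\{|y|\le R_b\}$ (where the potential is bounded by $1-\eta$, so the operator stays essentially subcritical).

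First I would establish existence and uniqueness of $P_b$ close to $Q$ by a shooting/implicit function theorem argument. The natural shooting parameter is $P_b(0)\sim Q(0)$, and the boundary condition $P_b(R_b)=0$ selects a unique value in a neighborhood of $Q(0)$ for $|b|$ small. Alternatively, writing $P_b = Q + h_b$, one gets $L_+ h_b = \frac{b^2|y|^2}{4}(Q+h_b) + \text{quadratic in }h_b$, and the linearized operator $L_+$ is invertible on radial functions modulo $\Lambda Q$; the vertical degree of freedom in $P_b(0)$ absorbs the one-dimensional obstruction, while the Dirichlet condition at $R_b$ is met by a standard fixed-point argument in a weighted $C^2$ or $H^2$ space. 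Smoothness in $b$ then follows by differentiating the IFT construction, which also gives (\ref{Eqn-Qb_by_b}): at $b=0$, the real profile depends on $b$ only through $b^2$, so $\partial_b P_b|_{b=0}=0$, and the only contribution to $\partial_b Q_b|_{b=0}$ comes from the phase, giving $-i\frac{|y|^2}{4}Q$.

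For the truncation estimates, once $P_b$ is close to $Q$ in $C^3_{\mathrm{loc}}$ and $Q$ decays exponentially, standard elliptic regularity on $|y|\le R_b^-$ promotes this to exponential closeness in a weighted norm $\norm{e^{C|y|}\cdot}_{C^3}$, giving (\ref{Eqn-Qb_closeToQ}). The error $\Psi_b$ from (\ref{DefnEqn-Psib}) is supported in the annular shell $R_b^- \le |y| \le R_b$, where $Q_b$ is already pointwise bounded by $e^{-R_b^-} \sim e^{-(1-C\eta)\frac{2}{|b|}}$ by the exponential decay of $Q$. Multiplying by any polynomial $P(y)$ costs only a power of $\frac{1}{b}$, which is absorbed into the exponential, yielding (\ref{Eqn-PsibEstimate}). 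The vanishing momentum in (\ref{Eqn-Qb_enerAndMoment}) is immediate from radial symmetry ($\Qb$ has a purely radial amplitude and a radial phase $e^{-ib|y|^2/4}$, so $\grad \Qb$ is pointwise parallel to $y$ with purely imaginary factor, and its pairing against $\overline{\Qb}$ integrates to zero by symmetry). For the energy bound, I would multiply (\ref{DefnEqn-TildeQb}) by $\Lambda \overline{\Qb}$ and take real parts: the Pohozaev identity expresses $E(\Qb)$ as $-\tfrac{1}{2}\mathrm{Re}(\Psi_b,\Lambda\Qb)$ up to terms already known to vanish, and (\ref{Eqn-PsibEstimate}) delivers the exponential bound.

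The supercritical mass identity (\ref{Eqn-Qb_by_bSquared}) is where I expect the most genuine work. Differentiating $\int|\Qb|^2 = \int|P_b|^2$ twice in $b$ at $b=0$, writing $P_b = Q + b^2 R + o(b^2)$ where $R$ solves $L_+ R = \frac{|y|^2}{4}Q$, gives $d_0 = 2\int QR\, dy$. Positivity of this quantity is a non-obvious algebraic/spectral fact about $L_+$; it is precisely the well-known computation that links the almost-self-similar profile to a genuine ejection of mass as $b$ varies, and it is established in the references \cite{MR-SharpUpperL2Critical-03,MR-UniversalityBlowupL2Critical-04}. The main obstacle is thus not the construction itself (a fairly standard perturbation) but the combination of uniformity in $\eta$ as $b\to 0$ and the sign of $d_0$: one must choose $\eta$ small enough that the shrinkage of $R_b^-$ below $R_b$ still preserves the radial-decay structure inherited from $Q$, which is exactly the reason the refined cutoff parameter $\eta$ was introduced in \cite{MR-UniversalityBlowupL2Critical-04}.
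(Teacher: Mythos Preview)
The paper does not prove this proposition; it is quoted without proof from \cite[Prop 1]{MR-SharpUpperL2Critical-03} and \cite[Prop 8 and 9]{MR-UniversalityBlowupL2Critical-04}. Your outline correctly captures the strategy of those references: the phase transform $P_b = Q_b e^{ib|y|^2/4}$ reducing to a real elliptic problem with potential $\frac{b^2|y|^2}{4}$, the shooting/IFT construction near $Q$, the derivation of $\partial_b\Qb|_{b=0}$ from the phase alone, the Pohozaev identity $2E(\Qb) = \mathrm{Re}(\Psi_b,\Lambda\Qb)$ for the energy, and the deferral of the sign of $d_0$ to the original sources are all faithful to the Merle--Rapha\"el argument.

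One point deserves more care. You bound $Q_b$ pointwise in the shell $R_b^- \le |y| \le R_b$ by invoking the exponential decay of $Q$ through (\ref{Eqn-Qb_closeToQ}), obtaining roughly $|Q_b| \lesssim e^{-CR_b^-}$. This suffices for (\ref{Eqn-PsibEstimate}) with \emph{some} exponential rate, but the sharp constant $(1-C\eta)\frac{\pi}{|b|}$ in the energy bound (\ref{Eqn-Qb_enerAndMoment}) does not follow from this alone. The constant $\pi$ comes from a WKB-type analysis of the linearized ODE $P_b'' + \tfrac{1}{r}P_b' + (\tfrac{b^2r^2}{4}-1)P_b \approx 0$ on $[0,R_b]$, giving the phase integral $\int_0^{R_b}\sqrt{1-\tfrac{b^2r^2}{4}}\,dr \sim \tfrac{\pi}{2|b|}$ and hence $|Q_b(R_b^-)|^2 \sim e^{-(1-C\eta)\pi/|b|}$. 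This sharp rate is what later synchronizes with the estimate (\ref{Eqn-GammaBEstimate}) on $\Gamma_b$ and ultimately drives the log-log law, so it is not merely cosmetic.
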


The linearized Schr\"odinger operator near $\widetilde{Q}_b$ is, 
$M\begin{bmatrix}v\\iw\end{bmatrix} = M_+(v,w) + iM_-(v,w)$,
with,
\begin{align}\label{DefnEqn-M}
M_+(v,w) =& 
	-\laplacian_y v + v
	-\left(\frac{\widetilde{Q}_b^2}{\abs{\widetilde{Q}_b}^2}+2\right)
		\abs{\widetilde{Q}_b}^2v
	-Im(\widetilde{Q}_b^2)\,w, \\
M_-(v,w) =& 
	-\laplacian_y w + w
	-\left(2-\frac{\widetilde{Q}_b^2}{\abs{\widetilde{Q}_b}^2}\right)
		\abs{\widetilde{Q}_b}^2w
	-Im(\widetilde{Q}_b^2)\,v. 
\end{align}
As with $L$, equation (\ref{DefnEqn-L}), there is an associated bilinear operator,
\begin{equation}\label{DefnEqn-Hb}
H_b(\epsilon,\epsilon) = H(\epsilon,\epsilon) + \widetilde{H}_b(\epsilon,\epsilon),
\end{equation}
where $H(\epsilon,\epsilon)$ is the usual form (\ref{DefnEqn-H}) associated with $L$. The correction term may be written,
\begin{equation}\label{DefnEqn-HbTilde}
\widetilde{H}_b(\epsilon,\epsilon) = \int{V_{11}\epsRe^2}+\int{V_{12}\epsRe\epsIm}+\int{V_{22}\epsIm^2},
\end{equation}
for well-localized potentials built on $\widetilde{Q}_b$, $Q$ and $y\cdot\grad$ - see \cite[Appendix C]{MR-UniversalityBlowupL2Critical-04}. Due to proximity with $Q$, equation (\ref{Eqn-Qb_closeToQ}), there is universal constant $C$ with,
\begin{equation}\label{Eqn-HbTildeEst}\begin{aligned}
\norm{e^{C\abs{y}}V_{ij}}_{L^\infty} \rightarrow 0 
&& \text{ as }
&& b \rightarrow 0.
\end{aligned}\end{equation}
The following variation of $H$ is of a different nature. Let,
\begin{equation}\label{DefnEqn-HTilde}
\widetilde{H}(\epsilon,\epsilon) = 
	H(\epsilon,\epsilon) - \frac{1}{\norm{\Lambda Q}_{L^2}^2}\left(\epsRe,L_+\Lambda^2Q\right)\left(\epsRe,\Lambda Q\right),
\end{equation}
which simply alters the definition of ${\mathcal L}_+$, (\ref{DefnEqn-H}).
The following is a consequence of equation $(\ref{Strategy1-Algebra})$ and the Spectral Property,
\begin{lemma}[Alternative Spectral Property, {\cite[page 616]{MR-UniversalityBlowupL2Critical-04}}]
\label{Lemma-AlternateSpectral}
There exists a universal constant $\delta_0 > \tilde{\delta}_0 > 0$ such that $\forall \epsilon \in H^1$:
\begin{equation}\label{Eqn-2DSpectral-Alternate}\begin{aligned}
\widetilde{H}(\epsilon,\epsilon) \geq &\tilde{\delta}_0
	\left(\int_{y\in\real^2}{\abs{\grad_y\epsilon}^2} + 
		\int_{y\in\real^2}{\abs{\epsilon^2}e^{-\abs{y}}}\right)\\
	&-\frac{1}{\tilde{\delta}_0}\left(
		\begin{aligned}&Re\left(\epsilon,Q\right) + Re\left(\epsilon,\abs{y}^2 Q\right) +
		Re\left(\epsilon,yQ\right)\\ &+ Im\left(\epsilon,\Lambda Q\right) + 
		Im\left(\epsilon,\Lambda^2Q\right) + Im\left(\epsilon,\grad Q\right)\end{aligned}
	\right)^2.
\end{aligned}\end{equation}
\end{lemma}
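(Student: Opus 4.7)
The Alternative Spectral Property will be derived from Proposition \ref{Prop-SpectralProperty} by a rank-one algebraic modification that trades the direction $\Lambda Q$ for $\abs{y}^2 Q$ in the six-term squared sum of projections. The essential input is the pair of identities from \eqref{Strategy1-Algebra}: $L_-(\abs{y}^2 Q) = -2\Lambda Q$ and $L_+(\Lambda Q) = -2Q$. The first, combined with the self-adjointness of $L_-$, links inner products against $\Lambda Q$ to inner products against $\abs{y}^2 Q$. The second determines the shape of the rank-one correction $\frac{1}{\norm{\Lambda Q}_{L^2}^2}(\epsRe, L_+\Lambda^2 Q)(\epsRe, \Lambda Q)$ subtracted in the definition of $\widetilde{H}$.

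The plan is to apply Proposition \ref{Prop-SpectralProperty} directly to $\epsilon$ and then rewrite the obstructive summand $Re(\epsilon, \Lambda Q)$. Using self-adjointness,
\[
(\epsRe, \Lambda Q) = -\tfrac{1}{2}(\epsRe, L_-(\abs{y}^2 Q)) = -\tfrac{1}{2}(L_-\epsRe, \abs{y}^2 Q),
\]
and expanding $L_- = -\laplacian + 1 - Q^2$ followed by integration by parts of the Laplacian yields $(\epsRe, \Lambda Q) = -\tfrac{1}{2}(\epsRe, \abs{y}^2 Q) + \mathcal{E}(\epsRe)$, where $\mathcal{E}(\epsRe)$ is a finite sum of inner products of $\epsRe$ against exponentially localized functions built from $Q$, $\grad Q$, and $\abs{y}^2 Q$. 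By a weighted Cauchy--Schwarz, each such error is dominated by a small multiple of $\mathcal{N}(\epsilon)^{1/2}$, where $\mathcal{N}(\epsilon) = \int\abs{\grad\epsilon}^2 + \int\abs{\epsilon}^2 e^{-\abs{y}}$ is the coercive quantity from Proposition \ref{Prop-SpectralProperty}.

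The cross-terms produced when squaring this decomposition are cancelled by the rank-one correction in $\widetilde{H}$. Using the commutator $[L_+, \Lambda] = -2\laplacian + 6Q\,(y\cdot\grad Q)$ one computes $L_+ \Lambda^2 Q = -2\Lambda Q + [L_+, \Lambda]\Lambda Q$, and the resulting bilinear form $\frac{1}{\norm{\Lambda Q}_{L^2}^2}(\epsRe, L_+\Lambda^2 Q)(\epsRe, \Lambda Q)$ reproduces, up to inner products against exponentially localized functions, precisely the linear-in-$\epsRe$ cross-term arising in the expansion of $(\epsRe, \Lambda Q)^2$. After this cancellation one obtains
\[
\widetilde{H}(\epsilon,\epsilon) \geq (\delta_0 - \sigma)\,\mathcal{N}(\epsilon) - C_\sigma\Bigl((\epsRe, \abs{y}^2 Q) + \text{five surviving projections}\Bigr)^2
\]
for any $\sigma > 0$ sufficiently small. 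Setting $\tilde{\delta}_0 = \min(\delta_0 - \sigma,\,1/C_\sigma) > 0$ yields the claimed inequality.

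The main technical obstacle is the bookkeeping of the commutator and integration-by-parts errors: one must verify that every term produced in expanding $L_-\epsRe$ against $\abs{y}^2 Q$ and in rewriting $L_+\Lambda^2 Q$ is either a contribution to one of the six surviving projection directions, or pairs $\epsRe$ against a Schwartz-class function and is therefore absorbable into $\mathcal{N}(\epsilon)$ via the exponential decay of $Q$ and its derivatives. Once organized, the uniform bounds on the weighted norms of $Q$ and $\Lambda Q$ guarantee that the loss $\tilde{\delta}_0 < \delta_0$ is strictly positive.
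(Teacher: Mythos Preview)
Your overall plan---derive the Alternative Spectral Property from Proposition~\ref{Prop-SpectralProperty} together with the algebraic identities~\eqref{Strategy1-Algebra}---matches what the paper indicates (it states exactly that the lemma ``is a consequence of equation~\eqref{Strategy1-Algebra} and the Spectral Property'' and cites \cite{MR-UniversalityBlowupL2Critical-04}). However, the specific mechanism you sketch does not work.

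The central step of your argument is to expand
\[
(\epsRe,\Lambda Q)=-\tfrac{1}{2}(L_-\epsRe,\abs{y}^2 Q)
  =-\tfrac{1}{2}(\epsRe,\abs{y}^2 Q)+\mathcal E(\epsRe),
\]
treating the middle inner product as the ``main term'' and $\mathcal E$ as an absorbable remainder. But this identity is tautological: if you integrate the Laplacian by parts and use $\Delta Q=Q-Q^3$, the pieces of $\mathcal E$ recombine to cancel the extracted $-\tfrac{1}{2}(\epsRe,\abs{y}^2 Q)$ and return exactly $(\epsRe,\Lambda Q)$. In other words, the relation $L_-(\abs{y}^2 Q)=-2\Lambda Q$ links the two \emph{functions} through $L_-$, but gives no algebraic relation between the scalar projections $(\epsRe,\abs{y}^2 Q)$ and $(\epsRe,\Lambda Q)$ for a generic $\epsRe$. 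Consequently the ``errors'' $\mathcal E$ are not a small (or even bounded-and-harmless) perturbation---they carry the entire content of the direction you are trying to swap out. The later claim that the rank-one correction ``reproduces the linear-in-$\epsRe$ cross-term'' is therefore built on a decomposition that does not exist; and separately, the phrase ``small multiple of $\mathcal N(\epsilon)^{1/2}$'' is not justified, since there is no smallness parameter anywhere in the computation.

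The actual argument in \cite{MR-UniversalityBlowupL2Critical-04} is more structural. One observes that the rank-one correction in $\widetilde H$ is exactly the term generated when the phase estimate $\tilde\gamma_s\approx\tfrac{1}{\norm{\Lambda Q}^2}(\epsRe,L_+\Lambda^2 Q)$ is substituted into the virial computation, and then verifies---via the generalized null-space relations~\eqref{Strategy1-Algebra} and a finite-dimensional linear-algebra argument on the span of the negative directions---that coercivity of $H$ modulo $\{Q,\Lambda Q,yQ;\Lambda Q,\Lambda^2 Q,\grad Q\}$ implies coercivity of $\widetilde H$ modulo $\{Q,\abs{y}^2 Q,yQ;\Lambda Q,\Lambda^2 Q,\grad Q\}$. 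The swap of one real direction for another is possible precisely because $(\Lambda Q,\abs{y}^2 Q)\neq 0$, so the two six-dimensional subspaces are transverse to the same positive cone of $H$; the rank-one term compensates for the change of direction. Your sketch does not capture this mechanism.
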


The following result is proven \cite[Lemma 15]{MR-UniversalityBlowupL2Critical-04}. In Section \ref{Subsec-Lyapounov} we will find the study of linear radiation gives an accurate description of mass ejection from the singular regime.
\begin{lemma}[Linear Radiation]\label{Lemma-LinearRadiation}
There are universal constants $C>0$ and $\eta^*>0$ such that for all $0 < \eta < \eta^*$ there is $b^*(\eta) > 0$ such that for all $0 < b < b^*(\eta)$ the following is true. 

There exists a unique radial solution $\zeta_b$ to:
\begin{equation}\label{DefnEqn-Zb}
\left\{\begin{aligned}
\laplacian\zeta_b - \zeta_b + ib\Lambda\zeta_b = \Psi_b,\\
\int{\abs{\grad\zeta_b}^2} < + \infty,
\end{aligned}\right.
\end{equation}
where $\Psi_b$ is the truncation error given by (\ref{DefnEqn-TildeQb}). Moreover, if we denote 
\begin{equation}\label{DefnEqn-GammaB}
\Gamma_b = \lim_{\abs{y}\to+\infty}\abs{y}\abs{\zeta_b(y)}^2,
\end{equation}
then the solution satisfies:
\begin{itemize}
\item {\it Decay past the support of $\Psi_b$}:
\begin{equation}\begin{aligned}\label{Eqn-zb_H1L2Near}
\norm{\abs{y}\abs{\zeta_b}+\abs{y}^2\abs{\grad\zeta_b}}_{L^\infty(\abs{y}\geq R_b)} 
	&\leq \Gamma_b^{\frac{1}{2}-C\eta} < +\infty.
\end{aligned}\end{equation}

\item {\it Smallness in $\dot{H}^1$}:
\begin{equation}\label{Eqn-zb_smallH1}\begin{aligned}
\int{\abs{\grad_y\zeta_b}^2} \leq \Gamma_b^{1-C\eta}.
\end{aligned}\end{equation}

\item {\it Derivative with respect to $b$}:
\begin{equation}\label{Eqn-zb_by_b}
\norm{\frac{\partial\zeta_b}{\partial b}}_{C^1} \leq \Gamma_b^{\frac{1}{2}-C\eta}.
\end{equation}

\item {\it Stronger decay for larger $\abs{y}$}:
\begin{align}
\label{Eqn-zbH1Far}
&\norm{\abs{y}^2\abs{\grad\zeta_b}}_{L^\infty(\abs{y}\geq R_b^2)}
	\leq C\frac{\Gamma_b^\frac{1}{2}}{\abs{b}}, \text{  and}\\
\label{Eqn-GammaBEstimate} 
e^{-(1+C\eta)\frac{\pi}{b}} 
\leq \frac{4}{5} \Gamma_b 
\leq &\norm{\abs{y}^2\abs{\zeta_b}^2}_{L^\infty(\abs{y}\geq R_b^2)}
\leq e^{-(1-C\eta)\frac{\pi}{b}}.
\end{align}
As an estimate on $\Gamma_b$, (\ref{Eqn-GammaBEstimate}) will be indispensable.
\end{itemize}
\end{lemma}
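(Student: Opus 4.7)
My plan is to exploit radial symmetry to reduce to a second-order complex ODE on $(0,\infty)$, to construct fundamental solutions via matched asymptotics near the two singular endpoints $r=0$ and $r=\infty$, and to obtain $\zeta_b$ by a Green's function representation against the small, well-localized source $\Psi_b$. The inequality $\int |\grad \zeta_b|^2 < +\infty$ will play the role of an outgoing radiation condition: it selects a unique fundamental solution at infinity, while regularity at the origin selects another, and the variation-of-parameters formula then yields both existence and uniqueness. Because $\Psi_b$ is supported in $R_b^- \leq |y| \leq R_b$ and satisfies the pointwise bound (\ref{Eqn-PsibEstimate}), all integrals will be reducible to quantities we can estimate explicitly in terms of the two fundamental solutions evaluated on the support of $\Psi_b$.

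First I would set up the homogeneous radial equation $\zeta'' + (1/r)\zeta' - \zeta + ib\zeta + ibr\zeta' = 0$ and identify its two fundamental solutions. Near $r=0$ there is a regular solution $\rho_1(r)$ with $\rho_1(0)\neq 0$ and a singular one $\rho_2(r)\sim \log r$, obtained by Frobenius expansion. Near $r=\infty$ a WKB ansatz motivated by the phase structure $e^{\pm i b r^2/4}$ already present in $Q_b = P_b e^{ib|y|^2/4}$ produces two solutions $\rho_\pm(r)$ whose leading asymptotic is $r^{-1/2}e^{\mp i b r^2/4}$ times a slowly varying amplitude. A careful book-keeping of subleading corrections shows that exactly one of $\rho_\pm$, say $\rho_+$, has $\int_R^\infty |\rho_+'|^2 r\,dr < +\infty$, while the other does not; equivalently only $\rho_+$ is compatible with the finite-$\dot H^1$ condition. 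The unique solution $\zeta_b$ is then the one whose behavior at $r=0$ is purely $\rho_1$ and whose behavior at $r=\infty$ is purely $\rho_+$, and it is given by variation of parameters,
\begin{equation*}
\zeta_b(r) \;=\; \rho_1(r) \int_r^\infty \frac{\rho_+(r') \Psi_b(r')}{W(\rho_1,\rho_+)(r')}\, r'\,dr' \; + \; \rho_+(r) \int_0^r \frac{\rho_1(r') \Psi_b(r')}{W(\rho_1,\rho_+)(r')}\, r'\,dr',
\end{equation*}
with Wronskian $W$ of the two fundamental solutions. Properties (\ref{Eqn-zb_H1L2Near}), (\ref{Eqn-zb_smallH1}), (\ref{Eqn-zbH1Far}) are then direct consequences of this formula together with the sharp asymptotics of $\rho_\pm$ and the exponential smallness of $\Psi_b$ guaranteed by (\ref{Eqn-PsibEstimate}); the derivative (\ref{Eqn-zb_by_b}) is obtained by differentiating the formula and the defining equation with respect to $b$ and applying the same estimates to the resulting inhomogeneous ODE for $\partial_b\zeta_b$.

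The hard part is the sharp two-sided estimate (\ref{Eqn-GammaBEstimate}) on $\Gamma_b$. The definition $\Gamma_b = \lim_{|y|\to\infty}|y||\zeta_b|^2$ is well posed precisely because $\rho_+(r) \sim c_\infty r^{-1/2} e^{-ibr^2/4}$ at infinity, so $\Gamma_b$ equals the squared amplitude $|c_\infty|^2 \cdot |\int_0^\infty \rho_1 \Psi_b W^{-1} r'\,dr'|^2$. To read off the exponential rate one tracks the "connection" between the inner exponentially decaying behavior of $\rho_1 \sim K_0(r)\sim e^{-r}/\sqrt r$ on the regime $1 \ll r \ll 1/\sqrt b$ and the oscillatory outer behavior on $r \gtrsim 1/\sqrt b$. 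A stationary-phase / turning-point analysis of the effective action for the ODE through the transition region of width $\sim 1/\sqrt b$ produces the characteristic constant $\pi$, yielding $|c_\infty|^2 \cdot (\text{source integral})^2 = e^{-\pi(1+O(\eta))/b}$; combined with $\eta$-dependent errors tracked through the cutoff $\phi_b$ (which only enters through the support of $\Psi_b$), this gives the two-sided bound in (\ref{Eqn-GammaBEstimate}). Once $\Gamma_b$ is so identified, the other three claims are recovered: (\ref{Eqn-zb_H1L2Near}) by comparing $|\zeta_b(r)| r^{1/2}$ to its limit $\Gamma_b^{1/2}$ via the outer asymptotics; (\ref{Eqn-zb_smallH1}) by the energy identity obtained from pairing the ODE with $\overline{\zeta_b}$ and integrating on $[0,R]$, sending $R\to\infty$, with the boundary term controlled by $\Gamma_b^{1-C\eta}/b$; and (\ref{Eqn-zbH1Far}) by observing the additional $1/b$ gain in the outer amplitude of $\rho_+'$ relative to $\rho_+$.
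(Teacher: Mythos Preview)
The paper does not give its own proof of this lemma; it simply cites \cite[Lemma 15]{MR-UniversalityBlowupL2Critical-04}. Your sketch is in fact the method used in that reference: reduce to the radial ODE, note that the substitution $\zeta = e^{-ib|y|^2/4}w$ yields $w'' + \tfrac{1}{r}w' + (\tfrac{b^2r^2}{4}-1)w = e^{ib|y|^2/4}\Psi_b$, which has a turning point at $r=2/b\approx R_b$, and then carry out a connection/WKB analysis across that turning point to obtain the $e^{-\pi/b}$ rate and the associated asymptotics. So your plan is correct and is essentially the cited argument.

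Two minor corrections to your write-up. First, the transition region around the turning point $r=2/b$ has Airy width $\sim b^{-1/3}$, not $1/\sqrt b$; this does not affect the outcome but matters if you write out the matched asymptotics carefully. Second, of the two oscillatory tails $\zeta\sim r^{-1/2}$ and $\zeta\sim r^{-1/2}e^{-ibr^2/2}$, only the first lies in $\dot H^1$ (for the second, $|\zeta'|^2 r \sim b^2 r^2$ is not integrable), so the finite-$\dot H^1$ condition indeed selects a unique outer solution; your description via $e^{\mp ibr^2/4}$ is slightly off by a factor of two in the phase. Finally, the clean way to extract the constant $\pi$ is the WKB integral $\int_0^{2/b}\sqrt{1-b^2r^2/4}\,dr = \pi/(2b)$, giving amplitude $e^{-\pi/(2b)}$ and hence $\Gamma_b\sim e^{-\pi/b}$; the $C\eta$ losses come exactly from the $\sqrt{1-\eta}$ perturbation of $R_b$ in the cutoff $\phi_b$, which shifts the upper limit of that integral.
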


Forthcoming parameter $a>0$ is universal, sufficiently small, will be determined in Section \ref{Subsec-Lyapounov}, and determines the choice of $\eta$. 
We denote,
\begin{equation}\begin{aligned}
\label{DefnEqn-A}
A(t) = e^{a\frac{\pi}{b(t)}}, 
&& \text{ so that, }
&& \Gamma_b^{-\frac{a}{2}} \leq A \leq \Gamma_b^{-\frac{3a}{2}},
\end{aligned}\end{equation}
and let $\phi_A$ denote a radially symmetric cutoff function with,
\begin{equation}\begin{aligned}
\label{DefnEqn-phiA}
\phi_A(y) = \left\{\begin{aligned}
	1 && \text{ for }\abs{y} \leq A\\
	0 && \text{ for }\abs{y} \geq 2A.
	\end{aligned}\right.
\end{aligned}\end{equation}
Truncated radiation $\widetilde{\zeta}_b(y)=\phi_A(y)\zeta_b$ satisfies:
\begin{equation}\label{DefnEqn-TildeZb}
\laplacian\widetilde{\zeta}_b - \widetilde{\zeta}_b +ib\Lambda\widetilde{\zeta}_b = \Psi_b + F,
\end{equation}
where the error term $F$ is explicit,
\begin{equation}\label{DefnEqn-F}
F = \zeta_b\laplacian\phi_A + 2\grad\phi_A\cdot\grad\zeta_b + ib\zeta_b y\cdot\grad\phi_A,
\end{equation}
and, in particular, by (\ref{Eqn-zbH1Far}) and (\ref{Eqn-GammaBEstimate}),
\begin{equation}\label{Eqn-FEstimate}
\abs{F}_{L^\infty}+\abs{y\cdot\grad F}_{L^\infty} \leq C\frac{\Gamma^\frac{1}{2}_b}{A}.
\end{equation}

\begin{remark}
\label{Remark-ChoiceOf-a}
For smaller values of $\eta$ the central profiles $\widetilde{Q}_b$ approximate the mass of the singular region more closely, equation (\ref{DefnEqn-Rb}), at the cost that estimates (\ref{Eqn-Qb_closeToQ}) through (\ref{Eqn-Qb_enerAndMoment}) are only known for ever smaller values of $b$.  When $\eta$ is larger, to compensate for the imperfection of our central profile we require more of the radiative tail to get an accurate picture of mass transport, requiring a larger choice of $a$.
See \cite[page 53]{MR-SharpLowerL2Critical-06} for similar remarks on the optimality in choice of $A(t)$. 
\end{remark}

\subsection{Estimates directly due to Geometric Decomposition}
\label{Subsec-LocalVirial}
The following Lemma explains our choice of norm for $\epsilon$.

\begin{lemma}[Weighted and Local $L^2$ Estimates]
\label{Lemma-L2ByGrad}
For any $\kappa > 0$ and for all $v \in H^1(\real^2)$,
\begin{equation}\label{Eqn-ExpDecayByGrad}
\int_{y\in\real^2}{\abs{v(y)}^2e^{-\kappa\abs{y}}}
	\leq C(\kappa)\left(\int{\abs{\grad v(y)}^2} + \int_{\abs{y} \leq 1}{\abs{v(y)}^2e^{-\abs{y}}}\right).
\end{equation}
\begin{equation}\label{Eqn-L2ByGrad}
\int_{\abs{y} \leq \kappa}{\abs{v(y)}^2} \leq C\,\kappa^2\log \kappa
	\left(\int{\abs{\grad v(y)}^2} 
	+ \int_{\abs{y}\leq 1}{\abs{v(y)}^2e^{-\abs{y}}}\right).
\end{equation}
\end{lemma}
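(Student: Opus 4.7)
The plan is to pass to polar coordinates $y = (r\cos\theta, r\sin\theta)$ and exploit the fundamental theorem of calculus along radial lines, anchored in the unit disk where the local $L^2$ term is available. Throughout, let $F^2 := \int\abs{\grad v}^2 + \int_{\abs{y}\leq 1}\abs{v}^2 e^{-\abs{y}}\dy$ denote the right-hand side.

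First I would establish the auxiliary trace bound
\[
\int_0^{2\pi} \abs{v(1,\theta)}^2\,d\theta \;\leq\; C\,F^2.
\]
The identity $\abs{v(1,\theta)}^2 = \abs{v(r_0,\theta)}^2 + 2\,\mathrm{Re}\int_{r_0}^{1} v\,\overline{v_\rho}\,d\rho$ gives, after averaging over $r_0\in[\tfrac12,1]$ and Cauchy-Schwarz,
\[
\abs{v(1,\theta)}^2 \;\leq\; 3\!\int_{1/2}^{1}\abs{v(\rho,\theta)}^2\,d\rho + \int_{1/2}^{1}\abs{v_\rho(\rho,\theta)}^2\,d\rho.
\]
Integrating in $\theta$, inserting the Jacobian $\rho$ (which is bounded above and below on $[\tfrac12,1]$), and using $e^{-\abs{y}}\geq e^{-1}$ on the annulus $\tfrac12\leq\abs{y}\leq 1$ yields the claimed trace bound.

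For the exponential weight inequality, split the integral at $\abs{y}=1$. On $\abs{y}\leq 1$ one has $e^{-\kappa\abs{y}} \leq e^{\max(1,\kappa-1)} e^{-\abs{y}}$, so this piece is immediately dominated by $C(\kappa)F^2$. On $\abs{y}\geq 1$ I would apply $v(r,\theta) = v(1,\theta) + \int_1^r v_\rho\,d\rho$, square, and use Cauchy-Schwarz in the form $\bigl(\int_1^r \abs{v_\rho}\,d\rho\bigr)^2 \leq (r-1)\int_1^r\abs{v_\rho}^2\,d\rho$. Then
\[
\int_{\abs{y}\geq 1}\!\abs{v}^2 e^{-\kappa\abs{y}}\dy \;\leq\; 2\!\int_0^{2\pi}\!\abs{v(1,\theta)}^2\,d\theta\!\int_1^\infty\! re^{-\kappa r}dr + 2\!\int_0^{2\pi}\!\!\int_1^\infty\!\! r^2 e^{-\kappa r}\!\int_1^r\!\abs{v_\rho}^2\,d\rho\,dr\,d\theta.
\]
A Fubini swap bounds the second term by $C(\kappa)\int\abs{\grad v}^2\dy$, since $\int_\rho^\infty r^2 e^{-\kappa r}dr \leq C(\kappa)\rho\,e^{-\kappa\rho/2}$ and $\rho \geq 1$. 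Combined with the trace estimate this yields the first inequality.

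For the local $L^2$ inequality, again split at $\abs{y}=1$; the inner piece is bounded by $e\int_{\abs{y}\leq 1}\abs{v}^2 e^{-\abs{y}}\dy$. For $\abs{y}\in[1,\kappa]$ the crucial step is to apply Cauchy-Schwarz in the scale-invariant form
\[
\Bigl(\int_1^r\abs{v_\rho}\,d\rho\Bigr)^2 \;\leq\; \Bigl(\int_1^r\!\frac{d\rho}{\rho}\Bigr)\!\Bigl(\int_1^r\!\rho\abs{v_\rho}^2 d\rho\Bigr) \;=\; (\log r)\!\int_1^r\!\rho\abs{v_\rho}^2\,d\rho,
\]
which is where the logarithmic factor enters — this is the manifestation of the $2$D borderline failure of $H^1\hookrightarrow L^\infty$. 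Multiplying by $r$, integrating $r\in[1,\kappa]$ and $\theta\in S^1$, and swapping the order of the double radial integral gives $\int_1^\kappa r\log r\,dr \leq C\kappa^2\log\kappa$ as the outer factor, so that the contribution is $\leq C\kappa^2\log\kappa\,\int\abs{\grad v}^2\dy$. The anchor term produces $C\kappa^2\int\abs{v(1,\theta)}^2 d\theta$, which is absorbed into $C\kappa^2 F^2$ by the trace bound. Assembling the pieces gives the second estimate.

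The only mild subtlety is ensuring the trace bound at $r=1$ is controlled purely by $F$ — without the local $L^2$ term in the definition of $F$ this would be impossible in two dimensions — and being careful with the $\kappa$-dependent constants when $\kappa$ is either very small or very large, which is handled by the case split $\kappa\lessgtr 1$ in the first inequality and by the crude bound $\int_\rho^\kappa r\log r\,dr \leq \tfrac12 \kappa^2\log\kappa$ in the second.
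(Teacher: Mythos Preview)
Your argument is correct. The paper itself does not include a proof in the final text, only citing \cite{MR-UniversalityBlowupL2Critical-04} and \cite{MR-SharpLowerL2Critical-06}; but the commented-out proof in the source takes a different route from yours. There the function is decomposed into angular Fourier modes $v(\rho,\theta)=\sum_k v_k(\rho)e^{ik\theta}$; the nonradial part ($k\neq 0$) is controlled by the two-dimensional Hardy inequality $\int \abs{v-v_0}^2/\abs{y}^2 \lesssim \int\abs{\grad v}^2$, after which $e^{-\kappa\abs{y}}\leq C(\kappa)\abs{y}^{-2}$ finishes that piece, while the radial average $v_0$ is handled by the fundamental theorem of calculus anchored at a well-chosen $\rho_0\in[\tfrac12,1]$.

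Your approach sidesteps the Fourier decomposition and Hardy altogether by running the FTC argument ray-by-ray for each fixed $\theta$ and using a trace bound on the circle $\abs{y}=1$ as the anchor. This is more elementary and closer in spirit to the method the paper attributes to \cite{MR-SharpLowerL2Critical-06}; it also yields both estimates in a unified way, whereas the paper's commented argument addresses only the exponential-weight inequality. The Fourier/Hardy route, on the other hand, isolates precisely which part of $v$ genuinely needs the local $L^2$ anchor (only the radial mode), which is conceptually cleaner even if technically heavier.
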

Equation (\ref{Eqn-L2ByGrad}) is due to {\cite[eqn (4.11)]{MR-SharpLowerL2Critical-06}}. While, the original proof of (\ref{Eqn-ExpDecayByGrad}), {\cite[Lemma 5]{MR-UniversalityBlowupL2Critical-04}}, has a flaw, the methods of \cite{MR-SharpLowerL2Critical-06} give an alternate proof.

\begin{remark}[Non-concern for $\mu$]
In practice, we apply these Lemmas and the interaction estimates below only on regions within $\{\abs{y} \lesssim A(t)\}$. That is, equation (\ref{Hypo1-consequenceForMu}) always applies and we may choose to include measure $\mu(y)$ as appropriate.
\end{remark}

\begin{lemma}[Estimates on Interaction Terms, {\cite[Section 5.3(C)]{MR-SharpUpperL2Critical-03}} ]\label{Lemma-InteractionEst}
For all $s\in[s_0,s_1)$,
\begin{itemize}
\item {\it First-Order Terms}
\begin{equation}\label{Eqn-1stOrderEst}
\abs{\left(\epsilon(y),P(y)\frac{d^k}{dy^k}\widetilde{Q}_b(y)\right)}
	\leq C(P)\left(\epsNorm\right)^\frac{1}{2},
\end{equation}
where $P(y)$ is any polynomial and $0\leq k \leq 3$. 
\item {\it Second-Order Terms}
\begin{equation}\label{Eqn-2ndOrderEst}
\abs{\left(R(\epsilon),P(y)\frac{d^k}{dy^k}\widetilde{Q}_b(y)\right)}
	\leq C(P)\left(\epsNorm\right)
\end{equation}
where $P(y)$ is any polynomial, $0\leq k \leq 3$, and $R(\epsilon)$ is the terms of $(\epsilon+\widetilde{Q}_b)\abs{\epsilon+\widetilde{Q}_b}^2$ formally quadratic in $\epsilon$ - see equation (\ref{DefnEqn-R}).  
\item {\it Localized Higher-Order Terms}
\begin{equation}\label{Eqn-3rdOrderEst-J}
\int{\abs{J(\epsilon)-\abs{\epsilon}^4}\mu(y)\,dy}
	\leq \delta(\alpha^*)\left(\epsNorm\right),
\end{equation}
where $J(\epsilon)-\abs{\epsilon}^4 = 4\abs{\epsilon}^2Re\left(\epsilon\overline{\widetilde{Q}_b}\right)$, the term of $\abs{\epsilon+\widetilde{Q}_b}^4$ formally cubic in $\epsilon$ and localized to the support of $\widetilde{Q}_b$. Similarly,
\begin{equation}\label{Eqn-3rdOrderEst-RTilde}
\left(\widetilde{R}(\epsilon),\Lambda\widetilde{Q}_b\right)
	\leq \delta(\alpha^*)\left(\epsNorm\right),
\end{equation}
where $\widetilde{R}(\epsilon) = \epsilon\abs{\epsilon}^2$, the term of $(\epsilon+\widetilde{Q}_b)\abs{\epsilon+\widetilde{Q}_b}^2$ formally cubic in $\epsilon$. 
\end{itemize}
\end{lemma}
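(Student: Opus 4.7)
The plan for the first two estimates is straightforward Cauchy--Schwarz, exploiting the exponential decay of $\widetilde{Q}_b$ and its derivatives inherited from proximity to $Q$ (equation (\ref{Eqn-Qb_closeToQ})), together with the weighted $L^2$ control of Lemma \ref{Lemma-L2ByGrad}. For the first-order estimate, write
\[
\abs{(\epsilon, P(y)\partial_y^k \widetilde{Q}_b)}
\leq \left(\int \abs{\epsilon}^2 e^{-\abs{y}}\dy\right)^{\frac{1}{2}}
\left(\int \abs{P(y)}^2 \abs{\partial_y^k \widetilde{Q}_b}^2 e^{\abs{y}}\dy\right)^{\frac{1}{2}},
\]
and absorb the second factor into $C(P)$ using (\ref{Eqn-Qb_closeToQ}), which gives $|\partial_y^k \widetilde{Q}_b| \lesssim e^{-\abs{y}}$ uniformly in $b$. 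The first factor is then dominated by $\epsNorm$ via equation (\ref{Eqn-ExpDecayByGrad}). For the second-order estimate, recall $R(\epsilon)$ is quadratic in $\epsilon$ with an explicit $\widetilde{Q}_b$ factor (it collects the cross terms of $(\widetilde{Q}_b+\epsilon)|\widetilde{Q}_b+\epsilon|^2$ that are formally $\epsilon^2$). Therefore the integrand is bounded pointwise by $C(P)\abs{\epsilon}^2 e^{-\abs{y}}$ after distributing the exponential weight between the two $\widetilde{Q}_b$ factors, and (\ref{Eqn-ExpDecayByGrad}) again closes the estimate.

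For the localized cubic estimates (\ref{Eqn-3rdOrderEst-J}) and (\ref{Eqn-3rdOrderEst-RTilde}), the key is to trade one factor of $\epsilon$ for smallness. Both integrals have the shape
\[
I = \int \abs{\epsilon(y)}^3\, W(y)\, \mu(y)\dy,
\]
where $W$ is exponentially decaying (it carries a factor of $\widetilde{Q}_b$). After setting $\rho := \epsilon\, W^{1/3}$ and absorbing $\mu\sim 1$ on the support of $W$ via (\ref{Hypo1-consequenceForMu}), I apply the two-dimensional Gagliardo--Nirenberg inequality $\|\rho\|_{L^3}^3 \lesssim \|\rho\|_{L^2}^2\,\|\grad \rho\|_{L^2}$ to obtain
\[
I \lesssim \left(\int \abs{\epsilon}^2 e^{-\abs{y}}\dy\right)\,\left(\int \abs{\grad\epsilon}^2\dy + \int \abs{\epsilon}^2 e^{-\abs{y}}\dy\right)^{\frac{1}{2}},
\]
where the gradient of the weight contributes a term already of the desired form. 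Invoking (\ref{Eqn-ExpDecayByGrad}) and then the bootstrap bound (\ref{Hypo1-epsilon}), one factor of $\left(\epsNorm\right)^{1/2} \lesssim \Gamma_{b}^{3/8}$ is available to supply the prefactor $\delta(\alpha^*)$, since $\Gamma_b$ itself is controlled by a power of $\alpha^*$ via hypothesis {\bf H1.3}.

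The one subtlety, and the step I expect to be most delicate, is verifying that the argument goes through uniformly for $\widetilde{R}(\epsilon)=\epsilon\abs{\epsilon}^2$ paired against $\Lambda\widetilde{Q}_b$, since $\Lambda = 1 + y\cdot\grad_y$ does not destroy the exponential decay of $\widetilde{Q}_b$ but does introduce a polynomial factor $y$. This polynomial is harmless: $\abs{y}\,e^{-\abs{y}}$ is still integrable against any weighted norm, and can be folded into the constant $C(P)$ of equation (\ref{Eqn-Qb_closeToQ}) (or equivalently absorbed into the $L^\infty$ bound for $W$ in the cubic estimate). The same argument handles (\ref{Eqn-3rdOrderEst-J}) since $J(\epsilon)-\abs{\epsilon}^4 = 4\abs{\epsilon}^2 Re(\epsilon\overline{\widetilde{Q}_b})$ carries the required $\widetilde{Q}_b$ weight directly. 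This completes the sketch; full details follow those of \cite[Section 5.3(C)]{MR-SharpUpperL2Critical-03}.
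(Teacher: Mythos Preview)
Your proposal is correct and follows the standard approach of the cited reference \cite[Section 5.3(C)]{MR-SharpUpperL2Critical-03}; the paper itself does not supply a separate proof but simply invokes that reference, so there is nothing further to compare. One small remark: as defined in (\ref{DefnEqn-R}), $R(\epsilon)$ actually contains the cubic term $\epsilon|\epsilon|^2$ as well as the formally quadratic terms, so your treatment of (\ref{Eqn-2ndOrderEst}) should also note that the cubic contribution, once paired against $P(y)\partial_y^k\widetilde{Q}_b$, is handled by exactly the same Gagliardo--Nirenberg argument you give for (\ref{Eqn-3rdOrderEst-RTilde}) and is therefore $\leq \delta(\alpha^*)\bigl(\epsNorm\bigr) \leq C(P)\bigl(\epsNorm\bigr)$.
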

The following estimate is our first nontrivial departure from the $L^2$-critical argument.
\begin{lemma}[Complete Estimate on $J(\epsilon)$]
\label{Lemma-JEstimate}
For all $s\in[s_0,s_1)$,
\begin{equation}\label{Eqn-4thOrderEst}
\int{\abs{\epsilon(y)}^4\mu(y)\,dy} \leq \delta(\alpha^*)\left(\epsNorm\right).
\end{equation}
With equation (\ref{Eqn-3rdOrderEst-J}), this gives a complete estimate for $J(\epsilon)$.
\end{lemma}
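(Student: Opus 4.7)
The plan is to recast the weighted two-dimensional $L^4$ integral as the three-dimensional $L^4$ norm of the residual $\widetilde{u}$, partition by the cutoff $\chi$, and attack each piece by a different Sobolev inequality, exactly as outlined in the introduction's strategy section. Using the geometric decomposition together with the measure identity of Section \ref{Subsec-GeoDecomp}, one obtains
\[
\int\abs{\epsilon}^4\mu\dy = \lambda^2\int_{\real^3}\abs{\widetilde{u}}^4\dx \lesssim \lambda^2\norm{\chi \widetilde{u}}_{L^4(\real^3)}^4 + \lambda^2\int(1-\chi^4)\abs{\widetilde{u}}^4\dx,
\]
so the two pieces can be treated by different tools.

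For the far-from-ring piece, the central profile $\frac{1}{\lambda}\widetilde{Q}_b$ is supported on $\abs{(r,z)-(r(t),z(t))}\leq \lambda R_b$, which by \textbf{H1.1} and \textbf{H1.3} lies strictly inside $\{\abs{(r,z)-(1,0)}\leq \frac{1}{3}\}$, a set on which $\chi$ vanishes; hence $\chi\widetilde{u}=\chi u$ everywhere. Because this function is supported on a genuinely three-dimensional region containing a neighbourhood of the origin, the natural estimate is the three-dimensional Sobolev embedding $H^{3/4}(\real^3)\hookrightarrow L^4(\real^3)$, interpolated between $H^{1/2}$ and $H^1$:
\[
\norm{\chi u}_{L^4(\real^3)}^4 \lesssim \norm{\chi u}_{H^{1/2}(\real^3)}^2\norm{\chi u}_{H^1(\real^3)}^2.
\]
The first factor is precisely where the hypothesis (\ref{Hypo2-HcritReinterpreted}) enters, yielding $\norm{\chi u}_{H^{1/2}(\real^3)}^2 < \alphaConst{1/5}$. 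For the second factor, the product rule combined with $u=\widetilde{u}$ on the supports of $\chi$ and $\grad\chi$ gives $\norm{\chi u}_{H^1(\real^3)}^2\lesssim \norm{\widetilde{u}}_{L^2(\real^3)}^2 + \norm{\grad\widetilde{u}}_{L^2(\real^3)}^2$, and the scaling identity $\lambda^2\norm{\grad\widetilde{u}}_{L^2(\real^3)}^2=\int\abs{\grad_y\epsilon}^2\mu\dy$ together with \textbf{H1.2} converts this contribution into $\delta(\alpha^*)$ times the right-hand side of (\ref{Eqn-4thOrderEst}).

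For the near-ring piece, change back to $y$-variables. On the support of $1-\chi^4$ one has $r\in[\frac{1}{3},\frac{5}{3}]$ and therefore $\mu(y)\approx 1$, so the problem is essentially two-dimensional. Introduce a smooth cutoff $\psi$, built as a fixed function of $(r,z)$, equal to $1$ on this support and supported inside a slightly larger bounded $x$-region; then $\norm{\grad_y\psi}_{L^\infty(\real^2)}\lesssim \lambda$ because $\psi$ is assembled from a profile in the unrescaled coordinates. The two-dimensional Gagliardo--Nirenberg inequality applied to $\epsilon\psi \in H^1(\real^2)$ yields
\[
\int(1-\chi^4)\abs{\epsilon}^4\mu\dy \lesssim \norm{\epsilon\psi}_{L^2(\real^2)}^2\,\norm{\grad(\epsilon\psi)}_{L^2(\real^2)}^2,
\]
where $\norm{\epsilon\psi}_{L^2(\real^2)}^2\lesssim \int\abs{\epsilon}^2\mu\dy = \norm{\widetilde{u}}_{L^2(\real^3)}^2 < \alphaConst{1/5}$ by \textbf{H1.2} (thanks to $\mu\approx 1$ on supp$\,\psi$), and expansion of $\grad(\epsilon\psi)$ produces $\norm{\grad(\epsilon\psi)}_{L^2(\real^2)}^2\lesssim \int\abs{\grad_y\epsilon}^2\mu\dy + \lambda^2\norm{\widetilde{u}}_{L^2(\real^3)}^2$, in which the second term is a negligible remainder from the cutoff gradient.

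The main obstacle, and the reason Lemma \ref{Lemma-JEstimate} is flagged as the first non-trivial departure from the two-dimensional argument, is the careful verification that every residual term is dominated by $\delta(\alpha^*)$ times the right-hand side of (\ref{Eqn-4thOrderEst}) without accumulating unfavourable powers of $\lambda$. The essential leverage is the simultaneous smallness of $\norm{\chi u}_{H^{1/2}(\real^3)}$ supplied by (\ref{Hypo2-HcritReinterpreted}) and of the total residual mass $\norm{\widetilde{u}}_{L^2(\real^3)}$ from \textbf{H1.2}, together with the compensating factor $\norm{\grad_y\psi}_{L^\infty(\real^2)}\lesssim \lambda$ inherent to building a cutoff in the original $x$-coordinates. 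When the two contributions are tabulated they coalesce into a bound of the form $C\,\alphaConst{1/5}\left(\int\abs{\grad_y\epsilon}^2\mu\dy + \int_{\abs{y}\leq 10/b}\abs{\epsilon}^2 e^{-\abs{y}}\dy\right)$, which is the claimed inequality with $\delta(\alpha^*)\sim \alphaConst{1/5}$.
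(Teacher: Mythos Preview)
Your proof is correct and follows essentially the same route as the paper: split via the cutoff $\chi$, handle the piece supported near the origin by the three-dimensional embedding $H^{3/4}(\real^3)\hookrightarrow L^4$ interpolated between $H^{1/2}$ and $H^1$ together with hypothesis (\ref{Hypo2-HcritReinterpreted}), and handle the near-ring piece by two-dimensional Gagliardo--Nirenberg combined with the small-mass bound \textbf{H1.2}. The paper presents this slightly more tersely --- it does not introduce the auxiliary cutoff $\psi$, simply noting that on the support of $1-\chi^4$ one has $\mu\approx 1$ and applying two-dimensional Sobolev directly --- but your explicit cutoff is a harmless device that accomplishes the same thing and makes the gradient bookkeeping transparent.
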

\begin{proof}
Partition the support of $\epsilon$ into two and three dimensional regions,
\begin{equation}\label{Proof-4thOrder-eqn1}
\int{\abs{\epsilon(y)}^4\mu(y)\,dy}
=	\int{\left(1-\chi^4\right)\abs{\epsilon(y)}^4\mu(y)\,dy}
	+ \int{\abs{\chi\left(\lambda y+(r,z)(s)\right)\epsilon(y)}^4\mu(y)\,dy}.
\end{equation}
The first RH term is supported away from $r=0$, and due to {\bf H1.1} the support of $1-\chi^4$ is approximately, $\left\{\abs{y}<\frac{2}{3}\frac{1}{\lambda}\right\}$, so that, $\frac{1}{3} \lesssim \mu(y) \lesssim \frac{5}{3}$. We estimate this term by two-dimensional Sobolev embedding and the small mass assumption {\bf H1.2}.
Regarding the second RH term, the support of $\chi^4$ excludes the support of $\widetilde{Q}_b$ by the same reasons. Changing variables,
\begin{equation}\label{Proof-4thOrder-eqn2}
\int{\abs{\chi\left(x(y)\right)\epsilon(y)}^4\mu(y)\,dy}
=\lambda^2\int_{x\in\real^{3}}{\abs{\chi(x) u(x)}^4\,dx}.
\end{equation}
By the three-dimensional Sobolev embedding, $\dot{H}^{\frac{3}{4}} \hookrightarrow L^4(\real^{3})$, and interpolation,
\begin{equation}\begin{aligned}
\label{Proof-4thOrder-eqn3}
\lambda^2\int_{x\in\real^{3}}{\abs{\chi(x) u(x)}^4\,dx}
	&\lesssim \norm{\chi u}_{\dot{H}^\frac{1}{2}}^2
		\lambda^2\norm{\chi u}_{\dot{H}^1(\real^{3})}^2
	&\lesssim \norm{\chi u}_{H^\frac{1}{2}}\left(\int{\abs{\grad_y\epsilon}^2\mu\,dy}\right).
\end{aligned}\end{equation}
To complete the proof, we use the assumed control {\bf H2.3} for the first and only time.
\end{proof}

\begin{lemma}[Estimates due to Conservation Laws]
\label{Lemma-PrelimEstimatesConservLaws}
For all $s\in[s_0,s_1)$ the following are true:
\begin{itemize}
\item {\it Due to conservation of mass:}
\begin{equation}\label{Eqn-prelimMassEst}
b^2 + \int{\abs{\tilde{u}}^2} \lesssim \left(\alpha^*\right)^\frac{1}{2}.
\end{equation}
\item {\it Due to conservation of energy:}
\begin{multline}\label{Eqn-prelimEnerEst}
\abs{ 
2Re\left(\epsilon,\widetilde{Q}_b\right) 
-\int{\abs{\grad\epsilon}^2\mu(y)\,dy}
+3\int_{\abs{y}\leq\frac{10}{b}}{Q^2\epsRe^2}
+\int_{\abs{y}\leq\frac{10}{b}}{Q^2\epsIm^2}
}\\
\shoveright{
\leq 
\Gamma_b^{1-C\eta} + \delta(\alpha^*)\left(\epsNorm\right),
}
\end{multline}
\item {\it Due to localized momentum (\ref{Hypo1-enerMoment}):}
\begin{equation}\label{Eqn-prelimMomentEst}
\abs{Im\left(\epsilon,\grad\widetilde{Q}\right)}
\leq
\Gamma_b^2 + \delta(\alpha^*)\left(\epsNorm\right)^\frac{1}{2}.
\end{equation}
In particular, by H\"older and (\ref{Eqn-Qb_closeToQ}), (\ref{Eqn-prelimMomentEst}) also holds for $\abs{\left(\epsIm,Re(\grad\widetilde{Q}_b)\right)}$.
\end{itemize}
\end{lemma}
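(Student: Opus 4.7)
The plan is to verify each of the three bounds by specializing the relevant global conservation law to the decomposition (\ref{Eqn-GeoDecomp}) and controlling the resulting remainders with Lemmas \ref{Lemma-InteractionEst} and \ref{Lemma-JEstimate}. Throughout, since $\widetilde{Q}_b$ and its relevant derivatives are supported in $\{|y|\leq 2/b\}$, equation (\ref{Hypo1-consequenceForMu}) lets us treat the weight $\mu(y)$ as $2\pi r(t)$ on such supports, at the cost of an $O(\lambda)\ll\Gamma_b^{10}$ error justified by (\ref{Hypo1-consequenceForLambdaGamma}).

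For the mass bound, start from $M[u(t)]=M[u_0]$ and expand
$\|u\|_{L^2(\real^3)}^2=\|\frac{1}{\lambda}\widetilde{Q}_b\|^2+2\,Re\langle\frac{1}{\lambda}\widetilde{Q}_be^{-i\gamma},\widetilde{u}\rangle+\|\widetilde{u}\|^2$.
In $y$-coordinates, $\|\frac{1}{\lambda}\widetilde{Q}_b\|^2=\int|\widetilde{Q}_b|^2\mu\,dy=2\pi r(t)(\|Q\|_{L^2(\real^2)}^2+d_0 b^2)+O(\lambda)$ by (\ref{Eqn-Qb_by_bSquared}), while the cross term is $2\,Re(\epsilon,\widetilde{Q}_b)_\mu$ and is bounded by $\Gamma_b^{3/8}$ via (\ref{Eqn-1stOrderEst}) and (\ref{Hypo1-epsilon}). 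Subtracting the same identity at $t=0$ and invoking (C1.1)--(C1.2) cancels the large $1/\lambda$-scale terms and leaves a residual of size $|r(t)-r_0|+|b^2-b_0^2|\lesssim(\alpha^*)^{1/2}$ that dominates both $b^2$ and $\|\widetilde{u}\|^2$, yielding (\ref{Eqn-prelimMassEst}).

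For the energy bound, apply the same strategy to the rescaled identity $\lambda^2 E[u(t)]=\int|\nabla_y(\widetilde{Q}_b+\epsilon)|^2\mu-\frac{1}{2}\int|\widetilde{Q}_b+\epsilon|^4\mu=\lambda^2 E_0$, whose right side is $\Gamma_b^{1-C\eta}$-small by (\ref{Hypo1-enerMoment}), {\bf H1.5}, and (\ref{Eqn-GammaBEstimate}). Expanding in $\epsilon$: the pure-profile part is $\lambda^2 E(\widetilde{Q}_b)$, itself $\Gamma_b^{1-C\eta}$-small by (\ref{Eqn-Qb_enerAndMoment}); the linear-in-$\epsilon$ terms, after integration by parts and substitution of (\ref{DefnEqn-TildeQb}), collapse to $-2\,Re(\epsilon,\widetilde{Q}_b)-2b\,Im(\epsilon,\Lambda\widetilde{Q}_b)+2\,Re(\epsilon,\Psi_b)$, where the middle summand vanishes up to $O(\lambda)$ by (\ref{Eqn-GeoDecomp-OrthogImL}) and the last is $\Gamma_b^{1-C\eta}$-small by (\ref{Eqn-PsibEstimate}); the quadratic-in-$\epsilon$ terms, by (\ref{Eqn-Qb_closeToQ}), reduce to $\int|\nabla\epsilon|^2\mu-3\int Q^2\epsRe^2-\int Q^2\epsIm^2$ modulo $\delta(\alpha^*)(\epsNorm)$; and the cubic/quartic terms in $\epsilon$ are absorbed into $\delta(\alpha^*)(\epsNorm)$ via (\ref{Eqn-3rdOrderEst-J}) together with the global $L^4$ control of Lemma \ref{Lemma-JEstimate}, which is where the cutoff bound (\ref{Hypo2-HcritReinterpreted}) gets used. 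Rearranging these contributions produces (\ref{Eqn-prelimEnerEst}).

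For the localized momentum bound, unfold the hypothesis (\ref{Hypo1-enerMoment}) in the $y$-variables: since $\nabla_x\psi^{(x)}=(x_1/r,x_2/r,1)$ on the ring and $u$ is cylindrically symmetric, the 3D integral collapses to $\frac{1}{\lambda}Im\int(\partial_{y_1}v+\partial_{y_2}v)\overline{v}\,\mu\,dy$ with $v=\widetilde{Q}_b+\epsilon$, so that $|Im\int\nabla_y v\cdot(1,1)\,\overline{v}\,\mu\,dy|\leq\Gamma_b^2$. The self-interaction $Im\int\nabla\widetilde{Q}_b\,\overline{\widetilde{Q}_b}\,\mu$ vanishes by (\ref{Eqn-Qb_enerAndMoment}) up to $O(\lambda)$ corrections from the non-constant part of $\mu$; the quadratic $\epsilon\times\epsilon$ piece is bounded by $\|\nabla\epsilon\|_{L^2(\mu)}\|\epsilon\|_{L^2(\mu)}\lesssim\delta(\alpha^*)(\epsNorm)^{1/2}$ using {\bf H1.2}; and the cross terms, after integration by parts and substitution $\widetilde{Q}_b=Q+O(b)$ from (\ref{Eqn-Qb_closeToQ}), produce $-2\,Im(\epsilon,\nabla\widetilde{Q}_b)\cdot(1,1)$ plus controllable errors. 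Repeating the argument with a companion cutoff whose gradient is $(1,-1)$ on the ring separates the two vector components, giving (\ref{Eqn-prelimMomentEst}).

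I expect the main technical obstacle to be the consistent bookkeeping of $\mu$-induced corrections to the integration-by-parts identities that underlie the above algebra. In the 2D setting these identities are exact, whereas here $\nabla\mu=2\pi\lambda\,(1,0)\neq 0$ produces extra terms at each integration by parts; every such term must be shown to carry a factor $\lambda\ll\Gamma_b^{10}$ on the support of $\widetilde{Q}_b$, and one must verify that none of the crucial cancellations (the vanishing of $b\,Im(\epsilon,\Lambda\widetilde{Q}_b)$, of the self-momentum of $\widetilde{Q}_b$, and of the linear $\Psi_b$-remainders) is corrupted by the cylindrical weight.
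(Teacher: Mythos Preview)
Your proposal is correct and follows essentially the same approach as the paper: expand each conservation law through the geometric decomposition, eliminate the pure-profile contributions via (\ref{DefnEqn-TildeQb}) and the degenerate energy/momentum (\ref{Eqn-Qb_enerAndMoment}), reduce the quadratic-in-$\epsilon$ terms to $Q^2\epsilon^2$ via (\ref{Eqn-Qb_closeToQ}), and absorb the cubic/quartic and $\mu$-induced remainders with Lemmas \ref{Lemma-InteractionEst}--\ref{Lemma-JEstimate} together with (\ref{Hypo1-consequenceForLambdaGamma}). For the momentum, the paper separates the two vector components by simply fixing one coordinate $j\in\{1,2\}$ at a time (using that $\partial_{x_j}\psi^{(x)}\equiv 1$ on the support of $\widetilde{Q}_b$) rather than introducing a companion cutoff with gradient $(1,-1)$, but the substance is identical.
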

\begin{proof}

\begin{itemize}
\item {\it Conservation of mass}: $\int_{\real^{3}}{\abs{u(t)}^2\,dx} = \int{\abs{u_0}^2}$

From the geometric decomposition, expand and change some variables,
\begin{equation}\label{Proof-prelimMass-eqn1}
\int{\abs{\widetilde{Q}_b(y)}^2\mu(y)\,dy}
	+2Re\left(\int{\epsilon\overline{\widetilde{Q}}_b\mu(y)\,dy}\right)
	+\int{\abs{\tilde{u}(t)}^2}
	=\int{\abs{u_0}^2}.
\end{equation}
Expand measure $\mu$. 
Due to the bound on $\lambda$, equation (\ref{Hypo1-consequenceForLambdaGamma}), hypotheses  {\bf H1.1} and {\bf H1.2}, and the supercritical mass of $\widetilde{Q}_b$, 
\begin{equation}\label{Proof-prelimMass-eqn2}\begin{aligned}
\int{\abs{\widetilde{Q}_b}^2\mu(y)\,dy} - \int{Q^2} =
& \lambda\int{y_1\abs{\widetilde{Q}_b}^2\,dy}
+(r(t)-1)\int{\abs{\widetilde{Q}_b}^2}\\
& +\int{\abs{\widetilde{Q}_b}^2} - \int{Q^2}
\gtrsim  b^2 - \sqrt{\alpha^*}.
\end{aligned}\end{equation}
Due to small $b_0$ and the small mass of $\epsilon_0$, {\bf C1.2} 
$\abs{\int_{\real^{3}}{\abs{u_0}^2}-\int_{\real^2}{Q^2}} \lesssim C\alpha^*$.

Due to local support, and hypothesis {\bf H1.2},
$
\abs{\int{\epsilon\overline{\widetilde{Q}}_b\mu}} \lesssim \alpha^*.
$

\item {\it Conservation of Energy}: $\int_{\real^{3}}{\abs{\grad u(t)}^2\,dx}-\frac{1}{2}\int{\abs{u}^4} = 2E_0$

From the geometric decomposition, 
\begin{equation}\label{Proof-prelimEnergy-eqn1}
2\lambda^2E_0 = \int{\abs{\grad_y(\widetilde{Q}_b+\epsilon)}^2\mu(y)\,dy} - \frac{1}{2}\int{\abs{\widetilde{Q}_b+\epsilon}^4\mu(y)\,dy}
\end{equation}
Partially expand measure $\mu$,
\begin{equation}\label{Proof-prelimEnergy-eqn2}\begin{aligned}
\int{\abs{\grad_y(\widetilde{Q}_b+\epsilon)}^2\mu(y)\,dy}
= &
	r(t)\int{\abs{\grad_y\widetilde{Q}_b}^2} 
	+ 2r(t)Re\left(\int{\grad_y\epsilon\cdot\grad_y\overline{\widetilde{Q}_b}}\right)
	+ \int{\abs{\grad_y\epsilon}^2\mu(y)\,dy} \\
 &
	+\int{\lambda y_1
		\left(\abs{\grad_y\widetilde{Q}_b}^2
		 + 2Re(\epsilon\overline{\widetilde{Q}_b})\right)\,dy}.
\end{aligned}\end{equation}
Due to the support of $\widetilde{Q}_b$, the second line is of the order $\lambda$, and thus inconsequential. 
Through a similar approach,
\begin{equation}\label{Proof-prelimEnergy-eqn3}\begin{aligned}
-\frac{1}{2}\int{\abs{\widetilde{Q}_b+\epsilon}^4\mu(y)\,dy}
= -&r(t)\left(\begin{aligned}
		&\frac{1}{2}\int{\abs{\widetilde{Q}_b}^4} 
		+2Re\left(\int{\epsilon\overline{\widetilde{Q}_b}\abs{\widetilde{Q}_b}^2}\right)\\
		&+\int{\abs{\epsilon}^2\abs{\widetilde{Q}_b}^2}
		+Re\left(\int{\epsilon^2\overline{\widetilde{Q}_b}^2}\right)
		\end{aligned}\right)\\
	&+\lambda\, O\left(\abs{\widetilde{Q}_b}^2\right)
		-\frac{1}{2}\int{J(\epsilon)\mu(y)\,dy}.
\end{aligned}\end{equation}
Now proceed as in the $L^2$-critical argument. Integrate $\int{\grad_y\epsilon\cdot\grad_y\overline{\widetilde{Q}_b}}$ by parts and substitute the equation for $\widetilde{Q}_b$ (\ref{DefnEqn-TildeQb}) - this cancels the term of (\ref{Proof-prelimEnergy-eqn3}) linear in $\epsilon$. Recall the bound for $\Psi_b$ (\ref{Eqn-PsibEstimate}), the degenerate energy of $\widetilde{Q}_b$ (\ref{Eqn-Qb_enerAndMoment}), proximity to $Q$ (\ref{Eqn-Qb_closeToQ}), that $r(t) \sim 1$, and the non-trivial estimate on $J$, equation (\ref{Eqn-4thOrderEst}).

\item {\it Localized momentum (\ref{Hypo1-enerMoment})}:

In cylindrical coordinates, $\grad_xf\cdot\grad_xg = \partial_rf\partial_rg + \partial_zf\partial_zf$. For this proof we denote $r$ by $x_1$ and $z$ by $x_2$. Fix either $j=1$ or $j=2$.
From the geometric decomposition,
\begin{multline}\label{Proof-prelimMoment-eqn1}
\lambda(t)Im\left(\int_{\real^{3}}{\partial_{x_j}\psi^{(x)}\partial_{x_j} u\,\overline{u}\,dx}\right)
=
Im\left(\int{
	\partial_{x_j}\psi^{(x)}
	\partial_{y_j}\left(\widetilde{Q}_b+\epsilon\right)\,
	\overline{\left(\widetilde{Q}_b+\epsilon\right)}\mu(y)\,dy}\right).
\end{multline}
Directly from definition (\ref{DefnEqn-psi-x}), $\partial_{x_j}\psi^{(x)} = 1$ on the support of $\widetilde{Q}_b$. 
For the interaction term in $\partial_{y_j}\epsilon\overline{\widetilde{Q}_b}$ we expand the measure $\mu$ (\ref{DefnEqn-mu}) and integrate by parts the term in $r(t)$. Applying the denegerate momentum of $\widetilde{Q}_b$ (\ref{Eqn-Qb_enerAndMoment}) we have,
\begin{equation}\label{Proof-prelimMoment-eqn2}\begin{aligned}
2r(t)Im\left(\epsilon,\partial_{y_j}\widetilde{Q}_b\right) = 
& Im\left(\int{\lambda y_1
		\left(\partial_{y_j}\epsilon\overline{\widetilde{Q}_b} + \partial_{y_j}\widetilde{Q}_b\overline{\epsilon} + \partial_{y_j}\widetilde{Q}_b\overline{\widetilde{Q}_b}\right)\,dy}\right)\\
&+Im\left(\int{\partial_{x_j}\psi^{(x)}
	\partial_{y_j}\epsilon\overline{\epsilon}\mu(y)\,dy}\right)\\
&-\lambda(t)Im\left(\int_{\real^{3}}{\partial_{x_j}\psi^{(x)}\partial_{x_j} u\,\overline{u}\,dx}\right).
\end{aligned}\end{equation}
The first line is the order $\lambda$, and thus negligible. 
The second line we apply H\"older and the small mass assumption {\bf H1.2}. The final term is controlled by {\bf H1.4}.
\begin{remark}[Role of Momentum Conservation]
The estimate analogous to (\ref{Eqn-prelimMomentEst}) in the $L^2$-critical context is proven with the conservation of momentum in place of {\bf H1.4}, \cite[Appendix A]{MR-SharpUpperL2Critical-03}.
As might be expected, the proof of {\bf I1.4} will resemble the proof of momentum conservation. See equation (\ref{Proof-Improv1-enerMoment-MorawetzType}).
\end{remark}
\end{itemize}

\end{proof}

\begin{definition}[NLS Reformulated for $\epsilon$]
\label{Defn-epsEqn}
For $s\in[s_0,s_1)$, $y \in \left\lbrack-\frac{r(t)}{\lambda(t)},+\infty\right)\times\real$, and a suitable boundary condition at $y_1 = -\frac{r(t)}{\lambda(t)}$, $\epsilon$ satisfies:
\begin{equation}\begin{aligned}
\label{DefnEqn-epsEqn}
	ib_s\frac{\partial\widetilde{Q}_b}{\partial b} 
	+i\epsilon_s 
	-M(\epsilon) + \frac{\lambda}{r(y_1)}\partial_{y_1}\epsilon
	+ib\Lambda\epsilon
= &
	i\left(\frac{\lambda_s}{\lambda}+b\right)\Lambda\widetilde{Q}_b 
	+\tilde{\gamma}_s\widetilde{Q}_b
	+i\frac{(r_s,z_s)}{\lambda}\cdot\grad_y\widetilde{Q}_b\\
  &
	+i\left(\frac{\lambda_s}{\lambda}+b\right)\Lambda\epsilon 
	+\tilde{\gamma}_s\epsilon
	+i\frac{(r_s,z_s)}{\lambda}\cdot\grad_y\epsilon\\
  &
	+\Psi_b
	-R(\epsilon),
\end{aligned}\end{equation} 
where we introduced the new variable,
\begin{equation}\label{DefnEqn-gammaTilde}
\tilde{\gamma}(s) = -s - \gamma(s).
\end{equation}
Note the single new term due to cylindrical symmetry.
As already mentioned, the term $R(\epsilon)$ corresponds to those terms formally quadratic in $\epsilon$,
\begin{equation}\label{DefnEqn-R}
R(\epsilon) = 
	\left(\epsilon+\widetilde{Q}_b\right)\abs{\epsilon+\widetilde{Q}_b}^2
	-\widetilde{Q}_b\abs{\widetilde{Q}_b}^2
	-2\abs{\widetilde{Q}_b}^2\epsilon - \left(2\widetilde{Q}_b^2-Re(\widetilde{Q}_b^2)\right)\overline{\epsilon}.
\end{equation}
\end{definition}

\begin{lemma}[Estimates due to Orthogonality Conditions]
\label{Lemma-PrelimEstimatesOrthogConds}
For all $s\in[s_0,s_1)$, 
\begin{equation}\label{Eqn-prelimLambda+BEst}
\abs{\frac{\lambda_s}{\lambda}+b} +\abs{b_s} 
	\lesssim
	\Gamma_b^{1-C\eta} + \left(\epsNorm\right),
\end{equation}
\begin{multline}\label{Eqn-prelimGamma+REst}
\abs{\tilde{\gamma}_s - 
	\frac{1}{\abs{\Lambda Q}_{L^2}^2}\left(\epsRe,L_+(\Lambda^2Q)\right)}
+ \abs{\frac{r_s}{\lambda}}+\abs{\frac{z_s}{\lambda}}\\
{
\leq
\Gamma_b^{1-C\eta} + \delta(\alpha^*)\left(\epsNorm\right)^\frac{1}{2}.
}
\end{multline}
\end{lemma}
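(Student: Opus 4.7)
The plan is to differentiate each of the four orthogonality conditions (\ref{Eqn-GeoDecomp-OrthogReY2})--(\ref{Eqn-GeoDecomp-OrthogImL}) in $s$, substitute the expression for $\epsilon_s$ from the reformulated NLS (\ref{DefnEqn-epsEqn}), and thereby produce a linear system in the unknown modulation parameters
\[
X_1 = b_s,\quad X_2 = \frac{\lambda_s}{\lambda}+b,\quad X_3 = \tilde{\gamma}_s,\quad X_4 = \frac{r_s}{\lambda},\quad X_5 = \frac{z_s}{\lambda}.
\]
Each differentiated orthogonality condition contributes one equation after taking the appropriate real or imaginary part of an inner product of (\ref{DefnEqn-epsEqn}) with one of the four test functions $|y|^2\Qb$, $y\Qb$, $\Lambda^2\Qb$, $\Lambda\Qb$. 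A fifth equation is obtained by inner-producting with $\grad_y\Qb$ to pick up the translation parameters in the vectorial direction; modulo negligible $b$-corrections coming from $\|\Qb-Q\|_{C^3}\to 0$ (equation (\ref{Eqn-Qb_closeToQ})) the two translation components decouple naturally.

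First I would identify the leading linear part of the system. The algebraic identities (\ref{Strategy1-Algebra}), together with $(\Lambda Q,Q)=0$, yield at the limit $b=0$ an almost block-diagonal matrix: the coefficient of $X_2$ in the $\Lambda\Qb$ equation is essentially $\|\Lambda Q\|_{L^2}^2$, the coefficient of $X_4,X_5$ in the $y\Qb$ equation is $\frac12\|yQ\|_{L^2}^2$, and the coefficient of $X_1$ in the $\Lambda^2\Qb$ equation comes from the explicit derivative $\partial_b\Qb\sim -i|y|^2 Q/4$ from (\ref{Eqn-Qb_by_b}), giving a nonzero diagonal entry $\sim \tfrac14\|yQ\|_{L^2}^2$. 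The phase parameter $X_3$ couples only weakly because $(\Qb,\Lambda\Qb)\to 0$ and $(\Qb,\Lambda^2\Qb)\to 0$. The orthogonality condition $\Re(\epsilon,|y|^2\Qb)=0$ is the key tool for isolating $\tilde\gamma_s$: inner-producting (\ref{DefnEqn-epsEqn}) with $|y|^2\Qb$ and taking imaginary parts (so the $M(\epsilon)$ term becomes $\Re(\epsilon,L_+(\Lambda^2 Q))$ after using the matrix structure of $M$ and $L_-(|y|^2 Q)=-4\Lambda Q$) produces exactly the combination $\tilde\gamma_s-\|\Lambda Q\|_{L^2}^{-2}(\epsRe,L_+\Lambda^2Q)$ on the LHS, modulo the other unknowns with small coefficients.

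Next I would estimate the source terms on the RHS. The term $\Psi_b$ is handled through (\ref{Eqn-PsibEstimate}), producing a contribution $\lesssim e^{-C/b}\lesssim \Gamma_b^{1-C\eta}$. The quadratic remainder $R(\epsilon)$ is handled by Lemma \ref{Lemma-InteractionEst}, equation (\ref{Eqn-2ndOrderEst}), and the cubic piece $\widetilde R(\epsilon)$ by (\ref{Eqn-3rdOrderEst-RTilde}), both bounded by the squared $\epsilon$-norm. Linear interaction terms in $\epsilon$ against polynomial$\times\Qb$ derivatives use (\ref{Eqn-1stOrderEst}). The extra cylindrical term $\frac{\lambda}{r(y_1)}\partial_{y_1}\epsilon$ is localized to the support of $\Qb$, where $r(y_1)=r(t)+\lambda y_1\sim 1$ by (\ref{Hypo1-consequenceForMu}); it contributes $O(\lambda)\cdot\bigl(\epsNorm\bigr)^{1/2}\ll \Gamma_b^{10}$ by (\ref{Hypo1-consequenceForLambdaGamma}), and is therefore absorbed without penalty.

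The main obstacle is the careful bookkeeping to confirm that the coefficient matrix of $(X_1,\dots,X_5)$ is diagonally dominant uniformly in $b$ small, so that it can be inverted to solve for each modulation parameter. For $X_2$ and $X_1$ the RHS bound is the square-root-free estimate $\Gamma_b^{1-C\eta}+(\epsNorm)$ appearing in (\ref{Eqn-prelimLambda+BEst}), since the inner product with $\Lambda\Qb$ allows one to absorb $|\epsilon|^2$-type terms using the energy identity (\ref{Eqn-prelimEnerEst}) to cancel the linear-in-$\epsilon$ contribution $2\Re(\epsilon,\Qb)$ (this is the usual Pohozaev trick applied to a localized $\Qb$). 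For the remaining parameters $X_3,X_4,X_5$ no such cancellation is available, so only the weaker square-root bound $\delta(\alpha^*)(\epsNorm)^{1/2}$ survives, which is exactly what (\ref{Eqn-prelimGamma+REst}) asserts. The momentum estimate (\ref{Eqn-prelimMomentEst}) from Lemma \ref{Lemma-PrelimEstimatesConservLaws} is what ultimately forces the translation parameters $r_s/\lambda,z_s/\lambda$ to be small rather than merely bounded.
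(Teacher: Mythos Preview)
Your proposal is correct and follows essentially the same approach as the paper: take inner products of the $\epsilon$-equation (\ref{DefnEqn-epsEqn}) with the orthogonality test functions, use the algebraic structure (\ref{Strategy1-Algebra}) and (\ref{Eqn-Qb_by_b}) to obtain a diagonally-dominant linear system in the modulation parameters, estimate the source terms via Lemma \ref{Lemma-InteractionEst} and (\ref{Eqn-PsibEstimate}), dismiss the cylindrical term via (\ref{Hypo1-consequenceForLambdaGamma}), and invoke the energy estimate (\ref{Eqn-prelimEnerEst}) for $|b_s|$ and the momentum estimate (\ref{Eqn-prelimMomentEst}) for the translation parameters. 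The paper's own proof is just this sketch, deferring the full computation to \cite{MR-UniversalityBlowupL2Critical-04} and \cite{R-StabilityOfLogLog-05}.

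One minor correction: you do not need a separate ``fifth equation'' from $\grad_y\Qb$. The orthogonality condition $\Re(\epsilon, y\Qb)=0$ is already vectorial in $y\in\real^2$, so the four conditions (\ref{Eqn-GeoDecomp-OrthogReY2})--(\ref{Eqn-GeoDecomp-OrthogImL}) yield five scalar equations for the five unknowns $X_1,\dots,X_5$. The quantity $\Im(\epsilon,\grad\Qb)$ enters not as an additional equation in the system but as a source term in the $r_s/\lambda$, $z_s/\lambda$ rows, and it is the localized-momentum estimate (\ref{Eqn-prelimMomentEst}) that forces this source to be small --- the paper singles this out explicitly. Similarly, the energy identity is used specifically in the $b_s$ row (not for $\lambda_s/\lambda+b$, where the linear term cancels directly via the orthogonality $\Im(\epsilon,\Lambda\Qb)=0$).
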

Estimates (\ref{Eqn-prelimLambda+BEst}) and (\ref{Eqn-prelimGamma+REst}) are a direct result of orthogonality conditions (\ref{Eqn-GeoDecomp-OrthogReY2}), (\ref{Eqn-GeoDecomp-OrthogReY}), (\ref{Eqn-GeoDecomp-OrthogImL2}) and (\ref{Eqn-GeoDecomp-OrthogImL}) by taking the respective inner products with the $\epsilon$ equation (\ref{DefnEqn-epsEqn}). Due to equation (\ref{Hypo1-consequenceForLambdaGamma}), terms resulting from $\frac{\lambda}{r(\lambda)}\partial_{y_1}\epsilon$ are inconsequential.
The estimates due to energy and momentum, (\ref{Eqn-prelimEnerEst}) and (\ref{Eqn-prelimMomentEst}), are involved in the estimates of $\abs{b_s}$ and $\abs{\frac{r_s}{\lambda}}+\abs{\frac{z_s}{\lambda}}$ respectively.
Otherwise, all calculations are localized to the support of $\widetilde{Q}_b$ and are identical to the $L^2$-critical argument.
See \cite[Appendix C]{MR-UniversalityBlowupL2Critical-04} or \cite[Appendix A]{R-StabilityOfLogLog-05} for the complete calculations.

\begin{lemma}[Local Virial Identity]\label{Lemma-LocalVirial}
For all $s\in[s_0,s_1)$,
\begin{equation}\label{Eqn-LocalVirial}
b_s \geq \delta_1\left(\epsNorm\right) - \Gamma_b^{1-C\eta},
\end{equation}
where $\delta_1 > 0$ is a universal constant.
\end{lemma}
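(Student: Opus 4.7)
The plan is to adapt the Merle-Rapha\"el local virial derivation to the cylindrical setting, carefully accounting for the measure $\mu(y)$ and the new transport term $\frac{\lambda}{r(y_1)}\partial_{y_1}\epsilon$ that appear in the $\epsilon$-equation (\ref{DefnEqn-epsEqn}). I would form a suitable real-valued $L^2$ pairing of (\ref{DefnEqn-epsEqn}) against $\Lambda\widetilde{Q}_b$, using the Pohozaev-type multiplier inspired by (\ref{Strategy1-pohozaev}). The term $ib_s\,\partial_b\widetilde{Q}_b$ contributes $c_0\,b_s + O(\Gamma_b^{1/2})$, with $c_0>0$ a universal constant, by substituting the explicit $b$-derivative formula (\ref{Eqn-Qb_by_b}) and integrating by parts. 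The term $M(\epsilon)$ produces the bilinear form $H_b(\epsilon,\epsilon) = H(\epsilon,\epsilon) + \widetilde{H}_b(\epsilon,\epsilon)$ of (\ref{DefnEqn-Hb}), with $\widetilde{H}_b$ absorbed into a $\delta(b)$ error by (\ref{Eqn-HbTildeEst}). The truncation error $\Psi_b$ contributes at most $\Gamma_b^{1-C\eta}$ by (\ref{Eqn-PsibEstimate}).

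The second step is to eliminate the unwanted linear-in-$\epsilon$ residue. The algebraic identity $L_+(\Lambda Q) = -2Q$ of (\ref{Strategy1-Algebra}) leaves behind a linear term $-2\,Re(\epsilon,\widetilde{Q}_b)$, which the conservation of energy estimate (\ref{Eqn-prelimEnerEst}) replaces by the coercive pieces $\int\abs{\grad\epsilon}^2\mu\dy - 3\int_{\abs{y}\leq 10/b} Q^2\epsRe^2\dy - \int_{\abs{y}\leq 10/b} Q^2\epsIm^2\dy$ plus acceptable errors. Combining with $H(\epsilon,\epsilon)$ yields the modified form $\widetilde{H}(\epsilon,\epsilon)$ of (\ref{DefnEqn-HTilde}), and the Alternative Spectral Property (Lemma \ref{Lemma-AlternateSpectral}) then produces the desired coercive lower bound on $\epsNorm$, modulo six orthogonality penalties. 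Four of these penalties---$Re(\epsilon,\abs{y}^2Q)$, $Re(\epsilon,yQ)$, $Im(\epsilon,\Lambda Q)$, and $Im(\epsilon,\Lambda^2 Q)$---vanish up to $\Gamma_b^{1/2}$-errors by the orthogonality conditions (\ref{Eqn-GeoDecomp-OrthogReY2})--(\ref{Eqn-GeoDecomp-OrthogImL}) together with the closeness estimate (\ref{Eqn-Qb_closeToQ}); the penalty $Re(\epsilon,Q)$ is controlled at the order $\delta(\alpha^*)\bigl(\epsNorm\bigr)$ using energy conservation a second time; and $Im(\epsilon,\grad Q)$ is bounded via the localized momentum estimate (\ref{Eqn-prelimMomentEst}).

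The third step handles the remaining inhomogeneous, cubic, and cylindrical-correction terms. Inner products of the modulation-parameter contributions $(\frac{\lambda_s}{\lambda}+b)\Lambda(\widetilde{Q}_b+\epsilon)$, $\tilde{\gamma}_s(\widetilde{Q}_b+\epsilon)$, and $\frac{(r_s,z_s)}{\lambda}\cdot\grad_y(\widetilde{Q}_b+\epsilon)$ paired against $\Lambda\widetilde{Q}_b$ are absorbed using (\ref{Eqn-prelimLambda+BEst})--(\ref{Eqn-prelimGamma+REst}) and Young's inequality with a $\delta(\alpha^*)$-small constant. The cubic nonlinearity $R(\epsilon)$ is bounded by (\ref{Eqn-2ndOrderEst}) and (\ref{Eqn-3rdOrderEst-RTilde}). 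The new cylindrical transport term $\frac{\lambda}{r(y_1)}\partial_{y_1}\epsilon$, being localized to the support of $\Lambda\widetilde{Q}_b$ where $\mu \approx 1$ and $\lambda/r \lesssim \lambda \ll \Gamma_b^{10}$ by (\ref{Hypo1-consequenceForLambdaGamma}), contributes at most $\Gamma_b^{10}\bigl(\epsNorm\bigr)^{1/2}$, which is absorbed trivially. The main obstacle I anticipate is the systematic bookkeeping of the $\mu$-weight discrepancies arising from splitting $\mu(y)=2\pi r(t)+2\pi\lambda y_1$ against $\mu\approx 1$ on the support of $\widetilde{Q}_b$: each such splitting generates an $O(\lambda)$ correction that, thanks to (\ref{Hypo1-consequenceForLambdaGamma}), is automatically dominated by $\Gamma_b^{1-C\eta}$, but the dependence must be verified uniformly in every place that a two-dimensional $L^2$-critical integration by parts would otherwise be used.
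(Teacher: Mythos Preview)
Your proposal is correct and follows essentially the same route as the paper's brief proof. One clarification worth making: the passage from $H(\epsilon,\epsilon)$ to the modified form $\widetilde{H}(\epsilon,\epsilon)$ of (\ref{DefnEqn-HTilde}) does not arise from the energy substitution but from retaining the leading part $\frac{1}{\norm{\Lambda Q}_{L^2}^2}(\epsRe,L_+\Lambda^2Q)$ of $\tilde{\gamma}_s$ (supplied by (\ref{Eqn-prelimGamma+REst})) in the product $-\tilde{\gamma}_s\,Re(\epsilon,\Lambda\widetilde{Q}_b)$; that contribution is genuinely of order $\epsNorm$ and cannot simply be ``absorbed'' as you suggest in your third step, which is exactly why the Alternative Spectral Property with penalty $Re(\epsilon,\abs{y}^2Q)$ rather than $Re(\epsilon,\Lambda Q)$ is invoked.
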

\begin{proof}[Brief Proof]
Begin with the method used to prove preliminary estimate (\ref{Eqn-prelimLambda+BEst}). Take the real part of the inner product of $\epsilon$ equation (\ref{DefnEqn-epsEqn}) with $\Lambda\widetilde{Q}_b$.  Recognize that $\partial_sIm\left(\epsilon,\Lambda\widetilde{Q}_b\right) = 0$ due to orthogonality condition (\ref{Eqn-GeoDecomp-OrthogImL}). 
An adapted version of the algebraic property $L_+(\Lambda Q) = -2Q$ is applied, \cite[equation (101)]{MR-UniversalityBlowupL2Critical-04}.
After recognizing the equation of $\widetilde{Q}_b$, 
injecting the conservation of energy cancels the remaining terms linear in $\epsilon$.  
The resulting terms quadratic in $\epsilon$ are the bilinear operator $H_b(\epsilon,\epsilon)$, equation (\ref{DefnEqn-Hb}). The remaining terms cubic in $\epsilon$ (due to the original inner product) were estimated as part of Lemma \ref{Lemma-InteractionEst}.
See \cite[Appendix C]{MR-UniversalityBlowupL2Critical-04} 
for the complete calculation. Controlling the auxilliary terms of the conservation of energy with equation (\ref{Eqn-prelimEnerEst}) we have,
\begin{equation}\label{Proof-LocalVirial-firstMain}\begin{aligned}
-b_s\,Im\left(\frac{\partial}{\partial b}\widetilde{Q}_b,\Lambda\widetilde{Q}_b\right) 
\gtrsim
&H_b(\epsilon,\epsilon)\\
&+b_s\,Im\left(\epsilon,\Lambda\frac{\partial}{\partial b}\widetilde{Q}_b\right)
-\left(\frac{\lambda_s}{\lambda} + b\right)
		Im\left(\epsilon,\Lambda^2\widetilde{Q}_b\right)\\
&-\tilde{\gamma}_s\,Re\left(\epsilon,\Lambda\widetilde{Q}_b\right)
-\frac{(r_s,z_s)}{\lambda}\cdot Im\left(\epsilon,\grad\widetilde{Q}_b\right)\\
&-\Gamma_b^{1-C\eta} - \delta(\alpha^*)\left(\epsNorm\right).
\end{aligned}\end{equation}
Recall that $\partial_b\widetilde{Q}_b \approx -i\frac{\abs{y}^2}{4}Q$,
make the correction (\ref{Eqn-HbTildeEst}) for $\widetilde{H}_b$,  
and apply preliminary estimates (\ref{Eqn-prelimLambda+BEst}) and (\ref{Eqn-prelimGamma+REst}). With the proximity to $Q$ we may write,
\begin{multline}\label{Proof-LocalVirial-secondMain}
b_s\,\frac{1}{4}\norm{yQ}_{L^2}^2 
\gtrsim
H(\epsilon,\epsilon) - \tilde{\gamma}_s\left(\epsRe,\Lambda Q\right)\\
-\Gamma_b^{1-C\eta} - \delta(\alpha^*)\left(\epsNorm\right).
\end{multline}
Identify the alternate form of $\widetilde{H}$, equation (\ref{DefnEqn-HTilde}), apply the preliminary estimate for $\tilde{\gamma}_s$, equation (\ref{Eqn-prelimGamma+REst}), and apply the adapted version of the Spectral Property, Lemma \ref{Lemma-AlternateSpectral}.
\end{proof}

\begin{remark}[Progress in proving Proposition \ref{Prop-Improv}]
\label{Remark-waypointToImprov1}
We have already proven the first half of {\bf I1.2} as the preliminary estimate (\ref{Eqn-prelimMassEst}). The local virial identity with preliminary estimate (\ref{Eqn-prelimLambda+BEst}) produce a closed expression for $\lambda$ and $b$, which we treat with simple arguments to prove the following Lemma.  In particular,
equation (\ref{Eqn-lambdaUpperBound}) implies the
1\textsuperscript{st} lower bound of {\bf I1.3}. 
Following similar methods, we will then prove, the 2\textsuperscript{nd} upper bound of 
{\bf I1.3}, {\bf I1.4}, {\bf I1.5} and {\bf I1.1}.
\end{remark}

\begin{lemma}[Upper Bound on Blowup Rate]\label{Lemma-Upperbound}
For all $s\in[s_0,s_1)$,
\begin{equation}\label{Eqn-bLowerBound}
b(s) \geq \frac{3\pi}{4\log s}, \text{ and}
\end{equation}
\begin{equation}\label{Eqn-lambdaUpperBound}
\lambda(s) \leq \sqrt{\lambda_0}e^{-\frac{\pi}{3}\frac{s}{\log s}}.
\end{equation}
\end{lemma}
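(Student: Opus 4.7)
The plan is to combine the local virial identity (\ref{Eqn-LocalVirial}) with the preliminary estimate (\ref{Eqn-prelimLambda+BEst}) to obtain a pointwise one-sided bound on $b_s$, then bootstrap to prove the lower bound on $b(s)$ by a continuity argument, and finally integrate the relation $\lambda_s/\lambda\approx -b$ to deduce the upper bound on $\lambda(s)$. The one-sided bound on $b_s$ follows immediately by discarding the nonnegative $\epsilon$-term in (\ref{Eqn-LocalVirial}):
\begin{equation*}
b_s(s) \;\geq\; -\Gamma_{b(s)}^{1-C\eta} \qquad \forall\, s\in[s_0,s_1).
\end{equation*}
Combined with the $\Gamma_b$ asymptotics (\ref{Eqn-GammaBEstimate}), this says $b$ can decrease at most at the extremely slow rate $e^{-(1-C\eta)\pi/b}$.

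For the lower bound on $b$, I would argue by continuity. The choice $s_0 = e^{3\pi/(4b_0)}$ gives precisely $b(s_0)=b_0 = 3\pi/(4\log s_0)$, so the inequality holds with equality initially. Suppose for contradiction there is a first $s^*\in(s_0,s_1)$ where $b(s^*) = 3\pi/(4\log s^*)$; then $\phi(s):=b(s)-3\pi/(4\log s)$ vanishes at $s^*$ and is nonnegative just before, so $\phi_s(s^*)\leq 0$, that is $b_s(s^*) \leq -3\pi/\bigl(4s^*(\log s^*)^2\bigr)$. On the other hand, substituting $b(s^*) = 3\pi/(4\log s^*)$ into the pointwise bound yields $b_s(s^*) \geq -C\,(s^*)^{-4(1-C\eta)/3}$. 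For $\eta$ small enough that $4(1-C\eta)/3 > 1$, and for $s_0$ sufficiently large (guaranteed by $\alpha^*$ small, since $s_0 = e^{3\pi/(4b_0)}\to\infty$ as $b_0\to 0$), we have $(s^*)^{4(1-C\eta)/3 - 1} \gg (\log s^*)^2$ for all $s^*\geq s_0$, contradicting the two bounds. Hence $b(s)\geq 3\pi/(4\log s)$ on $[s_0,s_1)$.

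For the $\lambda$ bound, rearranging (\ref{Eqn-LocalVirial}) as $\epsNorm \leq \delta_1^{-1}(b_s+\Gamma_b^{1-C\eta})$ and substituting into (\ref{Eqn-prelimLambda+BEst}) gives
\begin{equation*}
-\frac{\lambda_s}{\lambda} \;\geq\; b \;-\; C\,\Gamma_b^{1-C\eta} \;-\; C\,b_s.
\end{equation*}
Integrating from $s_0$ to $s$: the $b_s$ contribution telescopes to $b_0-b(s) = O((\alpha^*)^{1/10})$ by (\ref{Hypo1-b}), while the $\Gamma^{1-C\eta}$ integral is uniformly bounded because the lower bound just established forces $\Gamma_{b(\tau)}^{1-C\eta}\leq \tau^{-4(1-C\eta)/3}$, integrable in $\tau$. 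The leading contribution is
\begin{equation*}
\int_{s_0}^s b(\tau)\,d\tau \;\geq\; \int_{s_0}^s \frac{3\pi}{4\log\tau}\,d\tau \;=\; \frac{3\pi}{4}\cdot\frac{s}{\log s}\bigl(1+o(1)\bigr)
\end{equation*}
by integration by parts. Since $3\pi/4 > \pi/3$, the excess $(5\pi/12)\,s/\log s$ comfortably absorbs the $O(1)$ errors and half of $|\log\lambda_0|$ (which is enormous by C1.3), producing $-\log\lambda(s)\geq -\tfrac{1}{2}\log\lambda_0 + \pi s/(3\log s)$, as required.

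The main obstacle is the exponent arithmetic in the bootstrap step: one needs $4(1-C\eta)/3 > 1$, which pins down the universal smallness threshold for $\eta$, and then forces $\alpha^*$ small enough that $s_0=e^{3\pi/(4b_0)}$ is large. Everything else is routine estimation using the preliminary bounds already in hand.
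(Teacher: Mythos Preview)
Your argument is correct, with one small wrinkle in the continuity step for (\ref{Eqn-bLowerBound}): since $\phi(s_0)=0$ exactly, you cannot speak of a ``first $s^*>s_0$ with $\phi(s^*)=0$'' without first checking that $\phi>0$ immediately to the right of $s_0$. But the same derivative comparison you run at $s^*$ also gives $\phi_s(s_0)>0$, so this is easily patched; alternatively, argue that at \emph{any} zero of $\phi$ one has $\phi_s>0$, which forces $\phi\geq 0$ throughout.

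The paper takes a different, slicker route for (\ref{Eqn-bLowerBound}). Rather than a continuity argument, it integrates a differential inequality directly: from $b_s\geq -\Gamma_b^{1-C\eta}$ one computes
\[
\partial_s e^{3\pi/(4b)} \;=\; -\frac{3\pi\,b_s}{4b^2}\,e^{3\pi/(4b)} \;\leq\; \frac{3\pi}{4b^2}\,\Gamma_b^{1-C\eta}\,e^{3\pi/(4b)} \;\leq\; 1,
\]
the last inequality holding for $\eta,\alpha^*$ small by (\ref{Eqn-GammaBEstimate}), since then $(1-C\eta)^2>3/4$ and the exponential decay kills the $1/b^2$. Integrating and using $s_0=e^{3\pi/(4b_0)}$ gives $e^{3\pi/(4b(s))}\leq s$, which is exactly (\ref{Eqn-bLowerBound}). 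This avoids the contradiction setup entirely and makes the role of the special choice of $s_0$ completely transparent. For the $\lambda$ bound the paper simplifies further by invoking the bootstrap hypothesis (\ref{Hypo1-epsilon}) directly in (\ref{Eqn-prelimLambda+BEst}) to get $\bigl|\lambda_s/\lambda+b\bigr|<\Gamma_b^{1/2}$, hence $-\lambda_s/\lambda\geq 2b/3$ pointwise, and then integrates; you instead route through the local virial to produce a telescoping $Cb_s$ term. Your approach is more self-contained (it does not lean on the $\epsilon$-smallness hypothesis), while the paper's is shorter.
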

\begin{proof}
Inject hypothesis {\bf H1.2} into the local virial identity (\ref{Eqn-LocalVirial}) and carefully integrate in time. From $b>0$, the bound on $\Gamma_b$ (\ref{Eqn-GammaBEstimate}), and the clever choice of $s_0$, equation (\ref{DefnEqn-s}),
\begin{equation}\label{Proof-UpperBound-eqn1}\begin{aligned}
\partial_se^{+\frac{3\pi}{4b}} = -\frac{b_s}{b^2}\frac{3\pi}{4}e^{+\frac{3\pi}{4b}} \leq 1 
&& \Longrightarrow 
&& e^{+\frac{3\pi}{4b}} \leq s -s_0 + e^{+\frac{3\pi}{4b_0}} = s.
\end{aligned}\end{equation}
This proves (\ref{Eqn-bLowerBound}).
Next, we view preliminary estimate (\ref{Eqn-prelimLambda+BEst}) and hypothesis {\bf H1.2} as the approximate dynamics of $\lambda$,
\begin{equation}\label{Eqn-lambda-prelimDynamics}
\abs{\frac{\lambda_s}{\lambda}+b}+\abs{b_s} < \Gamma_b^\frac{1}{2}.
\end{equation}
In particular as $b>0$ is small, $-\frac{\lambda_s}{\lambda} \geq \frac{2b}{3}$, which we integrate with (\ref{Eqn-bLowerBound}),
\begin{equation}\label{Proof-UpperBound-eqn2}
-\log\lambda \geq -\log\lambda_0 + \int_{s_0}^{s}{\frac{\pi}{2\log\sigma}\,d\sigma}.
\end{equation}
Assume $s_0$ is sufficiently large through choice of data (\ref{DataP1-mass}) with $\alpha^*$ sufficiently small, then,
\begin{equation}\label{Proof-UpperBound-eqn3}
\int_{s_0}^s{\frac{\pi}{2\log\sigma}d\sigma} 
	\geq \frac{\pi}{3}\left(\frac{s}{\log s}-\frac{s_0}{\log s_0}\right).
\end{equation}
From the choice of data {\bf C1.3}, 
and (\ref{DefnEqn-s}), $-\log\lambda_0 \geq e^{\frac{\pi}{2b_0}} = s_0^\frac{3}{2}$.  Thus we have proven (\ref{Eqn-lambdaUpperBound}),
\[
-\log\lambda \geq -\frac{1}{2}\log\lambda_0 + \frac{\pi}{3}\frac{s}{\log s}.
\]
\end{proof}
\begin{proof}[Corollary of (\ref{Eqn-lambdaUpperBound})]
By simple change of variables, (\ref{Eqn-lambdaUpperBound}), and choice of data (\ref{DataP1-mass}) and (\ref{DataP1-loglog}),
\begin{equation}\label{Eqn-t1Small}
T_{hyp} 
= \int_{s_0}^{s_1}{\lambda^2(\sigma)\,d\sigma}
\leq \lambda_0\int_{2}^{+\infty}{e^{-\frac{2\pi}{3}\frac{s}{\log s}}\,ds}
< \alpha^*.
\end{equation}
\end{proof}
\begin{proof}[Proof of {\bf I1.3}, 2\textsuperscript{nd} upper bound]
As a direct consequence of (\ref{Eqn-lambdaUpperBound}), again assuming $s_0 > 0$ sufficiently large, 
\begin{equation}\label{Proof-Improv1-loglog-eqn1}
-\log\left(s\lambda(s)\right) \geq \frac{\pi}{3}\frac{s}{\log s} - \log s \geq \frac{s}{\log s}.
\end{equation}
Taking the logarithm and applying equation (\ref{Eqn-bLowerBound}),
\begin{equation}\label{Proof-Improv1-loglog-eqn2}\begin{aligned}
\log\abs{-\log\left(s\lambda(s)\right)} 
	\geq \log\left(\frac{s}{\log s}\right)
	\geq \frac{4}{15}\log s \geq \frac{\pi}{5 b(s)}
&& \Longrightarrow
&& s\lambda(s) \leq e^{-e^{\frac{\pi}{5b}}},
\end{aligned}\end{equation} 
which in particular implies $\lambda \leq e^{-e^{\frac{\pi}{5b}}}$, the second upper bound of {\bf I1.3}
\end{proof}
\begin{proof}[Proof of {\bf I1.4}]
Recall approximate dynamic (\ref{Eqn-lambda-prelimDynamics}), which was due to preliminary estimate (\ref{Eqn-prelimLambda+BEst}) and the hypothesized control on $\epsilon$. As a consequence, for $s\in[s_0,s_1)$,
\begin{equation}\label{Proof-Improv1-enerMoment-eqn1}\begin{aligned}
\frac{d}{ds}\left(\lambda^2e^\frac{5\pi}{b}\right)
	= 2\lambda^2e^\frac{5\pi}{b}\left(\frac{\lambda_s}{\lambda}+b-b-\frac{5\pi b_s}{2b^2}\right)
	\leq& -\lambda^2be^{5\pi}{b} < 0,\\
&\Longrightarrow \lambda^2(t)e^{\frac{5\pi}{b(t)}} \leq \lambda_0^2e^\frac{5\pi}{b_0}.
\end{aligned}\end{equation}
Then, with the estimate on $\Gamma_b$ (\ref{Eqn-GammaBEstimate}), the choice of data (\ref{DataP1-enerMoment}) and the estimate on $\Gamma_b$ again,
\begin{equation}\label{Proof-Improv1-enerMoment-eqn2}
\lambda^2(t)\abs{E_0} < \Gamma_{b(t)}^4\,e^\frac{5\pi}{b_0}\lambda_0^2\abs{E_0}
	<\Gamma_{b(t)}^4\,e^\frac{5\pi}{b_0}\Gamma_{b_0}^{10} \ll \Gamma_{b(t)}^4,
\end{equation}
which proves the energy-normalization part of {\bf I1.4}. Regarding the localized momentum, calculate directly from equation (\ref{Eqn-NLS}) that,
\begin{equation}\begin{aligned}\label{Proof-Improv1-enerMoment-MorawetzType}
\frac{d}{dt}Im\left(\int{\grad\psi^{(x)}\cdot\grad u\overline{u}}\right)
	= &
	Re\left(\int{\partial_{x_j}\partial_{x_k}\psi^{(x)}
		\partial_{x_k}u\partial_{x_j}\overline{u}}\right)\\
	&-\frac{1}{2}\int{\laplacian\psi^{(x)}\abs{u}^4}
	-\frac{1}{4}\int{\laplacian^2\psi^{(x)}\abs{u}^2}.
\end{aligned}\end{equation}
This is a special case of the general Morawetz calculaton - eg. \cite[equation (3.36)]{Tao06}. 
Recall from definition (\ref{DefnEqn-psi-x}) that the support of $\psi^{(x)}$ is well away from $r=0$. Apply the two-dimensional Sobolev embedding $H^\frac{1}{2} \hookrightarrow L^4$ to estimate,
\begin{equation}\label{Proof-Improv1-enerMoment-eqn4}
\abs{\frac{d}{dt}Im\left(\int{\grad\psi^{(x)}\cdot\grad u\overline{u}}\right)}
	\leq C(\psi^{(x)})\norm{u(t)}_{H^1}^2 \lesssim \frac{1}{\lambda^2},
\end{equation}
where the final inequality is due to hypothesized control on $\epsilon$ and the small excess mass {\bf H1.2}.  Note that, $\int_0^t{\frac{d\tau}{\lambda^2(\tau)}} = \int_{s_0}^s{d\sigma} \leq s$, and so we have proven,
\begin{equation}\label{Proof-Improv1-enerMoment-eqn5}
\lambda(t)\abs{Im\left(\grad\psi^{(x)}\cdot\grad u(t)\overline{u}(t)\right)}
	\leq \lambda(t)\abs{Im\left(\grad\psi^{(x)}\cdot\grad u_0\overline{u}_0\right)}
	+ C\lambda(t)s(t).
\end{equation}
Due to the estimate on $\Gamma_b$ (\ref{Eqn-GammaBEstimate}) and equation (\ref{Proof-Improv1-loglog-eqn2}) from the previous proof, $C\lambda(t)s(t) \leq C\Gamma_{b(t)}^{10} \ll \Gamma_b^4$. 
Then by virtually the same calculation as equations (\ref{Proof-Improv1-enerMoment-eqn1}) and (\ref{Proof-Improv1-enerMoment-eqn2}), for $s\in[s_0,s_1)$,
\begin{equation}\label{Proof-Improv1-enerMoment-eqn6}
\frac{d}{ds}\left(\lambda e^\frac{6\pi}{b}\right) \leq -\frac{1}{2}\lambda be^\frac{6\pi}{b} < 0
\Longrightarrow \lambda(t) e^\frac{6\pi}{b(t)} \leq \lambda_0e^\frac{6\pi}{b_0},
\end{equation}
and so by the estimate on $\Gamma_b$ and choice of data (\ref{DataP1-enerMoment}),
\[
\lambda(t)\abs{Im\left(\grad\psi^{(x)}\cdot\grad u_0\overline{u}_0\right)}
\leq \Gamma_{b(t)}^5\,e^\frac{6\pi}{b_0}\Gamma_{b_0}^{10} \ll \Gamma_{b(t)}^4.
\]
This proves the localized-momentum part of {\bf I1.4}.
\end{proof}
\begin{proof}[Proof of {\bf I1.5}]
We follow the argument found in the proof of \cite[Lemma 7]{R-StabilityOfLogLog-05}. 
Fix some $s_2 \leq s_3 \in [s_0,s_1)$.
Substitute the local virial identity (\ref{Eqn-LocalVirial}) into the preliminary estimate (\ref{Eqn-prelimLambda+BEst}) to control the norm of $\epsilon$. With a crude bound for $\Gamma_b$,
\begin{equation}\label{Proof-Improv1-almostMono-eqn1}
\abs{\frac{\lambda_s}{\lambda} + b} \leq C \left( b_s + b^2 \right),
\end{equation}
From hypothesis {\bf H1.2}, $0 < b^2 < \delta(\alpha^*)b$
where $\delta(\alpha^*) \rightarrow 0$ as $\alpha^* \to 0$. Then,
\begin{equation}\label{Proof-Improv1-almostMono-eqn2}
-\log\left(\frac{\lambda(s_2)}{\lambda(s_3)}\right) 
= \int_{s_2}^{s_3}{\left(\frac{\lambda_s}{\lambda}+b\right)} - \int_{s_2}^{s_3}{b}
\leq \delta(\alpha^*) - \frac{1}{2}\int_{s_2}^{s_3}{b}
\leq \delta(\alpha^*).
\end{equation}
In particular, we may assume $\alpha^*$ is such that $\delta(\alpha^*) < \log 2$, which proves {\bf H1.5}.
\end{proof}
\begin{proof}[Proof of {\bf H1.1}]
Preliminary estimate (\ref{Eqn-prelimGamma+REst}) can be crudely simplified to,
\begin{equation}\label{Proof-Improv1-rz-eqn1}
\abs{\frac{r_s}{\lambda}}+\abs{\frac{z_s}{\lambda}} \leq 1.
\end{equation}
Then we have for all $s\in[s_0,s_1)$,
\begin{equation}\label{Proof-Improv1-rz-eqn2}
\abs{r(s)-r_0}+\abs{z(s)-z_0} 
	\leq \int_{s_0}^s{\abs{r_s}+\abs{z_s}} 
	\leq \int_{s_0}^s{\lambda(\sigma)\,d\sigma}
	\leq \sqrt{\lambda_0}\int_2^{+\infty}{e^{-\frac{\pi}{3}\frac{\sigma}{\log\sigma}}\,d\sigma}
<\alpha^*,
\end{equation}
where we applied (\ref{Eqn-lambdaUpperBound}), the choice of data (\ref{DataP1-loglog}) and the smallness of $b_0$ (\ref{DataP1-mass}). With our choice of $r_0$,$z_0$ (\ref{DataP1-rz}), this proves {\bf H1.1}.
\end{proof}

\subsection{Lyapounov Functional}
\label{Subsec-Lyapounov}
To begin this section, we repeat the calculation of the local virial identity, this time including the linear radiation $\widetilde{\zeta}_b$ as part of the central profile. That is we write,
\begin{equation}\label{DefnEqn-epsTilde}\begin{aligned}
\tilde{\epsilon} = \epsilon - \widetilde{\zeta}_b
&& \Rightarrow
&& u(t,x) = \frac{1}{\lambda(t)}\left(\widetilde{Q}_{b(t)}+\widetilde{\zeta}_{b(t)}+\tilde{\epsilon}(t)\right)
	\left(\frac{(r,z)-(r(t),z(t))}{\lambda}\right)e^{-i\gamma(t)},
\end{aligned}\end{equation}
where the parameters of the geometric decomposition are unchanged.  The equation for $\tilde{\epsilon}$ may then be written analogous to (\ref{DefnEqn-epsEqn}), with a new linearized evolution operator analogous to $M$, (\ref{DefnEqn-M}).



\begin{lemma}[Radiative Virial Identity]\label{Lemma-RefinedVirial}
For all $s\in[s_0,s_1)$,
\begin{equation}\label{Eqn-RadiativeVirial}\begin{aligned}
\partial_s f_1 \geq &\delta_2\left(\epsTildeNorm\right)\\
	&+\Gamma_b - \frac{1}{\delta_2}\int_{A\leq\abs{y}\leq 2A}{\abs{\epsilon}^2\,dy},
\end{aligned}\end{equation}
where $\delta_2, c > 0$ are universal constants and,
\begin{equation}\label{DefnEqn-f1}
f_1(s) = 
	\frac{b}{4}\abs{y\widetilde{Q}_b}_{L^2}^2 
	+ \frac{1}{2}Im\left(\int{y\cdot\grad\widetilde{\zeta}_b\overline{\widetilde{\zeta}}_b}\right)
	+Im\left(\epsilon,\Lambda\widetilde{\zeta}_b\right).
\end{equation}
\end{lemma}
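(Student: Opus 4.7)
The plan is to repeat the local virial calculation of Lemma \ref{Lemma-LocalVirial}, but now built around the enlarged profile $\widetilde{Q}_b + \widetilde{\zeta}_b$ rather than $\widetilde{Q}_b$ alone. First I would subtract equation (\ref{DefnEqn-TildeZb}) from equation (\ref{DefnEqn-epsEqn}) to obtain an equation for $\widetilde{\epsilon} = \epsilon - \widetilde{\zeta}_b$; schematically this looks like the $\epsilon$-equation but with $\widetilde{Q}_b$ replaced by $\widetilde{Q}_b + \widetilde{\zeta}_b$, and with the truncation error $\Psi_b$ cancelled (at the cost of the localization error $F$ from (\ref{DefnEqn-F})). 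The modulation parameters are unchanged, so the preliminary estimates of Lemma \ref{Lemma-PrelimEstimatesConservLaws} and Lemma \ref{Lemma-PrelimEstimatesOrthogConds} remain available.

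Next, I would take the real part of the inner product of this new equation with $\Lambda(\widetilde{Q}_b + \widetilde{\zeta}_b)$, in exact analogy with the proof of Lemma \ref{Lemma-LocalVirial}. Three types of contributions appear. The linear-in-$\widetilde{\epsilon}$ terms that survive the orthogonality conditions are handled by injecting conservation of energy (now expanded around $\widetilde{Q}_b + \widetilde{\zeta}_b$, using Lemma \ref{Lemma-JEstimate} to absorb the non-local $L^4$ piece); the quadratic-in-$\widetilde{\epsilon}$ terms assemble into a bilinear form close to $H(\widetilde{\epsilon},\widetilde{\epsilon})$, to which I apply the Alternative Spectral Property (Lemma \ref{Lemma-AlternateSpectral}) after using (\ref{Eqn-HbTildeEst}) and the smallness of $\widetilde{\zeta}_b$ in $\dot H^1$ from (\ref{Eqn-zb_smallH1}) to discard the correction; the cubic-in-$\widetilde{\epsilon}$ piece is absorbed as in Lemma \ref{Lemma-InteractionEst}. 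The time derivatives of the inner product with $\Lambda(\widetilde{Q}_b + \widetilde{\zeta}_b)$ on the LHS produce exactly the three pieces of $f_1$: the $b_s$-factor hitting $\widetilde{Q}_b$ gives $\tfrac{b}{4}\norm{yQ}_{L^2}^2$ up to lower order (via (\ref{Eqn-Qb_by_b}) and Lemma \ref{Lemma-PrelimEstimatesOrthogConds}); the $b_s$-factor hitting $\widetilde{\zeta}_b$, together with the fact that $\widetilde{\zeta}_b\notin L^2$, produces the radiation virial $\tfrac{1}{2}Im\int y\cdot\nabla\widetilde{\zeta}_b\overline{\widetilde{\zeta}}_b$; and the inner product itself, after substituting the $\widetilde{\epsilon}$-equation, yields $Im(\epsilon,\Lambda\widetilde{\zeta}_b)$.

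The genuinely new ingredient, and the main obstacle, is to extract the positive term $+\Gamma_b$ on the right-hand side. This comes from computing $\partial_s\bigl[\tfrac{1}{2}Im\int y\cdot\nabla\widetilde{\zeta}_b\overline{\widetilde{\zeta}}_b\bigr]$: after differentiating through and using (\ref{DefnEqn-TildeZb}), one obtains a Pohozaev-type identity whose leading contribution is the boundary flux $\lim_{R\to\infty}\int_{|y|=R}|\zeta_b|^2 \sim \Gamma_b$ from (\ref{DefnEqn-GammaB}). The sign here is dictated by the structure of $\zeta_b$ as the outgoing radiation propagating the truncation error $\Psi_b$, and the magnitude $\Gamma_b \sim e^{-\pi/b}$ from (\ref{Eqn-GammaBEstimate}) is exactly what beats the error terms $\Gamma_b^{1-C\eta}$. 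The cutoff $\phi_A$ only captures this flux up to the localization error $F$, which is supported in the annulus $A \leq |y| \leq 2A$ and has size $\Gamma_b^{1/2}/A$ by (\ref{Eqn-FEstimate}); pairing $F$ against the $\widetilde{\epsilon}$-terms and against $\Lambda\widetilde{\zeta}_b$ and applying Cauchy--Schwarz produces the negative remainder $-\tfrac{1}{\delta_2}\int_{A\leq|y|\leq 2A}|\epsilon|^2$.

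Finally, I would collect terms: the Spectral Property provides $\delta_0(\int|\nabla_y\widetilde{\epsilon}|^2\mu + \int_{|y|\leq 10/b}|\widetilde{\epsilon}|^2e^{-|y|})$; the negative modes ruled out by orthogonality conditions (\ref{Eqn-GeoDecomp-OrthogReY2})--(\ref{Eqn-GeoDecomp-OrthogImL}), conservation of energy, and momentum control (\ref{Eqn-prelimMomentEst}) are all absorbed as in the proof of Lemma \ref{Lemma-LocalVirial}; the $\Gamma_b$ flux survives; and the annular error $\int_{A\leq|y|\leq 2A}|\epsilon|^2$ appears. The choice of $a$ small enough (relative to $\eta$) is made here to ensure that all error terms scaling like $\Gamma_b^{1-C\eta}A^{-k}$ are dominated by the flux $+\Gamma_b$, yielding (\ref{Eqn-RadiativeVirial}) with a universal $\delta_2 > 0$.
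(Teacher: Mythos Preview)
Your proposal is correct and follows exactly the approach the paper indicates: the paper does not give a detailed proof of Lemma~\ref{Lemma-RefinedVirial} but simply states (just before the lemma) that one ``repeat[s] the calculation of the local virial identity, this time including the linear radiation $\widetilde{\zeta}_b$ as part of the central profile,'' deferring details to \cite{MR-SharpLowerL2Critical-06}. Your outline captures the essential mechanism---the inner product with $\Lambda(\widetilde{Q}_b+\widetilde{\zeta}_b)$, the emergence of the $+\Gamma_b$ flux from the Pohozaev computation on $\widetilde{\zeta}_b$, and the annular error from the cutoff term $F$---which is precisely the Merle--Rapha\"el argument the paper invokes.
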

Compared with the local virial identity, 
the radiative virial identity is useless to control $\epsilon$ in $\dot{H}^1$ due to the presence of mass term $\int_{A\leq\abs{y}\leq 2A}{\abs{\epsilon}^2}$.
See equation (\ref{Eqn-L2ByGrad}) for further discouragement. Nevertheless, we will link this  term to the ejection of mass from the singularity, through the radiation, into the dispersive regime - Lemma \ref{Lemma-MassDisperse}.  Then, we will show this mass ejection is more or less uninterrupted by demonstrating the Lyapounov fuctional - Lemma \ref{Lemma-LyapounovFunc}. Finally, through the conservation of energy we will prove precise bounds on the Lyapounov functional in terms of the excess mass at the singularity and $\abs{\epsilon}_{\dot{H}^1}$ - Lemma \ref{Lemma-LyapounovEstimates}. These bounds will allow us to bridge between times where $b_s \leq 0$ (times where the local virial identity is useful) to give a control for $\epsilon$ pointwise in time - Lemma \ref{Lemma-Lowerbound}.

Let $\phi_\infty$ be a smooth radial cutoff function on $\real^2$,
\begin{equation}\label{DefnEqn-phiInfty}\begin{aligned}
\phi_{\infty}(y) = \left\{\begin{aligned}
	0 && \text{ for }\abs{y} \leq \frac{1}{2}\\
	1 && \text{ for }\abs{y} \geq 3,
	\end{aligned}\right.
&& \text{ and }
&& \begin{aligned}
		\frac{1}{4}\leq\phi_{\infty}'\leq \frac{1}{2} &&& \text{ for }1\leq \abs{y}\leq 2,\\
		0\leq \phi_{\infty}' &&& \text{ for all }y.
	\end{aligned}
\end{aligned}\end{equation}
\begin{lemma}[Mass-ejection from Singular and Radiative Regimes] 
\label{Lemma-MassDisperse}
\begin{equation}\label{Eqn-MassDisperse}
\partial_s\left(\frac{1}{r(t)}\int{\phi_{\infty}\left(\frac{y}{A}\right)\abs{\epsilon}^2\mu(y)\,dy}\right)
	\geq \frac{b}{400}\int_{A\leq \abs{y} \leq 2A}{\abs{\epsilon}^2\,dy}
	-\Gamma_b^\frac{a}{2}\int{\abs{\grad_y\epsilon}^2\mu(y)\,dy}
	-\Gamma_b^2.
\end{equation}
\end{lemma}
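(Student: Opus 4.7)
The plan is to adapt the Merle--Rapha\"el mass-dispersion calculation of \cite[Lemma 4]{MR-SharpLowerL2Critical-06} to the cylindrically symmetric setting, where the weight $\mu(y)$ and the prefactor $\frac{1}{r(t)}$ must be tracked along with the $s$-dependence of the cutoff radius $A$. First I would differentiate the LHS directly. Since $r,\mu,A$ all depend on $s$, the product rule yields
\[
\partial_s\!\left(\frac{1}{r(t)}\int{\phi_{\infty}\!\left(\frac{y}{A}\right)\abs{\epsilon}^2\mu\dy}\right)
= \frac{2}{r(t)}\,Im\!\left(\int{\phi_{\infty}\!\left(\frac{y}{A}\right)\overline{\epsilon}\,(i\epsilon_s)\,\mu\dy}\right) + \mathcal{E}_{\mathrm{mot}},
\]
where $\mathcal{E}_{\mathrm{mot}}$ collects the motion errors from $\partial_s(1/r)$, $\partial_s\mu = 2\pi\lambda_s y_1$, and $\partial_s A = -\frac{a\pi}{b^2}b_s A$. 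By preliminary estimates (\ref{Eqn-prelimLambda+BEst})--(\ref{Eqn-prelimGamma+REst}), the tight control (\ref{Hypo1-consequenceForLambdaGamma}), and the $L^2$ smallness of $\epsilon$ from {\bf H1.2}, each motion term is either super-exponentially small or absorbable into the target errors $\Gamma_b^{\frac{a}{2}}\int\abs{\grad_y\epsilon}^2\mu\dy + \Gamma_b^2$.

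Next I would substitute the $\epsilon$-equation (\ref{DefnEqn-epsEqn}) for $i\epsilon_s$. The dominant contribution comes from the generator $-ib\Lambda\epsilon$: writing $Re(\overline{\epsilon}\Lambda\epsilon) = \abs{\epsilon}^2 + \tfrac{1}{2}y\cdot\grad_y\abs{\epsilon}^2$ and integrating by parts in the $y$-plane,
\[
-\frac{2b}{r(t)}\int{\phi_{\infty}\!\left(\frac{y}{A}\right)\mu\,Re(\overline{\epsilon}\Lambda\epsilon)\dy}
= \frac{b}{r(t)}\int{\abs{\epsilon}^2 \Bigl(y\cdot\grad_y\bigl[\phi_{\infty}\!\left(\tfrac{y}{A}\right)\mu(y)\bigr]\Bigr)\dy}.
\]
The gradient of the cutoff localizes to $A\le\abs{y}\le 2A$, where $\frac{\abs{y}}{A}\phi_{\infty}'(\abs{y}/A)\ge \frac{1}{4}$, and on this annulus (\ref{Hypo1-consequenceForMu}) together with $\lambda A \ll 1$ (from {\bf H1.3} and the definition (\ref{DefnEqn-A})) gives $\mu(y)/r(t)\sim 1$. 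The stray term $\phi_{\infty}(y/A)\cdot 2\pi\lambda y_1$ from $y\cdot\grad_y\mu$ carries the prefactor $\lambda$, super-exponentially small against the bounded $L^2$-mass of $\epsilon$. Generous accounting of constants yields the claimed $\frac{b}{400}\int_{A\le\abs{y}\le 2A}\abs{\epsilon}^2\dy$, with room to spare for absorbing error terms of the same shape.

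For the remaining contributions to $Im(\overline{\epsilon}\cdot(i\epsilon_s))$ I would argue term by term. The linearized operator $M(\epsilon)$ contributes $\frac{2}{r}\int\phi_{\infty}(y/A)\mu\,Im(\overline{\epsilon}M\epsilon)\dy$: the Laplacian piece integrates by parts to a boundary flux bounded by $A^{-1}\bigl(\int\abs{\epsilon}^2\mu\dy\bigr)^{1/2}\bigl(\int\abs{\grad_y\epsilon}^2\mu\dy\bigr)^{1/2}$, easily absorbed, while the potential terms of $M$ are supported on $\{\abs{y}\le R_b\}$, disjoint from the support of $\phi_{\infty}(y/A)$. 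The source $\Psi_b$ and all modulation terms involving $\Qb$, $\Lambda\Qb$, $\partial_b\Qb$, or $\grad\Qb$ vanish on this support for the same reason. The $\epsilon$-modulation pieces $i(\tfrac{\lambda_s}{\lambda}+b)\Lambda\epsilon + \tilde\gamma_s\epsilon + i\frac{(r_s,z_s)}{\lambda}\cdot\grad\epsilon$ generate transport identities analogous to the main flux but with prefactors of size $\Gamma_b^{1-C\eta}$ by (\ref{Eqn-prelimLambda+BEst})--(\ref{Eqn-prelimGamma+REst}). The cylindrical drift $\frac{\lambda}{r(y_1)}\partial_{y_1}\epsilon$ pairs with $\overline{\epsilon}$ to contribute $O\bigl(\Gamma_b^{10}\int\abs{\grad_y\epsilon}^2\mu\dy\bigr)$ after one integration by parts, via (\ref{Hypo1-consequenceForLambdaGamma}). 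Finally, the nonlinear term $R(\epsilon)$ has its $\Qb$-containing pieces supported away from $\phi_{\infty}(y/A)$, while the purely cubic residue $\widetilde R(\epsilon)=\epsilon\abs{\epsilon}^2$ satisfies $Im(\overline{\epsilon}\cdot\epsilon\abs{\epsilon}^2)\equiv 0$.

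The main obstacle is keeping the cylindrical weight $\mu$ from polluting the two-dimensional virial identity. The $\frac{1}{r(t)}$ prefactor is chosen precisely so that $\mu/r\sim 1$ on the annulus where $\phi_{\infty}'$ is supported, reducing the calculation cleanly to the planar prototype of \cite{MR-SharpLowerL2Critical-06}; once this bookkeeping is in hand, the flux identity above combined with the error estimates just outlined gives the target inequality, with $\Gamma_b^2$ absorbing all the exponentially small residuals from $\Psi_b$, the cylindrical drift, and $\mathcal{E}_{\mathrm{mot}}$.
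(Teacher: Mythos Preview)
Your proposal is correct and arrives at the same inequality, but it is organized differently from the paper's proof. The paper does not work from the $\epsilon$-equation~(\ref{DefnEqn-epsEqn}); instead it computes $\partial_s\int\phi_\infty\bigl(\frac{(r,z)-(r(t),z(t))}{\lambda A}\bigr)|u|^2\,dx$ directly from the original equation~(\ref{Eqn-NLS}) in the $x$-variables. This is cleaner: the time derivative of $|u|^2$ produces only the momentum flux $\frac{1}{A}Im\int\grad\phi_\infty\cdot\grad u\,\overline{u}$ (the nonlinearity drops out automatically since $Im(|u|^4)=0$), and the main positive term $\frac{b}{2}\int\frac{y}{A}\cdot\grad\phi_\infty|\epsilon|^2\mu$ appears when $\partial_s$ hits the time-dependent argument of the cutoff, via $\frac{\lambda_s}{\lambda}\approx -b$. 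Disjoint supports then replace $|u|^2$ by $|\epsilon|^2$ in one stroke, and the $\frac{1}{r(t)}$ prefactor is appended at the very end with a one-line estimate on $\frac{r_s}{r^2}\int\phi_\infty|\epsilon|^2\mu$. Your route---differentiating in $y$-variables and substituting the full $\epsilon$-equation---recovers the same flux from the $-ib\Lambda\epsilon$ generator, but at the cost of having to argue separately that every other term of~(\ref{DefnEqn-epsEqn}) (the potential parts of $M$, the $\widetilde{Q}_b$-modulation terms, $\Psi_b$, $R(\epsilon)$) vanishes or is negligible on $\operatorname{supp}\phi_\infty(\cdot/A)$. You do this correctly, so the argument closes; the paper's approach simply avoids generating those terms in the first place.
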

\begin{remark}[Interpretation of Lemma \ref{Lemma-MassDisperse}]\label{Remark-MeaningOfMassDisperse}
Assume for the sake of heuristics that $\epsilon \approx \zeta_b$ on the region, $\abs{y} \sim A$. Then with the definition of $\Gamma_b$ (\ref{DefnEqn-GammaB}) and the assumed control on $\epsilon$ {\bf H1.2}, equation (\ref{Eqn-MassDisperse}) suggests continuous ejection of mass from the region $\abs{y} < \frac{A}{2}$, regardless of whether that region is growing or contracting. 
\end{remark}
\begin{lemma}[Lyapounov Functional, \cite{MR-SharpLowerL2Critical-06} ]\label{Lemma-LyapounovFunc}
For all $s\in[s_0,s_1)$,
\begin{equation}\label{Eqn-Lyapounov}
\partial_s{\mathcal J} \leq -Cb\left(
	\Gamma_b + \epsTildeNorm + \int_{A\leq\abs{y}\leq 2A}{\abs{\epsilon}^2}
\right),
\end{equation}
where $C>0$ is a universal constant,
\begin{equation}\label{DefnEqn-LyapounovFunctional}\begin{aligned}
{\mathcal J}(s) = 
	&\int{\abs{\widetilde{Q}_b}^2} - \int{\abs{Q}^2} 
		+ 2Re\left(\epsilon,\widetilde{Q}_b\right)\\
	&+\frac{1}{r(s)}\int{\left(1-\phi_{\infty}\left(\frac{y}{A}\right)\right)\abs{\epsilon}^2\mu(y)\,dy}\\
	&-\frac{\delta_2}{800}\left(
		b\tilde{f}_1(b) - \int_0^b{\tilde{f}_1(v)\,dv} 
		+b\,Im\left(\epsilon,\Lambda\widetilde{\zeta}_b\right) \right),
\end{aligned}\end{equation}
where $\tilde{f}_1$ is the principal part of $f_1$, equation (\ref{DefnEqn-f1}),
\begin{equation}\label{DefnEqn-f1Tilde}
\tilde{f}_1(b) = 
	\frac{b}{4}\abs{y\widetilde{Q}_b}_{L^2}^2 
	+ \frac{1}{2}Im\left(\int{y\cdot\grad\widetilde{\zeta}_b\overline{\widetilde{\zeta}}_b}\right).
\end{equation}
\end{lemma}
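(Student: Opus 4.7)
The plan is to differentiate ${\mathcal J}$ piece by piece and arrange for a cancellation between the three parts: the first two ride on conservation of mass together with the mass-ejection estimate (Lemma \ref{Lemma-MassDisperse}), while the third rides on the radiative virial identity (Lemma \ref{Lemma-RefinedVirial}). The sign choice $-\delta_2/800$ is what glues them together.

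First I would handle the mass pieces of ${\mathcal J}$. By the same calculation underpinning equation (\ref{Eqn-prelimMassEst}) in Lemma \ref{Lemma-PrelimEstimatesConservLaws}, conservation of mass gives, modulo corrections of order $\lambda \ll \Gamma_b^{2}$ by (\ref{Hypo1-consequenceForLambdaGamma}),
\[
\int |\widetilde{Q}_b|^2 - \int Q^2 + 2\,Re(\epsilon,\widetilde{Q}_b) + \frac{1}{r(s)}\int |\epsilon|^2\mu(y)\,dy = \text{const} + O(\lambda).
\]
Splitting the last integral with $1 = \phi_\infty(y/A) + (1-\phi_\infty(y/A))$ shows that Piece 1 plus Piece 2 of ${\mathcal J}$ equals $\text{const} - \frac{1}{r(s)}\int \phi_\infty(y/A)|\epsilon|^2\mu\,dy + O(\lambda)$. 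Applying Lemma \ref{Lemma-MassDisperse} directly yields
\[
\partial_s(\text{Piece 1} + \text{Piece 2}) \leq -\frac{b}{400}\int_{A\leq|y|\leq 2A}|\epsilon|^2 + \Gamma_b^{a/2}\int |\grad_y\epsilon|^2\mu\,dy + \Gamma_b^{2}.
\]

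For Piece 3, the key algebraic identity is $\partial_s\bigl(b\tilde{f}_1(b) - \int_0^b \tilde{f}_1(v)\,dv\bigr) = b_s\cdot b\,\tilde{f}_1'(b)$, and since $f_1 = \tilde{f}_1(b) + Im(\epsilon,\Lambda\widetilde{\zeta}_b)$ from (\ref{DefnEqn-f1}),
\[
\partial_s\Bigl(b\tilde{f}_1 - \int_0^b \tilde{f}_1 + b\,Im(\epsilon,\Lambda\widetilde{\zeta}_b)\Bigr) = b_s\bigl(b\,\tilde{f}_1'(b) + f_1 - \tilde{f}_1\bigr) + b\,\partial_s f_1.
\]
The $b_s$ grouping is a perturbation: by the preliminary estimate (\ref{Eqn-prelimLambda+BEst}) together with the radiation bounds (\ref{Eqn-zb_smallH1})--(\ref{Eqn-zb_by_b}), each of $b\,\tilde{f}_1'(b)$, $f_1$, $\tilde{f}_1$ is at most $\Gamma_b^{1/2-C\eta}$ in size, and $|b_s| \lesssim \Gamma_b^{1-C\eta} + \|\epsilon\|^2$, so this term sits well below the main contribution. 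The main term $b\,\partial_s f_1$ is estimated directly from Lemma \ref{Lemma-RefinedVirial}:
\[
b\,\partial_s f_1 \geq b\,\delta_2\!\left(\epsTildeNorm\right) + b\,\Gamma_b - \frac{b}{\delta_2}\int_{A\leq|y|\leq 2A}|\epsilon|^2.
\]

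Multiplying by $-\delta_2/800$ and summing with the mass pieces, the two contributions in $\int_{A\leq|y|\leq 2A}|\epsilon|^2$ combine with coefficient $-\frac{b}{400} + \frac{b}{800} = -\frac{b}{800}$, inheriting the good sign. The norm of $\tilde{\epsilon}$ is converted to the norm of $\epsilon$ using $\epsilon = \tilde{\epsilon} + \widetilde{\zeta}_b$ together with (\ref{Eqn-zb_smallH1}), (\ref{Eqn-zb_H1L2Near}), and the definition of $A$, losing only a factor $\Gamma_b^{1-C\eta}$ which is absorbed into the $b\Gamma_b$ term. The main obstacle, and the source of the universal constraint $\eta<a/C_0$ promised in the parameter overview, is absorbing the term $\Gamma_b^{a/2}\int|\grad_y\epsilon|^2\mu$ from Lemma \ref{Lemma-MassDisperse} into $b\,\delta_2\int|\grad_y\epsilon|^2\mu$: this requires $\Gamma_b^{a/2} \leq b\,\delta_2/2$, which by (\ref{Eqn-GammaBEstimate}) amounts to $e^{-a\pi/(2b)} \ll b$, true precisely once $\eta$ is chosen small relative to $a$ so that the constants $C\eta$ floating in the radiation estimates do not destroy the gap. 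Collecting everything yields (\ref{Eqn-Lyapounov}) with a universal $C>0$.
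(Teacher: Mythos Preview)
Your overall strategy matches the paper's: combine mass conservation with Lemma~\ref{Lemma-MassDisperse} for Pieces~1 and~2, use Lemma~\ref{Lemma-RefinedVirial} for Piece~3 via the identity relating $b\,\partial_s f_1$ to the $s$-derivative of the bracketed expression, and exploit the coefficient $-\delta_2/800$ to obtain the correct sign on $\int_{A\le|y|\le 2A}|\epsilon|^2$.

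There is, however, one genuine slip in the absorption step. You convert the \emph{good} term $\epsTildeNorm$ (from the radiative virial) into the $\epsilon$-norm, claiming the additive loss $\Gamma_b^{1-C\eta}$ is absorbed by the $b\Gamma_b$ gain. It is not: since $\Gamma_b<1$ one has $\Gamma_b^{1-C\eta}\gg\Gamma_b$, so the loss $b\,\delta_2^2\,\Gamma_b^{1-C\eta}$ swamps the gain $b\,\delta_2\,\Gamma_b$ and destroys the $\Gamma_b$ term in (\ref{Eqn-Lyapounov}). The paper converts in the opposite direction: the \emph{bad} term $\Gamma_b^{a/2}\int|\grad\epsilon|^2\mu$ from Lemma~\ref{Lemma-MassDisperse} is written as
\[
\Gamma_b^{a/2}\int|\grad\epsilon|^2\mu \;\lesssim\; \Gamma_b^{a/2}\,\Gamma_b^{1-C\eta} \;+\; \Gamma_b^{a/2}\int|\grad\tilde{\epsilon}|^2\mu.
\]
The second piece is absorbed by $\frac{\delta_2^2 b}{800}\int|\grad\tilde{\epsilon}|^2\mu$ simply because $\Gamma_b^{a/2}\ll b$; the first piece is $\le \Gamma_b^{1+a/4}$ and hence negligible against $b\Gamma_b$, \emph{provided} $a/2 - C\eta \ge a/4$, i.e.\ $a\ge 4C\eta$. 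That is where the constraint $\eta<a/C_0$ actually enters --- not in the comparison $\Gamma_b^{a/2}\ll b$, which holds for all small $b$ regardless of $\eta$.

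A second, harmless point: your displayed identity for $\partial_s$ of the bracket carries an extra term $b_s\cdot b\,\tilde{f}_1'(b)$; the correct relation (cf.\ (\ref{Proof-LyapounovFunc-eqn3})) is $\partial_s(\cdots)=b\,\partial_s f_1 + b_s\,Im(\epsilon,\Lambda\widetilde{\zeta}_b)$. Since you treat every $b_s$-term as a perturbation anyway, this slip does not affect the outcome.
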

Lemma \ref{Lemma-LyapounovFunc} is proven from the radiative virial estimate (\ref{Eqn-RadiativeVirial}), mass dispersion estimate (\ref{Eqn-MassDisperse}), and the conservation of mass.  The proof is provided at the end of the section.
Now let us discuss what ${\mathcal J}$ is.
\begin{lemma}[Estimates on Lyapounov Functional]\label{Lemma-LyapounovEstimates}
For all $s\in[s_0,s_1)$ we have the crude estimate,
\begin{equation}\label{Eqn-CrudeLyapounovEst}
\abs{ {\mathcal J} - d_0b^2 } < \delta_3b^2,
\end{equation}
where $0 < \delta_3 \ll 1$ is a universal constant, and $d_0b^2$ is the approximate excess mass of profile $\widetilde{Q}_b$ - see (\ref{Eqn-Qb_by_bSquared}). There also holds a more refined estimate,
\begin{equation}\label{Eqn-RefinedLyapounovEst}
{\mathcal J}(s) - f_2(b(s)) \left\{\begin{aligned}
	\leq
		\Gamma_b^{1-Ca} + CA^2&\left(\epsNorm\right)\\
	\geq
		-\Gamma_b^{1-Ca} + \frac{1}{C}&\left(\epsNorm\right),
\end{aligned}\right.
\end{equation}
where $f_2$ is the principal part of ${\mathcal J}$ concerned with mass of the profile,
\begin{equation}\label{DefnEqn-f2}
f_2(b) = 
	\int{\abs{\widetilde{Q}_b}^2} - \int{\abs{Q}^2} 
	-\frac{\delta_2}{800}\left(
		b\tilde{f}_1(b) - \int_0^b{\tilde{f}_1(v)\,dv} \right).
\end{equation}
\end{lemma}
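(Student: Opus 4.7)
The plan is to subtract (\ref{DefnEqn-f2}) from (\ref{DefnEqn-LyapounovFunctional}) so that
\begin{equation*}
{\mathcal J}(s) - f_2(b) = 2\,Re(\epsilon,\widetilde{Q}_b) + \frac{1}{r(s)}\int\left(1-\phi_\infty(y/A)\right)\abs{\epsilon}^2\mu(y)\,dy - \frac{\delta_2}{800}\,b\,Im(\epsilon,\Lambda\widetilde{\zeta}_b),
\end{equation*}
and then apply the conservation-of-energy estimate (\ref{Eqn-prelimEnerEst}) to rewrite the first summand as $\int\abs{\grad_y\epsilon}^2\mu\,dy - 3\int_{\abs{y}\leq 10/b}Q^2\epsRe^2 - \int_{\abs{y}\leq 10/b}Q^2\epsIm^2$ modulo errors of size $\Gamma_b^{1-C\eta} + \delta(\alpha^*)(\text{norm})$. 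Both refined bounds then reduce to controlling the three remaining ingredients; the crude bound will fall out as a consequence.

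For the upper bound in (\ref{Eqn-RefinedLyapounovEst}) I would drop the negative weighted $L^2$ contributions and bound the positive weighted-mass term via Lemma \ref{Lemma-L2ByGrad} applied on $\{\abs{y}\leq 6A\}$, producing the $CA^2(\text{norm})$ factor (with the logarithmic factor absorbed into $A^2$). The indefinite radiative cross term is controlled by Cauchy--Schwarz together with (\ref{Eqn-zb_H1L2Near})--(\ref{Eqn-GammaBEstimate}) of Lemma \ref{Lemma-LinearRadiation}, which give $\norm{\Lambda\widetilde{\zeta}_b}_{L^2}\lesssim \Gamma_b^{1/2 - Ca}$; AM--GM then yields at most $\Gamma_b^{1-Ca} + (\text{norm})$.

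For the lower bound, coercivity must be reconstructed from the indefinite combination $\int\abs{\grad_y\epsilon}^2\mu - 3\int Q^2\epsRe^2 - \int Q^2\epsIm^2$ coming from energy, together with the strictly positive weighted-mass term. The crucial point is that $A/2 \gg 10/b$, so $1-\phi_\infty(y/A)\equiv 1$ on the exponential support of $Q^2$ and the weighted mass dominates $c\int_{\abs{y}\leq 10/b}\abs{\epsilon}^2 e^{-\abs{y}}\,dy$. I would then invoke the Spectral Property, Proposition \ref{Prop-SpectralProperty}, to obtain coercivity of the full indefinite form; the orthogonality conditions (\ref{Eqn-GeoDecomp-OrthogReY2})--(\ref{Eqn-GeoDecomp-OrthogImL}) together with the inner-product estimates (\ref{Eqn-prelimEnerEst}) and (\ref{Eqn-prelimMomentEst}) control the finitely many negative modes, yielding $\tfrac{1}{C}(\text{norm})$. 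The radiative cross term is then absorbed by AM--GM with a sufficiently small parameter so as not to spoil this coercivity.

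The crude estimate (\ref{Eqn-CrudeLyapounovEst}) follows from these bounds together with a Taylor expansion $f_2(b) = d_0 b^2 + O(b^3) + O(\Gamma_b^{1-Ca})$ provided by (\ref{Eqn-Qb_by_bSquared}) and the radiation decay; substituting the bootstrap hypothesis (\ref{Hypo1-epsilon}) into the refined estimate gives ${\mathcal J} - f_2(b) = O(A^2 \Gamma_b^{3/4}) = O(\Gamma_b^{3/4 - 2a}) \ll b^2$ once $a$ is chosen small. The main obstacle I anticipate is the lower bound: arranging that the positive weighted-mass term genuinely dominates the negative $-3\int Q^2 \epsilon^2$ contributions, rather than merely balancing them, forces an appeal to the Spectral Property, and the parameters $\eta$ and $a$ must be calibrated against the universal constants from Proposition \ref{Prop-SpectralProperty}, Lemma \ref{Lemma-L2ByGrad}, and Lemma \ref{Lemma-LinearRadiation} so that no $\Gamma_b^{-Ca}$ loss ever exceeds the available $\Gamma_b^{1}$ gain.
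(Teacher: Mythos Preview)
Your decomposition of ${\mathcal J}-f_2$, the treatment of the radiative cross term, the upper bound via Lemma~\ref{Lemma-L2ByGrad}, and the derivation of the crude estimate from the refined one are all in line with the paper (the paper actually proves the crude estimate first, directly, but your route is fine). The gap is in the lower bound.

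The quadratic form you obtain by combining the energy contribution
\[
\int\abs{\grad_y\epsilon}^2\mu - 3\int Q^2\epsRe^2 - \int Q^2\epsIm^2
\]
with the localized mass $\int(1-\phi_\infty(y/A))\abs{\epsilon}^2$ is, after a cutoff $\phi_B$ supported on $\abs{y}\leq 4A$ (which the paper introduces), essentially
\[
Re\left(L(\phi_B\epsilon),\phi_B\epsilon\right) - \int\phi_\infty\abs{\phi_B\epsilon}^2,
\]
where $L$ is the linearized operator with potentials $-3Q^2$, $-Q^2$. This is \emph{not} $H(\epsilon,\epsilon)$: the operator $H$ carries the potentials $3Qy\cdot\grad Q$ and $Qy\cdot\grad Q$, which only appear after the additional inner product with $\Lambda\widetilde{Q}_b$ in the local virial calculation. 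Proposition~\ref{Prop-SpectralProperty} therefore does not apply here, and there is no simple identity converting one form into the other.

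The paper instead invokes a separate coercivity result, stated mid-proof as Lemma~\ref{Lemma-EllipticEstimate} (Elliptic estimate for $L$), taken from \cite{MR-SharpLowerL2Critical-06} and resting on spectral facts of Martel--Merle, Maris, and McLeod rather than the numerically verified Spectral Property. It asserts that $Re(Lv,v)-\int\phi_\infty\abs{v}^2$ is coercive modulo only the four directions $Re(v,Q)$, $Re(v,\abs{y}^2Q)$, $Re(v,yQ)$, $Im(v,\Lambda^2Q)$. Note that $Im(v,\grad Q)$ is \emph{not} among them, so the momentum estimate (\ref{Eqn-prelimMomentEst}) you cite is not actually needed at this step. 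Three of these four are handled by the orthogonality conditions (\ref{Eqn-GeoDecomp-OrthogReY2})--(\ref{Eqn-GeoDecomp-OrthogImL2}), and $Re(v,Q)$ by energy, as you anticipated.
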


\begin{proof}
To prove (\ref{Eqn-CrudeLyapounovEst}) we will approximate each term of (\ref{DefnEqn-LyapounovFunctional}). To estimate the term in $\abs{\epsilon}^2$, recall the support of $\phi_\infty$ (\ref{DefnEqn-phiInfty}) and the consequence for $\mu(y)$, such as equation (\ref{Hypo1-consequenceForMu}). Then,
\begin{equation}\label{Proof-CrudeLyapounov-eqn1}\begin{aligned}
\int{\left(1-\phi_\infty\left(\frac{y}{A}\right)\right)\abs{\epsilon}^2\mu(y)\,dy}
	&\lesssim \int_{\abs{y}\leq 3A}{\abs{\epsilon}^2}\\
	&\lesssim A^2\log A\left(\epsNorm\right) \leq \Gamma_b^\frac{1}{2},
\end{aligned}\end{equation}
where the second inequality is due to Lemma \ref{Lemma-L2ByGrad} 
and the final inequality is from the definition of $A$ (\ref{DefnEqn-A}) and the hypothesized control of $\epsilon$.  Estimate $(\epsilon,\widetilde{Q}_b)$ by the same control, and the terms in $\widetilde{\zeta}_b$ by (\ref{Eqn-zb_smallH1}). Equation (\ref{Eqn-CrudeLyapounovEst}) then follows from (\ref{Eqn-Qb_by_bSquared}) by noting that the constant $\delta_2$ due to the radiative virial identity (\ref{Eqn-RadiativeVirial}) can be assumed small with respect to universal constant $d_0$, so that $0 < \left.\frac{\partial f_2}{\partial b^2}\right|_{b^2=0} < \infty$.
Next we prove the refined estimate. Note that,
\begin{equation}\label{Proof-RefinedLyapounov-eqn1}
{\mathcal J}(s) - f_2(b(s)) = 
	2Re\left(\epsilon,\widetilde{Q}_b\right) 
	+ \frac{1}{r(t)}\int{\left(1-\phi_\infty\right)\abs{\epsilon}^2\mu(y)}
	- \frac{\delta_2}{800}b\,Im\left(\epsilon,\Lambda\widetilde{\zeta}_b\right).
\end{equation}
By the bounds for $\widetilde{\zeta}_b$, 
Lemma \ref{Lemma-L2ByGrad}, 
and the choice of $A$, 
\begin{equation}\label{Proof-RefinedLyapounov-eqn2}\begin{aligned}
\abs{Im\left(\epsilon,\Lambda\widetilde{\zeta}_b\right)}
&\leq \Gamma_b^{\frac{1}{2}-C\eta}
	\left(\int_{\abs{y} \leq A}\abs{\epsilon}^2\right)^\frac{1}{2}\\
&\lesssim \Gamma_b^{\frac{1}{2}-C\eta}A\left(\log A\right)^\frac{1}{2}
	\left( \epsNorm \right)^\frac{1}{2}\\
&\lesssim \Gamma_b^{1-Ca} + \left(\epsNorm\right).
\end{aligned}\end{equation}
Since $b$ is small, the contribution of (\ref{Proof-RefinedLyapounov-eqn2}) is a factor of $\alpha^*$ smaller than the desired bound. Similar terms will be omitted for the remainder of the proof.
Regarding the two other terms of (\ref{Proof-RefinedLyapounov-eqn1}), the term linear in $\epsilon$ we recognize from the conservation of energy (\ref{Eqn-prelimEnerEst}). Indeed, the upper bound for (\ref{Proof-RefinedLyapounov-eqn1}) follows from (\ref{Eqn-prelimEnerEst}) with (\ref{Eqn-ExpDecayByGrad}) and,
\begin{equation}\label{Proof-RefinedLyapounov-eqn3}
\int{\left(1-\phi_\infty\right)\abs{\epsilon}^2\mu(y)\,dy} \lesssim A^2\log A\left(\epsNorm\right)
\end{equation}
which is due to (\ref{Eqn-L2ByGrad}).

To establish a lower bound for (\ref{Proof-RefinedLyapounov-eqn1}) we will need the following Lemma - the proof is based on a spectral result due to \cite{MartelMerle01}, with additional properties proven \cite{Maris02} and \cite{McLeod93}. See \cite[Lemma 8]{MR-SharpLowerL2Critical-06} for that spectral property, and \cite[Appendix D]{MR-SharpLowerL2Critical-06} for proof of the Lemma.
\begin{lemma}[Elliptic estimate for $L$.]
\label{Lemma-EllipticEstimate}
Recall the linearized Schr\"odinger operator $L$ from (\ref{DefnEqn-L}).
There exists a universal constant $\delta_4 > 0$ such that $\forall v \in H^1(\real^2)$,
\begin{equation}\label{EllipticEstimate}\begin{aligned}
Re\left(L(v),v\right) - \int{\phi_\infty\abs{v}^2}
	\geq &\delta_4\left(\int{\abs{\grad v}^2} + \int{\abs{v}^2e^{-\abs{y}}}\right)\\
	&-\frac{1}{\delta_4}\left(
		Re\left(v,Q\right) + Re\left(v,\abs{y}^2Q\right) 
		+ Re\left(v,yQ\right) + Im\left(v,\Lambda^2Q\right)
	\right)^2.
\end{aligned}\end{equation}
\end{lemma}
Introduce a new radially symmetric cutoff function, analogous to $\phi_A$ (\ref{DefnEqn-phiA}) but with larger support such that $(1-\phi_B(y))(1-\phi_\infty(\frac{y}{A})) = 0$.
\begin{equation}\begin{aligned}
\label{DefnEqn-phiB}
\phi_B(y) = \left\{\begin{aligned}
	1 && \text{ for }\abs{y} \leq 3A\\
	0 && \text{ for }\abs{y} \geq 4A,
	\end{aligned}\right.
\end{aligned}\end{equation}
From equation (\ref{Hypo1-consequenceForMu}), we may rewrite the principal part of the conservation of energy estimate (\ref{Eqn-prelimEnerEst}) as,
\begin{equation}\label{Proof-RefinedLyapounov-eqn4}\begin{aligned}
2Re\left(\epsilon,\widetilde{Q}_b\right) \approx& 
	\int{\left(1-\phi_B^2\right)\abs{\grad\epsilon}^2\mu(y)\,dy}\\
	&+\int{\phi_B^2\abs{\grad\epsilon}^2\,dy}
	-3\int{Q^2(\phi_B\epsRe)^2} - \int{Q^2(\phi_B\epsIm)^2},
\end{aligned}\end{equation}
where we used the exponential spatial decay of $Q$ and the lower bound for $\Gamma_b$ (\ref{DefnEqn-GammaB}) to control the excess in $Q^2\epsilon^2$ on $\abs{y} > \frac{10}{b}$. 
With integration by parts,
\begin{equation}\label{Proof-RefinedLyapounov-eqn5.5}\begin{aligned}
\int{\phi_B^2\abs{\grad\epsilon}^2\,dy} 
=	\int{\abs{\grad(\phi_B\epsilon)}^2\,dy}
	+\int{\laplacian\phi_B\,\phi_B\abs{\epsilon}^2\,dy}.
\end{aligned}\end{equation}
The principal part of (\ref{Proof-RefinedLyapounov-eqn1}) is then,
\begin{equation}\label{Proof-RefinedLyapounov-eqn7}\begin{aligned}
2Re\left(\epsilon,\widetilde{Q}_b\right) 
	&+ \frac{1}{r(t)}\int{(1-\phi_\infty)\abs{\epsilon}^2\mu(y)\,dy}\\
	\approx& 
	\int{\left(1-\phi_B^2\right)\abs{\grad\epsilon}^2\mu(y)\,dy}\\
	&+\left( Re\left(L(\phi_B\epsilon),\phi_B\epsilon\right) 
		- \int{\phi_\infty\abs{\phi_B\epsilon}^2} \right) 
		+ \int{\laplacian\phi_B\,\phi_B\abs{\epsilon}^2}\\
	&+\int{ \left(1-\phi_\infty\right)\left(\frac{\mu}{r(t)}-\phi_B^2\right) 
		\abs{\epsilon}^2}.
\end{aligned}\end{equation}
The final term can be neglected, as $\left(1-\phi_\infty\right)\left(\frac{\mu}{r(t)}-\phi_B^2\right)$ is of the order $\lambda y_1$, and supported on $\abs{y} < 4A$. 
The lower bound for (\ref{Proof-RefinedLyapounov-eqn1}) then follows from Lemma \ref{Lemma-EllipticEstimate}, an integration by parts, and the straightforward comparison,
\begin{equation}\label{Proof-RefinedLyapounov-eqn8}
\int{\phi_B^2\abs{\grad\epsilon}^2} + \int{\abs{\phi_B\epsilon}^2e^{-\abs{y}}}
\gtrsim \int{\phi_B^2\abs{\grad\epsilon}^2\mu(y)} + \int_{\abs{y}\leq\frac{10}{b}}{\abs{\epsilon}^2e^{-\abs{y}}},
\end{equation}
again due to the support of $\phi_B$ and the bound on $\lambda$.
This completes the proof of (\ref{Eqn-RefinedLyapounovEst}).
\end{proof}

\begin{lemma}[Lower Bound on Blowup Rate]\label{Lemma-Lowerbound}
For all $s\in[s_0,s_1)$,
\begin{equation}\label{Eqn-bUpperBound}
b(s) \leq \frac{4\pi}{3\log s},
\end{equation}
\begin{equation}\label{Eqn-epsIntegral}
\int_{s_0}^s{\left(
\Gamma_{b(\sigma)} + \epsNorm
\right)\,d\sigma}
\leq C \alpha^*, 
\end{equation}
where $C > 0$ is a universal constant and,
\begin{equation}\label{Eqn-epsImproved}
\epsNorm \leq \Gamma_b^\frac{4}{5},
\end{equation}
which is equation (\ref{Improv1-epsilon}), the remaining part of {\bf I1.2}.
\end{lemma}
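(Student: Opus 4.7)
I will follow the Merle--Rapha\"el scheme of \cite{MR-SharpLowerL2Critical-06}, exploiting the Lyapounov functional ${\mathcal J}$ (Lemma \ref{Lemma-LyapounovFunc}) together with its crude and refined estimates (Lemma \ref{Lemma-LyapounovEstimates}) and the local virial identity (Lemma \ref{Lemma-LocalVirial}).

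To prove \eqref{Eqn-bUpperBound}, I would first note that the monotonicity of ${\mathcal J}$ together with the two-sided crude estimate \eqref{Eqn-CrudeLyapounovEst} at once gives $b(s) \leq (1+C\delta_3)\,b_0$ throughout $[s_0, s_1)$. Since the choice of $s_0$ in \eqref{DefnEqn-s} reads $b_0 = \tfrac{3\pi}{4\log s_0}$, this already establishes \eqref{Eqn-bUpperBound} for $s$ in a neighbourhood of $s_0$. For larger $s$, the integrated form
\[
(1-\delta_3)d_0 b^2(s) \leq (1+\delta_3)d_0 b_0^2 - C\int_{s_0}^s b(\sigma)\Gamma_{b(\sigma)}\,d\sigma,
\]
combined with the near-monotonicity of $b$ (itself a consequence of the crude estimate) and the lower bound $\Gamma_b \gtrsim e^{-(1+C\eta)\pi/b}$ from \eqref{Eqn-GammaBEstimate}, shows that the map $b \mapsto b\,e^{-(1+C\eta)\pi/b}$ (increasing for small $b$) gives $b(\sigma)\,\Gamma_{b(\sigma)} \gtrsim b(s)\,e^{-(1+C\eta)\pi/b(s)}$ on $[s_0,s]$. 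The integral then dominates $(s-s_0)\,b(s)\,e^{-(1+C\eta)\pi/b(s)}$, and rearranging and taking logarithms (with $\eta$ chosen so that $1+C\eta < \tfrac{4}{3}$) yields the claim.

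For \eqref{Eqn-epsIntegral}, I would integrate the local virial identity \eqref{Eqn-LocalVirial} directly from $s_0$ to $s$:
\[
\delta_1\int_{s_0}^s\left(\epsNorm\right)d\sigma \leq b(s) - b_0 + \int_{s_0}^s \Gamma_b^{1-C\eta}\,d\sigma.
\]
The first term on the right is bounded by $2\alpha^*$ via the crude estimate and \textbf{C1.2}. The remaining integral of $\Gamma_b^{1-C\eta}$, and the analogous integral of $\Gamma_b$, would be controlled by combining the Lyapounov inequality $\int_{s_0}^s b\,\Gamma_b\,d\sigma \leq C b_0^2$ from \eqref{Eqn-Lyapounov} with the bounds on $b$ from \eqref{Eqn-bLowerBound} and Step 1, partitioning $[s_0,s]$ into the subregime where $b\gtrsim b_0$ (so the weight $b$ may be removed at a cost of $b_0^{-1}$) and its complement (where $\Gamma_b$ is exponentially small in $1/b_0$).

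For the pointwise bound \eqref{Eqn-epsImproved}, I will apply the refined Lyapounov estimate \eqref{Eqn-RefinedLyapounovEst} in the form
\[
\epsNorm \lesssim {\mathcal J}(s) - f_2(b(s)) + \Gamma_b^{1-Ca}
\]
by case analysis on the sign of $b_s$. At any time $s$ with $b_s(s) \leq 0$, the local virial identity forces $\epsNorm \leq \delta_1^{-1}\Gamma_b^{1-C\eta}$, which absorbs into $\Gamma_b^{4/5}$ for $\eta$ small. For times $s$ with $b_s(s) > 0$, take $s^* < s$ to be the most recent time with $b_{s^*} \leq 0$ (or $s_0$); the Lyapounov decay ${\mathcal J}(s) \leq {\mathcal J}(s^*)$, paired with the near-monotone behaviour of $f_2(b) \approx d_0 b^2$ (see \eqref{Eqn-Qb_by_bSquared}), controls $|{\mathcal J}(s) - f_2(b(s))|$ by an error of order $\Gamma_b^{1-Ca}$ with $Ca < \tfrac{1}{5}$. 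The main obstacle lies precisely in this case analysis: I must ensure that oscillations of $b$ between consecutive ``good times'' do not spoil the tight relationship between ${\mathcal J}$ and $f_2(b)$, which is what the refined estimate \eqref{Eqn-RefinedLyapounovEst} was designed to capture. The delicate bookkeeping here is what distinguishes the present $H^{1/2}$--critical setting from the purely two-dimensional argument, since the cylindrical measure $\mu(y)$ must be accommodated within the same framework.
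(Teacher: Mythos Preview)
Your overall strategy matches the paper's: use the Lyapounov functional ${\mathcal J}$ with its crude and refined estimates, and do a case analysis on the sign of $b_s$ for the pointwise bound. The case analysis you sketch for \eqref{Eqn-epsImproved} is exactly the paper's argument; the one point you leave implicit and should make explicit is that on the interval $(s^*,s)$ where $b_s>0$ one has $b(s^*)<b(s)$, hence $f_2(b(s^*))<f_2(b(s))$ since $f_2$ is increasing near $0$, so the term $f_2(b(s^*))-f_2(b(s))$ has the \emph{good} sign when you chain ${\mathcal J}(s)\le{\mathcal J}(s^*)$ with the two refined estimates. Without that sign, the bookkeeping you allude to does not close.

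Where you diverge from the paper is in the mechanics of \eqref{Eqn-bUpperBound} and \eqref{Eqn-epsIntegral}. For the integral bound, you integrate the local virial identity and are then left needing $\int\Gamma_b^{1-C\eta}\,d\sigma\lesssim\alpha^*$ separately, which you propose to get by partitioning according to the size of $b$; this works but is awkward, since the ``near-monotonicity'' of $b$ only gives $b(\sigma)\ge(1-C\delta_3)b(s)$, not $b(\sigma)\gtrsim b_0$. The paper avoids this entirely: divide the Lyapounov inequality \eqref{Eqn-Lyapounov} by $\sqrt{\mathcal J}\approx\sqrt{d_0}\,b$, so the weight $b$ on the right cancels and the left becomes $-2\partial_s\sqrt{\mathcal J}$; integrating gives both $\int\Gamma_b$ and $\int(\epsNorm)$ bounded by $C\sqrt{{\mathcal J}(s_0)}\approx Cb_0$ in one stroke. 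Similarly, for the upper bound on $b$, the paper packages the same idea as a clean differential inequality: $\partial_s\,e^{\frac{5\pi}{4}\sqrt{d_0/{\mathcal J}}}\gtrsim \frac{b}{\mathcal J}\Gamma_b\,e^{\frac{5\pi}{4}\sqrt{d_0/{\mathcal J}}}\ge 1$ (using $\Gamma_b\ge e^{-(1+C\eta)\pi/b}$ with $\tfrac{5}{4}>1+C\eta$), then integrates and reads off $b(s)\le\frac{4\pi}{3\log s}$ via the crude estimate and the choice of $s_0$. Your route via the integrated inequality $(s-s_0)\,b(s)\,e^{-(1+C\eta)\pi/b(s)}\lesssim b_0^2$ reaches the same conclusion but requires more juggling of constants.
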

Note that (\ref{Eqn-bUpperBound}) is the 1\textsuperscript{st} upper bound of {\bf I1.3}. The only estimate remaining to establish Proposition \ref{Prop-Improv} follows as a corollary,
\begin{proof}[Proof of {\bf I1.3}, 2\textsuperscript{nd} lower bound]
Recall the approximate dynamics of $\lambda$, equation 
(\ref{Eqn-lambda-prelimDynamics}).  Since $b>0$ is small, 
$-\frac{\lambda_s}{\lambda} \leq 3b$, which we integrate with (\ref{Eqn-bUpperBound}),
\begin{equation}\label{Proof-Improv1-loglog-eqn3}
-\log \lambda(s) 
	\leq -\log\lambda_0 + 4\pi\int_{s_0}^s{\frac{1}{\log\sigma}\,d\sigma} 
	\leq -\log\lambda_0 + 4\pi (s-s_0).
\end{equation}
Use (\ref{Eqn-bUpperBound}) again, and recall the definition of $s_0$ (\ref{DefnEqn-s}) and choice of data (\ref{DataP1-loglog}), 
\begin{equation}\label{Eqn-lambdaLowerBound}
\lambda(s) 
	\geq \lambda_0e^{4\pi s_0}\,e^{-4\pi e^\frac{4\pi}{3b(s)}} 
	> e^{-e^\frac{5\pi}{b(s)}}.
\end{equation}
\end{proof}

\begin{proof}[Proof of Lemma \ref{Lemma-Lowerbound}]
To begin, note from the crude estimate (\ref{Eqn-CrudeLyapounovEst}) that we may divide the Lyapounov inequality (\ref{Eqn-Lyapounov}) by $\sqrt{\mathcal J}$ and integrate in time leaving,
\begin{equation}\label{Proof-LowerBound-eqn1}
\int_{s_0}^s{\left(
\Gamma_{b(\sigma)} + \epsNorm
\right)\,d\sigma}
\leq C\left(\sqrt{{\mathcal J}(s_0)} - \sqrt{{\mathcal J}(s)}\right)
\leq Cb_0.
\end{equation}
The choice of data (\ref{DataP1-mass}) then proves (\ref{Eqn-epsIntegral}). Alternately, we may view the crude estimate (\ref{Eqn-CrudeLyapounovEst}) and the Lyapounov inequality (\ref{Eqn-Lyapounov}) as giving a differential inequality for ${\mathcal J}$,
\begin{equation}\label{Proof-LowerBound-eqn2}\begin{aligned}
\partial_se^{+\frac{5\pi}{4}\sqrt{\frac{d_0}{{\mathcal J}}}}
	\gtrsim \frac{b}{{\mathcal J}}\,\Gamma_be^{\frac{5\pi}{4}\sqrt{\frac{d_0}{{\mathcal J}}}} 
	\geq 1
&& \Longrightarrow 
&& e^{+\frac{5\pi}{4}\sqrt{\frac{d_0}{{\mathcal J}(s)}}} \geq 
		e^{+\frac{5\pi}{4}\sqrt{\frac{d_0}{{\mathcal J}(s_0)}}}
		+s -s_0.
\end{aligned}\end{equation}
Note that here we applied the bound on $\Gamma_b$ (\ref{Eqn-GammaBEstimate}), for which it was essential $\frac{5}{4} > 1+C\eta$ - see Remark \ref{Remark-NormalBrezisFails}.
By crude estimate (\ref{Eqn-CrudeLyapounovEst}) and the definition of $s_0$ (\ref{DefnEqn-s}),
\begin{equation}\label{Proof-LowerBound-eqn2.5}
e^{+\frac{5\pi}{4}\sqrt{\frac{d_0}{{\mathcal J}(s_0)}}}
	> e^\frac{\pi}{b_0} > s_0,
\end{equation}
which, again with estimate (\ref{Eqn-CrudeLyapounovEst}), proves (\ref{Eqn-bUpperBound}) from (\ref{Proof-LowerBound-eqn2}).

It remains to establish the pointwise control of $\epsilon$. Fix $s \in[s_0,s_1)$.
\begin{enumerate}
\item If $\partial_sb(s) \leq 0$, then (\ref{Eqn-epsImproved}) follows from the local virial identity, Lemma \ref{Lemma-LocalVirial}. 
\item If $\partial_sb(s) > 0$, then there exists a largest interval $(s_+,s)$, with $s_0\leq s_+$, on which $\partial_sb >0$.
\[
\begin{aligned}
\text{This implies, }
&& b(s_+) < b(s)
&& \text{ and either, }
&& 	\left.\begin{aligned}
		\left(a\right) &&&s_+ = s_0, \\
		\;\;\text{ or,}\\
		\left(b\right) &&&\partial_sb(s_+) = 0.
		\end{aligned}\right. 
\end{aligned}\]
In case (a) or (b), by the choice of small $\epsilon_0$ 
 or the local virial identity, respectively, 
\[
\int{\abs{\grad_y\epsilon(s_+,y)}^2\mu(y)\,dy}
	+\int_{\abs{y}\leq\frac{10}{b(s_+)}}{\abs{\epsilon(s_+,y)}^2e^{-\abs{y}}\,dy}
	\leq \Gamma_{b(s_+)}^\frac{6}{7}.
\]
From the upper bound of refined estimate (\ref{Eqn-RefinedLyapounovEst}), and assuming $a>0$ is sufficiently small,
\begin{equation}\label{Proof-LowerBound-eqn5}
{\mathcal J}(s_+) - f_2(b(s_+)) \leq \Gamma_{b(s_+)}^\frac{5}{6} < \Gamma_{b(s)}^\frac{5}{6}.
\end{equation}
Since ${\mathcal J}$ is non-increasing, 
and from the lower bound of refined estimate (\ref{Eqn-RefinedLyapounovEst}),
\begin{equation}\label{Proof-LowerBound-eqn6}\begin{aligned}
\Gamma_{b(s)}^\frac{5}{6} 
\geq & 
{\mathcal J}(s) - f_2(b(s_+))\\
\gtrsim & 
\left(\int{\abs{\grad_y\epsilon(s,y)}^2\mu(y)\,dy}
	+\int_{\abs{y}\leq\frac{10}{b(s)}}{\abs{\epsilon(s,y)}^2e^{-\abs{y}}\,dy}
	\right)\\
&\qquad-\Gamma_{b(s)}^{1-Ca} + \left(f_2(b(s)) - f_2(b(s_+))\right).
\end{aligned}\end{equation}
As noted in the proof of crude estimate (\ref{Eqn-CrudeLyapounovEst}), we may assume the constant $\delta_2$ of equation (\ref{Eqn-RadiativeVirial}) is sufficiently small relative to $d_0$, such that $0 < \left.\frac{\partial f_2}{\partial_{b^2}}\right|_{b^2=0} < \infty$, and proving that $\left(f_2(b(s)) - f_2(b(s_+))\right) > 0$. Assuming $a>0$ is sufficiently small, this proves (\ref{Eqn-epsImproved}).
\end{enumerate}
\end{proof}

\begin{proof}[Proof of Lemma \ref{Lemma-MassDisperse}, \cite{MR-SharpLowerL2Critical-06}]
Directly from equation (\ref{Eqn-NLS}),
\begin{multline}\label{Proof-MassDisperse-eqn1}
\frac{1}{2}\partial_s\left(\int{
	\phi_{\infty}\left(\frac{(r,z) - (r(t),z(t))}{\lambda A}\right)
	\abs{u}^2\,dx}\right)\\
\begin{aligned}=&
\frac{1}{\lambda A}Im\left(\int{\grad_x\phi_\infty\left(\frac{y}{A}\right)
\cdot\grad_x u\,\overline{u}\,dx}\right)\\
&-\frac{1}{2\lambda^2 A}\int{\left( 
		\left(\frac{\lambda_s}{\lambda} + \frac{A_s}{A}\right)
		y
		+\frac{\partial_s(r,z)}{\lambda}\right)
	\cdot\grad_x\phi_\infty\left(\frac{y}{A}\right)
	\abs{u}^2\,dx}.
\end{aligned}
\end{multline}
From the choice of $A$ (\ref{DefnEqn-A}) and $\phi_\infty$ (\ref{DefnEqn-phiInfty}), the support of $\widetilde{Q}_b$ and $\phi_\infty\left(\frac{y}{A}\right)$ are disjoint. With the geometric decomposition 
and change of variables we may rewrite (\ref{Proof-MassDisperse-eqn1}) in terms of $\abs{\epsilon}^2$,
\begin{multline}\label{Proof-MassDisperse-eqn2}
\frac{1}{2}\frac{d}{ds}\int{\phi_\infty\left(\frac{y}{A}\right)\abs{\epsilon}^2\mu(y)\,dy}\\
\begin{aligned}
=&
\frac{1}{A}Im\left(\int{\grad_x\phi_\infty\left(\frac{y}{A}\right)\cdot\grad_y\epsilon\,\overline{\epsilon}\mu(y)\,dy}\right)
+ 
\frac{b}{2}\int{\frac{y}{A}\cdot\grad_x\phi_\infty\left(\frac{y}{A}\right)\abs{\epsilon}^2\mu(y)\,dy}\\
&-\frac{1}{2A}\int{\left(\left(\frac{\lambda_s}{\lambda}+b+\frac{A_s}{A}\right)y+\frac{\partial_s(r,z)}{\lambda}\right)
	\cdot\grad_x\phi_\infty\left(\frac{y}{A}\right)\abs{\epsilon}^2\mu(y)\,dy}.
\end{aligned}
\end{multline}
By Cauchy-Schwarz, the definition of $A$ (\ref{DefnEqn-A}) and the lower bound on $\Gamma_b$ (\ref{Eqn-GammaBEstimate}),
\begin{multline}\label{Proof-MassDisperse-eqn3}
\abs{\frac{1}{A}Im\left(\int{\grad_x\phi_\infty\left(\frac{y}{A}\right)\cdot\grad_y\epsilon\,\overline{\epsilon}\mu(y)\,dy}\right)}\\
\begin{aligned}
	\leq&\frac{1}{A}\left(\int{\abs{\grad\epsilon}^2\mu(y)\,dy}\right)^\frac{1}{2}
		\left(\int{\abs{\grad_x\phi_\infty\left(\frac{y}{A}\right)}\abs{\epsilon}^2\mu(y)\,dy}\right)^\frac{1}{2}\\
	\leq&\frac{1}{2}\Gamma_b^\frac{a}{2}\int{\abs{\grad\epsilon}^2\mu(y)\,dy}
		+\frac{b}{40}\int{\abs{\grad_x\phi_\infty\left(\frac{y}{A}\right)}\abs{\epsilon}^2\mu(y)\,dy}.
\end{aligned}
\end{multline}
The factor $\frac{b}{40}$ is arbitrary by assuming $b$ is sufficiently small.
The following term is the principal part of (\ref{Proof-MassDisperse-eqn2}): from the support of ${\phi_\infty}'$, and that ${\phi_\infty}'\geq 0$, equation (\ref{DefnEqn-phiInfty}),
\begin{equation}\label{Proof-MassDisperse-eqn4}
\frac{b}{2}\int{\frac{y}{A}\cdot\grad_x\phi_\infty\left(\frac{y}{A}\right)\abs{\epsilon}^2\mu(y)\,dy}
\geq \frac{b}{5}\int{\abs{\grad_x\phi_\infty\left(\frac{y}{A}\right)}\abs{\epsilon}^2\mu(y)\,dy}.
\end{equation}
Regarding the last line of (\ref{Proof-MassDisperse-eqn2}), apply preliminary estimates (\ref{Eqn-prelimLambda+BEst}) and (\ref{Eqn-prelimGamma+REst}), the support of ${\phi_\infty}'$, 
and the definition of $A$ 
to estimate,
\begin{equation}\label{Proof-MassDisperse-eqn5}
\abs{\frac{1}{2A}\left(\left(\frac{\lambda_s}{\lambda}+b+\frac{A_s}{A}\right)y+\frac{\partial_s(r,z)}{\lambda}\right)}
\leq \frac{b}{40}.
\end{equation}
Due to the bounds for ${\phi_\infty}'\left(\frac{y}{A}\right)$ on $A\leq\abs{y}\leq 2A$, and lower bounds for $\mu$ similar to equation (\ref{Hypo1-consequenceForMu}),
\begin{equation}\label{Proof-MassDisperse-eqn6}
\int{\abs{\grad_x\phi_\infty\left(\frac{y}{A}\right)}\abs{\epsilon}^2\mu(y)\,dy}
\geq \frac{1}{6}\int_{A \leq \abs{y} \leq 2A}{\abs{\epsilon}^2\,dy}.
\end{equation}
From (\ref{Proof-MassDisperse-eqn2}) we have proven,
\begin{equation}\label{Proof-MassDisperse-eqn7}
\frac{d}{ds}\int{\phi_\infty\left(\frac{y}{A}\right)\abs{\epsilon}^2\mu(y)\,dy}
\geq
\frac{b}{20}\int_{A \leq \abs{y} \leq 2A}{\abs{\epsilon}^2\,dy} -
\Gamma_b^\frac{a}{2}\int{\abs{\grad\epsilon}^2\mu(y)\,dy}.
\end{equation}
Finally note that by  preliminary estimate (\ref{Eqn-prelimGamma+REst}), $r(t) \sim 1$ from {\bf H1.1}, change of variables, and the log-log relationship (\ref{Hypo1-consequenceForLambdaGamma}), we have the easy estimate,
\begin{equation}
\abs{\frac{r_s}{r^{2}(t)}\int{\phi_\infty\left(\frac{y}{A}\right)\abs{\epsilon}^2\mu(y)\,dy}}
\ll \lambda \int{\abs{\tilde{u}}^2} \ll \Gamma_b^2.
\end{equation}
This completes the proof of Lemma \ref{Lemma-MassDisperse}. 
\end{proof}

\begin{proof}[Proof of Lemma \ref{Lemma-LyapounovFunc}]
Multiply the radiative virial identity (\ref{Eqn-RadiativeVirial}) by $\frac{\delta_2 b}{800}$, and sum with the mass ejection equation (\ref{Eqn-MassDisperse})
to cancel the bad sign of $\int_{A\leq\abs{y}\leq 2A}{\abs{\epsilon}^2}$,
\begin{multline}\label{Proof-LyapounovFunc-eqn1}
\partial_s\left(\frac{1}{r^k(t)}\int{\phi_{\infty}\left(\frac{y}{A}\right)\abs{\epsilon}^2\mu(y)\,dy}\right)
+ \frac{\delta_2b}{800}\partial_s f_1 \\
\begin{aligned}
	\geq& \frac{\delta_2^2b}{800}\left(\epsTildeNorm\right)\\
		&+\frac{b}{800}\int_{A\leq \abs{y} \leq 2A}{\abs{\epsilon}^2\,dy}
		+\frac{\delta_2b}{1000}\Gamma_b
		-\Gamma_b^\frac{a}{2}\int{\abs{\grad_y\epsilon}^2\mu(y)\,dy}.
\end{aligned}
\end{multline}
The final term of (\ref{Proof-LyapounovFunc-eqn1}) has the bad sign.
Recall $\epsilon = \widetilde{\epsilon} + \widetilde{\zeta}_b$, equation (\ref{DefnEqn-epsTilde}), and $\widetilde{\zeta}_b$ is small in $\dot{H}^1$, equation (\ref{Eqn-zb_smallH1}), with support on which we may estimate $\mu$ so that,
\begin{equation}\label{Proof-LyapounovFunc-eqn2}\begin{aligned}
\Gamma_b^\frac{a}{2}\int{\abs{\grad_y\epsilon}^2\mu(y)\,dy}
	&\lesssim \Gamma_b^\frac{a}{2}\left(
		\Gamma_b^{1-C\eta} + \int{\abs{\grad\widetilde{\epsilon}}^2\mu(y)\,dy}
	\right)\\
	&\leq \Gamma_b^{1+\frac{a}{4}} 
		+ \Gamma_b^\frac{a}{2}\int{\abs{\grad\widetilde{\epsilon}}^2\mu(y)\,dy},
\end{aligned}\end{equation}
where for the second inequality we require $a > 4C\eta$ 
	- see Remark \ref{Remark-ChoiceOf-a}.
To rewrite $\frac{\delta_2b}{800}\partial_s f_1$, note that,
\begin{equation}\label{Proof-LyapounovFunc-eqn3}\begin{aligned}
b\partial_s f_1 = \partial_s\left(
	b\tilde{f}_1(b) - \int_0^b{\tilde{f}_1(v)\,dv} 
		+b Im\left(\epsilon,\Lambda\widetilde{\zeta}_b\right) 
	\right)
	- \partial_s b \,Im\left(\epsilon,\Lambda\widetilde{\zeta}_b\right),
\end{aligned}\end{equation}
where $\tilde{f}_1$ is the principal part of $f_1$, given by equations (\ref{DefnEqn-f1Tilde}) and (\ref{DefnEqn-f1}), respectively. 
Estimate the final term of (\ref{Proof-LyapounovFunc-eqn3}) with a combination of preliminary estimate (\ref{Eqn-prelimLambda+BEst}), H\"older, Lemma \ref{Lemma-L2ByGrad}, and the hypothesis $\epsilon$ {\bf H1.2}.
Equation (\ref{Proof-LyapounovFunc-eqn1}) has transformed into,
\begin{multline}\label{Proof-LyapounovFunc-eqn4}
\partial_s\left(
\frac{1}{r(t)}\int{\phi_{\infty}\left(\frac{y}{A}\right)\abs{\epsilon}^2\mu(y)\,dy}
+
\frac{\delta_2}{800}\left(
	b\tilde{f}_1(b) - \int_0^b{\tilde{f}_1(v)\,dv} 
		+b Im\left(\epsilon,\Lambda\widetilde{\zeta}_b\right) 
	\right)
\right)\\
\geq 
\frac{\delta_2^2b}{800}\left(
	\epsTildeNorm + \int_{A\leq \abs{y} \leq 2A}{\abs{\epsilon}^2\,dy}
	\right)
+\frac{\delta_2b}{2000}\Gamma_b.
\end{multline}
To identity the LHS of (\ref{Proof-LyapounovFunc-eqn4}) with $-\partial_s{\mathcal J}$, inject the conservation of mass, $\int_{\real^{3}}{\abs{u(t)}^2} = \int{\abs{u_0}^2}$. 
As we did before, equation (\ref{Proof-prelimMass-eqn1}), rewrite $u(t)$ with the geometric decomposition, expand the product, change variables, expand the measure $\mu$,
divide by $r(t)$, and take the derivative $\partial_s$,
\begin{equation}\label{Proof-LyapounovFunc-eqn6}\begin{aligned}
\partial_s\left(\frac{1}{r(t)}\int{\phi_{\infty}\left(\frac{y}{A}\right)\abs{\epsilon}^2\mu(y)\,dy}\right)
=&
-\partial_s\left(\int{\abs{\widetilde{Q}_b}^2} - \int{\abs{Q}^2} 
	+ 2Re\left(\epsilon,\widetilde{Q}_b\right)\right)\\
& -\partial_s\left(\frac{1}{r(t)}
		\int{\lambda y_1P(\lambda y_1,r(t))\left(\abs{\widetilde{Q}_b}^2 
			+ 2Re\left(\epsilon\overline{\widetilde{Q}_b}\right)\right)}
		\right)\\
	&-\frac{\partial_sr}{r^{2}(t)}\int{\abs{u_0}^2}.
\end{aligned}\end{equation}
Through a combination of preliminary estimates (\ref{Eqn-prelimLambda+BEst}) and (\ref{Eqn-prelimGamma+REst}), the $\epsilon$-equation (\ref{DefnEqn-epsEqn}), and the log-log rate (\ref{Hypo1-consequenceForLambdaGamma}),
\[
\abs{-\partial_s\left(\frac{1}{r(t)}
		\int{\lambda y_1P(\lambda y_1,r(t))\left(\abs{\widetilde{Q}_b}^2 
			+ 2Re\left(\epsilon\overline{\widetilde{Q}_b}\right)\right)}
		\right)}
\lesssim \lambda < \Gamma_b^2.
\]
Likewise, $\abs{\frac{\partial_sr}{r^{2}(t)}}\int{\abs{u_0}^2} \lesssim \lambda\int{\abs{u_0}^2} < \Gamma_b^2$. 
Inserting equation (\ref{Proof-LyapounovFunc-eqn6}) into (\ref{Proof-LyapounovFunc-eqn4}) completes the proof of Lemma \ref{Lemma-LyapounovFunc}.
\end{proof}

\section{Proof of Global Behaviour}
\label{Section-BootAtInfty}

In this chapter we prove that properties {\bf I2.1} through {\bf I2.3} are a consequence of hypotheses {\bf H1.1} through {\bf H2.3}.  The following properties of the singular dynamic proven in Chapter \ref{Section-BootLoglog} will be used:
the specific log-log rate, the geometric decomposition and resulting control on $b_s$,  
and the integrability of $\norm{\tilde{u}}_{L^2_tH^1_x}$.

\subsection{Growth of \texorpdfstring{$\norm{u}_{H^{3}}$ }{u in H\textthreesuperior}} 
It is left until Chapter \ref{Section-FinalProof} to show that $\lambda^{-1}$ follows the log-log rate (\ref{Thm-MainResult-LogLog}).  Here, we use the log-log rate in the form {\bf H1.3}, and the control of $b_s$, to prove directly that $\lambda^{-1}(t)$ has the same integrability in time as $\sqrt{\frac{\log\abs{\log(T-t)}}{T-t}}$.
The following Lemma was previously noted, \cite[equation (51)]{RaphaelSzeftel-StandingRingNDimQuintic-08}.
\begin{lemma}[Integrability due to log-log rate]\label{Lemma-lambdaIntegralCrude}
Let $0 \leq \mu <2$ and $\sigma_1 \in \real$. Then,
\begin{equation}\label{Eqn-crudeLambdaIntegral}
\int_0^t{\frac{e^{\frac{\sigma_1}{b(\tau)}}}{\lambda^\mu(\tau)}\,d\tau} \lesssim C(\mu,\sigma_1,\alpha^*),
\end{equation}
where for fixed $\mu$ and $\sigma_1$, $C(\mu,\sigma_1,\alpha^*)$ decays much faster than $e^{-\frac{1}{\alpha^*}}$ as $\alpha^* \rightarrow 0$.
\end{lemma}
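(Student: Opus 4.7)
The plan is to pass to the rescaled time variable $s$ and then exploit the double-exponential scaling of $\lambda$ against the at-most polynomial growth of $e^{\sigma_1/b}$ in $s$.

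First I would change variables using $dt = \lambda^2(s)\,ds$ (recall $ds/dt = 1/\lambda^2$ from (\ref{DefnEqn-s})), which transforms the integral into
\[
\int_0^t \frac{e^{\sigma_1/b(\tau)}}{\lambda^\mu(\tau)}\,d\tau
= \int_{s_0}^{s(t)} \lambda^{2-\mu}(\sigma)\, e^{\sigma_1/b(\sigma)}\,d\sigma.
\]
Since $\mu<2$, the exponent $2-\mu$ is strictly positive, so smallness of $\lambda$ is an asset rather than a liability.

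Next, I would insert the two bounds I have. From Lemma~\ref{Lemma-Upperbound}, (\ref{Eqn-lambdaUpperBound}),
\[
\lambda^{2-\mu}(\sigma) \leq \lambda_0^{(2-\mu)/2}\,e^{-\tfrac{2-\mu}{3}\pi\,\sigma/\log\sigma}.
\]
From the log-log hypothesis \textbf{H1.3}, either $\sigma_1\le 0$, in which case $e^{\sigma_1/b(\sigma)}\le 1$, or $\sigma_1>0$ and then $b(\sigma)>\pi/(10\log\sigma)$ gives
\[
e^{\sigma_1/b(\sigma)} \leq e^{(10\sigma_1/\pi)\log\sigma} = \sigma^{10\sigma_1/\pi}.
\]
In either case, $e^{\sigma_1/b(\sigma)}$ is at most polynomial in $\sigma$.

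Combining these estimates and enlarging the domain to $[s_0,\infty)$ yields
\[
\int_0^t \frac{e^{\sigma_1/b(\tau)}}{\lambda^\mu(\tau)}\,d\tau
\;\leq\; \lambda_0^{(2-\mu)/2}\int_{s_0}^{\infty} \sigma^{10|\sigma_1|/\pi}\, e^{-\tfrac{(2-\mu)\pi}{3}\sigma/\log\sigma}\,d\sigma.
\]
The $\sigma$-integral converges to a finite constant depending only on $\mu$ and $\sigma_1$, since the double-exponential-type decay beats any polynomial. The prefactor $\lambda_0^{(2-\mu)/2}$ is then extracted as the small constant: by the data condition (\ref{DataP1-loglog}) and (\ref{DataP1-mass}), $\lambda_0 \le e^{-e^{\pi/(2b_0)}} \le e^{-e^{\pi/(2\alpha^*)}}$, which decays much faster than $e^{-1/\alpha^*}$ as $\alpha^*\to 0$. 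The main (minor) obstacle is simply verifying that the rescaled time domain lies inside the interval $[s_0,s_1)$ where \textbf{H1.3} and Lemma~\ref{Lemma-Upperbound} apply; this is immediate since the bootstrap hypotheses hold on $[0,T_{hyp})$ and Lemma~\ref{Lemma-Upperbound} was established unconditionally within that window.
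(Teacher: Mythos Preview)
Your proof is correct and follows essentially the same approach as the paper's: change variables to rescaled time $s$, bound $e^{\sigma_1/b}$ polynomially in $s$ via {\bf H1.3}, and beat that polynomial with the nearly-exponential decay of $\lambda^{2-\mu}$ in $s$. The only cosmetic difference is that you pull the smallness from the explicit prefactor $\lambda_0^{(2-\mu)/2}$ in (\ref{Eqn-lambdaUpperBound}), whereas the paper instead bounds $\lambda^{2-\mu}$ using {\bf H1.3} directly (obtaining $\lambda^{2-\mu}\lesssim e^{(\mu-2)s^{1/100}}$) together with almost-monotony {\bf H1.5}, and then extracts the $\alpha^*$-decay from $s\ge s_0=e^{3\pi/(4b_0)}$ with $b_0<\alpha^*$.
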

\begin{proof}
From the log-log rate, {\bf H1.3}, 
\[\begin{aligned}
e^\frac{\pi}{10 b} < \abs{\log\lambda}
&&\text{ and, } &&
\frac{\pi}{10b} > \frac{1}{100}\log s && \Rightarrow && \frac{1}{\lambda} > e^{+e^{\frac{\pi}{10b}}} > e^{\left(s^\frac{1}{100}\right)}.
\end{aligned}\]
Then by change of variables 
and almost-monotony of $\lambda$, {\bf H1.5}, 
\[
\int_0^t{\frac{e^{\frac{\sigma_1}{b(\tau)}}}{\lambda^\mu(\tau)}\,d\tau} < \int_{s_0}^s{\frac{\abs{\log\lambda}^\frac{10\sigma_1}{\pi}}{\lambda^{\mu-2}(\tau')}\,d\tau'}
\lesssim \frac{\abs{\log\lambda(t)}^\frac{10\sigma_1}{\pi}}{\lambda^{\mu-2}(t)}\left(s(t)-s_0\right)
\lesssim e^{(\mu-2)s^\frac{1}{100}(t)}.
\]
Finally, to prove the behaviour of $C(\mu,\sigma_1,\alpha^*)$, recall $s(t) \geq s_0 = e^\frac{3\pi}{4b_0}$, and $b_0 < \alpha^*$.
\end{proof}
\begin{remark}[Lemma \ref{Lemma-lambdaIntegralCrude} for $\mu \geq 2$]
From the log-log rate, {\bf H1.3}, 
$s(t)-s_0 \lesssim e^{\frac{10}{\pi}\frac{1}{b(t)}}$, so by the same proof,
\begin{equation}\label{Eqn-outdatedLambdaIntegral}
\int_0^t{\frac{1}{\lambda^\mu}} \lesssim \frac{e^{\frac{10}{\pi}\frac{1}{b(t)}}}{\lambda^{\mu-2}}.
\end{equation}
This is the primary integrability tool of \cite{RaphaelSzeftel-StandingRingNDimQuintic-08}. The following improvement will be crucial,
\end{remark}
\begin{lemma}[Refined Integrability due to control of $b_s$]\label{Lemma-lambdaIntegral}
Let $\mu > 2$, $\sigma^*$ arbitrary and assume $\alpha^*>0$ is sufficiently small. Then for any $\sigma_2 > 0$ and all $t\in[0,T_{hyp})$,
\begin{equation}\label{Eqn-lambdaIntegral}
\int_0^t{\frac{e^{-\frac{\sigma^*}{b(\tau)}}}{\lambda^\mu(\tau)}\,d\tau} 
	\leq 
	C(\mu,\sigma_2,\alpha^*)\,\frac{
		e^{-\frac{\sigma^*}{b(t)}}e^{+\frac{\sigma_2}{b(t)}}
	}{\lambda^{\mu-2}(t)},
\end{equation}
where, for fixed $\mu$ and $\sigma_2$, $C(\mu,\sigma_2,\alpha^*) \rightarrow 0$ as $\alpha^* \rightarrow 0$.
\end{lemma}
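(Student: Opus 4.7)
The plan is to change variables to the rescaled time $s$ and then use the approximate modulation dynamics as an integrating factor, with
$$G(s) \;:=\; \frac{e^{(\sigma_2-\sigma^*)/b(s)}}{\lambda^{\mu-2}(s)}$$
playing the role of an antiderivative of the integrand. Using $d\tau = \lambda^2(\sigma)\,d\sigma$ from (\ref{DefnEqn-s}) consumes two powers of $\lambda$:
$$\int_0^t \frac{e^{-\sigma^*/b(\tau)}}{\lambda^\mu(\tau)}\,d\tau \;=\; \int_{s_0}^{s(t)} \frac{e^{-\sigma^*/b(\sigma)}}{\lambda^{\mu-2}(\sigma)}\,d\sigma.$$
The strategy is to show $\partial_s G$ dominates the integrand by a large factor that blows up as $\alpha^*\to 0$, then integrate and discard the non-negative initial value $G(s_0)$.

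For the key differential inequality, combine the preliminary estimate (\ref{Eqn-prelimLambda+BEst}) with the bootstrap hypothesis (\ref{Hypo1-epsilon}) to obtain $|\lambda_s/\lambda + b| + |b_s| \lesssim \Gamma_b^{3/4}$. Differentiating $G$ directly,
$$\partial_s G \;=\; G\left[\frac{(\sigma^*-\sigma_2)\,b_s}{b^2} - (\mu-2)\frac{\lambda_s}{\lambda}\right] \;\geq\; \frac{\mu-2}{2}\,b\,G,$$
once $\alpha^*$ is small enough (depending on $\sigma^*$ and $\sigma_2$) that the correction $|b_s|/b^2 \lesssim \Gamma_b^{3/4}/b^2 \leq e^{-\pi/(2b)}$ is dwarfed by $b$; this uses (\ref{Eqn-GammaBEstimate}) together with $b < (\alpha^*)^{1/10}$ from (\ref{Hypo1-b}). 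Factoring the lower bound yields
$$\partial_s G \;\geq\; \frac{\mu-2}{2}\,b(\sigma)\,e^{\sigma_2/b(\sigma)}\cdot\frac{e^{-\sigma^*/b(\sigma)}}{\lambda^{\mu-2}(\sigma)}.$$

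To conclude, observe that $b\mapsto b\,e^{\sigma_2/b}$ tends to $+\infty$ as $b\to 0^+$, while (\ref{Hypo1-b}) forces $b(\sigma)\leq (\alpha^*)^{1/10}$ throughout $[s_0,s(t)]$. Hence there exists $\mathcal{C}(\sigma_2,\alpha^*)$, with $\mathcal{C}\to +\infty$ as $\alpha^*\to 0$, satisfying $\inf_{\sigma\in[s_0,s(t)]} b(\sigma)\,e^{\sigma_2/b(\sigma)} \geq \mathcal{C}$. Integrating the differential inequality from $s_0$ to $s(t)$ and dropping the non-negative term $G(s_0)$ yields
$$\int_{s_0}^{s(t)} \frac{e^{-\sigma^*/b(\sigma)}}{\lambda^{\mu-2}(\sigma)}\,d\sigma \;\leq\; \frac{2}{(\mu-2)\,\mathcal{C}(\sigma_2,\alpha^*)}\,G(s(t)),$$
which is exactly (\ref{Eqn-lambdaIntegral}) with $C(\mu,\sigma_2,\alpha^*) := 2/((\mu-2)\mathcal{C}(\sigma_2,\alpha^*)) \to 0$ as $\alpha^*\to 0$.

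The main obstacle is the sign of the error term $(\sigma^*-\sigma_2)\,b_s/b^2$ in $\partial_s G$: because $\sigma^*$ is arbitrary (possibly negative or large positive) this correction need not be favorable. It is absorbed because $|b_s|\lesssim\Gamma_b^{3/4}$ is super-polynomially small in $b$, by the log-log regime (\ref{Hypo1-loglog}) together with (\ref{Eqn-GammaBEstimate}); the price is that the admissible size of $\alpha^*$ must be chosen depending on $\sigma^*$, which the statement of the lemma accommodates.
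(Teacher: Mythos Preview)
Your proof is correct and rests on the same core idea as the paper: use the approximate dynamics $\frac{\lambda_s}{\lambda}\approx -b$ and $|b_s|\lesssim\Gamma_b^{3/4}$ to exhibit an explicit antiderivative that dominates the integrand after changing to the rescaled time $s$.

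The packaging differs. The paper first treats $\sigma^*=0$ by taking the simpler antiderivative $F=\frac{1}{b\,\lambda^{\mu-2}}$, shows $\frac{1}{\lambda^{\mu-2}}\leq C(\mu)\,\partial_s F$, integrates, and only at the end trades $\frac{1}{b}$ for $e^{\sigma_2/b}$; the general $\sigma^*$ case is then reduced to this one by a separate integration by parts in $t$, absorbing the boundary term with the control on $b_s$. Your choice $G=\frac{e^{(\sigma_2-\sigma^*)/b}}{\lambda^{\mu-2}}$ builds the full exponential into the integrating factor from the start and handles all $\sigma^*$ in one stroke, which is cleaner. The trade-off is that your differential inequality $\partial_s G\geq \frac{\mu-2}{2}\,b\,G$ requires $\alpha^*$ small depending on $|\sigma^*-\sigma_2|$ (through the error $(\sigma^*-\sigma_2)b_s/b^2$), whereas the paper's $\sigma^*=0$ step needs $\alpha^*$ small only relative to $\mu$; in practice this is harmless since the lemma is only ever applied with bounded $\sigma^*$ and $\sigma_2$, and for large $\sigma_2$ the claimed bound follows trivially from any fixed smaller value.
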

\begin{proof}
To begin, we prove the case $\sigma^* = 0$. By direct calculation,
\[
\frac{d}{ds}\left(\frac{1}{b}\frac{1}{\lambda^{\mu-2}}\right)
=
\frac{1}{\lambda^{\mu-2}}\left((\mu-2) - \frac{b_s}{b^2} - (\mu-2)\frac{\frac{\lambda_s}{\lambda}+b}{b}\right).
\]
For $\alpha^*$ sufficiently small relative to $\mu$, 
from {\bf H1.2} 
and the control of $b_s$, (\ref{Eqn-prelimLambda+BEst}), 
\begin{equation}\label{Proof-lambdaIntegral-eqn1}
\frac{1}{\lambda^\mu} 
\leq
C(\mu)\frac{1}{\lambda^2}\,\frac{d}{ds}\left(\frac{1}{b}\frac{1}{\lambda^{\mu-2}}\right)
=
C(\mu)\frac{d}{dt}\left(\frac{1}{b}\frac{1}{\lambda^{\mu-2}}\right).
\end{equation}
After integration, we estimate $C(\mu)\frac{1}{b}\frac{1}{\lambda^{\mu-2}} \leq C(\mu,\sigma_2,\alpha^*)\frac{e^{+\frac{\sigma_2}{b}}}{\lambda^{\mu-2}}$.
For those cases where $\sigma^* \neq 0$, integrate by parts,
\begin{equation}\label{Proof-lambdaIntegral-eqn2}
\int_0^t{\frac{ e^{-\frac{\sigma^*}{b(\tau)}} }{\lambda^\mu(\tau)}d\,\tau}
=\left.
	e^{-\frac{\sigma^*}{b(\tau)}}\int_0^\tau{\frac{1}{\lambda^\mu(\tau')}d\,\tau'}
\right|_0^t
-\int_0^t{\sigma^*\left(\frac{b_\tau}{b^2}e^{-\frac{\sigma^*}{b(\tau)}}\int_0^\tau{\frac{1}{\lambda^\mu(\tau')}d\,\tau'}\right)d\,\tau}.
\end{equation}
Apply the previous case to the first RH term.  For the second term, make the change of variable $b_\tau = \frac{b_{s(\tau)}}{\lambda^2(\tau)}$ and apply the previous case for some $\sigma_2 << \frac{1}{2}$.  Use (\ref{Eqn-prelimLambda+BEst}) to approximate $b_s$, and we have bound the second term by a small multiple of the LHS.
\end{proof}
Lemma \ref{Lemma-lambdaIntegral} is simply not true for $\mu = 2$. As a substitute, we prove a corollary of the integrated Lyapounov inequality, equation (\ref{Eqn-epsIntegral}).
\begin{corollary}\label{Corollary-lambdaIntegral2version2}
Let $\sigma_3 \geq 0$.  Then for all $t\in[0,T_{hyp})$,
\begin{equation}\label{Eqn-lambdaIntegral2version2}
\int_0^t{e^{\frac{\sigma_3}{b(\tau)}}
	\left(\norm{\tilde{u}(\tau)}_{H^1}^2 + \frac{\Gamma_{b(\tau)}}{\lambda^2(\tau)}\right)
	\,d\tau}
	\lesssim C(\alpha^*)e^{\frac{\sigma_3}{b(t)}}.
\end{equation}
\end{corollary}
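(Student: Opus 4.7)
The plan is to change variables to the rescaled time $s$, establish a uniform pointwise comparison $e^{\sigma_3/b(\sigma)} \lesssim e^{\sigma_3/b(s)}$ for $\sigma \in [s_0,s]$, and then conclude via the integrated Lyapounov estimate (\ref{Eqn-epsIntegral}).

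First isolate the $L^2$-part of $\norm{\tilde{u}}_{H^1}^2$: by {\bf H1.2}, $\norm{\tilde{u}}_{L^2}^2 \leq (\alpha^*)^{1/5}$, and Lemma \ref{Lemma-lambdaIntegralCrude} with $\mu = 0$ gives $\int_0^t e^{\sigma_3/b(\tau)}\,d\tau \leq C(\alpha^*)$, so this contribution is already bounded by $C(\alpha^*)\leq C(\alpha^*)e^{\sigma_3/b(t)}$. For the remaining contributions, the scaling identity $\norm{\tilde{u}}_{\dot{H}^1(\real^3)}^2 = \lambda^{-2}\int\abs{\grad_y\epsilon}^2\mu\,dy$ together with $d\tau = \lambda^2\,ds$ yields
\[
\int_0^t e^{\sigma_3/b(\tau)}\Bigl(\norm{\tilde{u}}_{\dot H^1}^2 + \Gamma_b/\lambda^2\Bigr)d\tau
= \int_{s_0}^s e^{\sigma_3/b(\sigma)}\Bigl(\int\abs{\grad_y\epsilon(\sigma)}^2\mu\,dy + \Gamma_{b(\sigma)}\Bigr)d\sigma.
\]

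The core step is the pointwise comparison of $b(\sigma)$ with $b(s)$. By Lemma \ref{Lemma-LyapounovFunc}, $\mathcal{J}$ is monotonically decreasing in $s$, so $\mathcal{J}(\sigma) \geq \mathcal{J}(s)$ for all $\sigma \in [s_0,s]$. Combining the refined estimate (\ref{Eqn-RefinedLyapounovEst}) with the improved $\epsilon$-control $\norm{\epsilon}^2 \leq \Gamma_b^{4/5}$ from (\ref{Eqn-epsImproved}) and the definition $A \leq \Gamma_b^{-3a/2}$ from (\ref{DefnEqn-A}) gives, for $a$ sufficiently small,
\[
\abs{\mathcal{J}(s') - f_2(b(s'))} \leq \Gamma_{b(s')}^{1-Ca} + CA^2\Gamma_{b(s')}^{4/5} \lesssim \Gamma_{b(s')}^{3/4},
\]
uniformly in $s'$. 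Since $f_2(b) = d_0 b^2 + O(b^4)$ is strictly monotone near $b = 0$ by (\ref{Eqn-Qb_by_bSquared}), the monotonicity of $\mathcal{J}$ forces $b(\sigma)^2 \geq b(s)^2 - C\Gamma_{b(s)}^{3/4} - C\Gamma_{b(\sigma)}^{3/4}$; the case $b(\sigma) < b(s)$ only improves the bound since $\Gamma$ is increasing in $b$, so in either case $b(\sigma) \geq b(s)\bigl(1 - C\Gamma_{b(s)}^{3/4}/b(s)^2\bigr)$. Consequently
\[
\sigma_3\bigl(1/b(\sigma) - 1/b(s)\bigr) \lesssim \sigma_3\,\Gamma_{b(s)}^{3/4}/b(s)^3,
\]
and the function $b \mapsto \Gamma_b^{3/4}/b^3$ is uniformly bounded on $(0,\alpha^*]$. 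Therefore $e^{\sigma_3/b(\sigma)} \leq C(\alpha^*)e^{\sigma_3/b(s)}$ for all $\sigma \in [s_0,s]$. Factoring this out of the reduced $s$-integral and applying (\ref{Eqn-epsIntegral}) completes the proof.

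The principal obstacle is the comparison step: the crude estimate (\ref{Eqn-CrudeLyapounovEst}) by itself gives only $b(\sigma) \geq (1 - C\delta_3)b(s)$, producing a correction factor $e^{C\sigma_3\delta_3/b(s)}$ that grows without bound as $b(s) \to 0$. The refined estimate (\ref{Eqn-RefinedLyapounovEst}) combined with the pointwise $\epsilon$-control from Chapter \ref{Section-BootLoglog} is what replaces the universal constant $\delta_3$ by the exponentially small factor $\Gamma_{b(s)}^{3/4}/b(s)^2$, exactly what is needed to absorb the extra $1/b(s)$ produced when comparing the reciprocals of $b(\sigma)$ and $b(s)$.
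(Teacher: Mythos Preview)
Your proof is correct and takes a genuinely different route from the paper. The paper argues by integration by parts in the $s$-variable: writing $G(\sigma)=\int_{s_0}^\sigma\bigl(\lambda^2\norm{\tilde u}_{H^1}^2+\Gamma_b\bigr)\,d\sigma'$, one has
\[
\int_{s_0}^{s} e^{\sigma_3/b}\,g\,d\sigma \;=\; e^{\sigma_3/b(s)}G(s)\;+\;\sigma_3\int_{s_0}^{s}\frac{b_s}{b^2}\,e^{\sigma_3/b}\,G\,d\sigma,
\]
and then invokes $G\leq C\alpha^*$ from (\ref{Eqn-epsIntegral}) together with the control on $b_s$ from (\ref{Eqn-prelimLambda+BEst}) for the remainder. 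You instead establish the pointwise comparison $e^{\sigma_3/b(\sigma)}\leq C(\alpha^*)\,e^{\sigma_3/b(s)}$ directly, using the monotonicity of $\mathcal J$ and the refined estimate (\ref{Eqn-RefinedLyapounovEst}) together with (\ref{Eqn-epsImproved}) to show that $b(\sigma)$ cannot undershoot $b(s)$ by more than $O\bigl(\Gamma_{b(s)}^{3/4}/b(s)\bigr)$.

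Your route is more transparent: it isolates exactly the near-monotonicity of $b$ that makes the result work, and your final paragraph correctly pinpoints why the crude estimate (\ref{Eqn-CrudeLyapounovEst}) alone would leave an uncontrolled factor $e^{C\delta_3\sigma_3/b(s)}$. In fact the paper's IBP argument, to be closed rigorously, also needs this same near-monotonicity: applying the second mean value theorem to the remainder (with $G$ monotone) gives $I=G(s)\,e^{\sigma_3/b(\xi)}$ for some $\xi\in[s_0,s]$, and bounding $e^{\sigma_3/b(\xi)}$ by $C\,e^{\sigma_3/b(s)}$ is precisely your pointwise comparison. So the two arguments converge on the same core fact; yours simply makes it explicit.
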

\begin{proof}
By change of variables and integration by parts,
\[\begin{aligned}
\int_0^t{e^{\frac{\sigma_3}{b(\tau)}}
	\left(\norm{\tilde{u}(\tau)}_{H^1}^2 + \frac{\Gamma_{b(\tau)}}{\lambda^2(\tau)}\right)
	\,d\tau}
	=&\left.e^{\frac{\sigma_3}{b(\sigma)}}
		\int_0^\sigma{\lambda^2(\sigma ')\norm{\widetilde{u}(\sigma ')}_{H^1_x}^2 
			+ \Gamma_{b(\sigma ')}\,d\sigma '}
		\right|_{s_0}^{s(t)}\\
	&+\sigma_3\int_{s_0}^{s(t)}{\frac{b_s}{b^2}e^{\frac{\sigma_3}{b(\sigma)}}
		\left(\int_0^\sigma{\lambda^2\norm{\widetilde{u}}_{H^1_x}^2+\Gamma_b}\right)\,d\sigma}.
\end{aligned}\]
Then observe the control on $b_s$ (\ref{Eqn-prelimLambda+BEst}) and estimate (\ref{Eqn-epsIntegral}).
\end{proof}
\begin{remark}[Optimality of (\ref{Eqn-lambdaIntegral2version2})]
Corollary \ref{Corollary-lambdaIntegral2version2} is the best possible integrability of $\frac{e^\frac{\delta}{b}}{\lambda^2}$ for constant $\delta$. 
As a heuristic, assume that $\lambda \sim \sqrt{T-t}$ and $e^{\frac{1}{b}} \sim \abs{\log \lambda} \sim \abs{\log(T-t)}$, motivated by the log-log rate {\bf H1.3}.
The integral, $\int^T{\frac{\abs{\log(T-t)}^{\delta}}{T-t}\,dt}$, is only finite for values of $\delta$ sufficiently negative. In our case, the maximum threshold for $\delta$ is given dynamically by (\ref{Eqn-epsIntegral}). 
\end{remark}

Next, we translate hypotheses {\bf H2.1} through {\bf H2.3} into a gain of derivative during particular three-dimensional Sobolev embeddings. Consider a smooth cutoff function with support on $\chi^{-1}\{1\}$,
\begin{equation}\label{DefnEqn-TildeChi}
\widetilde{\chi}(r,z,\theta) = \left\{\begin{aligned}
		1 && \text{ for } \abs{(r,z) - (1,0)} \geq \frac{3}{4}\\
		0 && \text{ for } \abs{(r,z) - (1,0)} \leq \frac{2}{3}.
	\end{aligned}\right.
\end{equation}
\begin{lemma}[Consequences of Bootstrap Hypotheses]\label{Lemma-NKSobolev}
Let $v = \widetilde{\chi} u$, and suppose that
$2 \geq l_1 \geq l_2 \geq l_3 \geq 0$ with $l_1+l_2+l_3 = 3$.
Then,
\begin{equation}\label{Eqn-NKSobolev-simple}\begin{aligned}
\int{\abs{\grad^{2}v}^2\abs{v}^2} \leq C\left(\widetilde{\chi},\alpha^*\right) \norm{v}_{H^{3}}^2,
\end{aligned}\end{equation}
where $C\left(\widetilde{\chi},\alpha^*\right) \rightarrow 0$ as $\alpha^*\rightarrow 0$, and,
\begin{equation}\label{Eqn-NKSobolev-full}
\begin{aligned}
	&\int{\abs{\grad^{3}v}
		\abs{\grad^{l_1}v}\abs{\grad^{l_2}v}\abs{\grad^{l_3}v}}\\
	&+\int{\abs{\grad v}\abs{v}^2
		\abs{\grad^{l_1}v}\abs{\grad^{l_2}v}\abs{\grad^{l_3}v}}\\
	&+\int{\abs{\grad^{2}v}^2\left(\abs{\grad v}^2+\abs{v}^4\right)}
\end{aligned}
\leq C(\widetilde{\chi}) \frac{1}{\lambda^{7}}.
\end{equation}
\end{lemma}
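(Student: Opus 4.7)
The plan is to exploit the observation that $\chi \equiv 1$ on the support of $\widetilde{\chi}$, so that $v = \widetilde{\chi}u = \widetilde{\chi}(\chi u)$. Multiplication by $\widetilde{\chi}$ is bounded on each $H^{s}$, hence $\norm{v}_{H^{s}} \leq C(\widetilde{\chi})\norm{\chi u}_{H^{s}}$ for every $s \in \{1,\tfrac{3}{2},2,\tfrac{5}{2}\}$, while $\norm{v}_{H^{3}} \leq C(\widetilde{\chi})\norm{u}_{H^{3}}$. The hypotheses {\bf H2.1}–{\bf H2.3} therefore translate directly into sharp Sobolev bounds on $v$ at each of these levels, and the whole lemma reduces to one Hölder/Sobolev inequality per term. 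Throughout, the $e^{(1+\kappa)m/b}$ factors arising from {\bf H2.2} will be absorbed via {\bf H1.3}: for any $\delta>0$ one has $e^{Cm/b(t)}\ll |\log\lambda(t)|^{C'}\leq \lambda(t)^{-\delta}$ once $\alpha^{*}$ is sufficiently small, so every occurrence of these factors is negligible compared with any positive power of $\lambda^{-1}$.

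For (\ref{Eqn-NKSobolev-simple}) the plan is a single application of Hölder with the exponent pair $L^{3}\times L^{6}$, followed by the three–dimensional Sobolev embeddings $H^{1/2}\hookrightarrow L^{3}$ and $H^{1}\hookrightarrow L^{6}$:
\[
\int \abs{\grad^{2}v}^{2}\abs{v}^{2} \leq \norm{\grad^{2}v}_{L^{3}}^{2}\norm{v}_{L^{6}}^{2}
\lesssim \norm{v}_{H^{5/2}}^{2}\,\norm{v}_{H^{1}}^{2}
\leq C(\widetilde{\chi})\,\norm{\chi u}_{H^{1}}^{2}\,\norm{v}_{H^{3}}^{2}.
\]
Hypothesis {\bf H2.3} then gives $\norm{\chi u}_{H^{1}}^{2}<(\alpha^{*})^{1/5}$, producing the stated smallness of $C(\widetilde{\chi},\alpha^{*})$.

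For (\ref{Eqn-NKSobolev-full}) the plan is to treat each of the three types of integrals by an allocated Hölder exponent, chosen so that every factor $\norm{\grad^{j}v}_{L^{p}}$ becomes a Sobolev norm whose hypothesized decay is at most $\lambda^{-(2\alpha-3)}$ with $\alpha\in[1,3]$. A scaling count — $u\sim\lambda^{-1}$, $\grad^{k}u\sim\lambda^{-(k+1)}$ on a region of volume $\lambda^{3}$ — identifies $\lambda^{-7}$ as the critical value for all three integrals. A representative allocation for $\int \abs{\grad^{3}v}\abs{\grad^{2}v}\abs{\grad v}\abs{v}$ (the worst case of type (a)) is $L^{2}\times L^{6}\times L^{6}\times L^{6}$, giving
\[
\norm{v}_{H^{3}}\cdot \norm{v}_{H^{3}}\cdot \norm{v}_{H^{2}}\cdot\norm{v}_{H^{1}}
\lesssim \frac{e^{m/b}}{\lambda^{3}}\cdot\frac{e^{m/b}}{\lambda^{3}}\cdot\frac{e^{2m/b}}{\lambda}\cdot (\alpha^{*})^{1/10}
\]
which, after absorbing $e^{4m/b}$, is bounded by $C(\widetilde{\chi})\lambda^{-7}$. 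The remaining cases of (a) ($l_{1}\leq 2$), the cubic type (b), and the quartic type (c) are handled in the same way, in each case using an $L^{p}$ distribution that caps the derivative count in $L^{2}$ at three (so that only {\bf H2.1} is invoked for the highest–order factor) and distributes the remaining derivatives in $L^{6}$, $L^{4}$ or $L^{\infty}$ according to what {\bf H2.2} and {\bf H2.3} can supply at the corresponding Sobolev level.

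The main obstacle is that naively substituting the global hypothesis {\bf H2.1} for all factors would yield $\lambda^{-12}$, far worse than scaling, so the allocation of Hölder exponents must be precisely matched to the hierarchy of {\bf H2.2} in order to extract at most six powers of $\lambda^{-1}$ from the four factors and keep one full unit of scaling margin to soak up the $e^{Cm/b}$ losses. In particular, no factor other than $\grad^{3}v$ may be placed in $L^{2}$; the lowest–order factor must always be placed in $L^{6}$ (using $\norm{\chi u}_{H^{1}}<(\alpha^{*})^{1/10}$) so that every allocation has enough margin to guarantee the uniform bound $C(\widetilde{\chi})\lambda^{-7}$.
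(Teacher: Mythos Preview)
Your approach to (\ref{Eqn-NKSobolev-simple}) is correct and essentially dual to the paper's: the paper puts $\grad^{2}v$ in $L^{6}$ and $v$ in $L^{3}$, landing at $\norm{v}_{H^{3}}^{2}\norm{v}_{H^{1/2}}^{2}$, but either allocation closes via {\bf H2.3}.

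For (\ref{Eqn-NKSobolev-full}) your stated principle in the final paragraph---extract at most six powers of $\lambda^{-1}$ from the four factors and reserve one full power to absorb the $e^{Cm/b}$ losses---is exactly what is needed and is what the paper does. But your worked example for $(l_1,l_2,l_3)=(2,1,0)$ contradicts it. Placing $\grad^{2}v$ in $L^{6}$ forces $\norm{\grad^{2}v}_{L^{6}}\lesssim\norm{v}_{H^{3}}$, so the four factors contribute $\lambda^{-3}\cdot\lambda^{-3}\cdot\lambda^{-1}\cdot\lambda^{0}=\lambda^{-7}$, not $\lambda^{-6}$. You are then left with $e^{4m/b}(\alpha^{*})^{1/10}\lambda^{-7}$, and the constant $(\alpha^{*})^{1/10}$ cannot absorb $e^{4m/b(t)}$: the latter diverges as $b(t)\to 0$ while the former is fixed. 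Your own absorption rule $e^{Cm/b}\leq\lambda^{-\delta}$ would only give $\lambda^{-7-\delta}$, not the stated $C(\widetilde{\chi})\lambda^{-7}$.

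The correct allocation is the paper's: place $\grad^{l_j}v$ in $L^{6/l_j}$ (with a large finite exponent when $l_j=0$), so that three-dimensional Sobolev lands each factor at $H^{(3+l_j)/2}$. For $(2,1,0)$ this yields
\[
\norm{v}_{H^{3}}\,\norm{v}_{H^{5/2}}\,\norm{v}_{H^{2}}\,\norm{v}_{H^{3/2+\delta}},
\]
whose $\lambda$-count via {\bf H2.1}--{\bf H2.2} is $\lambda^{-3}\cdot\lambda^{-2}\cdot\lambda^{-1}\cdot\lambda^{0}=\lambda^{-6}$. The residual $e^{C/b}$ is then swallowed by a single $\lambda^{-1}$ using the log-log relation (\ref{Hypo1-consequenceForLambdaGamma}), since $\lambda^{-1}>e^{e^{\pi/(10b)}}$ dominates any finite power $e^{C/b}$. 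The same allocation handles $(1,1,1)$ (where your $L^{6}$ choice happens to coincide with $L^{6/l_j}$) and, with analogous exponents, the type-II and type-III terms.
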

\begin{proof}
To prove (\ref{Eqn-NKSobolev-simple}), apply the three-dimensional Sobolev embeddings $H^{1} \hookrightarrow L^{6}$ and $H^{\frac{1}{2}}\hookrightarrow L^{3}$,
\[
\int{\abs{\grad^{2}v}^2\abs{v}^2} \leq 
	\norm{\grad^{2}v}_{L^{6}}^2\norm{v}_{L^{3}}^2.
	\lesssim \norm{v}_{H^{3}}^2\norm{v}_{H^\frac{1}{2}}^2,
\] 
then recall hypothesis {\bf H2.3}. 
Now, consider the three LH terms of (\ref{Eqn-NKSobolev-full}) in turn, applying H\"older and three-dimensional Sobolev embeddings in each case.
\begin{enumerate}
\item
\begin{equation}\label{Proof-NKSobolev-eqn1}\begin{aligned}
\int{\abs{\grad^{3}v}\abs{\grad^{l_1}v}\abs{\grad^{l_2}v}\abs{\grad^{l_3}v}}
	&\lesssim \norm{v}_{\dot{H}^{3}}\\
	\prod_{j=1,2,3}\norm{\grad^{l_j}v}_{L^\frac{6}{l_j}}
	&\lesssim \norm{v}_{\dot{H}^{3}}
	\prod_{j=1,2,3}\norm{v}_{H^{\frac{3+l_j}{2}+\delta}},
\end{aligned}\end{equation}
where $\frac{1}{2} \gg \delta > 0$ is only necessary if $l_3 = 0$.  Apply hypotheses {\bf H2.1} and {\bf H2.2},
interpolating if $\delta \neq 0$.  The resulting bound is of the order $\frac{1}{\lambda^6}$.

\item 
\begin{equation}\label{Proof-NKSobolev-eqn2}\begin{aligned}
\int{\abs{\grad v}\abs{v}^2
		\abs{\grad^{l_1}v}\abs{\grad^{l_2}v}\abs{\grad^{l_3}v}}
	&\lesssim \norm{\grad v}_{L^3}\norm{v}_{L^{18}}^2
		\prod_{j=1,2,3}\norm{\grad^{l_j}v}_{L^{\frac{27}{5}\frac{1}{l_j}}}\\
	&\lesssim \norm{v}_{H^\frac{3}{2}}\norm{v}_{H^\frac{4}{3}}^2
		\prod_{j=1,2,3}
		\norm{v}_{H^{\frac{3}{2}+\frac{4}{9}l_j+\delta}}
\end{aligned}\end{equation}
where, again, $\frac{1}{2} \gg \delta > 0$ is only necessary if $l_3 = 0$.
Apply hypotheses {\bf H2.1} through {\bf H2.3}, interpolating where necessary.
The resulting bound is of the order $\frac{1}{\lambda^\frac{8}{3}}$.

\item\begin{equation}\label{Proof-NKSobolev-eqn4}\begin{aligned}
\int{\abs{\grad^{2}v}^2\left(\abs{\grad v}^2+\abs{v}^4\right)}
	&\lesssim \norm{\grad^{2}v}_{L^{6}}^2
		\left(\norm{\grad v}_{L^{3}}^2 
			+ \norm{v}_{L^{6}}^4\right)\\
	&\lesssim \norm{v}_{H^{3}}^2
		\left(\norm{v}_{H^{\frac{3}{2}}}^2 
			+ \norm{v}_{H^1}^4\right).
\end{aligned}\end{equation}
Apply hypotheses {\bf H2.1} and {\bf H2.3}. The resulting bound is of the order $\frac{1}{\lambda^6}$. 
\end{enumerate}

Finally, use hypothesis {\bf H1.3} to estimate the neglected factors of $e^\frac{1}{b}$ by a single factor of $\frac{1}{\lambda}$.
\end{proof}

Near the singular ring, and in particular on the support of $\grad\chi$, we do not have the luxury of bootstrap hypotheses.  However, in this region two-dimensional type Sobolev embeddings are applicable. Coupled to the geometric decomposition, we achieve precisely the weakest usable bounds. 
\begin{lemma}[Two-dimensional version of Lemma \ref{Lemma-NKSobolev}]
\label{Lemma-NSobolev}
Let $v=(1-\widetilde{\chi})u$, and suppose that $2 \geq l_1 \geq l_2 \geq l_3 \geq 0$ with $l_1+l_2+l_3 = 3$.  There exists $\sigma_4 > 0$ so that,
\begin{equation}\label{Eqn-NSobolev-simple}
\int{\abs{\grad^{2}v}^2\abs{v}^2} \leq C\left(\widetilde{\chi},\widetilde{Q}_b\right)
	\left(\frac{1}{\lambda^{6}} + 
		e^{-\frac{\sigma_4}{b}}\norm{u}_{H^{3}}^{2}\right),
\end{equation}
and,
\begin{equation}\label{Eqn-NSobolev-full}
\begin{aligned}
	&\int{\abs{\grad^{3}v}\abs{\grad^{l_1}v}\abs{\grad^{l_2}v}\abs{\grad^{l_3}v}}\\
	&+\int{\abs{\grad v}\abs{v}^2
		\abs{\grad^{l_1}v}\abs{\grad^{l_2}v}\abs{\grad^{l_3}v}}\\
	&+\int{\abs{\grad^{2}v}^2\left(\abs{\grad v}^2+\abs{v}^4\right)}\end{aligned}
	 \leq C\left(\widetilde{\chi},\widetilde{Q}_b\right)
		\left( \frac{1}{\lambda^{8}} + 
		e^{-\frac{\sigma_4}{b}}\frac{1}{\lambda^2}\norm{u}_{H^{3}}^2\right).
\end{equation}
Moreover, the value of $\sigma_4>0$ is uniform over all $m>0$ sufficiently small.
\end{lemma}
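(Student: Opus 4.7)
The plan is to insert the geometric decomposition (\ref{Eqn-GeoDecomp}) into $v$, writing $v = v_Q + v_{\tilde u}$ with
\[
v_Q(x) = (1-\widetilde\chi)(x)\,\frac{1}{\lambda(t)}\widetilde{Q}_{b(t)}\!\left(\tfrac{(r,z)-(r(t),z(t))}{\lambda(t)}\right)e^{-i\gamma(t)}, \qquad v_{\tilde u} = (1-\widetilde\chi)\tilde u.
\]
Since $\mathrm{supp}(1-\widetilde\chi) \subset \{\abs{(r,z)-(1,0)} \leq 3/4\}$, the radius $r$ is bounded above and below on $\mathrm{supp}(v)$, so the cylindrical measure $dx = 2\pi r\,dr\,dz$ is comparable to planar Lebesgue measure and the two-dimensional Sobolev embeddings $H^{1-\frac{2}{q}}(\real^2)\hookrightarrow L^q$ (for $q<\infty$) and $H^{1+\delta_5}(\real^2)\hookrightarrow L^\infty$ apply to $v$ on its support, at the cost of a factor depending only on $\widetilde\chi$. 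These are strictly better than the three-dimensional embeddings used in Lemma \ref{Lemma-NKSobolev}.

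For the pure $v_Q$ contribution, the scaling of $\widetilde Q_b$ combined with Lemma \ref{Lemma-UnprovenProperty} gives
\[
\norm{v_Q}_{\dot{H}^s(\real^3)} \leq C(\widetilde\chi,\widetilde Q_b)\,\lambda^{-(s-1)}
\]
uniformly in $b$ for $s \in \{0,1,2,3\}$. A direct H\"older estimate using the 2D embedding above on each non-$L^2$ factor controls every integral on the left of (\ref{Eqn-NSobolev-simple}) and (\ref{Eqn-NSobolev-full}) by $\lambda^{-\alpha}$ for some $\alpha \leq 6$; in particular each of the three types is bounded by $\lambda^{-8}$ with comfortable margin.

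The pure $v_{\tilde u}$ contribution is where the exponential gain is extracted. Change of variables $dx = \lambda^2\mu\,dy$ with hypothesis {\bf H1.2}, equation (\ref{Hypo1-epsilon}), gives $\norm{\tilde u}_{\dot H^1(\real^3)} \leq \lambda^{-1}\Gamma_b^{3/8}$, so $\norm{v_{\tilde u}}_{H^1} \leq C(\widetilde\chi)\lambda^{-1}\Gamma_b^{3/8}$. Combined with the trivial $\norm{v_{\tilde u}}_{H^3} \leq C(\widetilde\chi)\norm{u}_{H^3}$, interpolation yields
\[
\norm{v_{\tilde u}}_{H^s} \lesssim \bigl(\lambda^{-1}\Gamma_b^{3/8}\bigr)^{\frac{3-s}{2}}\norm{u}_{H^3}^{\frac{s-1}{2}}, \qquad 1\leq s\leq 3.
\]
For each of the three terms I apply H\"older with the 2D Sobolev embedding, choosing the $L^q$ exponents so the resulting Sobolev indices sum to yield exactly two factors of $\norm{u}_{H^3}$ and total $H^1$-weight two in the remaining factors. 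For example in the critical case $\int\abs{\grad^2 v_{\tilde u}}^2\abs{\grad v_{\tilde u}}^2$, H\"older with $L^4\cdot L^4$ and the embedding $H^{1/2}(\real^2)\hookrightarrow L^4$ gives $\norm{v_{\tilde u}}_{H^{5/2}}^2\norm{v_{\tilde u}}_{H^{3/2}}^2$, and substituting the interpolation identity above delivers exactly $(\lambda^{-1}\Gamma_b^{3/8})^2\norm{u}_{H^3}^2 = \Gamma_b^{3/4}\lambda^{-2}\norm{u}_{H^3}^2$; since $\Gamma_b^{3/4} \leq e^{-\sigma_4/b}$ for some $\sigma_4 < \frac{3\pi}{4}$ via (\ref{Eqn-GammaBEstimate}), we obtain the target form. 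Terms that do not land exactly on $\norm{u}_{H^3}^2$ are forced there by using hypothesis {\bf H2.1} to exchange leftover $\norm{u}_{H^3}^\theta$ ($\theta<2$) for $\lambda^{-3\theta}e^{+\theta m/b}$; since $\sigma_4$ has been reserved well below $\frac{3\pi}{4}$, taking $m$ small keeps the net exponent $e^{-\sigma_4/b}$ strictly negative and independent of $m$.

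The mixed terms are treated configuration by configuration: each H\"older step is split by Young's inequality into a piece controlled by the pure $v_Q$ estimate (contributing to $\lambda^{-8}$) and a piece controlled by the pure $v_{\tilde u}$ estimate (contributing to $e^{-\sigma_4/b}\lambda^{-2}\norm{u}_{H^3}^2$). The main obstacle is purely bookkeeping over all admissible $(l_1,l_2,l_3)$ and every possible split of factors, to verify that the numerology closes with a single fixed $\sigma_4 > 0$ that survives the interpolation losses of {\bf H2.1}. Because the bootstrap smallness $\Gamma_b^{3/8} \sim e^{-3\pi/(8b)}$ has a coefficient independent of $m$, once $\sigma_4$ is chosen we need only $m$ small enough that the cumulative interpolation loss from at most two $H^3$-endpoint uses does not swamp it; this is the uniformity in $m$ asserted in the final clause of the lemma.
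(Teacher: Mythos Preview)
Your approach is the paper's: decompose $v = W + w$ (your $v_Q + v_{\tilde u}$), handle the pure-$W$ contribution by explicit scaling via Lemma~\ref{Lemma-UnprovenProperty}, and extract a positive power of $\norm{w}_{H^1}$ from the remainder to produce $e^{-\sigma_4/b}$. Two small points are worth noting.

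First, your scaling $\norm{v_Q}_{\dot H^s} \lesssim \lambda^{-(s-1)}$ is off by one power: in this cylindrical geometry $dx \approx 2\pi r_0\,\lambda^2\,dy$ on the support of $\widetilde Q_b$, so $\norm{v_Q}_{L^2}\sim 1$ and $\norm{v_Q}_{\dot H^s}\sim\lambda^{-s}$. This is harmless because the target $\lambda^{-8}$ carries slack, but the stated bound is incorrect.

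Second, the paper handles the ``at least one $w$'' cases more directly than your Young's-inequality reduction to pure pieces. Rather than fully expanding every factor into $W$ and $w$ and then trying to recombine, it singles out \emph{one} factor to be $w$ and leaves the remaining three as the full $v$; see (\ref{Proof-NSobolev-eqn3}) and (\ref{Proof-NSobolev-eqn5}). After H\"older with exponents as in (\ref{Proof-NSobolev-eqn4}) and two-dimensional Sobolev, each $\norm{v}$-factor is interpolated between $L^2$ and $H^3$, while the lone $\norm{w}$-factor is interpolated between $H^1$ and $H^3$. This lands immediately on the form $\norm{u}_{H^3}^{2}\norm{u}_{H^1}^{2-C}\norm{w}_{H^1}^{C}$ of (\ref{Proof-NSobolev-Interpolate}), which is exactly $e^{-\sigma_4/b}\lambda^{-2}\norm{u}_{H^3}^2$ after inserting {\bf H1.2}. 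The case $l_j=0$ forces a small overshoot $\norm{u}_{H^3}^{2+C(\delta_5)}$, absorbed by taking $m$ small relative to the universal constant $\delta_5$. This sidesteps the case-by-case bookkeeping you flag as the main obstacle; your Young's-inequality splitting can be made to work, but it is strictly more laborious and does not obviously land on the right powers without the same numerology check.
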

\begin{proof}
Due to the concentrated support of $\Qb$ - see (\ref{Hypo1-consequenceForMu}),
\begin{equation}\label{Eqn-NSobolev-decomp}
\left(1-\widetilde{\chi}\right)u(r,z,\theta) = 
	\frac{1}{\lambda}
		\widetilde{Q}_b\left(\frac{(r,z)-(r_0,z_0)}{\lambda}\right)e^{-i\gamma}
 + \left(1-\widetilde{\chi}\right)\widetilde{u}(r,z),
\end{equation}
which we denote by $W+w$. Due to Lemma \ref{Lemma-UnprovenProperty}, the various norms of $W$ are explicit.  For example, $\norm{\grad^3 W}_{L^\infty} \leq C(\widetilde{Q}_b) \frac{1}{\lambda^{4}}$, where the constant is uniform over all $b$ sufficiently small. To prove (\ref{Eqn-NSobolev-simple}) and (\ref{Eqn-NSobolev-full}), we substitute $v = W + w$ and consider two cases: all factors are $W$, or, at least one factor is $w$.  The first case is explicit and trivial. In the second case we will extract a factor that is a power of $\norm{w}_{H^1}$. Assuming $m>0$ is sufficiently small, {\bf H1.2} 
will then yield the factor of $e^{-\frac{\sigma_4}{b}}$.
Throughout this proof, we preserve the correct multiplicity of $\frac{1}{\lambda}$ and $\norm{u}_{H^{3}}$ by avoiding the Sobolev embedding into $L^\infty$.

Make the substitution $v = W + w$. To prove (\ref{Eqn-NSobolev-simple}) we need to show the same bound for,
\begin{equation}\label{Proof-NSobolev-eqn1}
\int{\abs{\grad^{2}w}\abs{\grad^{2}v}\abs{v}^2}
+\int{\abs{\grad^{2}v}^2\abs{v}\abs{w}}.
\end{equation}
For the first term, apply H\"older, the two-dimensional embedding $H^\frac{1}{2} \hookrightarrow L^4$,
and interpolate,
\begin{equation}\label{Proof-NSobolev-eqn2}\begin{aligned}
\int{\abs{\grad^{2}w}\abs{\grad^{2}v}\abs{v}^2}
	&\leq \norm{\grad^{2}w}_{L^4} 
		\norm{\grad^{2}v}_{L^4}
		\norm{v}_{L^{4}}^2\\
	&\lesssim \norm{w}_{H^{3-\frac{1}{2}}}
		\norm{v}_{H^{3-\frac{1}{2}}}\norm{v}_{H^{\frac{1}{2}}}^2
	&\lesssim 
		\left(\norm{w}_{H^{3}}^{\frac{3}{4}}
				\norm{w}_{H^1}^{\frac{1}{4}}\right)
		\norm{v}_{H^{3-\frac{1}{2}}}\norm{v}_{H^{\frac{1}{2}}}^2.
\end{aligned}
\end{equation}
Interpolate the norms in $v$ between $\norm{u}_{L^2}$ and $\norm{u}_{H^{3}}$. The factor of $\norm{w}_{H^1}$ provides a factor of $e^{-\frac{\sigma_4}{b}}$ for some $\sigma_4 > 0$. 
For the second term of (\ref{Proof-NSobolev-eqn1}) follow the same strategy, except use the interpolation $\norm{w}_{H^\frac{1}{2}} \lesssim \norm{w}_{H^1}^\frac{1}{2}\norm{w}_{L^2}^{\frac{1}{2}}$. This completes the proof of (\ref{Eqn-NSobolev-simple}).

Now consider the three LH terms of (\ref{Eqn-NSobolev-full}) in turn. In each case make the substitution $v= W+ w$ and assume at least one factor is $w$.
\begin{enumerate}
\item We need to show the same bound for,
\begin{equation}\label{Proof-NSobolev-eqn3}
\int{\abs{\grad^{3}w}\abs{\grad^{l_a}W}\abs{\grad^{l_b}W}\abs{\grad^{l_c}W}}
	+\int{\abs{\grad^{3}v}\abs{\grad^{l_a}v}\abs{\grad^{l_b}v}\abs{\grad^{l_c}w}},
\end{equation}
where $2\geq l_a, l_b, l_c \geq 0$, $l_a+l_b+l_c = 3$, is some permutation of $l_1, l_2, l_3$. Integrate the first term of (\ref{Proof-NSobolev-eqn3}) by parts, use H\"older and interpolate,
\[\begin{aligned}
\norm{\grad^{2}w}_{L^2}&\norm{\grad^{l_a}W\grad^{l_b}W\grad^{l_c}W}_{H^1}\\
&\lesssim
	\norm{w}_{H^{3}}^\frac{1}{2}
	\norm{w}_{H^1}^\frac{1}{2}
	\norm{\grad^{l_a}W\grad^{l_b}W\grad^{l_c}W}_{H^1}.
\end{aligned}\]
The norms of $W$ have explicit scaling-consistent bounds of the order $\left(\frac{1}{\lambda}\right)^{(2+l_a+l_b+l_c)}$. Again, the factor $\norm{w}_{H^1}$ provides a factor of $e^{-\frac{\sigma_4}{b}}$ for some $\sigma_4 > 0$ and the resulting bound is much better than $\frac{1}{\lambda^7}$.

The remaining term of (\ref{Proof-NSobolev-eqn3}) is more difficult. Choose $p_a, p_b, p_c > 0$ such that,
\begin{equation}\label{Proof-NSobolev-eqn4}\begin{aligned}
\sum_{j=a,b,c}\frac{1}{p_j} = \frac{1}{2}
&& \text{ and } 
&& \begin{aligned}
		&\frac{1}{p_j} < \frac{l_j}{2} && \text{ if } l_j \neq 0,\\
		&\frac{1}{p_j} < \delta_5 && \text{ if } l_j = 0,
	\end{aligned}
\end{aligned}\end{equation} 
where $0 <\delta_5 \ll 1$ is an arbitrary universal constant. Apply H\"older and two-dimensional Sobolev embeddings,
\begin{equation}\label{Proof-NSobolev-eqn5}\begin{aligned}
\int{\abs{\grad^{3}v}\abs{\grad^{l_a}v}\abs{\grad^{l_b}v}\abs{\grad^{l_c}w}}
	&\leq
	\norm{v}_{H^{3}}\norm{\grad^{l_a}v}_{L^{p_a}}\norm{\grad^{l_b}v}_{L^{p_b}}
		\norm{\grad^{l_c}w}_{L^{p_c}}\\
	&\lesssim
	\norm{v}_{H^{3}}
	\prod_{j=a,b}\norm{v}_{H^{2\left(\frac{1}{2}-\frac{1}{p_j}\right)+l_j}}\,
	\norm{w}_{H^{2\left(\frac{1}{2}-\frac{1}{p_c}\right)+l_c}}.\\
\end{aligned}\end{equation}
Due to choice (\ref{Proof-NSobolev-eqn4}), the final three norms of (\ref{Proof-NSobolev-eqn5}) may be interpolated strictly between $H^{3}$ and $H^1$, or strictly between $H^1$ and $L^2$, if $l_j = 0$.
We are guaranteed a factor in $\norm{w}_{H^1}$,
\begin{equation}\label{Proof-NSobolev-Interpolate}
(\ref{Proof-NSobolev-eqn5}) \lesssim \left\{
\begin{aligned}
&\norm{u}_{H^{3}}^2\norm{u}_{H^1}^{2-C(l_c)}\norm{w}_{H^1}^{C(l_c)}
	&& \text{ if } l_j \text{ all non-zero},\\
&\norm{u}_{H^{3}}^{2+C(\delta_5)}\norm{u}_{H^1}^{2-3C(\delta_5)-C(l_c)}\norm{w}_{H^1}^{C(l_c)}
	&& \text{ if } l_j \text{ zero for some }j.
\end{aligned}
\right.
\end{equation}
For $m>0$ sufficiently small (relative to $\delta_5$), there is a spare factor of $e^{-\frac{\sigma_4}{b}}$, for some $\sigma_4>0$. This proves the bound for the first LH term of (\ref{Eqn-NSobolev-full}).

\item Apply H\"older and two-dimensional Sobolev embeddings, using the same values $p_j$,
\begin{equation}\begin{aligned}\label{Proof-NSobolev-eqn7}
\int{\abs{\grad v}\abs{v}^2
		\abs{\grad^{l_1}v}\abs{\grad^{l_2}v}\abs{\grad^{l_3}v}}
\lesssim \norm{\grad v}_{L^4}\norm{v}_{L^8}^2
		\prod_{j=1,2,3}\norm{\grad^{l_j}v}_{L^{q_j}}.
\end{aligned}\end{equation}
Recall, at least one factor of $v$ in (\ref{Proof-NSobolev-eqn7}) is infact $w$. Apply Sobolev embeddings and interpolation exactly as we did to equation (\ref{Proof-NSobolev-eqn5}).
This proves the bound for the second LH term of (\ref{Eqn-NSobolev-full}).

\item Apply H\"older and two-dimensional Sobolev,
\begin{equation}\label{Proof-NSobolev-eqn8}\begin{aligned}
\int{\abs{\grad^{2}w}\abs{\grad^{2}v}\abs{\grad v}^2}
&\leq \norm{\grad^{2}w}_{L^4} 
		\norm{\grad^{2}v}_{L^4}
		\norm{\grad v}_{L^4}^2\\
	&\lesssim \norm{w}_{H^{\frac{5}{2}}}
		\norm{v}_{H^{\frac{5}{2}}}\norm{v}_{H^{\frac{3}{2}}}^2,
\end{aligned}\end{equation}
and,
\begin{equation}\label{Proof-NSobolev-eqn9}\begin{aligned}
\int{\abs{\grad^{2}w}\abs{\grad^{2}v}\abs{v}^4}
&\leq \norm{\grad^{2}w}_{L^4} 
		\norm{\grad^{2}v}_{L^4}
		\norm{v}_{L^8}^4\\
	&\lesssim \norm{w}_{H^{\frac{5}{2}}}
		\norm{v}_{H^{\frac{5}{2}}}\norm{v}_{H^{\frac{3}{4}}}^4.
\end{aligned}\end{equation}
The bound for the third LH term of (\ref{Eqn-NSobolev-full}) follows from interpolation.
\end{enumerate}
\end{proof}

\begin{lemma}[$H^{3}$ Energy Identity]\label{Lemma-HnkEnergy}
Denote the third-order energy by,
\begin{equation}\label{DefnEqn-Enk}
E_{3}(u) =
	\int{\abs{\grad^{3}u}^2}
	-\left(2\int{\abs{\grad^{2}u}^2\abs{u}^2}
		+Re\int{(\grad^{2}\overline{u})^2u^2}\right).
\end{equation}
Then,
\begin{equation}\label{Eqn-HnkEnergy}\begin{aligned}
\frac{1}{C}\abs{\frac{d}{dt}E_{3}(u)}  
\leq &
\int{\abs{\grad^{3}u}
		\abs{\grad^{l_1}u}\abs{\grad^{l_2}u}\abs{\grad^{l_3}u}}\\
&+\int{\left(\abs{\grad u}\abs{u}^2\right)
		\abs{\grad^{l_1}u}\abs{\grad^{l_2}u}\abs{\grad^{l_3}u}}\\
&+ \int{\abs{\grad^{2}u}^2\left(\abs{\grad u}^2+\abs{u}^4\right)},
\end{aligned}
\end{equation}
where the right side is implicitly summed over $2 \geq l_1 \geq l_2 \geq l_3 \geq 0$ with $l_1+l_2+l_3 = 3$.
\end{lemma}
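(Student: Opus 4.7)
The plan is a standard NLS-style energy method calculation: differentiate each term of $E_3(u)$ in time using $u_t = i\laplacian u + iu\abs{u}^2$ and verify that the specific combination of corrections in $E_3$ is designed so that every contribution with more than three derivatives on a single factor either cancels algebraically or is already of one of the three types on the right-hand side of (\ref{Eqn-HnkEnergy}).

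For the principal term, write $\frac{d}{dt}\int\abs{\grad^3 u}^2 = 2\,\mathrm{Re}\int \grad^3 u_t\,\overline{\grad^3 u}$. The dispersive contribution $-2\,\mathrm{Im}\int\laplacian(\grad^3 u)\,\overline{\grad^3 u}$ vanishes because the integral is real after one integration by parts. Expanding $\grad^3(u^2\bar u)$ via Leibniz, the three terms where all three derivatives land on a single factor give, after contracting with $\overline{\grad^3 u}$ and taking imaginary parts, a vanishing contribution $-4\,\mathrm{Im}\int\abs{u}^2\abs{\grad^3 u}^2 = 0$ (real integrand) and a single surviving leftover $-2\,\mathrm{Im}\int u^2(\grad^3 \bar u)^2$. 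Every other Leibniz term is an integral of the schematic form $\int \abs{\grad^3 u}\,\abs{\grad^{l_1}u}\,\abs{\grad^{l_2}u}\,\abs{\grad^{l_3}u}$ with $l_1+l_2+l_3 = 3$ and $l_j \leq 2$, matching the first type in (\ref{Eqn-HnkEnergy}).

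Next I would differentiate the two correction terms and show both the required cancellation and control of the remainders. For $-\mathrm{Re}\int(\grad^2\bar u)^2 u^2$, substituting $\bar u_t = -i\laplacian\bar u - i\bar u\abs{u}^2$ and integrating $\laplacian$ by parts once yields exactly $+2\,\mathrm{Im}\int u^2(\grad^3\bar u)^2$, which annihilates the surviving principal leftover, plus a type-1 remainder of the form $\int\abs{\grad^3\bar u}\,\abs{\grad^2\bar u}\,\abs{u}\,\abs{\grad u}$. The further contribution $+2\,\mathrm{Im}\int(\grad^2\bar u)^2\,u\,\laplacian u$ from differentiating the $u^2$ factor, after integrating $\laplacian$ by parts, splits into a type-1 piece $\int\abs{\grad^3 u}\,\abs{\grad^2 u}\,\abs{u}\,\abs{\grad u}$ and a type-3 piece $\int\abs{\grad^2 u}^2\,\abs{\grad u}^2$. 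For the first correction $-2\int\abs{\grad^2 u}^2\abs{u}^2$, the dispersive contribution produces, after one integration by parts, only type-1 terms $\int\abs{\grad^3 u}\,\abs{\grad^2 u}\,\abs{u}\,\abs{\grad u}$, while the term $\int\abs{\grad^2 u}^2\,\mathrm{Im}(\laplacian u\,\bar u)$ reduces analogously to a type-1 plus type-3 combination. Finally, every contribution from the nonlinear pieces $iu\abs{u}^2$ and $-i\bar u\abs{u}^2$ of $u_t$, $\bar u_t$, when expanded by Leibniz against $\overline{\grad^3 u}$, $\abs{\grad^2 u}^2$, or $(\grad^2\bar u)^2$, produces only terms of types 2 or 3.

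The computation is essentially bookkeeping; the main obstacle is the algebraic verification that the coefficient $-1$ on $\mathrm{Re}\int(\grad^2\bar u)^2 u^2$ is precisely what cancels the $u^2(\grad^3\bar u)^2$-type leftover from the principal term. Any other normalization would leave an irreducible four-derivative contribution with no admissible bound. Once this cancellation is in place, Cauchy–Schwarz together with the integration-by-parts identities above absorbs every remaining term into one of the three categories of (\ref{Eqn-HnkEnergy}).
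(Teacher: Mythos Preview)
Your proposal is correct and is essentially the same computation as the paper's, organized in reverse: the paper starts from $\frac{d}{dt}\int|\nabla^3 u|^2$, isolates the two troublesome groupings $-2\,\mathrm{Im}\int\nabla(\nabla^2 u\,|u|^2)\nabla^3\bar u$ and $-\mathrm{Im}\int\nabla(\nabla^2\bar u\,u^2)\nabla^3\bar u$, integrates by parts, and uses the equation to convert $\Delta\bar u$ back into $\bar u_t$, thereby \emph{discovering} the two correction terms as exact time derivatives; you instead take the corrections as given and verify the cancellation directly. The algebraic content and the resulting type-I/II/III error classification are identical.
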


\begin{proof}
We refer to the RHS of (\ref{Eqn-HnkEnergy}) as error terms of type I, II, and III respectively.
By direct calculation,
\begin{equation}\label{Proof-HnkEnergy-eqn1}
\begin{aligned}
	\frac{1}{2}\frac{d}{dt}\left(\int{\abs{\grad^{3}u}^2}\right)
	& = -Im
			\int{\grad^{3}\left(\laplacian u 
			+u\abs{u}^2\right)\,\grad^{3}\overline{u}}\\
	& = -2\,Im\int{\grad\left(\grad^{2}u\,\abs{u}^2\right)\,\grad^{3}\overline{u}}\\
	&\mspace{36.0mu}-Im\int{\grad\left(\grad^{2}\overline{u}u^2\right)\,\grad^{3}\overline{u}}\\
	&\mspace{54.0mu}+ \text{terms of the form }
			\int{\grad \left(\grad u \grad u\,u\right)\grad^{3}\overline{u}}.
\end{aligned}\end{equation} 
The final terms of (\ref{Proof-HnkEnergy-eqn1}) are error of type I. Regarding the first RH term of (\ref{Proof-HnkEnergy-eqn1}),
\begin{equation}\label{Proof-HnkEnergy-eqn2}\begin{aligned}
-2\,Im\int{\grad\left(\grad^{2}u\,\abs{u}^2\right)\,\grad^{3}\overline{u}}
	\;=\; & 2\,Im\int{\grad^{2}u\,\abs{u}^2\grad^{2}\laplacian \overline{u}}\\
	=\; & \int{\frac{d}{dt}\left(\abs{\grad^{2}u}^2\right)\,\abs{u}^2}
		-2\,Im\int{\grad^{2}u\,\abs{u}^2\grad^{2}\left(\overline{u}\abs{u}^2\right)}.
\end{aligned}
\end{equation}
Recognize the last term of (\ref{Proof-HnkEnergy-eqn2}) as error of type II and III. Regarding the other term,
\begin{equation}\label{Proof-HnkEnergy-eqn3}\begin{aligned}
\int{\frac{d}{dt}\left(\abs{\grad^{2}u}^2\right)\,\abs{u}^2}
=&\frac{d}{dt}\left(\int{\abs{\grad^{2}u}^2\,\abs{u}^2}\right)
+2\,Im\int{\abs{\grad^{2}u}^2\left(\laplacian u + u\abs{u}^2\right) \overline{u}}.
\end{aligned}\end{equation}
After integration by parts, we recognize the final term of (\ref{Proof-HnkEnergy-eqn3}) as error of type I and III. We have shown that,
\[
-2\,Im\int{\grad\left(\grad^{2}u\,\abs{u}^2\right)\,\grad^{3}\overline{u}}
 = \frac{1}{2}\frac{d}{dt}\left(\int{\abs{\grad^{2}u}^2\,\abs{u}^2}\right),
\]
up to error terms. It is virtually the same calculation to show that,
\[
-Im\int{\grad\left(\grad^{2}\overline{u}u^2\right)\,\grad^{3}\overline{u}}
 = \frac{1}{2}\frac{d}{dt}\left( Re\int{(\grad^{2}\overline{u})^2u^2} \right),
\]
also up to error terms of type I, II, and III. This completes the proof of (\ref{Eqn-HnkEnergy}).
\end{proof}

Now we simply combine the previous three Lemmas.
Equations (\ref{Eqn-NKSobolev-simple}) and (\ref{Eqn-NSobolev-simple}) prove that $E_3 \approx \norm{u}_{H^3}$. Equations (\ref{Eqn-NKSobolev-full}) and (\ref{Eqn-NSobolev-full}) control $\frac{d}{dt}E_3$. Integrate the bound on $\frac{d}{dt}E_3$ using Lemma \ref{Lemma-lambdaIntegral}, with $\sigma_2 < \min(\sigma_4, 2m)$. Choose $m'>0$ to be any value, $m-\frac{\sigma_4}{2}<m'<m$.  Assuming $\alpha^*$ is sufficiently small (depending on the choice of $m'$), we have proven,
\begin{lemma}[Controlled Growth of $H^{3}$]\label{Lemma-ControlledHnk}
For all $t\in[0,T_{hyp})$,
\begin{equation}\label{Eqn-Hnk}
\norm{u(t)}_{H^{3}(\real^{3})} 
	< \frac{e^{\frac{m'}{b(t)}}}{\lambda^{3}(t)}.
\end{equation}
That is, statement {{\bf I2.1}}. 
\end{lemma}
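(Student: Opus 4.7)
The plan is to control the evolution of the third-order energy $E_{3}(u)$ from Lemma \ref{Lemma-HnkEnergy}, and then to recover $\norm{u(t)}_{H^{3}}^2$ from $E_{3}$ via the coercivity packaged into Lemmas \ref{Lemma-NKSobolev} and \ref{Lemma-NSobolev}. My first move is to split $u = \widetilde{\chi}u + (1-\widetilde{\chi})u$ in each of the three error classes on the right of (\ref{Eqn-HnkEnergy}) and expand the resulting four-factor products. Any term all of whose factors come from $\widetilde{\chi}u$ is estimated by (\ref{Eqn-NKSobolev-full}), which has already baked in the bootstrap hypotheses {\bf H2.1}--{\bf H2.3}, contributing $O(\lambda^{-7})$. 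Any term containing at least one factor from $(1-\widetilde{\chi})u$ is estimated by (\ref{Eqn-NSobolev-full}) --- the narrowly-supported factor can always swallow any $\widetilde{\chi}u$ partner --- contributing $O(\lambda^{-8}) + e^{-\sigma_4/b}\lambda^{-2}\norm{u}_{H^{3}}^2$.

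A parallel splitting applied to the difference $E_{3}(u) - \norm{\grad^{3}u}_{L^2}^2 = -2\int\abs{\grad^{2}u}^2\abs{u}^2 - \mathrm{Re}\int(\grad^{2}\overline{u})^2u^2$, together with (\ref{Eqn-NKSobolev-simple}) and (\ref{Eqn-NSobolev-simple}), will give the pointwise equivalence
\begin{equation*}
\tfrac{1}{2}\norm{u(t)}_{H^{3}}^2 - C\lambda^{-6}(t) \;\leq\; E_{3}(u(t)) \;\leq\; 2\norm{u(t)}_{H^{3}}^2 + C\lambda^{-6}(t),
\end{equation*}
once $\alpha^*$ is small and $m$ is fixed with $2m < \sigma_4$, since then {\bf H2.1} lets the parasitic $e^{-\sigma_4/b}\norm{u}_{H^{3}}^2 \lesssim e^{(2m-\sigma_4)/b}\lambda^{-6}$ be absorbed. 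Feeding {\bf H2.1} back into the error bound produces the differential inequality
\begin{equation*}
\abs{\tfrac{d}{dt}E_{3}(u(t))} \;\lesssim\; \frac{1}{\lambda^{8}(t)} + \frac{e^{(2m-\sigma_4)/b(t)}}{\lambda^{8}(t)} \;\lesssim\; \frac{1}{\lambda^{8}(t)}.
\end{equation*}

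The last step is to integrate in time using Lemma \ref{Lemma-lambdaIntegral} with $\mu=8$ and some small $\sigma_2 > 0$ chosen so that $\sigma_2 < 2m$, yielding $\int_0^t \lambda^{-8} \lesssim C(\alpha^*)\, e^{\sigma_2/b(t)}\lambda^{-6}(t)$. The initial datum satisfies $E_{3}(u_0) \lesssim \lambda_0^{-6}$ by {\bf C2.1}, and by the almost-monotonicity {\bf H1.5} this is dominated by a multiple of $\lambda^{-6}(t)$. Combined,
\begin{equation*}
\norm{u(t)}_{H^{3}}^2 \;\leq\; C(\alpha^*)\,\frac{e^{\sigma_2/b(t)}}{\lambda^{6}(t)}.
\end{equation*}
Picking any $m' \in (m-\sigma_4/2,\,m)$ with $\sigma_2 < 2m'$, and shrinking $\alpha^*$ so that $C(\alpha^*) \leq e^{(2m'-\sigma_2)/b}$ (feasible because $b \leq \alpha^*$ is itself small, so $e^{c/b}$ dwarfs any prescribed constant), produces (\ref{Eqn-Hnk}).

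The main obstacle --- and what distinguishes this argument from the radially-symmetric $L^2$-critical setting of \cite{RaphaelSzeftel-StandingRingNDimQuintic-08} --- is that in a tube around the singular ring one is working with essentially two-dimensional Sobolev inequalities, and $H^1(\real^2) \not\hookrightarrow L^\infty$. A direct estimate of the error terms in (\ref{Eqn-HnkEnergy}) would therefore cost a full power of $\lambda^{-1}$ and break scaling; it is precisely Lemma \ref{Lemma-NSobolev}, which exploits the smoothness of $\widetilde{Q}_b$ from Lemma \ref{Lemma-UnprovenProperty} so that only the residual $\widetilde{u}$ is in play and trades derivatives against $\norm{\widetilde{u}}_{H^1}$, that demotes this cost to the purely exponential factor $e^{-\sigma_4/b}$. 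This exponential factor, small because of {\bf H1.2}--{\bf H1.3}, is what permits the Gronwall step to close with only an $e^{\sigma_2/b}$ loss --- delivered in turn by the refined integrability of Lemma \ref{Lemma-lambdaIntegral}, itself indispensable since the cruder Lemma \ref{Lemma-lambdaIntegralCrude} only addresses $\mu < 2$.
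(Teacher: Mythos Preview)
Your proof is correct and follows essentially the same route as the paper's: combine Lemmas \ref{Lemma-NKSobolev}, \ref{Lemma-NSobolev}, and \ref{Lemma-HnkEnergy} to obtain both the coercivity $E_3 \approx \norm{u}_{H^3}^2$ and the differential inequality on $E_3$, then integrate via Lemma \ref{Lemma-lambdaIntegral} with $\sigma_2 < 2m'$ and absorb constants by shrinking $\alpha^*$. Your treatment of the mixed terms (``the narrowly-supported factor can always swallow any $\widetilde{\chi}u$ partner'') is a bit casual --- strictly speaking Lemmas \ref{Lemma-NKSobolev} and \ref{Lemma-NSobolev} are stated only for pure products --- but the paper is equally terse on this point, and the mixed terms are supported on the overlap annulus where both lemmas' methods apply with room to spare.
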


\begin{remark}[Higher Dimensions]\label{Remark-HigherDim-NthEnergy}
For the higher dimensional case, define,
\[
E_N(u) = \int{\abs{\grad^Nu}^2} - \left(2\int{\abs{\grad^{N-1}u}^2\abs{u}^2} + Re\int{\left(\grad^{N-1}u\right)^2\left(\overline{u}\right)^2}\right).
\]
Then $\abs{\frac{d}{dt}E_N(u)}$ may be bounded in the same way by generalizing error of type II to include,
\[\begin{aligned}
\int{\left(\abs{\grad^{k_1}u}\abs{\grad^{k_2}u}\abs{\grad^{k_3}u}\right)
		\abs{\grad^{l_1}u}\abs{\grad^{l_2}u}\abs{\grad^{l_3}}}
&&\text{ with }&&k_1+k_2+k_3 = N-2.
\end{aligned}\]
For the calculation with quintic nonlinearity, see \cite[Lemma 1]{RaphaelSzeftel-StandingRingNDimQuintic-08}.
\end{remark}

\subsection{Behaviour away from both Infinity and the Singularity}
	\label{Subsection-Annular}
This section we concentrate on the interface between the singular ring and the truly three-dimensional region that contains the origin.  On this interface, away from $r=0$, the dynamics remains essentially two-dimensional and $L^2$-critical
\begin{lemma}[Two-dimensional endpoint Sobolev control away from the singularity]
\label{Lemma-NdimEndpointSobolev}
For $\sigma_5 > 0$, and a smooth cutoff function $\varphi$ compactly supported away from both the singular ring and the origin,
\begin{equation}\label{Eqn-NdimEndpointSobolev}
\norm{\varphi u(t)}_{L^\infty(\real^3)} 
	\leq C(\sigma_5, \varphi) 
		e^{+\frac{\sigma_5}{b(t)}}
		\left(\norm{\widetilde{u}(t)}_{H^1(\real^3)} 
		+ \frac{\Gamma^\frac{1}{2}_{b(t)}}{\lambda(t)}\right).
\end{equation}
This is a two-dimensional type of estimate due to the support of $\varphi$.
\end{lemma}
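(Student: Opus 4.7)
The plan is to reduce to a two-dimensional problem on a compact annular region and then apply a Brezis--Gallou\"et type Fourier decomposition, very carefully choosing the splitting frequency to absorb the double-exponential smallness of $\lambda$ into the $\Gamma_b^{1/2}/\lambda$ term rather than into the logarithm.

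First I would set up the reduction. Since $\varphi$ is compactly supported away from both the singular ring and the origin, there exist $0<c_1<c_2$ with $r\in[c_1,c_2]$ on $\operatorname{supp}\varphi$. By cylindrical symmetry $\varphi u$ may be regarded as a function of $(r,z)\in\real^2$, and because the Jacobian $r$ is bounded away from $0$ and $\infty$ on $\operatorname{supp}\varphi$, we have the comparison $\norm{\varphi u}_{H^k(\real^2)}\sim C(\varphi)\norm{\varphi u}_{H^k(\real^3)}$ for $k=0,1,2$. Moreover, both $\widetilde{Q}_b$ and $\widetilde{\zeta}_b$ are supported within $\abs{y}\lesssim A$, so in original coordinates within $\abs{(r,z)-(r(t),z(t))}\lesssim A\lambda$; by (\ref{DefnEqn-A}) and (\ref{Hypo1-consequenceForLambdaGamma}) this radius is much smaller than $\alpha^*$, hence disjoint from $\operatorname{supp}\varphi$ once $\alpha^*$ is small enough. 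Thus $u\equiv\widetilde{u}$ on $\operatorname{supp}\varphi$, giving $\norm{\varphi u}_{H^1(\real^2)}\leq C(\varphi)\norm{\widetilde{u}}_{H^1(\real^3)}$.

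Next I would implement the Fourier decomposition at a free radius $R>0$. Using $\norm{f}_{L^\infty(\real^2)}\leq\norm{\hat f}_{L^1(\real^2)}$, Cauchy--Schwarz on the regions $\{|\xi|<R\}$ and $\{|\xi|\geq R\}$, and the explicit integrals in 2D
\[
\int_{|\xi|<R}\frac{d\xi}{1+|\xi|^2}=\pi\log(1+R^2),\qquad \int_{|\xi|\geq R}\frac{d\xi}{(1+|\xi|^2)^2}=\frac{\pi}{1+R^2},
\]
one obtains
\[
\norm{\varphi u}_{L^\infty(\real^2)}\leq C\sqrt{\log(1+R^2)}\,\norm{\varphi u}_{H^1(\real^2)}+\frac{C}{R}\norm{\varphi u}_{H^2(\real^2)}.
\]
By the bootstrap hypothesis {\bf H2.2} with $\kappa=1$, $\norm{\chi u}_{H^2(\real^3)}\leq e^{2m/b}/\lambda$, whence $\norm{\varphi u}_{H^2(\real^2)}\leq C(\varphi)e^{2m/b}/\lambda$. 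Choose $R:=e^{2m/b}\Gamma_b^{-1/2}$, so that the high-frequency term is bounded by $C(\varphi)\Gamma_b^{1/2}/\lambda$, as desired. With this choice,
\[
\log(1+R^2)\lesssim \frac{4m}{b}+\log\Gamma_b^{-1}\lesssim \frac{C}{b},
\]
by the estimate (\ref{Eqn-GammaBEstimate}) on $\Gamma_b$, so $\sqrt{\log(1+R^2)}\lesssim 1/\sqrt{b}\leq e^{\sigma_5/b}$ for any prescribed $\sigma_5>0$, provided $b$ is small enough. Combining with the $H^1$ bound for $\varphi u$ yields the claim.

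The main obstacle is that $\lambda$ is doubly exponentially small in $1/b$, so $\log(1/\lambda)$ is itself as large as $e^{10\pi/b}$; a naive Brezis--Gallou\"et bound of the form $\norm{f}_{L^\infty}\lesssim\norm{f}_{H^1}\sqrt{\log(\norm{f}_{H^2}/\norm{f}_{H^1})}$ would produce a $\sqrt{\log(1/\lambda)}$ factor that overwhelms the target $e^{\sigma_5/b}$ completely. The subtlety is to \emph{not} optimize $R$ in the classical manner, but rather to pick $R$ so that the high-frequency tail is synchronized to the built-in floor $\Gamma_b^{1/2}/\lambda$. All factors of $1/\lambda$ then collapse into this term, leaving $\log R$ to involve only $e^{2m/b}$ and $\Gamma_b^{-1/2}\sim e^{\pi/(2b)}$---quantities that are only \emph{single}-exponential in $1/b$---so the logarithm is merely polynomial in $1/b$ and thus harmlessly absorbed into $e^{\sigma_5/b}$. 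This is precisely why the form of hypothesis {\bf H2.2} (scaling-consistent $H^2$ control modulated by $e^{2m/b}$) is essential.
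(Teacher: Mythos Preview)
Your argument has a genuine gap at the step where you invoke {\bf H2.2}. Hypothesis {\bf H2.2} bounds $\norm{\chi u}_{H^2}$, not $\norm{\varphi u}_{H^2}$, and in the intended application (to $\psi^{(0)}$ in Lemma~\ref{Lemma-Annular12}) the cutoff $\varphi$ is supported in $\{\frac{1}{7}<|(r,z)-(1,0)|<\frac{6}{7}\}$, which includes the region $|(r,z)-(1,0)|<\frac{1}{3}$ where $\chi\equiv 0$. Indeed, the entire purpose of this lemma is to supply control \emph{outside} the support of {\bf H2.2}; invoking {\bf H2.2} here is circular. The only global higher-regularity input available is {\bf H2.1}, which after interpolation gives $\norm{\varphi u}_{H^2}\lesssim e^{Cm/b}/\lambda^{2}$, a full power of $1/\lambda$ worse than you assumed.

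This extra $1/\lambda$ is fatal for your scheme: to force the high-frequency term to equal $\Gamma_b^{1/2}/\lambda$ you would need $R\gtrsim e^{C/b}\Gamma_b^{-1/2}/\lambda$, and then $\log R\gtrsim\abs{\log\lambda}\gtrsim e^{\pi/(10b)}$ is doubly exponential, so $\sqrt{\log R}$ cannot be absorbed into $e^{\sigma_5/b}$ for arbitrary $\sigma_5$. The paper avoids this by choosing $R=\norm{\widetilde u}_{H^1}+\Gamma_b^{1/2}/\lambda$ (so the low-frequency piece already equals $R$) and, crucially, controlling the high frequencies via $H^\nu$ with $\nu>1$ \emph{close to $1$} rather than $\nu=2$. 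After interpolation the high-frequency contribution becomes $\bigl(\norm{v}_{H^2}/R^{2}\bigr)^{\nu-1}\cdot R$; since $R\geq\Gamma_b^{1/2}/\lambda$, the ratio $\norm{v}_{H^2}/R^2\lesssim e^{C/b}\Gamma_b^{-1}$ is only single-exponential in $1/b$, and taking $\nu-1$ small yields the factor $e^{\sigma_5/b}$. Your Brezis--Gallou\"et splitting is the right idea, but the free parameter must be the Sobolev exponent $\nu$, not just the radius $R$.
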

The key feature of Lemma \ref{Lemma-NdimEndpointSobolev} is that we may avoid the Sobolev embedding $H^{1+\epsilon}(\real^2) \hookrightarrow L^\infty(\real^2)$. At the order of the blowup parameter $\lambda$, equation (\ref{Eqn-NdimEndpointSobolev}) is consistent with scaling. In the case of radial symmetry, such as \cite{R06,RaphaelSzeftel-StandingRingNDimQuintic-08}, Strauss' radial embedding is used instead.

\begin{proof}[Proof of Lemma \ref{Lemma-NdimEndpointSobolev}]
We adapt an argument of Brezis \& Gallou\"et.
Our estimate is for a fixed time $t\in[0,T_{hyp})$. 
Choose $R = \norm{\widetilde{u}(t)}_{H^1} + \frac{\Gamma^\frac{1}{2}_{b(t)}}{\lambda(t)} \gg 0$. Denote $v = \varphi u$ and partition phase space,
\[
\abs{v} \leq \norm{\hat{v}}_{L^1} = 
	\int_{\abs{\xi} \leq R}{\abs{\hat{v}(\xi)}\,d\xi}
	+\int_{\abs{\xi} > R}{\abs{\hat{v}(\xi)}\,d\xi}
\]
Rewrite the low frequencies and apply H\"older,
\begin{equation}\label{Eqn-NdimEndpoint-eqn1}\begin{aligned}
\int_{\abs{\xi} \leq R}{\abs{\hat{v}}\,d\xi} 
	&= \int_{\abs{\xi} \leq R}{
		\left({\langle\xi\rangle}^{\frac{1}{2}}
			\abs{\hat{v}}^{\frac{1}{2}}\right)
		\left(\abs{\hat{v}}^{\frac{1}{2}}\right)
		\left(\frac{1}{{\langle\xi\rangle}^{\frac{1}{2}}}\right)\,d\xi
	}\\
	&\leq \norm{v}_{H^{1}}^{\frac{1}{2}}
			\norm{v}_{L^2}^{\frac{1}{2}}
			\left(\int_{\abs{\xi} \leq R}{\frac{1}{{\langle\xi\rangle}}\,d\xi}
			\right)^\frac{1}{2},
\end{aligned}\end{equation}
where ${\langle\xi\rangle}$ denotes $\sqrt{1 + \abs{\xi}^2}$. Note the final integral of (\ref{Eqn-NdimEndpoint-eqn1}) is,
$
\int_{\abs{\xi} \leq R}{\frac{1}{{\langle\xi\rangle}}\,d\xi}
	\leq \int_0^R{\frac{1}{\rho}\rho\,d\rho}
	= R.
$ 
Apply a similar argument to high frequencies, with parameter $\nu(\sigma_5,m)>1$ to be determined,
\begin{equation}\label{Eqn-NdimEndpoint-eqn3}\begin{aligned}
\int_{\abs{\xi} > R}{\abs{\hat{v}}\,d\xi}
	&= \int_{\abs{\xi} > R}{\left({\langle\xi\rangle}^\nu\abs{\hat{v}}\right)
			\frac{1}{{\langle\xi\rangle}^\nu}\,d\xi}\\
	&\lesssim \norm{v}_{H^\nu}\,
			\left(\int_R^{+\infty}{\frac{1}{{\langle \rho\rangle}^{2\nu}}\rho\,d\rho}\right)^\frac{1}{2}\\
	&\leq \frac{1}{2(\nu-1)}\norm{v}_{H^\nu}\,\frac{1}{R^{\nu-1}}\\
	&\lesssim \frac{1}{2(\nu-1)}
		\left(\norm{v}_{H^1}^{2-\nu}R^{\nu - 1}\right)
		\left(\frac{\norm{v}_{H^2}^{\nu-1}}{R^{2(\nu-1)}}\right).
\end{aligned}\end{equation}
Due to hypothesis {\bf H2.1} 
and $\Gamma_b$-estimate (\ref{Eqn-GammaBEstimate}), the final term of (\ref{Eqn-NdimEndpoint-eqn3}) is bounded by $e^{+\frac{\sigma_5}{b(t)}}$ for any choice of $\nu > 1$ sufficiently small.
\end{proof}

\begin{definition}[Cutoffs to cover $Supp\left(\grad\chi\right)$]\label{Defn-AnnularCutoffs}
Fix seven smooth cylindrically symmetric cutoff functions, $\psi^{(0)}$, $\psi^{(\frac{1}{2})}$, $\psi^{(1)}$, $\varphi^{(\frac{5}{2})}$, $\varphi^{(2)}$, $\varphi^{(\frac{3}{2})}$, $\varphi^{(1)}$, such that,
\begin{enumerate}
\item {\it They cover the support of $\grad\chi$.}

Each function is $1$ on, $\left\{ \frac{1}{3} < \abs{(r,z)-(1,0)} < \frac{2}{3}\right\}$.

\item {\it Tails do not overlap.}

The support of each cutoff is contained where the previous cutoff is $1$.

\item {\it Supported away from both the singularity and the origin.}

The largest support, that of $\psi^{(0)}$, is contained in, $\left\{ \frac{1}{7} < \abs{(r,z) - (1,0)} < \frac{6}{7}\right\}$.
\end{enumerate}
\end{definition}

\begin{lemma}[Annular $H^\frac{1}{2}$ control - the crucial first step]
\label{Lemma-Annular12} 
For all $t\in[0,T_{hyp})$,
\begin{equation}\label{Eqn-Annular12}
\norm{\psi^{(\frac{1}{2})}u}_{H^\frac{1}{2}} \lesssim \frac{1}{\lambda^{C(\alpha^*)}(t)},
\end{equation}
where $C(\alpha^*) \rightarrow 0$ as $\alpha^* \rightarrow 0$.
\end{lemma}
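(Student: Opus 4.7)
The plan is to derive a differential/integral inequality for $A(t) := \|\psi^{(1/2)} u(t)\|_{H^{1/2}}^2$ via a Kato-smoothing/Strichartz analysis on the cutoff equation, then to close it by Gronwall using Lemma \ref{Lemma-NdimEndpointSobolev} to control the $L^\infty$ factor produced by the cubic nonlinearity.

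Setting $\psi = \psi^{(1/2)}$ and applying the cutoff to (\ref{Eqn-NLS}) yields
\begin{equation*}
i(\psi u)_t + \laplacian(\psi u) + (\psi u)|u|^2 = 2\grad\psi\cdot\grad u + u\laplacian\psi.
\end{equation*}
First I would compute $\frac{d}{dt}\|\psi u\|_{H^{1/2}}^2$ directly from this equation. The skew-adjoint Laplacian contributes nothing, and three sources of growth survive. The cutoff-error terms supported on $\mathrm{supp}(\grad\psi)\cup\mathrm{supp}(\laplacian\psi)$ are handled via a local-smoothing argument: since these supports sit inside $\{\psi^{(0)}\equiv 1\}$, the geometric decomposition together with (\ref{Hypo1-consequenceForLambdaGamma}) and the compact support of the scaled $\widetilde{Q}_b$ profile give $\|\psi^{(0)} u\|_{H^1} \lesssim \|\widetilde u\|_{H^1}$ up to exponentially small terms, and Corollary \ref{Corollary-lambdaIntegral2version2} makes this quantity square-integrable in time. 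Applying Cauchy-Schwarz in time, these contributions total at most $C(\alpha^*)(1 + \sup_{[0,t]} A^{1/2})$.

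The remaining contribution, $-2\,\mathrm{Im}\!\int D^{1/2}(\psi u\,|u|^2)\,\overline{D^{1/2}(\psi u)}$, is the main obstacle because the Leibniz rule fails at order $1/2$. I would apply a Kato-Ponce commutator decomposition
\begin{equation*}
D^{1/2}(\psi u\cdot|u|^2) = |u|^2\,D^{1/2}(\psi u) + [D^{1/2},|u|^2](\psi u),
\end{equation*}
so that, using $|u|=|\psi^{(0)} u|$ on $\mathrm{supp}(\psi)$, the principal piece is bounded by $\|\psi^{(0)} u\|_{L^\infty}^2\,A$; the commutator piece, by a fractional Leibniz estimate coupled with {\bf H1.2}, is subprincipal and absorbs into the same structure up to a small multiple of $A$. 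To estimate the $L^\infty$ factor, I invoke the modified Brezis-Gallou\"et inequality of Lemma \ref{Lemma-NdimEndpointSobolev} with $\varphi = \psi^{(0)}$:
\begin{equation*}
\|\psi^{(0)} u(\tau)\|_{L^\infty}^2 \lesssim e^{2\sigma_5/b(\tau)}\bigl(\|\widetilde u(\tau)\|_{H^1}^2 + \Gamma_{b(\tau)}/\lambda^2(\tau)\bigr).
\end{equation*}

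Assembling these estimates and applying Gronwall yields
\begin{equation*}
A(t) \lesssim \bigl(A(0) + C(\alpha^*)\bigr)\exp\Bigl(C\int_0^t e^{2\sigma_5/b(\tau)}\bigl(\|\widetilde u\|_{H^1}^2 + \Gamma_b/\lambda^2\bigr)\,d\tau\Bigr).
\end{equation*}
The initial value $A(0)$ is bounded by {\bf C2.3} interpolated against mass conservation. By Corollary \ref{Corollary-lambdaIntegral2version2} the exponent is at most $C(\alpha^*)\,e^{2\sigma_5/b(t)}$, and by {\bf H1.3} we have $1/b(t) \lesssim \log\log\lambda^{-1}(t)$, so the exponent is at most $C(\alpha^*)(\log\lambda^{-1}(t))^{20\sigma_5/\pi}$. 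Choosing $\sigma_5$ small, this is $o(\log\lambda^{-1})$, so the exponential is dominated by $\lambda^{-C(\alpha^*)}$ with $C(\alpha^*)\to 0$ as $\alpha^*\to 0$, giving the claimed bound. The principal difficulty throughout is the cubic commutator analysis at half-derivative regularity, which forces the Kato-Ponce splitting above and the careful use of the modified Brezis-Gallou\"et bound; the precise form of hypothesis {\bf H2.1} is paid here through Lemma \ref{Lemma-NdimEndpointSobolev}.
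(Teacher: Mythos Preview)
Your proposal is correct and follows essentially the same approach as the paper: compute $\frac{d}{dt}\|\psi^{(1/2)}u\|_{H^{1/2}}^2$, bound the cutoff-commutator terms via $\|\psi^{(0)}u\|_{H^1}\sim\|\tilde u\|_{H^1}$, handle the nonlinear term by a fractional Leibniz/Kato--Ponce estimate producing $\|\psi^{(0)}u\|_{L^\infty}^2$, invoke Lemma~\ref{Lemma-NdimEndpointSobolev} for the $L^\infty$ factor, apply Gronwall with Corollary~\ref{Corollary-lambdaIntegral2version2}, and convert $e^{\sigma_5/b}$ to a power of $|\log\lambda|$ via {\bf H1.3}. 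The only cosmetic difference is that the paper fixes $\sigma_5=\pi/10$ so that $e^{\sigma_5/b}\leq|\log\lambda|$ exactly, whereas you take $\sigma_5$ small to make the exponent $o(\log\lambda^{-1})$; both yield the claimed $\lambda^{-C(\alpha^*)}$.
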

This is the first proof that any behaviour better than scaling extends beyond the support of hypotheses {\bf H2.2} and {\bf H2.3}.
\begin{remark}[Analogue in \cite{R06,RaphaelSzeftel-StandingRingNDimQuintic-08}]
In radial cases, one proves Lemma \ref{Lemma-Annular12} for $H^\nu$, $\nu < \frac{1}{2}$. The subsequent $H^\frac{1}{2}$ bound, for example \cite[Lemma 10]{R06}, should be seen as comparable to forthcoming Lemma \ref{Lemma-AnnularN2}.
\end{remark}

\begin{proof}[Proof of Lemma \ref{Lemma-Annular12}]
By direct calculation,
\begin{equation}\label{Proof-Annular12-Gronwall}
\frac{1}{2}\frac{d}{dt}\norm{\psi^{(\frac{1}{2})}u}_{\dot{H}^\frac{1}{2}}^2 =
	Im\left(\int{D^\frac{1}{2}\left(u\laplacian\psi^{(\frac{1}{2})} 
		+ 2\grad\psi^{(\frac{1}{2})}\cdot\grad u 
		- \psi^{(\frac{1}{2})}u\,\abs{u}^2\right)
		\,D^\frac{1}{2}\left(\psi^{(\frac{1}{2})}\overline{u}\right)}\right).
\end{equation}
Estimate the first and second RH terms of (\ref{Proof-Annular12-Gronwall}), 
\begin{align}
\notag \norm{D^\frac{1}{2}\left(u\laplacian\psi^{(\frac{1}{2})}\right)}_{L^2}
		\norm{\psi^{(\frac{1}{2})}u}_{H^\frac{1}{2}}
		&\leq C\left(\psi^{(\frac{1}{2})}\right)\norm{\psi^{(0)}u}_{H^\frac{1}{2}}
		\norm{\psi^{(\frac{1}{2})}u}_{H^\frac{1}{2}},\\
\notag \norm{\grad\psi^{(\frac{1}{2})}\cdot\grad u}_{L^2}
		\norm{\psi^{(\frac{1}{2})}u}_{H^1}
		&\leq C\left(\psi^{(\frac{1}{2})}\right)\norm{\psi^{(0)}u}_{H^{1}}^2.
\end{align}
The nonlinear term of (\ref{Proof-Annular12-Gronwall}) does not enjoy any real-valued cancellations, as the operator $D$ does not have an exact Leibniz property.
Apply standard commutation estimates,
\begin{equation}\label{Proof-Annular12-eqnNeedBrezis}\begin{aligned}
\norm{D^\frac{1}{2}\left(\psi^{(\frac{1}{2})}u\,\abs{u}^2\right)}_{L^2} 
\lesssim 
	&
	\norm{\psi^{(\frac{1}{2})}u}_{H^\frac{1}{2}}\norm{\psi^{(0)}u}_{L^\infty(\real^2)}^2
	+\norm{\psi^{(\frac{1}{2})}u}_{L^4}
	\norm{\psi^{(0)}{u}}_{W^{\frac{1}{2},4}}\norm{\psi^{(0)}u}_{L^\infty(\real^2)}\\
\lesssim &
	\norm{\psi^{(\frac{1}{2})}u}_{H^{\frac{1}{2}}}\left(
		\norm{\psi^{(0)}u}_{L^\infty(\real^2)}^2 
		+ \norm{\psi^{(0)}u}_{H^1}\norm{\psi^{(0)}u}_{L^\infty(\real^2)}
	\right).
\end{aligned}\end{equation}
From support away from the singularity, $\psi^{(0)}u = \psi^{(0)}\widetilde{u}$, and we may apply the endpoint estimate of Lemma \ref{Lemma-NdimEndpointSobolev}.
Denote $\norm{\psi^{(\frac{1}{2})}u(t)}_{\dot{H}^\frac{1}{2}}$ by $f$. We have the simple ODE,
\begin{equation}\label{Proof-Annular12-simpleODE}\begin{aligned}
\frac{1}{2}\frac{d}{dt}\left(f^2\right) \leq 
	&C\left(\psi^{(\frac{1}{2})}\right)\left(
		f\,\norm{\widetilde{u}(t)}_{H^1}^\frac{1}{2}
		\norm{\widetilde{u}(t)}_{L^2}^\frac{1}{2}
		+ \norm{\widetilde{u}(t)}_{H^1}^2
	\right)\\
	&+C\left(\sigma_5,\psi^{(0)}\right)f^2\,
		e^{+\frac{\sigma_5}{b(t)}}
			\left(\norm{\widetilde{u}(t)}_{H^1}^2 
			+ \frac{\Gamma_{b(t)}}{\lambda^2(t)}\right).
\end{aligned}\end{equation}
The final term is dominant. After integration by Corollary \ref{Corollary-lambdaIntegral2version2},
\[
\norm{\psi^{(\frac{1}{2})}u(t)}_{{H}^\frac{1}{2}}
\lesssim e^{\left\lbrack C\left(\alpha^*\right)\,C\left(\sigma_5,\psi^{(0)}\right)e^{+\frac{\sigma_5}{b(t)}}\right\rbrack}.
\]
To complete the proof, choose $\sigma_5 = \frac{\pi}{10}$ and recall the log-log rate {\bf H1.3}.
\end{proof}

\begin{remark}[Justification for Lemma \ref{Lemma-NdimEndpointSobolev}]
\label{Remark-NormalBrezisFails}
The open nature of hypothesis {\bf H1.3} is an essential feature of any modulation argument. It is for this reason that we must be free to choose $\sigma_5$.  The standard Brezis-Gallou\"et estimate, 
$
\norm{v}_{L^\infty(\real^2)} \lesssim \norm{v}_{H^1}\sqrt{\log\left(\norm{v}_{H^2}\right)},
$ would not suffice to prove Lemma \ref{Lemma-Annular12}.
\end{remark}

We now reformulate the calculation of equation (\ref{Proof-Annular12-Gronwall}) for repeated application.
\begin{lemma}[Standard Gronwall Argument]\label{Lemma-GronwallArg}
Let $\psi^A$ be supported where $\psi^B\equiv 1$, let $I$ be any subinterval of $[0,T_{hyp})$, and let $\nu\geq 0$. Then,
\begin{equation}\label{Eqn-GronwallTool}
\norm{\psi^Au}_{L^\infty_IH^\nu} \leq C(\psi^A)\left(
	\norm{\psi^Bu_0}_{H^\nu} + \abs{I} +
	\norm{\psi^Bu}_{L^2_IH^{\nu+\frac{1}{2}}} +
	\norm{\psi^Au\,\abs{u}^2}_{L^1_IH^{\nu}}\right). 
\end{equation}
\end{lemma}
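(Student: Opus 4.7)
The plan is to derive the linear Schr\"odinger equation satisfied by $w = \psi^A u$, then close the estimate via the standard inhomogeneous energy bound for the nonlinear source together with inhomogeneous Kato local smoothing for a spatially localized commutator source. Multiplying (\ref{Eqn-NLS}) by $\psi^A$ and commuting through the Laplacian yields
\begin{equation*}
(i\partial_t + \laplacian) w = -\psi^A u\abs{u}^2 + H,\quad H := 2\grad\psi^A\cdot\grad u + u\laplacian\psi^A.
\end{equation*}
The decisive feature of $H$ is that it is supported on $\operatorname{supp}(\grad\psi^A)\cup\operatorname{supp}(\laplacian\psi^A)\subset\{\psi^B\equiv 1\}$. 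Since $\grad\psi^B$ vanishes on that set, $\grad u = \grad(\psi^B u)$ there, so
\begin{equation*}
H = 2\grad\psi^A\cdot\grad(\psi^B u) + (\psi^B u)\laplacian\psi^A,
\end{equation*}
which is compactly supported in $x$ and involves at most one spatial derivative of $\psi^B u$.

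Fix $t_0 = \inf I$ and apply Duhamel's formula,
\begin{equation*}
w(t) = e^{i(t-t_0)\laplacian}w(t_0) + i\int_{t_0}^t e^{i(t-s)\laplacian}\bigl(\psi^A u\abs{u}^2 - H\bigr)(s)\,ds.
\end{equation*}
The homogeneous piece has $L^\infty_IH^\nu$-norm bounded by $\norm{\psi^A u(t_0)}_{H^\nu}\leq C(\psi^A)\norm{\psi^B u_0}_{H^\nu}$ (with a trivial reduction from $u(t_0)$ to $u_0$ when $t_0>0$). The nonlinear source is absorbed by the energy estimate $\norm{\int_{t_0}^t e^{i(t-s)\laplacian}F\,ds}_{L^\infty_IH^\nu}\leq\norm{F}_{L^1_IH^\nu}$, producing the $\norm{\psi^A u\abs{u}^2}_{L^1_IH^\nu}$ term. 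The key step is the treatment of the commutator: the inhomogeneous Kato local smoothing effect---the $TT^*$-dual of $\norm{\chi e^{it\laplacian}f}_{L^2_tH^{\nu+1/2}}\lesssim\norm{f}_{H^\nu}$ valid in $\real^N$ with $N\geq 2$---applied to the spatially localized $H$ gives
\begin{equation*}
\norm{\int_{t_0}^t e^{i(t-s)\laplacian}H(s)\,ds}_{L^\infty_IH^\nu}\leq C(\psi^A)\norm{H}_{L^2_IH^{\nu-1/2}}\leq C(\psi^A)\norm{\psi^B u}_{L^2_IH^{\nu+1/2}},
\end{equation*}
since $H$ carries at most one spatial derivative of $\psi^B u$. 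The $\abs{I}$ term in the conclusion is reserved to absorb zero-order corrections bounded trivially via the conserved $L^2$-mass of $u$ (e.g.\ contributions of the shape $\norm{u\laplacian\psi^A}_{L^1_IL^2}\lesssim \abs{I}\norm{u_0}_{L^2}$) and any low-order commutator between $D^\nu$ and $\psi^A$; both are in any case negligible in view of (\ref{Eqn-t1Small}).

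The main technical point is the substitution of inhomogeneous Kato smoothing for the naive energy bound on $H$: the latter would require controlling $\norm{\psi^B u}_{L^1_IH^{\nu+1}}$ and costs an extra half-derivative after Cauchy--Schwarz in time. The half-derivative gain from Kato smoothing is precisely the ``smoothing budget'' that will be consumed step-by-step by the iteration in Section \ref{Subsection-Annular}, where each successive cutoff is allowed to improve regularity by a half-step relative to scaling.
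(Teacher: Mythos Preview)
Your proof is correct, but the route differs from the paper's. The paper does not invoke Duhamel or Kato smoothing as a black box; instead it computes the energy derivative directly,
\[
\tfrac{1}{2}\tfrac{d}{dt}\norm{D^{\nu}(\psi^A u)}_{L^2}^2
= Im\int D^{\nu}\bigl(u\laplacian\psi^A + 2\grad\psi^A\cdot\grad u - \psi^A u\,\abs{u}^2\bigr)\,D^{\nu}\overline{(\psi^A u)},
\]
and handles the dangerous gradient commutator by the elementary redistribution $\int D^\nu f\,D^\nu\overline g = \int D^{\nu-1/2}f\,D^{\nu+1/2}\overline g$, yielding
\[
\bigl|\langle D^{\nu-1/2}(\grad\psi^A\cdot\grad u),\,D^{\nu+1/2}(\psi^A u)\rangle\bigr|
\le C(\psi^A)\,\norm{\psi^B u}_{H^{\nu+1/2}}^2.
\]
Integrating in time produces the $L^2_IH^{\nu+1/2}$ term directly. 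This is exactly the same half-derivative gain you extract from inhomogeneous Kato smoothing, and the paper itself remarks (Section \ref{Subsec-StrategyPart2}) that the direct computation ``is effectively Kato's smoothing effect''; the two arguments are dual packagings of the same mechanism. Your approach has the advantage of making the dispersive input explicit and modular; the paper's is more self-contained and avoids any appeal to Christ--Kiselev or $TT^*$. One small quibble: the reduction from $\norm{\psi^A u(t_0)}_{H^\nu}$ to $\norm{\psi^B u_0}_{H^\nu}$ when $t_0>0$ is not actually trivial as stated---but in every application in the paper $I=[0,t)$, so this is moot.
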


\begin{lemma}[Annular $H^1$ control - propagation of Lemma \ref{Lemma-Annular12}]
\label{Lemma-AnnularN2}
There exists $\sigma_6 > 0$, universal for all $m>0$ sufficiently small, such that for all $t\in[0,T_{hyp})$,
\begin{equation}\label{Eqn-AnnularN2}
\norm{\psi^{(1)}u(t)}_{H^1} < C(\alpha^*)\frac{e^{-\frac{\sigma_6}{b(t)}}}{\lambda^\frac{1}{2}(t)},
\end{equation}
where $C(\alpha^*) \rightarrow 0$ as $\alpha^* \rightarrow 0$.
\end{lemma}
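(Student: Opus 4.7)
The plan is to apply the Gronwall tool of Lemma \ref{Lemma-GronwallArg} with $\nu=1$, $\psi^A=\psi^{(1)}$ and $\psi^B=\psi^{(1/2)}$, feeding it the $H^{1/2}$ control of Lemma \ref{Lemma-Annular12}, the global hypothesis {\bf H2.1}, the endpoint $L^\infty$ estimate of Lemma \ref{Lemma-NdimEndpointSobolev}, and the refined time-integrability of Lemma \ref{Lemma-lambdaIntegral} and Corollary \ref{Corollary-lambdaIntegral2version2}. Schematically, with $f(t)=\norm{\psi^{(1)}u(t)}_{H^1}$ and $g(t)=\norm{\psi^{(1/2)}u(t)}_{L^\infty}$, one obtains
\[
f(t)\;\lesssim\; \norm{\psi^{(1/2)}u_0}_{H^1} + T_{hyp} + \norm{\psi^{(1/2)}u}_{L^2_I H^{3/2}} + \int_0^t g(\tau)^2\,f(\tau)\,d\tau,
\]
after controlling the nonlinear term by Leibniz and the fact that $\psi^{(1/2)}\equiv 1$ on $\mathrm{supp}(\psi^{(1)})$, so $\norm{\psi^{(1)}u\,\abs{u}^2}_{H^1}\lesssim f\,g^2$ up to lower-order terms that fit the same scheme.

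First I would bound the $L^2_I H^{3/2}$ contribution by interpolating $H^{3/2}$ between $H^{1/2}$ and $H^3$: since $3/2=\tfrac{3}{5}\cdot\tfrac{1}{2}+\tfrac{2}{5}\cdot 3$, Lemma \ref{Lemma-Annular12} and {\bf H2.1} give
\[
\norm{\psi^{(1/2)}u}_{H^{3/2}} \lesssim \lambda^{-\frac{3}{5}C(\alpha^*)}\Bigl(\tfrac{e^{m/b}}{\lambda^3}\Bigr)^{2/5} \;=\; \frac{e^{2m/(5b)}}{\lambda^{6/5+\frac{3}{5}C(\alpha^*)}}.
\]
Squaring and integrating by Lemma \ref{Lemma-lambdaIntegral} (applied with $\mu=12/5+\tfrac{6}{5}C(\alpha^*)>2$ and arbitrary auxiliary $\sigma_2$) gives
\[
\norm{\psi^{(1/2)}u}_{L^2_I H^{3/2}}^{2} \;\lesssim\; C(\alpha^*)\,\frac{e^{(4m/5+\sigma_2)/b(t)}}{\lambda^{2/5+\frac{6}{5}C(\alpha^*)}(t)}.
\]
For $m$ and $\sigma_2$ taken small enough, this is already far better than the target $e^{-2\sigma_6/b}/\lambda$, so it produces the principal (and harmless) contribution.

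Next I would control the exponential Gronwall factor coming from $g^2$. By Lemma \ref{Lemma-NdimEndpointSobolev} applied with a cutoff supported in $\{\psi^{(0)}\equiv 1\}$ and some small $\sigma_5>0$,
\[
g(\tau)^2 \;\lesssim\; e^{2\sigma_5/b(\tau)}\Bigl(\norm{\widetilde u(\tau)}_{H^1}^{2}+\tfrac{\Gamma_{b(\tau)}}{\lambda^2(\tau)}\Bigr),
\]
and Corollary \ref{Corollary-lambdaIntegral2version2} then yields $\int_0^t g^2\,d\tau \lesssim C(\alpha^*)\,e^{2\sigma_5/b(t)}$. By {\bf H1.3}, $e^{\sigma_5/b(t)}\lesssim \abs{\log\lambda(t)}^{10\sigma_5/\pi}$, so the Gronwall multiplier is
\[
\exp\!\bigl(C(\alpha^*)\,\abs{\log\lambda(t)}^{20\sigma_5/\pi}\bigr),
\]
which, provided $\sigma_5$ is fixed sufficiently small, grows strictly sub-polynomially in $\lambda^{-1}$ and is therefore absorbed into a factor $\lambda^{-\delta}$ with $\delta$ as small as we please.

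Combining these two ingredients, Gronwall gives
\[
f(t)\;\lesssim\;C(\alpha^*)\Bigl(\norm{\psi^{(1/2)}u_0}_{H^1}+T_{hyp}+\tfrac{e^{(2m/5+\sigma_2/2)/b(t)}}{\lambda^{1/5+\frac{3}{5}C(\alpha^*)}(t)}\Bigr) \exp\!\bigl(C(\alpha^*)\abs{\log\lambda(t)}^{20\sigma_5/\pi}\bigr),
\]
where the initial-data term is $\lesssim(\alpha^*)^{1/2}$ by {\bf C2.3} and $T_{hyp}<\alpha^*$ by (\ref{Eqn-t1Small}). Choosing $m$, $\sigma_2$ and $\sigma_5$ small enough relative to a target $\sigma_6\in\bigl(0,\min(\tfrac{\pi}{5},\sigma_4,\ldots)\bigr)$, the right-hand side is dominated by $C(\alpha^*)\,e^{-\sigma_6/b(t)}/\lambda^{1/2}(t)$, proving (\ref{Eqn-AnnularN2}). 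The main obstacle is the nonlinear term: since $H^1(\real^2)\not\hookrightarrow L^\infty$, one cannot close a direct Gronwall loop for $f^2$ without carefully sub-scaling the $L^\infty$ norm of $\psi^{(1/2)}u$, and it is precisely here that the modified Brezis--Gallou\"et estimate of Lemma \ref{Lemma-NdimEndpointSobolev}---combined with the openness in $\sigma_5$ of the log-log rate---is essential to keep the Gronwall factor tame.
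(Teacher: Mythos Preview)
Your argument is correct, but it takes a different route from the paper in both key estimates.

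For the $H^{3/2}$ term, you interpolate $\psi^{(1/2)}u$ between $H^{1/2}$ (Lemma~\ref{Lemma-Annular12}) and $H^3$ ({\bf H2.1}), obtaining a small power of $\lambda^{-1}$ but with a \emph{positive} exponential $e^{c/b}$. The paper instead uses that $\psi^{(1/2)}u=\psi^{(1/2)}\tilde u$ on the annulus and interpolates $\tilde u$ between $H^1$ and $H^3$: the point is that {\bf H1.2} gives $\norm{\tilde u}_{H^1}\lesssim\Gamma_b^{3/8}/\lambda$, which already carries a \emph{negative} exponential. After squaring and integrating via Lemma~\ref{Lemma-lambdaIntegral}, the paper obtains directly $e^{-\sigma_6/b}/\lambda^{1/2}$ without having to trade a spare power of $\lambda$ against $e^{c/b}$ via the log-log relation as you do at the end.

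For the nonlinear term, you run a second Gronwall loop, bounding $\norm{\psi^{(1)}u\abs{u}^2}_{H^1}\lesssim f\,g^2$ with $g=\norm{\psi^{(1/2)}u}_{L^\infty}$ and invoking the Brezis--Gallou\"et Lemma~\ref{Lemma-NdimEndpointSobolev} again. The paper avoids Gronwall entirely here: using H\"older and two-dimensional Sobolev it writes
\[
\norm{\psi^{(1)}u\abs{u}^2}_{H^1}
\lesssim\norm{\psi^{(1/2)}u}_{H^{3/2}}\norm{\psi^{(1/2)}u}_{H^{3/4}}^2
\lesssim\norm{u}_{H^3}^{3/5}\norm{\psi^{(1/2)}u}_{H^{1/2}}^{12/5}
\lesssim\lambda^{-9/5-C(\alpha^*)},
\]
interpolating against the $H^{1/2}$ control of Lemma~\ref{Lemma-Annular12}. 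Since the exponent is strictly below $2$, Lemma~\ref{Lemma-lambdaIntegralCrude} integrates this to a constant $C(\alpha^*)$. This is cleaner: no Gronwall factor to control, and Lemma~\ref{Lemma-NdimEndpointSobolev} is used only once (for Lemma~\ref{Lemma-Annular12}). Your approach works because the $3/10$ gap in the $\lambda$-power absorbs both the sub-polynomial Gronwall multiplier and any prescribed $e^{\sigma_6/b}$, but the paper's route shows that the endpoint estimate is not needed a second time once the $H^{1/2}$ annular control is in hand.
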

\begin{proof}
Apply the standard Gronwall argument, equation (\ref{Eqn-GronwallTool}), for $\nu=1$, $I = [0,t<T_{hyp}]$, $\psi^A = \psi^{(1)}$, and $\psi^B = \psi^{(\frac{1}{2})}$. Note that $\psi^{(1)}u = \psi^{(1)}\tilde{u}$. Through interpolation and hypotheses {\bf H1.2} and {\bf H2.1},
\begin{equation}\label{Proof-AnnularN2-eqn1}\begin{aligned}
\norm{\psi^{(1)}u}_{L^2_IH^{1+\frac{1}{2}}}
\lesssim& \left(\int{
	\norm{\tilde{u}}_{H^1}^{2-\frac{1}{2}}
	\norm{\tilde{u}}_{H^{3}}^{\frac{1}{2}}
	}\right)^\frac{1}{2}
&\lesssim& \left(\int{
	e^{-\left(\frac{1}{4}-\frac{m}{2}\right)\frac{1}{b}}\frac{1}{\lambda^3}
	}\right)^\frac{1}{2}.
\end{aligned}\end{equation}
Assuming $m>0$ is sufficiently small, apply integrability Lemma \ref{Lemma-lambdaIntegral} for $\sigma_2 > 0$, also sufficiently small. 
Regarding the final term of (\ref{Eqn-GronwallTool}), apply H\"older, two-dimensional Sobolev embedding, and interpolate,
\begin{equation}\label{Proof-AnnularN2-eqn2}\begin{aligned}
\norm{\psi^{(1)}u\,\abs{u}^2}_{H^1} \lesssim &
\norm{\grad\left(\psi^{(\frac{1}{2})}u\right)\,\left(\psi^{(\frac{1}{2})}u\right)^2}_{L^2}\\
\lesssim &
\norm{\grad\left(\psi^{(\frac{1}{2})}u\right)}_{L^4}\norm{\psi^{(\frac{1}{2})}u}_{L^8}^2\\
\lesssim &
\norm{\psi^{(\frac{1}{2})}u}_{H^\frac{3}{2}}\norm{\psi^{(\frac{1}{2})}u}_{H^\frac{3}{4}}^2\\
\lesssim &
\norm{u}_{H^{3}}^{\frac{3}{5}}
\norm{\psi^{(\frac{1}{2})}u}_{H^\frac{1}{2}}^{3-\frac{3}{5}}
	& \lesssim\frac{1}{\lambda^{\frac{9}{5}+C(\alpha^*)}(t)},
\end{aligned}\end{equation}
where the final inequality is due to hypothesis {\bf H2.1} and Lemma \ref{Lemma-Annular12}. Apply Lemma \ref{Lemma-lambdaIntegralCrude}.
\end{proof}

\begin{remark}[Scheme for the remainder of Chapter \ref{Section-BootAtInfty}]
The proof of Lemma \ref{Lemma-AnnularN2} may be repeated, with a shrunken cutoff and $H^\frac{3}{2}$ in place of $H^1$.  However, due to the new version of equation (\ref{Proof-AnnularN2-eqn1}), iteration to higher norms will not yield more than the same $\frac{1}{2}$-derivative improvement over scaling.

Instead, we switch direction. Starting with {\bf I2.1}, at each stage the previous iterate will give progressively better control on the equivalent of (\ref{Proof-AnnularN2-eqn1}).  Lemma \ref{Lemma-AnnularN2} will be used to help control the equivalent of equation (\ref{Proof-AnnularN2-eqn2}).
\end{remark}

\begin{lemma}[Moser-type Product Estimate]\label{Lemma-MoserType}
Let $v \in H^{\nu+\frac{1}{2}}(\real^d)$ for some $\nu \geq \frac{d-1}{2}$, not necessarily an integer. Then,
\begin{equation}\label{Eqn-MoserEst}
\norm{v^3}_{H^\nu} \lesssim 
	\norm{v}_{H^{\nu+\frac{1}{2}}}
	\norm{v}_{H^{\frac{d}{2}}}^2.
\end{equation}
\end{lemma}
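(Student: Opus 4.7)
The plan is to obtain the estimate by two applications of the Kato--Ponce fractional Leibniz inequality, with H\"older exponents tuned to the Sobolev embedding $H^{1/2}(\real^d)\hookrightarrow L^{2d/(d-1)}(\real^d)$. The $L^2$ piece of the inhomogeneous $H^\nu$ norm is handled at the end via $\norm{v^3}_{L^2}\leq \norm{v}_{L^6}^3\lesssim \norm{v}_{H^{d/3}}^3$: the hypothesis $\nu\geq (d-1)/2$ gives $d/3\leq\min(d/2,\,\nu+\tfrac{1}{2})$, so this is bounded by $\norm{v}_{H^{\nu+1/2}}\norm{v}_{H^{d/2}}^2$. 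The substantive part of the argument concerns the derivative term $\norm{D^\nu(v^3)}_{L^2}$.

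First I would split $v^3=v\cdot v^2$ and apply the fractional Leibniz inequality
\[
\norm{D^\nu(v\cdot v^2)}_{L^2}\lesssim \norm{D^\nu v}_{L^{p_1}}\norm{v^2}_{L^{p_2}}+\norm{v}_{L^{q_1}}\norm{D^\nu(v^2)}_{L^{q_2}},
\]
with $p_1=\tfrac{2d}{d-1}$, $p_2=2d$, $q_1=4d$, and $q_2=\tfrac{4d}{2d-1}$; one checks $1/p_1+1/p_2=1/q_1+1/q_2=1/2$. The embedding $H^{\nu+1/2}\hookrightarrow W^{\nu,\,2d/(d-1)}$ gives $\norm{D^\nu v}_{L^{p_1}}\lesssim \norm{v}_{H^{\nu+1/2}}$, while $H^{d/2-1/4}\hookrightarrow L^{4d}$ yields both $\norm{v^2}_{L^{p_2}}=\norm{v}_{L^{4d}}^2\lesssim \norm{v}_{H^{d/2}}^2$ and $\norm{v}_{L^{q_1}}\lesssim \norm{v}_{H^{d/2}}$. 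Thus the first summand already matches the claim.

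For the second summand I would apply Kato--Ponce once more, now to $v^2$, with a new H\"older pair $a=\tfrac{2d}{d-1}$, $b=4d$ (satisfying $1/a+1/b=1/q_2$). The same two embeddings give
\[
\norm{D^\nu(v^2)}_{L^{q_2}}\lesssim \norm{D^\nu v}_{L^{a}}\norm{v}_{L^{b}}\lesssim \norm{v}_{H^{\nu+1/2}}\norm{v}_{H^{d/2}},
\]
which combined with the bound on $\norm{v}_{L^{q_1}}$ above reproduces the desired right-hand side.

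The only things to verify are that every H\"older pair lies in $(1,\infty)^2$ and that each Sobolev index appearing above is nonnegative; both checks are automatic from $d\geq 2$ and $\nu+\tfrac{1}{2}\geq d/2$, so no paraproduct decomposition or commutator estimate is required. The main (mild) obstacle is simply the bookkeeping of the H\"older exponents, chosen so that the single derivative factor $D^\nu v$ always lands at the subcritical rate $H^{\nu+1/2}$ while the two remaining factors of $v$ sit strictly below the $H^{d/2}$ threshold; the condition $\nu\geq (d-1)/2$ is precisely what makes this distribution admissible.
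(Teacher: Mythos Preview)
Your argument is correct. The two applications of Kato--Ponce with the H\"older triples you chose land the single $D^\nu$ factor at the critical rate $\dot{H}^{1/2}\hookrightarrow L^{2d/(d-1)}$ and each undifferentiated factor at $H^{d/2-1/4}\hookrightarrow L^{4d}$, and the exponent checks (all pairs in $(1,\infty)$, $\nu>0$, $1/a+1/b=1/q_2$) go through for $d\geq 2$.

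The paper takes a different route. Rather than invoking the fractional Leibniz rule as a black box, it writes $D^\nu(v^3)$ (for integer $\nu$) as a sum of triple products $\nabla^{l_1}v\,\nabla^{l_2}v\,\nabla^{l_3}v$ with $l_1+l_2+l_3=\nu$, applies H\"older with exponents $q_m$ chosen so that $\sum 1/q_m=1/2$ and $d(1/2-1/q_m)<(\nu+\tfrac12)-l_m$, then uses Sobolev embedding and interpolates each factor between $H^{d/2}$ and $H^{\nu+1/2}$. A four-case analysis on how many of the indices satisfy $l_m>d/q_m$ verifies that the total interpolation weight toward $H^{\nu+1/2}$ never exceeds $1$; the non-integer case is handled at the end by a commutator estimate. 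Your approach is shorter and avoids the case split entirely, at the cost of relying on Kato--Ponce (which itself hides paraproduct machinery). The paper's approach is more self-contained and makes the role of the threshold $\nu\geq (d-1)/2$ visible through the strict inequality $d<2\nu+\tfrac32$ that guarantees admissible $q_m$ exist.
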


\begin{lemma}[{\bf I2.2} and {\bf I2.3} on the support of $\grad \chi$]
\label{Lemma-AnnularHlower}
For all $t \in [0,T_{hyp})$:
\begin{equation}\label{Eqn-AnnularHlower}
\norm{\varphi^{(3-\kappa)}u}_{H^{3-\kappa}} < C(\alpha^*) \frac{e^{(1+\kappa)\frac{m'}{b(t)}}}{\lambda^{3-2\kappa}},
\end{equation}
for each half integer $\frac{1}{2} \leq \kappa < \frac{3}{2}$,
\begin{equation}\label{Eqn-AnnularHlowerN2}
\norm{\varphi^{(\frac{3}{2})} u(t)}_{H^\frac{3}{2}} < C(\alpha^*) e^{+\frac{2m' + \pi}{b(t)}},
\end{equation}
and,
\begin{equation}\label{Eqn-AnnularHCrit}
\norm{\varphi^{(1)}u}_{H^{1}} < C(\alpha^*) \left(\alpha^*\right)^{\frac{1}{5}},
\end{equation}
where in each case $C(\alpha^*) \rightarrow 0$ as $\alpha^* \rightarrow 0$.
\end{lemma}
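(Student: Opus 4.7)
The plan is an iterative application of the Gronwall-type estimate of Lemma \ref{Lemma-GronwallArg} four times, for $\nu\in\{5/2,\,2,\,3/2,\,1\}$, using the nested cutoffs $\varphi^{(5/2)}\supset\varphi^{(2)}\supset\varphi^{(3/2)}\supset\varphi^{(1)}$ of Definition \ref{Defn-AnnularCutoffs}. At each stage we set $\psi^A$ equal to the cutoff associated with the target norm and choose $\psi^B$ to be the next-larger cutoff, so that $\psi^B\equiv 1$ on $\mathrm{supp}(\psi^A)$. The Kato smoothing term $\norm{\psi^B u}_{L^2_I H^{\nu+1/2}}$ is controlled by the previous stage's bound: Stage~1 ($\nu=5/2$) uses the global hypothesis {\bf I2.1}; Stages~2--4 use the preceding stage's output. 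In each case, integration in time via Lemma \ref{Lemma-lambdaIntegral} (or Corollary \ref{Corollary-lambdaIntegral2version2} when the exponent of $\lambda^{-1}$ hits the critical value $\mu=2$) gains a factor $\lambda^{-2}$ and an arbitrarily small exponential blowup $e^{\sigma_2/b}$. This is precisely the mechanism by which each iterate gains a half-derivative of smoothing relative to scaling, matching the progression of targets in (\ref{Eqn-AnnularHlower})--(\ref{Eqn-AnnularHCrit}).

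The cubic nonlinear term $\norm{\psi^A u\abs{u}^2}_{L^1_I H^\nu}$ is handled uniformly across stages. Since $\psi^B\equiv 1$ on $\mathrm{supp}(\psi^A)$, we write $\psi^A u\abs{u}^2=\psi^A u\abs{\psi^B u}^2$ and apply a Moser-type product estimate in the spirit of Lemma \ref{Lemma-MoserType} together with the endpoint embedding Lemma \ref{Lemma-NdimEndpointSobolev}: schematically,
\[
\norm{\psi^A u\abs{\psi^B u}^2}_{H^\nu}\lesssim \norm{\psi^B u}_{H^\nu}\,\norm{\psi^B u}_{L^\infty}^2 + \text{(lower-order commutator terms)}.
\]
Lemma \ref{Lemma-NdimEndpointSobolev} gives $\norm{\psi^B u}_{L^\infty}^2\lesssim e^{2\sigma_5/b}\bigl(\norm{\widetilde{u}}_{H^1}^2+\Gamma_b/\lambda^2\bigr)$ for any $\sigma_5>0$, and Corollary \ref{Corollary-lambdaIntegral2version2} then absorbs the $L^1_t$ time integral into a factor $C(\alpha^*)e^{2\sigma_5/b(t)}$. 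The sup-in-time factor $\sup_I\norm{\psi^B u}_{H^\nu}$ is bounded at stage $k$ by the output of stage $k-1$ (for $k\geq 2$) combined with the $H^1$ control from Lemma \ref{Lemma-AnnularN2} via interpolation. In this way the nonlinear contribution matches the Kato contribution order-by-order. The accumulated $e^{2\sigma_5/b}$ from the first three stages produces the extra $e^{\pi/b}$ prefactor of (\ref{Eqn-AnnularHlowerN2}) once $\sigma_5$ is chosen proportional to $\pi$; at Stage~4 the target is merely a $C(\alpha^*)$ constant and Lemma \ref{Lemma-lambdaIntegralCrude} (which applies since the resulting $\lambda$-exponent is strictly below $2$) absorbs the now-harmless exponential prefactors into a factor vanishing with $\alpha^*$.

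The principal difficulty is Stage~1, where no prior half-derivative improvement is available. The sup-in-time factor $\sup_I\norm{\psi^{(1)}u}_{H^{5/2}}$ must be estimated purely by interpolation of {\bf H2.1}, {\bf H2.2}, and Lemma \ref{Lemma-AnnularN2}, and the resulting nonlinear bound must match the Kato contribution $\sim e^{(m'+\sigma_2/2)/b}/\lambda^2$ \emph{exactly} at the scaling $\lambda^{-2}$. It is essential here to use the scaling-near-preserving endpoint embedding of Lemma \ref{Lemma-NdimEndpointSobolev} rather than the standard Brezis--Gallou\"et estimate, which would produce an additional $\sqrt{\log\norm{u}_{H^2}}\sim 1/\sqrt{b}$ factor that would break scaling when combined with the log-log rate {\bf H1.3}---see Remark \ref{Remark-NormalBrezisFails}. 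The open-ended freedom to pick $\sigma_5$ arbitrarily small, together with the gap $m'<m$ (arranged by the choice of $m'$ in Lemma \ref{Lemma-ControlledHnk}), is what closes Stage~1; the remaining stages then follow systematically from the scheme above.
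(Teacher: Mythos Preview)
Your overall iterative scheme is correct and matches the paper: four applications of Lemma~\ref{Lemma-GronwallArg} with the nested cutoffs, the Kato smoothing term controlled by the previous stage, and time-integration via Lemma~\ref{Lemma-lambdaIntegral} (or Corollary~\ref{Corollary-lambdaIntegral2version2} at $\mu=2$, Lemma~\ref{Lemma-lambdaIntegralCrude} for $\mu<2$). The gap is in your handling of the nonlinear term.

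You use the product estimate $\norm{\psi^A u\abs{\psi^B u}^2}_{H^\nu}\lesssim \norm{\psi^B u}_{H^\nu}\norm{\psi^B u}_{L^\infty}^2$ together with Lemma~\ref{Lemma-NdimEndpointSobolev}. At Stage~1 this forces you to estimate $\sup_I\norm{\psi^{(1)}u}_{H^{5/2}}$, and since {\bf H2.2} applies only to $\chi u$ (and $\chi$ is not identically~1 on $\mathrm{supp}\,\psi^{(1)}$), the only available bound is interpolation between {\bf H2.1} and Lemma~\ref{Lemma-AnnularN2}:
\[
\norm{\psi^{(1)}u}_{H^{5/2}}\lesssim \norm{u}_{H^3}^{3/4}\norm{\psi^{(1)}u}_{H^1}^{1/4}\lesssim \frac{e^{(3m-\sigma_6)/(4b)}}{\lambda^{19/8}}.
\]
After combining with Corollary~\ref{Corollary-lambdaIntegral2version2} your nonlinear contribution is of order $\lambda^{-19/8}$, strictly worse than the target $\lambda^{-2}$. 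No factor $e^{-c/b}$ can close this gap: by {\bf H1.3}, $e^{-c/b}\gtrsim\abs{\log\lambda}^{-C}$, which is never $\leq\lambda^{3/8}$. The same defect propagates to later stages (e.g.\ at Stage~2 you get $\lambda^{-3/2}$ against a target of $\lambda^{-1}$).

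The paper treats the nonlinear term differently, and this is the point. It uses the two-dimensional form of Lemma~\ref{Lemma-MoserType},
\[
\norm{\varphi^{(\nu)}u\abs{u}^2}_{H^\nu}\lesssim \norm{\varphi^{(\nu+\frac12)}u}_{H^{\nu+\frac12}}\norm{\varphi^{(\nu+\frac12)}u}_{H^1}^2,
\]
so that the high-norm factor matches the \emph{previous} stage exactly (at Stage~1 this is {\bf I2.1}), and the two low-norm factors are placed at $H^1$, where Lemma~\ref{Lemma-AnnularN2} gives $\norm{\varphi^{(\nu+\frac12)}u}_{H^1}\lesssim e^{-\sigma_6/b}\lambda^{-1/2}$. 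The product is then $e^{(m'-2\sigma_6)/b}\lambda^{-4}$ at Stage~1, which integrates (Lemma~\ref{Lemma-lambdaIntegral}) to the required $\lambda^{-2}$ with a \emph{negative} exponential prefactor, provided $m$ is small relative to the universal constant $\sigma_6$. Lemma~\ref{Lemma-NdimEndpointSobolev} plays no role here; its work was done upstream in the proof of Lemma~\ref{Lemma-Annular12}, which feeds into Lemma~\ref{Lemma-AnnularN2}. The $e^{\pi/b}$ in (\ref{Eqn-AnnularHlowerN2}) comes not from accumulated $L^\infty$ losses but from Corollary~\ref{Corollary-lambdaIntegral2version2} applied to the Kato term at $\kappa=3/2$.
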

\begin{proof}
We prove (\ref{Eqn-AnnularHlower}) by induction in $\kappa$. The base case $\kappa = 0$ is Lemma \ref{Lemma-ControlledHnk}. 
Hypothesize that (\ref{Eqn-AnnularHlower}) holds for $\kappa - \frac{1}{2}$, some $\kappa \geq \frac{1}{2}$. Denote $\nu = 3-\kappa$, and apply the standard Gronwall argument for $I=[0,t<T_{hyp}]$, $\psi^A =\phi^{(\nu)}$ and $\psi^B = \phi^{(\nu+\frac{1}{2})}$,
\begin{equation}\label{Proof-AnnularLower-eqn1}
\norm{\varphi^{(\nu)}u}_{H^\nu} \lesssim
	\norm{\chi_0u_0}_{H^\nu} 
	+ \norm{\varphi^{(\nu+\frac{1}{2})}u}_{L^2_tH^{\nu+\frac{1}{2}}}
	+ \norm{\varphi^{(\nu)}u\,\abs{u}^2}_{L^1_tH^{\nu}}.
\end{equation}
Apply our induction hypothesis to the second RH term of (\ref{Proof-AnnularLower-eqn1}),
\begin{equation}\label{Proof-AnnularLower-eqn1.2}
\norm{\varphi^{(\nu+\frac{1}{2})}u}_{L^2_tH^{\nu+\frac{1}{2}}}
	\lesssim \left(\int_I{\left(\frac{e^{\left(1+(\kappa-\frac{1}{2})\right)\frac{m'}{b(\tau)}}}{\lambda^{3-2(\kappa-\frac{1}{2})}(\tau)}\right)^2d\,\tau}\right)^\frac{1}{2}
	\lesssim
	\left(\frac{e^{(1+\kappa)\frac{m'}{b(t)}}}{\lambda^{3-2\kappa}(t)}\right)
	\left(e^{\frac{\sigma_2-m'}{b(t)}}\right)^\frac{1}{2}.
\end{equation}
where, since $\kappa < \frac{3}{2}$, we applied Lemma \ref{Lemma-lambdaIntegral} for some $\sigma_2 < m'$.
Examine the final term of (\ref{Proof-AnnularLower-eqn1}). 
Note that, $\varphi^{(\nu)}u = \varphi^{(\nu)}\left( \varphi^{(\nu+\frac{1}{2})}u \right)$.
Apply the Moser-type estimate of Lemma \ref{Lemma-MoserType} 
and inject both the $H^1$ control of Lemma \ref{Lemma-AnnularN2} and the induction hypothesis,
\begin{equation}\label{Proof-AnnularLower-useMoser}\begin{aligned}
\norm{\varphi^{(\nu)}u\,\abs{u}^2}_{H^\nu} \lesssim &
\norm{\varphi^{(\nu+\frac{1}{2})}u}_{H^{\nu+\frac{1}{2}}}
	\norm{\varphi^{(\nu+\frac{1}{2})}u}_{H^1}^2\\
\lesssim &
	\frac{e^{+\frac{(1+(\kappa-\frac{1}{2}))m'}{b}}}{\lambda^{3-2(\kappa-\frac{1}{2})}}
	\frac{e^{-\frac{2\sigma_6}{b}}}{\lambda}
& =  \frac{e^{\frac{(\text{neg})}{b}}}{\lambda^2}\frac{1}{\lambda^{3-2\kappa}},
\end{aligned}\end{equation}
where we made the assumption $m>0$ is sufficiently small relative to $\sigma_6$.
Finally, apply Lemma \ref{Lemma-lambdaIntegral} for some $\sigma_2$ less than the negative exponent. This completes the proof of (\ref{Eqn-AnnularHlower}).

To prove (\ref{Eqn-AnnularHlowerN2}) let $\kappa = \frac{3}{2}$. We proceed exactly as above, using (\ref{Eqn-AnnularHlower}) in place of the induction hypothesis, and applying Corollary \ref{Corollary-lambdaIntegral2version2} in place of Lemma \ref{Lemma-lambdaIntegral}.

To prove (\ref{Eqn-AnnularHCrit}), let $\kappa = 2$. We proceed exactly as above using (\ref{Eqn-AnnularHlowerN2}) in place of the induction hypothesis, and applying Lemma \ref{Lemma-lambdaIntegralCrude} in place of Lemma \ref{Lemma-lambdaIntegral}.
\end{proof}

\subsection{Improved Behaviour at Infinity}
	\label{Subsection-Infty}

With Lemma \ref{Lemma-AnnularHlower} covering the support of $\grad\chi$, we prove the corresponding result for $\chi$ by similar methods. Note the argument is now in three-dimensions.
\begin{proof}[Proof of {\bf I2.2} and {\bf I2.3}]
We revisit the proof of the standard Gronwall argument. 
Let $I = [0,t<T_{hyp}]$, $\nu \geq 0$, and denote $v = \chi u$. With equation (\ref{Eqn-NLS}),
\begin{equation}\label{Proof-OriginHLower-eqn1}
iv_t+\laplacian v +v\abs{v}^2= u\laplacian\chi + 2\grad\chi\cdot\grad u + \left(\chi^2 - 1\right)\chi u\abs{u}^2.
\end{equation}
Note that the terms on the RHS of (\ref{Proof-OriginHLower-eqn1}) are localized to the support of $\grad\chi$, a region of two-dimensional character where $\varphi^{(1)} \equiv 1$.
By direct calculation,
\begin{equation}\label{Proof-OriginHLower-miniGron}\begin{aligned}
\frac{1}{2}\norm{\chi u}_{L^\infty_IH^\nu} \leq 
	&\norm{\chi u_0}_{H^\nu} + \norm{\chi u\abs{\chi u}^2}_{L^1_IH^\nu}\\
&+
C(\chi)\left(
	\norm{\varphi^{(1)}u_0}_{H^\nu} + \abs{I} 
	+\norm{\varphi^{(1)}u}_{L^2_IH^{\nu+\frac{1}{2}}} 
	+\norm{\varphi^{(1)}u\abs{\varphi^{(1)}u}^2}_{L^1_IH^\nu}
\right).
\end{aligned}\end{equation}
Consider $\nu = 3-\kappa$ for some $\frac{1}{2}\leq\kappa\leq 2$.
Due to Definition \ref{Defn-AnnularCutoffs}, all the conclusions of Lemma \ref{Lemma-AnnularHlower} apply to $\varphi^{(1)}u$, which we use in place of an induction hypothesis to control the second line of (\ref{Proof-OriginHLower-miniGron}) exactly as we did equation (\ref{Proof-AnnularLower-eqn1}). These terms will give the largest contribution.

Finally, examine the term nonlinear in $\chi u$.
Apply the Moser-type estimate of Lemma \ref{Lemma-MoserType}, 
interpolate, and inject hypotheses {\bf H2.2},
\begin{equation}\label{Proof-OriginHLower-useMoser}\begin{aligned}
\norm{\chi u\,\abs{\chi u}^2}_{L^1_IH^\nu} 
	\lesssim &
		\norm{ \norm{\chi u}_{H^{\nu+\frac{1}{2}}}
			\norm{\chi u}_{H^\frac{3}{2}}^2 }_{L^1_I}\\
	\lesssim & \left\{\begin{aligned}
			&\int_I{
				\frac{e^{\left(1+(\kappa-\frac{1}{2})\right)\frac{m}{b(\tau)}}}
				{\lambda^{3-2(\kappa-\frac{1}{2})}(\tau)}
				e^{2\frac{2m+\pi}{b(\tau)}}\,d\,\tau}
				&&\text{ for } \kappa < 2,\\
			&\int_I{
				e^{3\frac{2m+\pi}{b(\tau)}}\,d\,\tau}
				&&\text{ for } \kappa = 2.
			\end{aligned}\right.
\end{aligned}\end{equation}
Apply Lemma \ref{Lemma-lambdaIntegralCrude} for $\kappa \geq \frac{3}{2}$, Corollary \ref{Corollary-lambdaIntegral2version2} for $\kappa=1$, and Lemma \ref{Lemma-lambdaIntegral} for $\kappa = \frac{1}{2}$. Note that the result of equation (\ref{Proof-OriginHLower-useMoser}) is an entire order better in $\frac{1}{\lambda}$ than necessary.  
\end{proof}

This completes the proof of Proposition \ref{Prop-Improv}.

\section{Proof of Theorem \ref{Thm-MainResult}}
\label{Section-FinalProof}

\begin{proof}[Proof of norm growth (\ref{Thm-MainResult-LogLog}), (\ref{Thm-MainResult-LogLogHigher})]
From Proposition \ref{Prop-Improv} we have that $T_{hyp} = T_{max}$, and from (\ref{Eqn-t1Small}) we have blowup in finite time. By the failure of local wellposedness we have that $\lambda(t) \to 0$ as $t \rightarrow T_{max}$.  Recall the approximate dynamics of $\lambda$, equation (\ref{Eqn-lambda-prelimDynamics}), which with the control on $b$ implies in particular that $\abs{\frac{\lambda_s}{\lambda}} < 1$ on $[s_0,s_{max})$, which easily integrates to,
\begin{equation}\label{Eqn-sIsInfty}\begin{aligned}
\abs{\log \lambda(s)} \lesssim 1+s
&& \Longrightarrow 
&& s_{max} = +\infty.
\end{aligned}\end{equation}
By direct calculation and a change of variable,
\[\begin{aligned}
-\partial_t\left(\lambda^2\log\abs{\log\lambda}\right) =
	&-\frac{\lambda_s}{\lambda}\log\abs{\log\lambda}
		\left(2 + \frac{1}{\abs{\log\lambda}\log\abs{\log\lambda}}\right).
\end{aligned}\]
From the approximate dynamics (\ref{Eqn-lambda-prelimDynamics}),
$
\frac{b}{2} \leq -\frac{\lambda_s}{\lambda} \leq 2b,
$
and so with the log-log rate {\bf H1.3} we have proven that, for some universal constant $C>0$ and all $t \in [0,T_{max})$,
\begin{equation}\label{Proof-MainResult-eqn1}
\frac{1}{C} \leq -\partial_t\left(\lambda^2\log\abs{\log\lambda}\right) \leq C.
\end{equation}
For all $t\in[0,T_{max})$, integrate equation (\ref{Proof-MainResult-eqn1}). Since $\lambda$ is very small we may estimate,
\begin{equation}\label{Proof-MainResult-BoundForLambda}\begin{aligned}
\frac{1}{C}\left(\frac{T_{max}-t}{\log\abs{\log(T_{max}-t)}}\right)^\frac{1}{2}
\leq \lambda(t)
\leq C \left(\frac{T_{max}-t}{\log\abs{\log(T_{max}-t)}}\right)^\frac{1}{2}.
\end{aligned}\end{equation}
We do not prove the exact value of the constant in equation (\ref{Thm-MainResult-LogLog}) - see \cite[Proposition 6]{MR-SharpLowerL2Critical-06}. 
Finally, we conclude that equation (\ref{Thm-MainResult-LogLogHigher}) follows from the log-log relationship {\bf H1.3}, higher-order norm control {\bf H2.1}, and from $m > 0$ small.
As an aside, recall that $\frac{ds}{dt} = \frac{1}{\lambda^2}$, so that with (\ref{Proof-MainResult-BoundForLambda}) one would conclude,
\begin{equation}\label{Proof-MainResult-BoundForS}
\frac{1}{C}\abs{\log(T_{max}-t)} \leq s(t) \leq C\abs{\log(T_{max}-t)}.
\end{equation}
Then from the explicit lower and upper bounds for $b$, equations (\ref{Eqn-bLowerBound}) and (\ref{Eqn-bUpperBound}),
\begin{equation}\label{Proof-MainResult-BoundForB}
\frac{1}{C \log\abs{\log(T_{max}-t)}}
	\leq b(t) 
	\leq \frac{C}{\log\abs{\log(T_{max}-t)}}.
\end{equation}
\end{proof}
\begin{proof}[Proof of stable locus of concentration, (\ref{Thm-MainResult-RZ})]
The preliminary estimate (\ref{Eqn-prelimGamma+REst}) implies in particular that $\abs{\frac{\partial_s(r,z)}{\lambda}} < 1$ on $[s_0,s_1)$. Then by change of variable, equation (\ref{Proof-MainResult-BoundForLambda}) and the bound on $T_{max}$, equation (\ref{Eqn-t1Small}),
\begin{equation}\label{Proof-MainResult-RZ}
\int_0^{T_{max}}{\abs{\partial_t(r,z)}\,dt} < \int_0^{T_{max}}{\frac{1}{\lambda(t)}\,dt} < \delta(\alpha^*).
\end{equation}
Equation (\ref{Thm-MainResult-RZ}) follows from choice of initial data {\bf C1.1}.
\end{proof}
\begin{proof}[Proof of regularity away from singular ring, (\ref{Thm-MainResult-SingOnRing})]
Given $R>0$, define $\chi_R$ to be a suitable modification of $\chi$ (\ref{DefnEqn-Chi}), equal to one for $\abs{(r,z)-(r_{max},z_{max})} > R$. 
Choose some $t(R)\in[0,T_{max})$ such that,
\begin{equation}\label{Proof-MainResult-BootstrapInitial}\begin{aligned}
A(t)\lambda(t) + \abs{(r(t),z(t))-(r_{max},z_{max})} \ll R
&& \text{ for all } t\in[t(R),T_{max}),
\end{aligned} \end{equation}
and hence $\chi_R u = \chi_R\widetilde{u}$ for all $t\in[t(R),T_{max})$.
Hypothesize $t_3\in(t(R),T_{max}]$ to be the largest value such that, 
\begin{equation}\label{Proof-MainResult-Hcrit}\begin{aligned}
\norm{\chi_R u(t)}_{H^1} < 2\norm{\chi_R u(t(R))}_{H^1}
&& \text{ for all } t\in[t(R),t_3).
\end{aligned}\end{equation}
This choice of $t_3>t(R)$ is possible since $u(t)$ is strongly continuous in $H^1$ at time $t(R) < T_{max}$.
With interpolation, (\ref{Proof-MainResult-Hcrit})
replaces bootstrap hypotheses {\bf H2.2} and {\bf H2.3}. Repeating the arguments of Chapter \ref{Section-BootAtInfty} proves $t_3 = T_{max}$ and,
\begin{equation}\label{Proof-MainResult-SingOnRing}\begin{aligned}
\norm{\widetilde{u}(t)}_{H^{1}\left(\abs{(r,z)-(r_{max},z_{max})}>R\right)} < C(R)
&& \text{ for all } t \in[0,T_{max}).
\end{aligned}\end{equation}
\end{proof}
\begin{proof}[Proof of mass concentration, (\ref{Thm-MainResult-L2})]
Let $R> 0$.  To begin we will prove there exists a residual profile in $L^2$ away from the singular ring,
\begin{equation}\label{Proof-MainResult-eqn3}\begin{aligned}
\tilde{u}(t) \to u^*
&& \text{ in } && L^2_x\left(\abs{(r,z)-(r_{max},z_{max})} \geq R\right)
&& \text{ as } && t \rightarrow T_{max}.
\end{aligned}\end{equation}
Then to establish equation (\ref{Thm-MainResult-L2}) we will prove $u* \in L^2(\real^{3})$,
\begin{equation}\label{Proof-MainResult-eqn3.2}\begin{aligned}
u^*\in L^2(\real^{3})
&& \text{ and }
&&\int{\abs{u^*}^2} = \lim_{t\rightarrow T_{max}}\int{\abs{\tilde{u}(t)}^2}.
\end{aligned}\end{equation} 
Let $\epsilon_0 > 0$ be arbitrary. Due to equation (\ref{Eqn-epsIntegral}), we may choose $t(R)<T_{max}$ such that both,
\begin{equation}\label{Proof-MainResult-eqn4}\begin{aligned}
T_{max}-t(R) < \frac{\epsilon_0}{1+C\left(\frac{R}{4}\right)} 
&&\text{ and }
&&\int_{t(R)}^{T_{max}}{\int{\abs{\grad\widetilde{u}}^2\,dx}\,dt} < \epsilon_0,
\end{aligned}\end{equation}
where $C(\frac{R}{4})$ is the constant from equation (\ref{Proof-MainResult-SingOnRing}). We may assume that, for $t\in[t(R),T_{max})$, $u(t) = \tilde{u}$ on $\left\{\abs{(r,z)-(r_{max},z_{max})} > \frac{R}{4}\right\}$.
For parameter $\tau > 0$, to be fixed later, we denote, 
\begin{equation}\label{Proof-MainResult-DefnEqn-vTau}
v^\tau(t,x) = u(t+\tau,x) - u(t,x).
\end{equation}
Since $t(R) < T_{max}$, $u(t)$ is strongly continuous in $L^2$ at time $t(R)$. Thus, there exists $\tau_0$ such that,
\begin{equation}\label{Proof-MainResult-eqn5}\begin{aligned}
\int{\abs{v^\tau(t(R))}^2\,dx} < \epsilon_0
&& \text{ for all } \tau \in [0,\tau_0].
\end{aligned}\end{equation}
Denote $\phi_R$ a smooth cutoff function analogous to $\phi_\infty$ of equation (\ref{DefnEqn-phiInfty}),
\begin{equation}\label{Proof-MainResult-DefnEqn-phiR}
\phi_R(r,z) = \phi_\infty^4\left(\frac{(r,z) - (r_{max},z_{max})}{R}\right).
\end{equation}
By direct calculation,
\begin{equation}\label{Proof-MainResult-eqn7}\begin{aligned}
\frac{1}{2}\partial_t\left(\int{\phi_R\abs{v^\tau}^2}\right)
	=& Im\left(\int{\grad\phi_R\cdot\grad v^\tau \overline{v^\tau}\,dx}\right)\\
	&+ Im\left(\int{\phi_R v^\tau\left(\overline{u\abs{u}^2(t+\tau) - u\abs{u}^2(t)}\right)\,dx}\right).
\end{aligned}\end{equation}
Regarding the first RH term of (\ref{Proof-MainResult-eqn7}), from H\"older and our choice of $t(R)$ we have that,
\begin{equation}\label{Proof-MainResult-eqn8}
\int_{t(R)}^{T_{max}}\abs{
	Im\left(\int{\grad\phi_R\cdot\grad v^\tau \overline{v^\tau}\,dx}\right)
	\,dt}
\leq C \left(\int_{t(R)}^{T_{max}}{1^2\,dt}\right)^\frac{1}{2}\epsilon_0^\frac{1}{2}
< C\epsilon_0.
\end{equation}
Regarding the second RHS term of (\ref{Proof-MainResult-eqn7}), by homogeneity,
\begin{equation}\label{Proof-MainResult-eqn9}
\abs{\phi_R v^\tau\left(\overline{u\abs{u}^2(t+\tau) - u\abs{u}^2(t)}\right)}
\leq C\left(\abs{\phi_R^\frac{1}{4}u(t+\tau)}^4 + \abs{\phi_R^\frac{1}{4}u(t)}^4\right).
\end{equation}
Then, as we did in proving estimate (\ref{Eqn-4thOrderEst}), apply the Sobolev embedding $H^{\frac{3}{4}} \hookrightarrow L^4(\real^{3})$ and interpolate, 
$\int{\abs{\phi_R^\frac{1}{4}u}^4} \leq C\norm{\phi_R^\frac{1}{4}u}_{H^\frac{1}{2}}^2
		\norm{\phi_R^\frac{1}{4}u}_{H^1}^2$.  
By the uniform control of $H^\frac{1}{2}$, equation (\ref{Proof-MainResult-SingOnRing}), and our choice of $t(R)$,
\begin{equation}\label{Proof-MainResult-eqn10}
\int_{t(R)}^{T_{max}}{\abs{\phi_R v^\tau\left(\overline{u\abs{u}^2(t+\tau) - u\abs{u}^2(t)}\right)}\,dt}
\leq C\epsilon_0.
\end{equation}
Through the integration of equation (\ref{Proof-MainResult-eqn7}) we have proven,
\begin{equation}\label{Proof-MainResult-eqn6}\begin{aligned}
\int{\phi_R\abs{v^\tau(t)}^2\,dx} < C \epsilon_0
&& \text{ for all } \tau \in [0,\tau_0]
\text{ and } t\in[t(R),T_{max}-\tau).
\end{aligned}\end{equation}
This shows that $\widetilde{u}$ is Cauchy, which proves (\ref{Proof-MainResult-eqn3}). We now turn our attention to (\ref{Proof-MainResult-eqn3.2}). Denote the thickness of the toroidal support of the singular profile and radiation by,
\begin{equation}\label{Proof-MainResult-DefnEqn-Rt}
R(t) = A(t)\lambda(t).
\end{equation}
Recall the definition of $A(t)$, equation (\ref{DefnEqn-A}). By the log-log rate {\bf H1.3}, $\lambda(t) \to 0$ implies that $A(t) \to 0$ and in particular,
\begin{equation}\label{Proof-MainResult-BoundForA}
A(t) \leq \frac{1}{\abs{\log(T_{max}-t)}^C}.
\end{equation}
Consider now $\phi_{R(t),\tau} = \phi_{\infty}^4\left(\frac{(r,z)-(r(\tau),z(\tau))}{R(t)}\right)$, a family of time-variable cutoffs similar to $\phi_{R(t)}$. For fixed time $t<T_{max}$ we calculate directly that,
\begin{equation}\label{Proof-MainResult-eqn11}
\begin{aligned}
\frac{1}{2}\partial_\tau\left(\int{\phi_{R(t),\tau}\abs{u(\tau)}^2\,dx}\right)
	=&
\frac{1}{R(t)}Im\left(\int{\grad_x\phi_{R(t),\tau}\cdot\grad_xu(\tau)\overline{u(\tau)}\,dx}\right)\\
		&-\frac{1}{2R(t)}\int{\partial_\tau(r(\tau),z(\tau))
			\cdot\grad_x\phi_{R(t),\tau}\abs{u(\tau)}^2\,dx},
\end{aligned}\end{equation}
where we use $\grad_x\phi_{R(t),\tau}$ to denote $\left.\grad_y\phi_\infty^4(y)\right|_{y=\frac{(r,z)-(r(\tau),z(\tau))}{R(t)}}$. 
Regarding the first RH line of (\ref{Proof-MainResult-eqn11}),
\[
\abs{\frac{1}{R(t)}
		Im\left(\int{\grad_x\phi_{R(t),\tau}\cdot\grad_xu(\tau)\overline{u(\tau)}\,dx}\right)}
\lesssim \frac{1}{R(t)}\norm{u(\tau)}_{H^1} \lesssim \frac{1}{A(t)\lambda(t)\lambda(\tau)}.
\]
Regarding the second RH line of (\ref{Proof-MainResult-eqn11}),
apply the preliminary estimate (\ref{Eqn-prelimGamma+REst}),
\[
\abs{\frac{1}{2R(t)}\int{\partial_\tau(r(\tau),z(\tau)) 
		\cdot\grad_x\phi_{R(t),\tau}\abs{u(\tau)}^2\,dx}}
	\lesssim \frac{1}{A(t)\lambda(t)\lambda(\tau)}\int{\abs{u_0}^2}.
\]
Integrate (\ref{Proof-MainResult-eqn11}) in $\tau$, and apply the bounds for $A$ and $\lambda$ from equations (\ref{Proof-MainResult-BoundForA}) and (\ref{Proof-MainResult-BoundForLambda}),
\begin{multline}\label{Proof-MainResult-eqn12}
\abs{\int{\phi_{R(t),T_{max}}\abs{u^*}^2\,dx} - \int{\phi_{R(t),t}\abs{u(t)}^2\,dx}}\\
	\begin{aligned}
	&\leq C \frac{1}{A(t)\lambda(t)}\int_{t}^{T_{max}}{\frac{1}{\lambda(\tau)}\,d\tau}\\
	&\leq \frac{C}{\abs{\log(T_{max}-t)}^C}\left(\frac{\log\abs{\log(T_{max}-t)}}{T_{max}-t}\right)^\frac{1}{2}
		\int_t^{T_{max}}{\left(\frac{\log\abs{\log(T_{max}-\tau)}}{T_{max}-\tau}\right)^\frac{1}{2}\,d\tau}\\
	&\leq \frac{1}{\abs{\log(T_{max}-t)}^\frac{C}{2}}.
	\end{aligned}
\end{multline}
The final inequality relied upon $T_{max}-t < T_{max} < \alpha^*$, equation (\ref{Eqn-t1Small}), both to approximate the integral and then to approximate $C\log\abs{\log(T_{max}-t)} < \abs{\log(T_{max}-t)}^\frac{C}{2}$.
Taking the limit $t \rightarrow T_{max}$ we see that,
\begin{equation}\label{Proof-MainResult-eqn13}
\int{\abs{u^*}^2} = \lim_{t\to T_{max}}\int{\phi_{R(t),t}\abs{u(t)}^2}.  
\end{equation}
From the definition of $R(t)$ and the geometric decomposition,
\[
\lim_{t\to T_{max}}\int{\phi_{R(t),t}\abs{u(t)}^2}
=\lim_{t\to T_{max}}\int{\phi_{R(t),t}\abs{\widetilde{u}(t)}^2}
=\lim_{t\to T_{max}}\int{\abs{\widetilde{u}(t)}^2},
\]
which proves that the limit in (\ref{Proof-MainResult-eqn13}) exists and establishes equation (\ref{Proof-MainResult-eqn3.2}). This completes the proof of equation (\ref{Thm-MainResult-L2}).
\end{proof}
\begin{remark}[Consistency with $u^*\notin H^1$]
\label{Remark-ProfileNonH1}
By repeating the proof of {\bf I2.3}, we expect that following the proof of (\ref{Proof-MainResult-eqn3}) it could be shown that $\tilde{u}(t) \to u^*$ in 
$H^1\left(\abs{(r,z)-(r_{max},z_{max})}\geq R\right)$.
Nevertheless, an attempt to prove a version of (\ref{Proof-MainResult-eqn3.2}) in $H^1$ will fail. Indeed, the second RH line of equation (\ref{Proof-MainResult-eqn11}) would require a bound for $\abs{\grad u(\tau)}$ on the support of $\grad\phi$, with
nothing to take the role mass conservation. 
\end{remark}

\appendix

\section{Appendix}

\label{Appendix}

\begin{proof}[Proof ${\mathcal P}$ is Nonempty]
Choose $r_0 = 1$, $z_0 = 0$, $b_0 > 0$ small enough to satisfy (\ref{DataP1-mass}), and $\lambda_0$ in the range of {\bf C1.3}. 
Fix some smooth $f(y)$, radially symmetric with support $\abs{y} \leq 2$, $\norm{f}_{H^3(\real^2)} \leq 1$ and $(f,Q) = 1$, such that
$\epsilon_0(y) = \nu f(y)$ satisfies orthogonality conditions (\ref{DataP1-orthog})
 for any $\nu\in\real$ to be determined.
One may explicitly calculate such an $f$ from $Q$.
With $\gamma_0 = 0$, we now find $\nu=\nu(b_0)$ to satisfy {\bf C1.4} 
 and the small-mass requirement of {\bf C1.2}. 

By the choice of $\lambda_0$, $\abs{(r,z)-(1,0)} < \frac{1}{3}$ on the support of $\widetilde{Q}_{b_0}(y)$, which includes the support of $f(y)$. After change of variables, we will expand $\mu_{\lambda_0,1}(y)$ as $1 + \lambda_0y_1$ so that,
\begin{equation}\label{Proof-DataP1-nonempty-ener}\begin{aligned}
\lambda_0^2\abs{E_0}
&=\abs{\frac{1}{2}\int{\abs{\grad_y\left(\widetilde{Q}_{b_0}+\nu f\right)}^2\mu_{\lambda_0,1}(y)\,dy}
	-\frac{1}{4}\int{\abs{\widetilde{Q}_{b_0}+\nu f}^4\mu_{\lambda_0,1}(y)\,dy}}\\
&\lesssim \abs{\frac{1}{2}\int{\abs{\grad_y\left(\widetilde{Q}_{b_0}+\nu f\right)}^2\,dy}
	-\frac{1}{4}\int{\abs{\widetilde{Q}_{b_0}+\nu f}^4\,dy}} + \lambda_0,
\end{aligned}
\end{equation}
a small correction from the two-dimensional energy. By direct calculation with equation (\ref{DefnEqn-Q}),
\begin{equation}\label{Proof-DataP1-nonempty-derivEner}
\left.
\frac{d}{d\nu}\left(
	\frac{1}{2}\int{\abs{\grad_y\left(Q+\nu f\right)}^2\,dy}
	-\frac{1}{4}\int{\abs{Q+\nu f}^4\,dy}
\right) 
\right|_{\nu=0}
= -\left(f,Q\right) = -1.
\end{equation}
Thus by the degenerate energy of $\widetilde{Q}_{b_0}$ 
there exists $\nu = \nu(b_0)$ of the order $\abs{\nu} \leq \Gamma_{b_0}^{1-C\eta}$ such that,
\begin{equation}\label{Proof-DataP1-nonempty-findZeroEner}
\frac{1}{2}\int{\abs{\grad_y\left(\widetilde{Q}_{b_0}+\nu f\right)}^2\,dy}
	-\frac{1}{4}\int{\abs{\widetilde{Q}_{b_0}+\nu f}^4\,dy} = 0.
\end{equation}
Note the choice $\nu = 0$ is impossible as the energy of $\widetilde{Q}_{b_0}$ alone is too large to satisfy {\bf C1.4}. 
Note with this choice of $\nu$ that {\bf C1.4} 
is satisfied. Indeed,
\begin{equation}\label{Proof-DataP1-nonempty-massOK}
\norm{\tilde{u}_0}_{L^2(\real^{3})}
=\abs{\nu}\left(\int{\abs{f(y)}^2\mu_{\lambda_0,1}(y)\,dy}\right)^\frac{1}{2} < \alpha^*,
\end{equation}
due to the norm and support of $f$, our choice of $\lambda_0$, and that $b_0$ is small. Next we show the momentum requirement of {\bf C1.4} 
is satisfied. Again from the choice of $\lambda_0$, the support of $\widetilde{Q}_{b_0} + \nu f$ lies well within $\abs{(r,z)-(1,0)}\leq\frac{1}{2}$, a region where $\grad_x\psi^{(x)}$ is constant - see definition (\ref{DefnEqn-psi-x}).
Furthermore, $\widetilde{Q}_b$ and $f$ are symmetric, so that we have,
\begin{multline}\label{Proof-DataP1-nonempty-moment}
\lambda_0\abs{Im\left(
	\int{\grad_x\psi^{(x)}\cdot\grad_xu_0\overline{u}_0}\right)}\\
\begin{aligned}
&=\abs{(1,1)\cdot Im\left(
	\int{
		\grad_y\left(\widetilde{Q}_{b_0} + \nu f\right)
		\left(\overline{\widetilde{Q}_{b_0}+\nu f}\right)
		\mu_{\lambda_0,1}(y)\,dy}\right)}\\
&=
	C\abs{Im\left(
	\int{\grad_y\left(\widetilde{Q}_{b_0} + \nu f\right)
		\left(\overline{\widetilde{Q}_{b_0}+\nu f}\right)
		\lambda_0y_1\,dy}\right)}
&\lesssim \lambda_0.\end{aligned}
\end{multline}
Finally, note that requirements {\bf C2.2}
 and {\bf C2.3} 
are automatic from the support of $f$.  Constant $C$ of {\bf C2.1} 
is due to Lemma \ref{Lemma-UnprovenProperty} and the choice of $\nu$.
\end{proof}

\begin{remark}[Relationship with the Classic Virial Argument]
\label{Remark-NegEnergyData} 
For data $u_0\in H^1$ with finite variance, due to the classic virial identity, a sufficient condition for blowup is,
\begin{equation}\label{Proof-RemarkNegEnergy-eqn1}
\left\lbrack Im\left(\int{x\overline{u_0}\grad u_0}\right)\right\rbrack^2
	> 2 \left\lbrack\norm{x u_0}_{L^2}^2\right\rbrack E(u_0).
\end{equation}
We remark that there exists $u_0\in{\mathcal P}$ for which condition (\ref{Proof-RemarkNegEnergy-eqn1}) fails.  From the construction above,
\[\begin{aligned}
Im\left(\int{x\overline{u_0}\grad u_0}\right)
	 = &\, Im\left(\int{y
		\left(\overline{\Qb}+\nu f\right)
		\grad_y\left(\Qb+\nu f\right)
			\mu_{\lambda_0,1}\,dy}\right)\\
	&\, +	\frac{(1,0)}{\lambda_0}\cdot Im\left(\int{
		\left(\overline{\Qb}+\nu f\right)
		\grad_y\left(\Qb+\nu f\right)
			\mu_{\lambda_0,1}\,dy}\right).
\end{aligned}\]
Then, from equation (\ref{Proof-DataP1-nonempty-moment}), we observe that the LHS of (\ref{Proof-RemarkNegEnergy-eqn1}) is bounded by a universal constant, for all $b_0$ and $\lambda_0$ sufficiently small. 
Regarding the RHS of (\ref{Proof-DataP1-nonempty-moment}), note that instead of making the choice of equation (\ref{Proof-DataP1-nonempty-findZeroEner}), we may adjust $\nu$ to the order of $\lambda_0$ to ensure the energy is positive, and of the order $\frac{1}{\lambda_0}$. \end{remark}

\bibliographystyle{plain}
\bibliography{../../Bibliography/Bibliography} 

\end{document}